\providecommand{\U}[1]{\protect\rule{.1in}{.1in}}
\newtheorem{theorem}{Theorem}
\newtheorem{algorithm}[theorem]{Algorithm}
\newtheorem{corollary}[theorem]{Corollary}
\newtheorem{definition}[theorem]{Definition}
\newtheorem{example}[theorem]{Example}
\newtheorem{lemma}[theorem]{Lemma}
\newtheorem{proposition}[theorem]{Proposition}
\newtheorem{remark}[theorem]{Remark}
\newenvironment{proof}[1][Proof]{\noindent\textbf{#1.} }{\ \rule{0.5em}{0.5em}}
\begin{document}

\title{{Introduction to Infinite Dimensional Statistics}\\{and Applications}}
\author{Jan Mandel\\University of Colorado Denver}
\date{Version \today}
\maketitle
\tableofcontents

\newpage

\section{Introduction}

These notes started to educate ourselves and to collect some background for
our future work, with the hope that perhaps they will be useful to others
also. The selection of the material is motivated by efforts to simplify and
generalize our analysis of the ensemble Kalman
filter~\cite{Kasanicky-2017-EKF,Kasanicky-2017-WBD,Kwiatkowski-2015-CSR,Mandel-2011-CEK}%
. Earlier versions served for short courses at INP-ENSEEIHT and L'\'{E}cole
Nationale de la M\'{e}t\'{e}orologie, Toulouse, France, in December 2012,
2013, and 2014.

Many if not all results are more or less elementary or available in the
literature, but we need to fill some holes (which are undoubtely statements so
trivial that the authors we use do not consider them holes at all) or make
straightforward extensions, and then we do the proofs in sufficient detail for
reference. The proofs might be included in a shortened form in a future paper
on an application, so we may present separate proofs for a more general and a
less general case rather than strive for the shortest writing. For the same
reason, we may spell out the argument rather than rely on a reference to a
numbered equation earlier.

The basic Hilbert space framework is based on \cite[Ch. 1]{DaPrato-2006-IIA},
but then we deviate in several ways. Our focus is on the (relatively
elementary) algebra of random elements and mappings rather than deep
properties of the probability measures on infinitely dimensional spaces as
found in standard monographs, such as
\cite{Bogachev-1998-GM,Kuo-1975-GMB,Ledoux-1991-PBS}, and in fact much of the
contents of these notes is taken for granted in those advanced sources and not
even spelled out. For example, \cite{Ledoux-1991-PBS} notes that many
statements in finite dimension hold true in Hilbert spaces, but he does not
see the need prove the Hilbert space extensions. \cite{vanderVaart-2000-AS}
uses arguments which carry over to infinite dimension. Our consistent
preference for $L^{p}$ norms rather than moments is somewhat unusual in
statistics, even if the norms are commonly used in, for example
\cite{Chow-1988-PT,Ledoux-1991-PBS}. The use of $L^{p}$ norms allows us to use
the properties of the norm easily, and it results in a style very similar to
$L^{p}$ spaces in real analysis, which we consider an advantage. We are
interested in estimates with explicit constants and explicit rates of
convergence rather that almost sure convergence or convergence under the
weakest possible conditions.

Although many results extend in a straightforward way to Banach spaces (or at
least reflexive ones), we prefer the simpler expression afforded by the less
general Hilbert space setting. The prerequisites for reading these notes are
only some Lebesgue integral and measure-theoretic probability, and basic
concepts from functional analysis, on the level of introductory graduate
courses. We review some results that should be known from such basic courses,
but we omit their proofs. Instead, we prove only the more unusual statements,
in more detail.

\section{Motivation: Spatial stochastic models and data assimilation}

\label{sec:prologue}

Much of the recent development in \emph{data assimilation} and
\emph{quantification uncertainty} can be described as \emph{statistics of the
solutions of partial differential equations}, or \emph{statistics of
scientific computing}. Here is picture of an eclectic collection of books
I\ saw in Summer 2012 at the Parallel Algorithms Group at CERFACS, which says
it all:

\begin{center}
\includegraphics[height=3in]{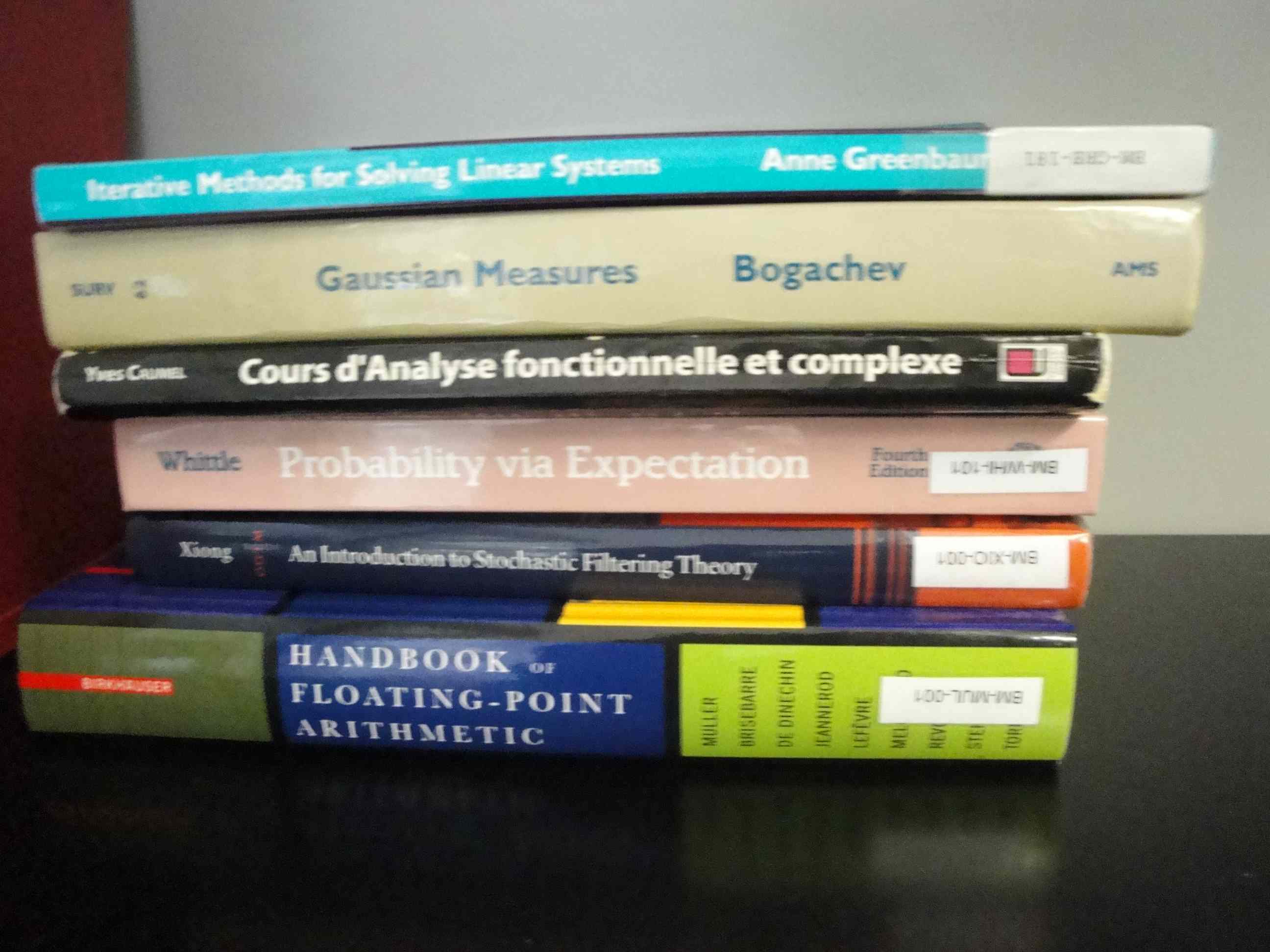}
\end{center}

\subsection{Continuum and discretization}

Physical models are formulated as partial differential equations, which have
solutions in spaces of functions, which are infinitely dimensional. The
computational realization of those models relies on discretizations, resulting
in models with finite dimensional state. Data assimilation methods rely on
multivariate statitics, applied to the discretized, finite dimensional models.
Finer resolution models result in models with higher dimension.

This higher dimension is generally just an artifact of the discretization.
Sometimes we have lower effective dimension: Nonlinear systems,
low-dimensional attractor. Curse of dimensionality: the performance of
stochastic algorithms sometimes deteriorates with the system dimension. But
sometimes not, the physical PDE model has so many scales that refining the
discretization keeps changing the solution. Then the effective dimension of
the model may be large and it has not been captured by the numerical model
yet... but then the adequacy of the numerical model is questionable in the
first place.

Standard approach in numerical PDEs: solution in infinitely dimensional
function space, finite-dimensional is approximation. Similarly here,
stochastic models in infinite dimension, fininite dimensional is an approximation.

Probability on infinitely-dimensional spaces can be tricky. There are many
statements that should be intuitively true but are not. Combines statistics
and functional analysis, so sometimes called functional statistics.

Asymptotics of interest:

\begin{itemize}
\item large number of samples

\item large number of time steps

\item large dimension from finer discretization
\end{itemize}

In following sections, we review some of the theoretical foundations and prove
an asymptotic result, convergence of the\ Ensemble Kalman Filter (EnKF)\ in
the limit for large number of samples (i.e., ensemble members).

\subsection{From least squares to Bayes theorem}

For symmetric positive definite matrix $A$, denote the vector norm $\left\vert
u\right\vert _{A}=\left(  u^{\mathrm{T}}Au\right)  ^{1/2}$. 

Data assimilation can be posed as solving approximately the inverse problem
\cite{Cotter-2009-BIP,Stuart-2010-IPB}%
\[
\mathcal{H}\left(  u\right)  =d
\]
where $u\in U$ is unknown system state, $\mathcal{H}$ is observation function,
and $y$ is the data. Solving by least squares gives%
\[
\left\vert \mathcal{H}\left(  u\right)  -d\right\vert _{R^{-1}}^{2}%
\rightarrow\min_{u\in U}.
\]
However $u$ can be not unique or be uncontrollably large, so we add a
Tikhonov-like regularization term to control the departure from some fixed
$u^{f}$, and get%
\[
\left\vert \mathcal{H}\left(  u\right)  -d\right\vert _{R^{-1}}^{2}+\left\vert
u-u^{f}\right\vert _{Q^{-1}}^{2}\rightarrow\min_{u\in U},
\]
which is equivalent to
\[
e^{-\frac{1}{2}\left\vert \mathcal{H}\left(  u\right)  -d\right\vert _{R^{-1}%
}^{2}}e^{-\frac{1}{2}\left\vert u-u^{f}\right\vert _{Q^{-1}}^{2}}%
\rightarrow\max_{u\in U},
\]
which has the interpretation of maximizing the \emph{analysis} probability
density from the Bayes theorem,%
\[
u^{a}=\arg\max_{u\in U}p^{a}\left(  u\right)  ,\quad p^{a}\left(  u\right)
\propto p\left(  d|u\right)  p^{f}\left(  u\right)  ,
\]
where $\propto$ means \textquotedblleft proportional to\textquotedblright,
\[
p^{f}\left(  u\right)  \propto e^{-\frac{1}{2}\left\vert u-u^{f}\right\vert
_{Q^{-1}}^{2}}%
\]
is the \emph{forecast} probability density, and
\begin{equation}
p(d|u)=e^{-\frac{1}{2}\left\vert \mathcal{H}\left(  u\right)  -d\right\vert
_{R^{-1}}^{2}} \label{eq:gaussian-data-likelihood}%
\end{equation}
the \emph{data likelihood}. The forecast is our best knowledge about the state
before the data, expressed as a probability density. Data likelihood describes
the data value as well as its error distribution. Here, That is,
$\mathcal{H}\left(  u\right)  $ is what the data would be if there were no
errors and the truth were $u$, and $R$ is the covariance of the normally
distributed data error. That is, given $u$, the data is assumed to be
distributed as $d\sim N\left(  \mathcal{H}\left(  u\right)  ,R\right)  $.

\subsection{Bayes theorem in infinite dimension}

All is good when the dimension of the state is finite. However, recall that by
definition, $p$ is a probability density of a measure $\mu$ on $\mathbb{R}%
^{n}$ defined by%
\begin{equation}
\mu\left(  A\right)  =\int_{A}p(u)d\nu\left(  u\right)  \label{eq:def-density}%
\end{equation}
for all Borel sets $A\subset U$, that is open, closed, or obtained by
countably many set operations from open or closed sets. Such sets are called
\emph{Borel sets}. The integration in (\ref{eq:def-density}) is with respect
to the Lebesgue measure $\nu$ (i.e. integration as we know it). But there is
no Lebesgue measure on an infinitely dimensional space (Theorem
\ref{thm:no-lebesgue}).

The key to applying the Bayes theorem in an infinitely dimensional space is to
realize that the primary object is the measure and not its density. So, write
the Bayes theorem for densities as%
\[
p^{a}(u)\propto p(d|u)p^{f}(u)
\]
and integrate over a Borel set $A\subset U$,
\[
\int_{A}p^{a}(u)du\propto\int_{A}p(d|u)p^{f}(u)du.
\]
Then, the analysis probability measure $\mu^{a}$ is given by%
\begin{equation}
\mu^{a}\left(  A\right)  =c\int_{A}p(d|u)d\mu^{f}(u)\quad\text{for all Borel
sets }A\subset U, \label{eq:bayes-infinitely-dimensional}%
\end{equation}
where the constant is determined from the condition that
\begin{equation}
\mu^{a}\left(  U\right)  =c\int_{U}p(d|u)d\mu^{f}(u)=1.
\label{eq:normalization}%
\end{equation}

The relation (\ref{eq:bayes-infinitely-dimensional}) between the measures
$\mu^{f}$ and $\mu^{a}$ is called the \emph{Radon-Nykodym derivative} and
denoted as%
\[
\frac{d\mu^{a}}{d\mu^{f}}\left(  u\right)  =cp(d|u).
\]
Unlike densities, the Radon-Nykodym derivative carries over the the general
case of infinitely dimensional space. But, how do we know that
\[
\int_{U}p(d|u)d\mu^{f}(u)>0
\]
in (\ref{eq:bayes-infinitely-dimensional})?

All is good when the data in (\ref{eq:gaussian-data-likelihood})\ is finite
dimensional, or, more generally, $C^{-1}$ is an operator defined everywhere
(but we will see that then the data likelihood cannot come from a probability
density, like in finite dimension). Then the data likelihood $p^{f}\left(
u\right)  \propto e^{-\frac{1}{2}\left\vert u-u^{f}\right\vert _{Q^{-1}}^{2}}$
is positive everywhere, and the integral of a nonegative function which is
positive on a set of positive measure is positive.

\subsection{Random field as a stochastic process}

The usual approach in geostatistics \cite{Cressie-1993-SSD} is the random
field as a stochastic process, that is a collection of random variables
$\left\{  U_{x}\right\}  $, one for every point $x$ in some domain. The mean
at $x$ and covariance between two points $x$ and $y$ are defined pointwise
\begin{align*}
\overline{U}_{x}  &  =E\left(  U_{x}\right) \\
C\left(  x,y\right)   &  =E\left(  \left(  U_{x}-\overline{U}_{x}\right)
\left(  U_{y}-\overline{U}_{y}\right)  \right)
\end{align*}

If the mean does not depend on $x$ and the covariance depends on the
difference $x-y$ only,%
\[
C_{U}\left(  x,y\right)  =f\left(  x-y\right)
\]
the random field $U$ is called stationary and $f$ is the covariance function
of $U$.

One common way of generating a random field is as a sum of a series of
functions with random coefficients,%
\begin{equation}
U_{x}=m\left(  x\right)  +\sum_{n=1}^{\infty}c_{n}^{1/2}\xi_{n}V_{n}\left(
x\right)  , \label{eq:random-series}%
\end{equation}
where $c_{n}\geq0$ are constants, $\xi_{n}$ are uncorrelated random variables
with $E\left(  \xi_{n}\right)  =0$, $E\left(  \xi_{n}^{2}\right)  =1$, and
$V_{n}$ are orthonormal functions on the spatial domain $S$, that is $%
%TCIMACRO{\dint \limits_{S}}%
%BeginExpansion
{\displaystyle\int\limits_{S}}
%EndExpansion
v_{m}v_{n}dx=0$ if $m\neq n$ and $1$ if $m=n$. Then%
\begin{align}
\overline{U}_{x}  &  =m\left(  x\right) \nonumber\\
C\left(  x,y\right)   &  =E\left[  \left(  \sum_{m=1}^{\infty}c_{m}^{1/2}%
\xi_{m}v_{m}\left(  x\right)  \right)  \left(  \sum_{n=1}^{\infty}c_{n}%
^{1/2}\xi_{n}v_{n}\left(  y\right)  \right)  \right] \nonumber\\
&  =E\left[  \sum_{m=1}^{\infty}c_{m}\xi_{m}^{2}v_{m}\left(  x\right)
v_{m}\left(  y\right)  \right] \nonumber\\
&  =\sum_{m=1}^{\infty}c_{m}v_{m}\left(  x\right)  v_{m}\left(  y\right)
\label{eq:stoch-proc-cov-dec}%
\end{align}
The covariance operator is now a mapping that assigns to a function $w$ the
function $\boldsymbol{C}w$, defined by
\[
\left(  \boldsymbol{C}w\right)  \left(  x\right)  =%
%TCIMACRO{\dint \limits_{S}}%
%BeginExpansion
{\displaystyle\int\limits_{S}}
%EndExpansion
C\left(  x,y\right)  w\left(  y\right)  dy.
\]
and $c_{n}$ are eigenvalues and $v_{n}$ the eigenvectors of the covariance
operator $\boldsymbol{C}$, that is,%
\[
\boldsymbol{C}v_{n}=c_{n}v_{n},
\]
because%

\begin{align*}
\left(  \boldsymbol{C}v_{n}\right)  \left(  x\right)   &  =%
%TCIMACRO{\dint \limits_{S}}%
%BeginExpansion
{\displaystyle\int\limits_{S}}
%EndExpansion
\sum_{m=1}^{\infty}c_{m}v_{m}\left(  x\right)  v_{m}\left(  y\right)
v_{n}\left(  y\right)  dy\\
&  =\sum_{m=1}^{\infty}c_{m}v_{m}\left(  x\right)
%TCIMACRO{\dint \limits_{S}}%
%BeginExpansion
{\displaystyle\int\limits_{S}}
%EndExpansion
v_{m}\left(  y\right)  v_{n}\left(  y\right)  dy=c_{n}v_{n}\left(  x\right)
\end{align*}
assuming that switching the integral and the infinite sum is justified. See,
e.g., \cite{Marcus-1981-RFS} studies of the convergence of random series. We
will see later that an \textbf{operator} $\boldsymbol{C}$ \textbf{is the
covariance of a probability measure if and only if it has finite trace},%
\begin{equation}
\operatorname*{Tr}\boldsymbol{C}=\sum_{n=1}^{\infty}c_{n}<\infty.
\label{eq:trace_condition}%
\end{equation}

When the trace condition (\ref{eq:trace_condition}) is not satisfied, random
series (\ref{eq:random-series}) not give a random element in the usual sense
or even converge. The extreme case is \textbf{white noise}, where all
$c_{n}=1$.

We find it more convenient to consider a realization of the random process
$U_{x}$ a random function (also called a path), and even better a random
element of some Hilbert space, which leads to considerable simplification. We
are then led naturally to the study of random elements and probability
measures on Hilbert spaces.

In computational applications, the model runs in a finite dimensional space,
such as values on a grid in the spatial domain $S$, and one can expect that
the statistical properties of the model then approach the infinite-dimensional
case as the grid gets refined. In some methods such as stochastic Galerkin
\cite{Babuska-2004-GFE}, rigorous analysis of such approximations is
available. Engineering literature, however, typically relies on heuristic
arguments and computational experiments, e.g. \cite{Xiu-2010-NMS}.

\subsection{Functions of the Laplace operator, random sine series, and the
heat kernel}

\label{sec:laplace}Can one use a function of the Laplace operator%
\[
-\Delta=-\frac{\partial^{2}}{\partial x^{2}}-\frac{\partial^{2}}{\partial
y^{2}}%
\]
as on the rectangle as a covariance operator? Consider the Laplace operator on
a rectangle%
\[
S=\left(  0,a\right)  \times\left(  0,b\right)  ,
\]
with boundary conditions $u=0$ on $\partial S$. The eigenvectors and
eigenvalues of $-\Delta$,%
\[
-\Delta u_{k\ell}=\lambda_{k\ell}u_{k\ell}%
\]
are%
\[
u_{k\ell}\left(  x,y\right)  =\sin\left(  \frac{x}{a}k\pi\right)  \sin\left(
\frac{y}{b}\ell\pi\right)
\]%
\[
\lambda_{k\ell}=\left(  \frac{k\pi}{a}\right)  ^{2}+\left(  \frac{\ell\pi}%
{b}\right)  ^{2}.
\]

Using the \textbf{negative powers of the Laplacian} $\left(  -\Delta\right)
^{^{-\alpha}}$ as the covariance gives the random sum%
\begin{equation}
U\left(  x,y\right)  =\sum_{k=1}^{\infty}\sum_{\ell=1}^{\infty}\lambda_{k\ell
}^{-\alpha/2}\xi_{k\ell}\sin\left(  \frac{x}{a}k\pi\right)  \sin\left(
\frac{y}{b}\ell\pi\right)  \label{eq:powers_laplacian}%
\end{equation}
where $\xi_{k\ell}\sim N\left(  0,1\right)  $ are independent. The numbers
$\left(  \lambda_{k\ell}\right)  ^{-\alpha}$ are the eigenvalues of the
operator $\left(  -\Delta\right)  ^{^{-\alpha}}$. Now consider when $U$ is
square integrable almost surely:%
\begin{align*}
E\left[  \int_{0}^{a}\int_{0}^{b}\left\vert U\left(  x,y\right)  \right\vert
^{2}dydx\right]   &  =\sum_{k=1}^{\infty}\sum_{\ell=1}^{\infty}\lambda_{k\ell
}^{-\alpha}E\left[  \left\vert \xi_{k\ell}\right\vert ^{2}\right]  \int%
_{0}^{a}\int_{0}^{b}\sin^{2}\left(  \frac{x}{a}k\pi\right)  \sin^{2}\left(
\frac{y}{b}\ell\pi\right)  dydx\\
&  =\operatorname*{const}\sum_{k=1}^{\infty}\sum_{\ell=1}^{\infty}%
\lambda_{k\ell}^{-\alpha}<\infty.
\end{align*}
Substituting and by summation over diagonals $k+\ell=n$, we see that up to a
constant multiplication factor,%
\begin{align*}
\sum_{k=1}^{\infty}\sum_{\ell=1}^{\infty}\lambda_{k\ell}^{-\alpha}  &
=\operatorname*{const}\sum_{k=1}^{\infty}\sum_{\ell=1}^{\infty}\left(
\frac{1}{k^{2}+\ell^{2}}\right)  ^{\alpha}\\
&  \approx\operatorname*{const}\sum_{k=1}^{\infty}\sum_{\ell=1}^{\infty
}\left(  \frac{1}{k+\ell}\right)  ^{2\alpha}\\
&  =\sum_{n=1}^{\infty}\sum_{\ell=1}^{n}\left(  \frac{1}{n}\right)  ^{2\alpha
}\\
&  =\sum_{n=1}^{\infty}n\left(  \frac{1}{n}\right)  ^{2\alpha}=\sum
_{n=1}^{\infty}n^{1-2\alpha}<\infty
\end{align*}
if and only if $1-2\alpha<-1$, that is $\alpha>1$. Hence, we have:

\begin{theorem}
$E\left[  \int_{0}^{a}\int_{0}^{b}\left\vert U\left(  x,y\right)  \right\vert
^{2}dydx\right]  <\infty$ a.s. if and only if $\alpha>1$. In particular, if
$\alpha>1$, then the series (\ref{eq:powers_laplacian}) converges to a sum
$U\in L^{2}\left(  S\right)  $ a.s.
\end{theorem}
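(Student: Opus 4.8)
The plan is to recast the double series as a series in an orthonormal basis of $L^2(S)$ and then reduce everything to a single nonnegative scalar series, whose finiteness is controlled by Tonelli's theorem together with the diagonal-summation estimate already displayed just above the statement.

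First I would normalize the eigenfunctions. Since $\int_0^a \sin^2(k\pi x/a)\,dx = a/2$ and sines with different indices are orthogonal on $(0,a)$ (likewise on $(0,b)$), the functions $v_{k\ell} = \frac{2}{\sqrt{ab}}\sin(k\pi x/a)\sin(\ell\pi y/b)$ form an orthonormal system in $L^2(S)$. Writing the truncations $U_N = \sum_{k,\ell \le N} \lambda_{k\ell}^{-\alpha/2}\xi_{k\ell}\sin(k\pi x/a)\sin(\ell\pi y/b)$, Parseval's identity gives the exact norm
\[
\|U_N\|_{L^2(S)}^2 = \frac{ab}{4}\sum_{k,\ell\le N}\lambda_{k\ell}^{-\alpha}\xi_{k\ell}^2 .
\]
Because the integrand is nonnegative, Tonelli's theorem justifies interchanging $E$ with $\int_S$, and using $E[\xi_{k\ell}^2]=1$ one gets $E\|U_N\|_{L^2(S)}^2 = \frac{ab}{4}\sum_{k,\ell\le N}\lambda_{k\ell}^{-\alpha}$. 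Letting $N\to\infty$ and invoking monotone convergence, the expected squared $L^2$ norm equals $\frac{ab}{4}\sum_{k,\ell}\lambda_{k\ell}^{-\alpha}$, which is precisely the deterministic double sum evaluated above; by the summation over diagonals it is finite if and only if $\alpha>1$. For $\alpha\le 1$ the same sum diverges, so the expectation is $+\infty$. This settles the equivalence.

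For the ``in particular'' statement the issue is to upgrade convergence in mean square to almost sure convergence in $L^2(S)$. Set $X = \frac{ab}{4}\sum_{k,\ell}\lambda_{k\ell}^{-\alpha}\xi_{k\ell}^2$, a sum of nonnegative terms. By the computation above, $E[X] = \frac{ab}{4}\sum_{k,\ell}\lambda_{k\ell}^{-\alpha} < \infty$ when $\alpha>1$, so $X<\infty$ almost surely. On the full-probability event $\{X<\infty\}$ the nonnegative scalar series converges, hence its tail blocks vanish; but by the Parseval identity the $L^2(S)$-norm of any finite block of the series for $U$ equals the corresponding block of this scalar series. Therefore the partial sums $U_N$ are Cauchy in $L^2(S)$ almost surely and converge to a limit $U\in L^2(S)$ with $\|U\|_{L^2(S)}^2 = X$. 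Since orthogonality makes each block norm independent of the order of summation, the convergence is unconditional and the choice of enumeration of the pairs $(k,\ell)$ is immaterial.

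The main obstacle is exactly this last step: mean-square convergence alone does not yield the pathwise, $L^2(S)$-valued almost sure convergence the theorem asserts. The clean way around it, as above, is to avoid any martingale or three-series machinery and instead reduce the vector-valued convergence to the almost sure finiteness of a single nonnegative scalar series via Parseval; a nonnegative random variable with finite expectation is finite almost surely, and that is all that is needed. I would also remark that the ``a.s.'' attached to the expectation in the statement is only a notational slip, since the expectation is a deterministic number, and that the substantive almost sure assertion is the convergence $U_N\to U$ in $L^2(S)$.
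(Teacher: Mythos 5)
Your proof is correct and follows essentially the same route as the paper: orthogonality of the sine products reduces the expected squared $L^{2}(S)$ norm to the scalar series $\operatorname{const}\sum_{k,\ell}\lambda_{k\ell}^{-\alpha}$, whose finiteness is settled by the same diagonal-summation estimate displayed before the theorem. The only difference is that you supply the rigor the paper leaves implicit --- Tonelli/monotone convergence on the truncations $U_{N}$ to justify the interchange, and the Parseval--Cauchy argument upgrading a.s.\ finiteness of the coefficient series to a.s.\ convergence of $U_{N}$ in $L^{2}(S)$ --- and your remark that the ``a.s.'' attached to the expectation in the statement is a notational slip (the expectation being a deterministic number) is also correct.
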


In a similar way, we show that higher powers $\alpha$ make the random
functions smooth. Consider the partial derivatives of $U$ of order $s=p+q$,%
\begin{align*}
E\int_{0}^{a}\int_{0}^{b}\left\vert \frac{\partial^{s}U}{\partial x^{p}y^{q}%
}\right\vert ^{2}dydx  &  =\operatorname*{const}\sum_{k=1}^{\infty}\sum
_{\ell=1}^{\infty}\frac{k^{2p}\ell^{2q}}{\lambda_{k\ell}^{\alpha}}\\
&  =\operatorname*{const}\sum_{k=1}^{\infty}\sum_{\ell=1}^{\infty}\frac
{k^{2p}\ell^{2q}}{\left(  k^{2}+\ell^{2}\right)  ^{\alpha}}\\
&  \approx\operatorname*{const}\sum_{k=1}^{\infty}\sum_{\ell=1}^{\infty}%
\frac{\left(  k+\ell\right)  ^{2s}}{\left(  k+\ell\right)  ^{2\alpha}}\\
&  =\operatorname*{const}\sum_{n=1}^{\infty}n^{2s-2\alpha+1}<\infty
\end{align*}
if and only if $2s-2\alpha+1<-1$, that is $\alpha>1+s$. Functions with all
partial derivatives up to order $s$ square integrable are said to be in the
Sobolev space $H^{s}\left(  S\right)  $, hence we have%
\[
U\in H^{s}\left(  S\right)  \text{ if }\alpha>1+s\text{, }s\geq0.
\]
Note that this result holds even if $s$ is not integer, and in fact one can
define Sobolev spaces with non-integer exponent precisely like this, by
differentiating Fourier series. By the \emph{Sobolev embedding theorem}, in
two dimensions, functions in $H^{s}$ are continuous if $s>1$. Consequently,
$U$ given by (\ref{eq:powers_laplacian}) is almost surely continuous if
$\alpha>2$, differentiable if $\alpha>3$, etc.

The \textbf{heat kernel}
\[
K_{T}:u_{0}\mapsto u_{T}%
\]
is defined by the solution $u_{T}=u\left(  T,x\right)  $ of the heat equation%
\begin{equation}
\frac{\partial u}{\partial t}=\Delta u\text{ in }S \label{eq:heat}%
\end{equation}
with initial condition $u\left(  0,x\right)  =u_{0}$ and $u\left(  t,x\right)
=0$ on $\partial S$. Just like in elementary differential equations, this
equation has the solution%
\[
u\left(  t\right)  =e^{-t\Delta}u\left(  0\right)  ,
\]
where the exponential of the operator is defined by subtituting in the power
series,%
\[
e^{-t\Delta}=I-t\Delta+\frac{1}{2!}t^{2}\Delta^{2}-\frac{1}{3!}t^{3}\Delta
^{3}+\cdots
\]
The eigenvectors of the function of an operator remain the same and the
eigenvalues transform by substituting them into the function according to the
spectral mapping theorem. For the example when $S=\left(  0,a\right)
\times\left(  0,b\right)  $ as above, we have eigenvalues of the heat kernel
$K_{T}$,
\[
e^{-T\lambda_{k\ell}}=e^{-T\left[  \left(  \frac{k\pi}{a}\right)  ^{2}+\left(
\frac{\ell\pi}{b}\right)  ^{2}\right]  }%
\]
which go to zero faster than any $\left(  \lambda_{k\ell}\right)  ^{-\alpha}$.
Consequently, random function with heat kernel covariance has continuous
partial derivatives of all orders a.s. The use of the Green's function of the
Laplace equation (that is, $\left(  -\Delta^{-1}\right)  $ for covariance was
suggested in \cite{Kitanidis-1999-GCF}, and several functions of the Laplace
operator including the heat kernel were proposed in
\cite{Mirouze-2010-RCF,Weaver-2003-CBC}.

Finite dimensional versions of the operations with the random field and its
covariance are easily and cheaply implemented by FFT. Let
\begin{equation}
h_{x}=\frac{a}{m+1},\text{ }h_{y}=\frac{b}{n+1}. \label{eq:def-step}%
\end{equation}
Discretizing the operator $-\Delta$ by finite differences on the the uniform
mesh with the nodes%
\[
\left(  ih_{x},jh_{y}\right)  ,\quad i=1,\ldots,m,\text{ }j=1,\ldots,n.
\]
we obtain the linear operator $L_{mn}:\mathbb{R}^{m\times n}\rightarrow
\mathbb{R}^{m\times n}$, given by%
\[
L_{mn}:u\mapsto v,\quad v_{ij}=\frac{-u_{i+1,j}+2u_{ij}-u_{i-1,j}}{h_{x}^{2}%
}+\frac{-u_{i,j-1}+2u_{ij}-u_{i,j-1}}{h_{y}^{2}},
\]
where the values off the grid are taken as $u_{0j}=u_{m+1,j}=u_{i,0}%
=u_{i,n+1}=0$. The eigenvectors of $L_{mn}$ are $u_{k\ell}$, given by%
\begin{equation}
\left[  u_{k\ell}\right]  _{ij}=\sin\left(  \frac{i}{m+1}k\pi\right)
\sin\left(  \frac{j}{n+1}\ell\pi\right)  ,\quad k=1,\ldots,m,\text{ }%
\ell=1,\ldots,n, \label{eq:sin-eigvec}%
\end{equation}
and using the trigonometric identity%
\[
-\sin(A-B)+2\sin A-\sin(A+B)=2\sin A(1-\cos B)=4\sin A\sin^{2}\frac{B}{2}%
\]
we have the corresponding eigenvalues $\lambda_{k\ell}$, given by
\begin{equation}
\lambda_{mn,k\ell}=4\left(  \frac{\sin\frac{\pi}{2\left(  m+1\right)  }%
k}{\frac{a}{m+1}}\right)  ^{2}+4\left(  \frac{\sin\frac{\pi}{2\left(
n+1\right)  }\ell}{\frac{b}{n+1}}\right)  ^{2},\quad k=1,\ldots,m,\text{ }%
\ell=1,\ldots,n. \label{eq:sin-eigval}%
\end{equation}

The eigenvalues of the discretized operator approach the exact eigenvalues%
\[
\lim_{m,n\rightarrow\infty}\lambda_{mn,k\ell}=\left(  \frac{k\pi}{a}\right)
^{2}+\left(  \frac{\ell\pi}{b}\right)  ^{2},
\]
for fixed eigenvalue number $k,\ell$. The decomposition of any vector in the
basis of eigenvectors is computed by the Fast Fourier Transform (FFT).

[Matlab demo comes here]

\subsection{Perils of probability in infinite dimension}

Definitions of measure and density need to be taken seriously now - intuitive
approach is no longer good enough. Sigma-additive
\textbf{translation-invariant or rotation invariant measure finite on balls
does not exist in infinite dimension.}

No Lebesgue measure, no density in the usual sense. What happens to Bayes
theorem? Density-free formulation by integrals (Radon Nikodym theorem). No
need for reference measure. \textbf{Data likelihood need not be density}. In
fact, we have seen that it better should not be.

\begin{itemize}
\item What exactly it means a random variable, random element? The answer will
depend on the topology.

\item Can we do everything without probability densities?

\item What happens to the Bayes theorem and data assimilation?

\item How do you even integrate functions with values in infinite dimensional
spaces? The mean of a random variable is integral.

\item What are the requirements on the coefficients of a series to guarantee
convergence a.s., convergence to a continuous function a.s.?

\item Elements of a Hilbert space need not be functions, so there need not be
any concept of values at points. Then, what exactly is covariance and how does
it relate to what we know for finite dimension, and for a stochastic process?

\item How do common statistics formulas and inequalities carry over to
infinite dimension?

\item Many standard arguments in statistics are usually done term-by-term. Now
what in infinite dimension?

\item How do the laws of large numbers carry over?

\item What is white noise (a random vector with identity as the covariance) in
an infinitely dimensional space?
\end{itemize}

\subsection{Kalman filter}

Data assimilation, also known as sequential statistical estimation, or cycle
between the application of the Bayes theorem, called analysis stem, and
densities gaussian, represented by mean and covariance matrix, advances
covariance by linear model. Needs covariance matrix to evaluate the Bayes theorem.

Extended Kalman filter and variants: approximation of the covariance when the
model is not linear.

Cannot maintain covariance matrix when the system dimension is large.

Ensemble Kalman filter EnKF:\ replaces the covariance by sample covariance
computed from an ensemble of simulations. Still assumes that the distributions
are gaussian, but formulas do not depend on it, so usually used anyway.

Particle filter:\ represents the densities by a weighted ensemble, ensemble
members now called particles. Application of the Bayes theorem updates the
weights. Needs many particles to cover the locations where the densities are large.

Curse of dimensionality: when the dimension of the system state grows,
exponentially more particles are needed \cite{Bengtsson-2008-CRC}. But not
always - why? Depends on state distribution. When the effective dimension is
low, convergence is good. We will show that this happens always when the state
probability distribution is actually a probability distribution on the state
space. This will depend on the behavior of the eigenvalues of the covariance -
they need to drop off fast enough.. The assumptions in
\cite{Bengtsson-2008-CRC} that the eigenvalues are bounded away from $0$ make
the state distribution similar to white noise. Contrary to the lore, this
applies to both the EnKF and the particle filter
{\small \cite{Beezley-2009-HDA}}.

\begin{center}
\textbf{Curse of dimensionality? Not for probability measures!}
\end{center}

{%
\begin{tabular}
[c]{cc}%
\includegraphics[height=2.5in]{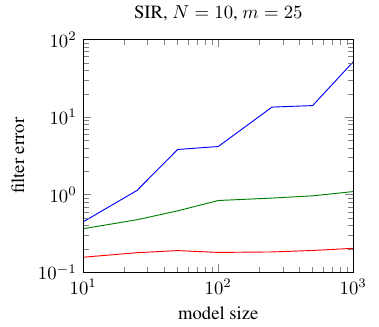} &
\includegraphics[height=2.5in]{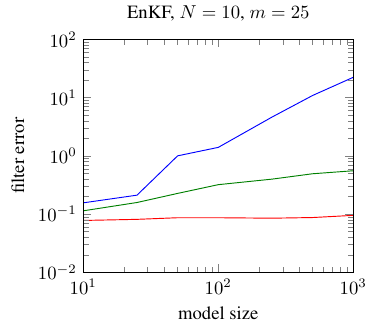}
\end{tabular}
}

{\small {\color{blue}\textbf{Constant covariance eigenvalues}} $\lambda_{n}=1$
and the {\color{green}inverse law} $\lambda_{n}=1/n$ are not probability
measures in the limit because $\sum_{n=1}^{\infty}\lambda_{n}=\infty$.
\newline{\color{red}\textbf{Inverse square law}} $\lambda_{n}=1/n^{2}$ gives a
probability measure because $\sum_{n=1}^{\infty}\lambda_{n}<\infty$. \newline
m=25 uniformly sampled data points from 1D state, N=10 ensemble members. From
\cite[Fig. 4.7]{Beezley-2009-HDA}.}

\section{Hilbert spaces}

We first review some material on Hilbert spaces to set the stage.

\subsection{A quick review of basic concepts}

The material in this section can be found in any textbook of introductory
functional analysis and many others, for example
\cite{Kreyszig-1989-IFA,Lax-2002-FA}.

A complete inner product space is called a \emph{Hilbert space}. A space is
separable if it has a countable dense subset. We will work with separable
spaces only. Let $H$ be a real, separable Hilbert space equipped with the
inner product $\left\langle \cdot,\cdot\right\rangle $ and the norm
$\left\vert \cdot\right\vert $. We use single bars because double bars
$\left\Vert \cdot\right\Vert $ will be reserved for stochastic norms. If
$x,y\in H$ and $\left\langle x,y\right\rangle =0$, then $x$ and $y$ are
orthogonal, which is denoted as $x\perp y$.

The symbol $\left\vert \cdot\right\vert $ will also denote the operator norm
on the space of continous (or, equivalently, bounded) linear operators
$\left[  H,K\right]  $, where $H$ and $K$ are Hilbert spaces, and $\left[
H\right]  $ stands for $\left[  H,H\right]  $. The space $\left[  H,K\right]
$ is equipped with the norm%
\[
\left\vert A\right\vert _{\left[  H,K\right]  }=\sup_{u\in H,u\neq0}%
\frac{\left\vert Au\right\vert _{K}}{\left\vert u\right\vert _{H}}%
\]
which makes it a Banach space (a complete normed linear space), but the norm
is not induced by an inner product, so $\left[  H,K\right]  $ is not a Hilbert
space. The Euclidean space $\mathbb{R}^{n}$ is a Hilbert space with the inner
product $\left\langle a,b\right\rangle =a^{\mathrm{T}}b$. We will use
subscripts to designate the space, such as in $\left\langle \cdot
,\cdot\right\rangle _{H}$, only if there is a danger of confusion.

The space of all linear functionals on $H$ is called the \emph{algebraic dual}
of $H$, and denoted by $H^{\#}$. The space of all continuous (or,
equivalently, bounded) linear functionals on $H$ is the \textit{dual space} of
$H$, and denoted by $H^{\prime}$. If $v\in H$, then the mapping%
\[
f:u\mapsto\left\langle u,v\right\rangle
\]
is clearly a bounded linear functional, and its norm is%
\[
\left\vert f\right\vert _{H^{\prime}}=\sup_{u\in H,u\neq0}\frac{\left\vert
\left\langle u,v\right\rangle \right\vert }{\left\vert u\right\vert
}=\left\vert v\right\vert .
\]
The \emph{Riesz representation theorem} states that every bounded linear
functional $f$ on $H$ is of this form, $f:u\mapsto\left\langle
u,v\right\rangle $ for some $v\in H$. This allows to consider the spaces $H$
and $H^{\prime}$ to be identified: for a linear functional $F$ on $H$, we use
the notation%
\begin{equation}
F\left(  u\right)  =\left\langle u,F\right\rangle , \label{eq:duality-pairing}%
\end{equation}
called \emph{duality pairing}. If $F$ is bounded, $F$ is considered to
coincide with an element of $H$ and $\left\langle \cdot,\cdot\right\rangle $
with the inner product. However, we will use the notation
(\ref{eq:duality-pairing}) for unbounded functionals also. Then $H=H^{\prime
}\subset H^{\#}$ and the duality pairing $\left\langle \cdot,\cdot
\right\rangle $ extends the inner product to $H\times H^{\#}$\emph{.}

One important consequence of the Riesz representation theorem is
representation of bounded bilinear forms by bounded operators, which we will
need to define covariance operators. For simplicity (and the application we
have in mind), consider the case of real space $H$ only.

\begin{lemma}
\label{lem:exist-op}If $a\left(  u,v\right)  $ is a bilinear form $H\times
H\rightarrow\mathbb{R}$ such that%
\[
\forall u,v\in H:\left\vert a\left(  u,v\right)  \right\vert \leq M\left\vert
u\right\vert \left\vert v\right\vert ,
\]
then there exists unique linear operator on $H$ such that%
\[
\forall u,v\in H:\left\langle u,Av\right\rangle =a\left(  u,v\right)  ,
\]
and $A\in\left[  H\right]  $ with $\left\vert A\right\vert \leq M$.
\end{lemma}

\begin{proof}
The map $u\mapsto a\left(  u,v\right)  $ for a fixed $v$ is a bounded linear
functional, so it can be written as $\left\langle u,Av\right\rangle $ for some
$Av.$
\end{proof}

Similarly, for any $A\in\left[  H,K\right]  $, where $H$ and $K$ are Hilbert
spaces, the Riesz representation theorem implies the existence of the adjoint
operators, which is defined uniquely by
\[
\left\langle u,Av\right\rangle _{K}=\left\langle A^{\ast}u,v\right\rangle
_{H},\quad\forall u\in K,v\in H.
\]
An operator $A\in\left[  H\right]  $ is called self-adjoint if $A^{\ast}=A$,
positive definite if $\left\langle Au,u\right\rangle >0$ for all $u\in H$,
$u\neq0$, and positive semidefinite if $\left\langle Au,u\right\rangle \geq0$
for all $u\in H$.

A\ \emph{complete orthonormal system} $\left\{  e_{n}\right\}  \subset H$ is a
sequence of vectors $e_{n}$ such that
\[
\left\langle e_{m},e_{n}\right\rangle =0\text{ if }m\neq n\text{,
}\left\langle e_{n},e_{n}\right\rangle =1,
\]
and the \emph{span of }$\left\{  e_{n}\right\}  $\emph{ is dense} in $H$. A
separable Hilbert space has a complete orthonormal system (in fact, infinitely
many). Given a complete orthonormal system $\left\{  e_{n}\right\}  $, any
vector $u\in H$ can be decomposed uniquely into an \emph{abstract Fourier
series}%
\begin{equation}
u=\sum_{n=1}^{\infty}c_{n}e_{n},\quad c_{n}=\left\langle u,e_{n}\right\rangle
, \label{eq:abstract-fourier}%
\end{equation}
and the \emph{Parseval equality} holds,%
\begin{equation}
\sum_{n=1}^{\infty}\left\vert c_{n}\right\vert ^{2}=\left\vert u\right\vert
^{2}. \label{eq:abstract-parseval}%
\end{equation}

In many physical systems, $\left\vert u\right\vert ^{2}$ corresponds to a
total energy of the system, and $\left\vert c_{n}\right\vert ^{2}$ is the
energy of mode $n$ (such as a vibration or turbulence mode).

The space $\left[  H\right]  $ of all bounded linear operators on an infinite
dimensional space is a Banach space and it is not separable: the diagonal
operator%
\[
\sum_{n=1}^{\infty}c_{n}e_{n}\mapsto\sum_{n=1}^{\infty}d_{n}c_{n}e_{n},\quad
d_{n}=0\text{ or }1
\]
is in $\left[  H\right]  $, but the distance between any such two operators in
$\left[  H\right]  $ is at least $1$, and there are uncountably many sequences
of zeros and ones. In fact, the space $\left[  H\right]  $ is so general that
in any Banach space can be embedded into the space $\left[  H\right]  $ for
some Hilbert space $H$. The space $\left[  H\right]  $ will not be very useful
because no special properties can hold for $\left[  H\right]  $ which would
not be true for an arbitrary Banach space. Later, we will look at some
subspaces of $\left[  H\right]  $ with some useful properties.

\subsection{Tensor product}

When $x$ and $y$ are vectors in $\mathbb{R}^{n}$, their product
$xy^{\mathrm{T}}$ is a rank-one matrix. Note that for $z\in\mathbb{R}^{n}$,%

\begin{equation}
\left(  xy^{\mathrm{T}}\right)  z=x\left(  y^{\mathrm{T}}z\right)
=x\left\langle z,y\right\rangle \label{eq:rank-one}%
\end{equation}
Therefore, the replacement for a rank-one matrix in Hilbert space is the
tensor product $x\otimes y$, which is a linear operator defined the same way
as (\ref{eq:rank-one}),%
\[
x\otimes y\in\left[  H\right]  ,\quad\left(  x\otimes y\right)
z=x\left\langle z,y\right\rangle \quad\forall z\in H
\]
or, equivalently, through the Riesz representation theorem,%
\begin{equation}
x\otimes y\in\left[  H\right]  ,\quad\left\langle u,\left(  x\otimes y\right)
v\right\rangle =\left\langle u,x\right\rangle \left\langle y,v\right\rangle
\quad\forall u,v\in H. \label{eq:def-tensor}%
\end{equation}
See, for example, \cite[page 25]{DaPrato-1992-SEI}. Then,%
\begin{equation}
\left\vert x\otimes y\right\vert =\left\vert x\right\vert \left\vert
y\right\vert \label{eq:tensor-norm}%
\end{equation}
This is clear when $y=0$. For $y\neq0$,
\[
\frac{\left\vert \left(  x\otimes y\right)  z\right\vert }{\left\vert
z\right\vert }=\frac{\left\vert x\left\langle y,z\right\rangle \right\vert
}{\left\vert z\right\vert }=\frac{\left\vert x\right\vert \left\vert
\left\langle y,z\right\rangle \right\vert }{\left\vert z\right\vert }\leq
\frac{\left\vert x\right\vert \left\vert y\right\vert \left\vert z\right\vert
}{\left\vert z\right\vert }=\left\vert x\right\vert \left\vert y\right\vert
\]
with equality when $z=y$. Continuity of the tensor product follows,
\begin{align}
\left\vert x\otimes y-w\otimes z\right\vert  &  =\left\vert \left(
x-w\right)  \otimes y+w\otimes\left(  y-z\right)  \right\vert \nonumber\\
&  \leq\left\vert z-w\right\vert \left\vert y\right\vert +\left\vert
y-z\right\vert \left\vert w\right\vert . \label{eq:tensor-continuous}%
\end{align}

If $\left\vert x\right\vert =1$, $x\otimes x$ is the orthogonal projection on
the span of $x$:%
\[
\left(  x\otimes x\right)  z=x\left\langle x,z\right\rangle =0\Leftrightarrow
z\perp x,\quad\left(  x\otimes x\right)  x=x\left\langle x,x\right\rangle =x.
\]

\subsection{Compact linear operators}

A linenar operator $A:H\rightarrow K$ is compact if for every bounded sequence
$\left\{  u_{n}\right\}  \subset H$, $\left\{  Au_{n}\right\}  $ has a
convergent subsequence in $K$. A compact operator is bounded. Linear operator
$A\in\left[  H,K\right]  $ is compact if and only if there is a sequence of
operators $A_{n}$ with finite-dimensional range which converge to $A$ in the
operator norm, $\lim_{n\rightarrow0}\left\vert A-A_{n}\right\vert =0$.

The spectrum $\sigma\left(  A\right)  $ of a linear operator $A$ is the set of
all $\lambda\in\mathbb{C}$ such that $\left(  \lambda I-A\right)  ^{-1}$ does
not exist as an operator in $\left[  H\right]  $. This concept of spectrum
generalizes eigenvalues in finite dimension. The spectrum is a closed subset
of $\mathbb{C}$. The spectrum of a linear operator can be quite general,
however the spectrum of a compact operator in $\left[  H\right]  $ consists of
isolated points, which are eigenvalues of finite multiplicity, and, unless $H$
is finite dimensional, also zero, which can be the only accumulation point of
the spectrum. The \emph{spectral theorem for selfadjoint compact operators}
guarantees that if $A=A^{\ast}$ is compact, then there is a complete
orthonormal system $\left\{  u_{n}\right\}  $ consisting of eigenvectors of
$A$,%
\[
Au_{n}=\lambda_{n}u_{n},\quad\left\langle u_{m},u_{n}\right\rangle =0\text{ if
}m\neq n\text{, }\left\vert u_{n}\right\vert =1,
\]
the eigenvalues $\lambda_{n}$ of $A$ are real, and $A$ is the infinite sum%
\begin{equation}
A=\sum_{n=1}^{\infty}\lambda_{n}u_{n}\otimes u_{n},
\label{eq:spectral-decomposition}%
\end{equation}
which converges in the operator norm. The tensor products $u_{n}\otimes u_{n}$
are called spectral projections.

The \emph{spectral decomposition} (\ref{eq:spectral-decomposition}) allows an
easy definition of functions of a compact operator: if a function $f$ is
continuous on the spectrum $\sigma\left(  A\right)  \subset\mathbb{R}$ of a
compact self-adjoint operator $A$, then%
\begin{equation}
f\left(  A\right)  =\sum_{n=1}^{\infty}f\left(  \lambda_{n}\right)
u_{n}\otimes u_{n}, \label{eq:function-of-operator}%
\end{equation}
and the action of $f\left(  A\right)  $ on a vector
\[
v=\sum_{n=1}^{\infty}c_{n}u_{n}%
\]
is obtained by multiplying the Fourier coefficients $c_{n}$ by $f\left(
\lambda_{n}\right)  $,%
\begin{equation}
f\left(  A\right)  v=\sum_{n=1}^{\infty}f\left(  \lambda_{n}\right)
u_{n}\left\langle v,u_{n}\right\rangle =\sum_{n=1}^{\infty}f\left(
\lambda_{n}\right)  c_{n}u_{n}. \label{eq:function-action}%
\end{equation}
The definition of $f\left(  A\right)  $ is consistent with the definition of
positive integer powers of $A$. More generally, when $f$ can be expanded into
a power series convergent on an open circle in $\mathbb{C}$ containing
$\sigma\left(  A\right)  $, $f\left(  A\right)  $ is obtained also by simply
substituting the operator $A$ in the power series. According to the
\emph{spectral mapping theorem}, the spectrum of $f\left(  A\right)  $ is
\[
\sigma\left(  f\left(  A\right)  \right)  =f\left(  \sigma\left(  A\right)
\right)  =\left\{  f\left(  \lambda\right)  |\lambda\in\sigma\left(  A\right)
\right\}  .
\]
This is again consistent with the behavior of eigenvalues of powers of matrices.

The definition (\ref{eq:function-of-operator}) also applies to negative powers
$A^{-\alpha}$, $\alpha>0$, of self-adjoint positive definite compact
operators, but the result is an \emph{unbounded operator defined only on the
dense subspace }$A^{\alpha}\left(  H\right)  $ \emph{of} $H$. Note that $0$
cannot be an eigenvalue of $A$, because $A$ is assumed to be positive
definite, so all $\lambda_{n}>0$. Then we define $A^{-\alpha}v$ only for
\[
v\in A^{\alpha}\left(  H\right)  =\left\{  A^{\alpha}w|w\in H\right\}
=\left\{  \left.  \sum_{n=1}^{\infty}c_{n}u_{n}\right\vert c_{n}=\lambda
_{n}^{\alpha}d_{n},\sum_{n=1}^{\infty}\left\vert d_{n}\right\vert ^{2}%
<\infty\right\}
\]
by%
\[
A^{-\alpha}v=\sum_{n=1}^{\infty}\lambda_{n}^{-\alpha}c_{n}u_{n},\quad
v=\sum_{n=1}^{\infty}c_{n}u_{n}.
\]

It is easy to see that $A^{-\alpha}$ is not a bounded operator because
\[
\frac{\left\vert A^{-\alpha}u_{n}\right\vert }{\left\vert u_{n}\right\vert
}=\lambda_{n}^{-\alpha}\rightarrow\infty\text{ as }n\rightarrow\infty
\]
because $0$ is the only accumulation point of the eigenvalues $\lambda_{n}>0$.
The space $A^{\alpha}\left(  H\right)  $ for $\alpha>0$ is a dense subspace of
$H$, because it contains all linear combinations (which are defined as having
finitely many terms) of the complete orthonormal sets of eigenvectors
$\left\{  u_{n}\right\}  $ of $A$.

If $A$ is compact, then $A^{\ast}A$ is compact self-adjoint positive
semidefinite operator. The square roots of the eigenvalues of $A^{\ast}A$ are
called \emph{singular values} of $A$, and they are denoted by $\sigma_{n}$.
With the corresponding eigenvectors $v_{n}$ of $A^{\ast}A$, we have
\[
\left(  A^{\ast}A\right)  ^{1/2}=\sum_{n=1}^{\infty}\sigma_{n}v_{n}\otimes
v_{n},\quad A^{\ast}Av_{n}=\sigma_{n}^{2}v_{n}.
\]
It is an easy exercise to verify that, just like in finite dimension,%
\[
\left\vert A\right\vert =\max\sigma_{n},
\]
and, in addition, if $A$ is self-adjoint and positive semidefinite, then the
singular values and eigenvalues are the same, $\sigma_{n}=\lambda_{n}$, and
$\left(  A^{\ast}A\right)  ^{1/2}=\left(  A^{2}\right)  ^{1/2}=A$.

\subsection{Trace class operators}

An operator $A\in\left[  H\right]  $ is trace class operator if for some
orthonormal sequence $\left\{  e_{n}\right\}  $ in $H,$%
\[
\sum_{n=1}^{\infty}\left\langle \left(  A^{\ast}A\right)  ^{1/2}e_{n}%
,e_{n}\right\rangle <\infty.
\]
The space of all trace class operators is denoted by $\operatorname*{Tr}%
\left(  H\right)  $ and it is a Banach space equipped with the norm%
\[
\left\vert A\right\vert _{\operatorname*{Tr}\left(  H\right)  }=\sum
_{n=1}^{\infty}\left\langle \left(  A^{\ast}A\right)  ^{1/2}e_{n}%
,e_{n}\right\rangle =\operatorname*{Tr}\left(  A^{\ast}A\right)  ^{1/2},
\]
where the trace of a selfadjoint operator $A\in\operatorname*{Tr}\left(
H\right)  $, $A=A^{\ast}$ is defined by%
\[
\operatorname*{Tr}A=\sum_{n=1}^{\infty}\left\langle Ae_{n},e_{n}\right\rangle
\]
The value of $\operatorname*{Tr}A$ and thus $\left\vert A\right\vert
_{\operatorname*{Tr}\left(  H\right)  }$ do not depend on the choice of the
orthonormal sequence $\left\{  e_{n}\right\}  $: if $\left\{  f_{m}\right\}  $
is another orthonormal sequence, we expand%
\[
Ae_{n}=\sum_{m=1}^{\infty}\left\langle Ae_{n},f_{m}\right\rangle f_{m}%
\]
and we get%

\begin{align*}
\sum_{n=1}^{\infty}\left\langle Ae_{n},e_{n}\right\rangle  &  =\sum
_{n=1}^{\infty}\left\langle \sum_{m=1}^{\infty}\left\langle Ae_{n}%
,f_{m}\right\rangle f_{m},e_{n}\right\rangle \\
&  =\sum_{n=1}^{\infty}\sum_{m=1}^{\infty}\left\langle Ae_{n},f_{m}%
\right\rangle \left\langle f_{m},e_{n}\right\rangle \\
&  =\sum_{m=1}^{\infty}\sum_{n=1}^{\infty}\left\langle Af_{m},e_{n}%
\right\rangle \left\langle e_{n},f_{m}\right\rangle \\
&  =\sum_{m=1}^{\infty}\left\langle \sum_{n=1}^{\infty}\left\langle
Af_{m},e_{n}\right\rangle e_{n},f_{m}\right\rangle \\
&  =\sum_{m=1}^{\infty}\left\langle Af_{m},f_{m}\right\rangle .
\end{align*}

Trace class operators are compact, therefore all points of their spectrum are
eigenvalues of finite multiplicity except possibly for the point zero, and
zero is the only possible cluster point of the spectrum. Trace class operators
will play an important role as covariance operators of random elements in a
Hilbert space.

\subsection{Hilbert-Schmidt operators}

An operator $A\in\left[  H\right]  $ is Hilbert-Schmidt operator if for an
orthonormal sequence $\left\{  e_{n}\right\}  $ in $H,$%
\[
\sum_{n=1}^{\infty}\left\langle Ae_{n},Ae_{n}\right\rangle =\sum_{n=1}%
^{\infty}\left\langle A^{\ast}Ae_{n},e_{n}\right\rangle =\sum_{n=1}^{\infty
}\left\vert Ae_{n}\right\vert ^{2}<\infty.
\]
The space of all Hilbert-Schmidt operators is denoted by $HS\left(  H\right)
$ and it is a separable Hilbert space equipped with the inner product%
\begin{equation}
\left\langle A,B\right\rangle _{HS\left(  H\right)  }=\sum_{n=1}^{\infty
}\left\langle Ae_{n},Be_{n}\right\rangle ,
\label{eq:Hilbert-Schmidt-inner-product}%
\end{equation}
and the norm%
\[
\left\vert A\right\vert _{HS\left(  H\right)  }=\left\langle A,A\right\rangle
_{HS\left(  H\right)  }^{1/2}=\left(  \sum_{n=1}^{\infty}\left\vert
Ae_{n}\right\vert ^{2}\right)  ^{1/2},
\]
which again do not depend on the choice of $\left\{  e_{n}\right\}  $, just
like the trace. Hilbert-Schmidt operators are also compact.

With $\sigma_{n}$ the singular values of $A$ and $e_{n}$ the singular vectors,
we see immediately that%
\[
\max\sigma_{n}\leq\left(  \sum_{n=1}^{\infty}\sigma_{n}^{2}\right)  ^{1/2}%
\leq\sum_{n=1}^{\infty}\sigma_{n},
\]
thus%
\begin{equation}
\left\vert A\right\vert \leq\left\vert A\right\vert _{HS\left(  H\right)
}\leq\left\vert A\right\vert _{\operatorname*{Tr}\left(  H\right)  }.
\label{eq:compare-hilbert-schmidt-norm}%
\end{equation}
In particular, every trace-class operator is also a Hilbert-Schmidt operator,
which is a bounded operator,%
\[
\left[  H\right]  \supset HS\left(  H\right)  \supset\operatorname*{Tr}\left(
H\right)  .
\]

The Hilbert-Schmidt norm of a tensor product $x\otimes x$ can be computed easily.

\begin{lemma}
For any $x\in H$,%
\begin{equation}
\left\vert x\otimes x\right\vert _{HS\left(  H\right)  }=\left\vert
x\right\vert _{H}^{2}. \label{eq:hilbert-schmidt-tensor}%
\end{equation}

\end{lemma}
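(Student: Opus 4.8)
The plan is to compute the Hilbert-Schmidt norm directly from its definition, exploiting the fact that its value does not depend on the choice of orthonormal sequence, so that we may use a \emph{complete} orthonormal system and invoke Parseval's equality.

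First I would dispose of the trivial case $x=0$, where both sides of (\ref{eq:hilbert-schmidt-tensor}) vanish. For $x\neq 0$, pick any complete orthonormal system $\left\{e_n\right\}$ of the separable space $H$; this is legitimate precisely because the defining sum for $\left\vert \cdot\right\vert_{HS\left(H\right)}$ is independent of the orthonormal sequence chosen, so a complete one gives the same value.

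Next I would apply the defining action of the tensor product, $\left(x\otimes x\right)e_n=x\left\langle e_n,x\right\rangle$ from (\ref{eq:def-tensor}), to obtain $\left\vert \left(x\otimes x\right)e_n\right\vert^2=\left\vert x\right\vert^2\left\vert \left\langle e_n,x\right\rangle\right\vert^2$. Summing over $n$ and factoring out the constant $\left\vert x\right\vert^2$ reduces the computation to $\sum_n \left\vert \left\langle e_n,x\right\rangle\right\vert^2$, which is exactly $\left\vert x\right\vert^2$ by the Parseval equality (\ref{eq:abstract-parseval}). This yields $\left\vert x\otimes x\right\vert_{HS\left(H\right)}^2=\left\vert x\right\vert^4$, and taking the square root gives the claim.

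There is no serious obstacle here: the only point requiring care is that Parseval's equality needs a complete orthonormal system, not merely an orthonormal sequence, and this is exactly what the basis independence of the Hilbert-Schmidt norm permits us to assume.
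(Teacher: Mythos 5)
Your proposal is correct and follows essentially the same route as the paper: expand $\left\vert x\otimes x\right\vert _{HS\left(  H\right)  }^{2}$ over an orthonormal system using $\left(  x\otimes x\right)  e_{n}=x\left\langle e_{n},x\right\rangle$, factor out $\left\vert x\right\vert ^{2}$, and finish with Parseval's equality. If anything, you are slightly more careful than the paper, which invokes Parseval while only stipulating an ``orthonormal sequence''; your remark that one may (and must) take a \emph{complete} system, justified by the basis independence of the Hilbert--Schmidt norm, tidies up that point.
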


\begin{proof}
Let $\left\{  e_{n}\right\}  $ be an orthonormal sequence in $H$. Then,%
\begin{align*}
\left\vert x\otimes x\right\vert _{HS\left(  H\right)  }^{2}  &  =\sum
_{n=1}^{\infty}\left\vert \left(  x\otimes x\right)  e_{k}\right\vert _{H}%
^{2}\\
&  =\sum_{n=1}^{\infty}\left\vert x\left\langle x,e_{k}\right\rangle
_{H}\right\vert _{H}^{2}\\
&  =\left\vert x\right\vert _{H}^{2}\sum_{n=1}^{\infty}\left\vert \left\langle
x,e_{k}\right\rangle \right\vert ^{2}\\
&  =\left\vert x\right\vert _{H}^{2}\left\vert x\right\vert _{H}^{2}%
\end{align*}
by Parseval's equality.
\end{proof}

In the finite-dimensional case with $H=\mathbb{R}^{m}$, $\left\langle
u,v\right\rangle =\sum_{n=1}^{m}u_{n}v_{n}$, the Hilbert-Schmidt operator norm
is the Frobenius matrix norm: With $e_{n}$ the canonical basis vectors, we
have%
\[
\left\vert A\right\vert _{HS\left(  H\right)  }^{2}=\sum_{n=1}^{m}\left\vert
Ae_{n}\right\vert ^{2}=\sum_{n=1}^{m}\left\vert A_{:,n}\right\vert ^{2}%
=\sum_{n=1}^{m}\sum_{k=1}^{m}a_{kn}^{2},
\]
where $A_{:,n}$ denotes the $n-th$ column of $A$.

The utility of the space Hilbert-Schmidt operators comes from the facts that
it is a Hilbert space, and that sample covariances are Hilbert-Schmidt operators.

See, for example \cite{DaPrato-1992-SEI,Kuo-1975-GMB,Lax-2002-FA}, and a short
introduction in \cite{Cupidon-2007-DMA} for more details on trace-class and
Hilbert-Schmidt operators.

\section{Probability on Hilbert spaces}

Because we need to question all established concepts to see what survives in
infinite dimension, we need to revisit the basics of probability theory. The
intuitive probability is not good enough here; to prove anything and navigate
safely, we need measure-theoretic probability. To start with, the concept of
probablity density (with respect to Lebesgue measure), which is ubiquitous in
finite dimensional probability, is no longer available, because there is
Lebesgue measure in infinite dimension.

\begin{theorem}
\label{thm:no-lebesgue}Let $U$ be infinitely dimensional normed linear space.
There is no measure on Borel sets of $U$ that is invariant to shifts or
invariant to rotations, and positive but finite on all balls.
\end{theorem}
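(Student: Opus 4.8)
The plan is to exploit the one feature that truly distinguishes infinite dimension, namely the failure of local compactness: a \emph{bounded} region can contain infinitely many pairwise disjoint balls of a fixed positive radius. Combined with the assumed invariance, this forces a single bounded ball to have infinite measure, contradicting finiteness on balls. Throughout, write $B(x,r)$ for the ball of center $x$ and radius $r$, and let $\mu$ be the hypothetical invariant measure that is positive and finite on all balls.

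First I would construct a bounded, uniformly separated sequence. In a Hilbert space one simply takes an orthonormal sequence $\left\{e_{n}\right\}$, for which $\left\vert e_{n}-e_{m}\right\vert =\sqrt{2}$ whenever $n\neq m$. In a general infinitely dimensional normed space the same role is played by Riesz's lemma: starting from any unit vector and repeatedly applying the lemma to the span of the (finitely many, hence closed and proper) vectors chosen so far, one obtains a sequence of unit vectors $\left\{x_{n}\right\}$ with $\left\vert x_{n}-x_{m}\right\vert \geq 1/2$ for $n\neq m$. Fix the separation constant $\delta$ (either $\sqrt{2}$ or $1/2$) and set $r=\delta/3$. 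The balls $B(x_{n},r)$ are then pairwise disjoint, since a common point $y$ would give $\left\vert x_{n}-x_{m}\right\vert \leq \left\vert x_{n}-y\right\vert +\left\vert y-x_{m}\right\vert <2r<\delta$; and, because $\left\vert x_{n}\right\vert =1$, they all lie inside the single ball $B(0,1+r)$.

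Next I invoke invariance to conclude that these balls all carry the same measure. Under shift invariance, $B(x_{n},r)$ is the translate of $B(0,r)$, so $\mu(B(x_{n},r))=\mu(B(0,r))=:c$ for every $n$. Under rotation invariance, working in the Hilbert setting, for any $n,m$ there is an orthogonal map sending $e_{n}$ to $e_{m}$; being an isometry that fixes the origin, it carries $B(e_{n},r)$ onto $B(e_{m},r)$, so again $\mu(B(e_{n},r))$ takes a common value $c$. Since $\mu$ is assumed positive on balls, $c>0$. Countable additivity now gives $\mu(B(0,1+r))\geq\sum_{n=1}^{\infty}\mu(B(x_{n},r))=\sum_{n=1}^{\infty}c=+\infty$, contradicting finiteness of $\mu$ on the ball $B(0,1+r)$.

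The only nontrivial input is the production of the separated bounded sequence, and this is exactly where infinite-dimensionality enters: in the normed-space generality it is Riesz's lemma rather than orthonormality that does the work. The genuinely delicate point, and the main obstacle, is the rotation-invariant statement, where one needs isometries moving one separated point to another. This is automatic in a Hilbert space via orthogonal transformations, but in a purely normed space it requires the isometry group to be rich enough, so I would carry out the rotation case within the Hilbert framework used throughout these notes, where the disjoint-ball argument applies verbatim.
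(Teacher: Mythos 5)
Your proof is correct, and its core is the same disjoint-balls argument as the paper's: a bounded family of pairwise disjoint balls of fixed radius centered at separated unit vectors, invariance forcing them all to have the same positive measure $c$, and $\sigma$-additivity then forcing a single bounded ball to have measure $\sum_{n=1}^{\infty}c=+\infty$, contradicting finiteness on balls. The difference is in scope. The paper explicitly proves the statement only \emph{on an example}: it works in $\ell^{2}$, takes the orthonormal coordinate vectors $e_{n}$ (so the separation is automatic), and stops there, leaving the general normed-space claim of the theorem unproved. You close exactly that gap for the shift-invariant case by invoking Riesz's lemma to manufacture a bounded sequence of unit vectors with pairwise distances $\geq1/2$ in an arbitrary infinite-dimensional normed space; this is the right tool, and your iteration (finite-dimensional spans are closed and proper, so the lemma applies at every step) is sound. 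You also identify, correctly, the one place where full generality genuinely breaks down: rotation invariance requires an isometry group rich enough to move $x_{n}$ to $x_{m}$, which is automatic for orthogonal maps on a Hilbert space but not available in a bare normed space, so restricting that half of the claim to the Hilbert setting is an honest and appropriate limitation --- indeed it matches what the paper's own example actually establishes. In short: same mechanism, but your version proves more of the stated theorem than the paper's proof does.
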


\begin{proof}
We show the proof on an example. Consider the space $\ell^{2}$ of real
sequences $v=\left\{  v_{n}\right\}  $ with the norm $\left\vert v\right\vert
=\left(  v_{1}^{2}+v_{2}^{2}+\cdots\right)  ^{1/2}$, and suppose that $\mu$ is
such measure on $\ell^{2}$. The unit vectors
\begin{align*}
e_{1}  &  =\left(  1,0,0,\ldots\right)  ,\\
e_{2}  &  =\left(  0,1,0,\ldots\right)  ,
\end{align*}
satisfy $\left\vert e_{n}\right\vert =1$ and $\left\vert e_{m}-e_{n}%
\right\vert >1$, so any two open balls $B_{1/2}\left(  e_{n}\right)  =\left\{
v\in V:\left\vert v-e_{n}\right\vert <1/2\right\}  $ do not instersect, and
their union is contained in a bigger ball $B_{2}\left(  0\right)  =\left\{
v\in V:\left\vert v\right\vert <2\right\}  $ centered at zero,%
\[
B_{1/2}\left(  e_{1}\right)  \cup B_{1/2}\left(  e_{2}\right)  \cup
\cdots\subset B_{2}\left(  0\right)  .
\]
Since the measure $\mu$ is rotation (or translation) invariant, $\mu\left(
B_{1/2}\left(  e_{n}\right)  \right)  =\mu\left(  B_{1/2}\left(  e_{1}\right)
\right)  $ for all $n$. Thus,%
\[
\mu\left(  B_{1/2}\left(  e_{1}\right)  \right)  +\mu\left(  B_{1/2}\left(
e_{1}\right)  \right)  +\cdots\leq\mu\left(  B_{2}\left(  0\right)  \right)
,
\]
which is a contradiction since $\mu\left(  B_{1/2}\left(  e_{1}\right)
\right)  >0$ and $\mu\left(  B_{2}\left(  0\right)  \right)  <\infty$.
\end{proof}

\begin{remark}
Note we said \textquotedblleft normed linear"\ space. That's so that we know
what open sets and therefore Borel sets are. We also used the norm to define
the balls that served in the contradition. How can you get around the
contradiction if you need to? Change the definition of open sets so that open
balls are not measurable. That is, use a different topology. See Chapter
\ref{ch:white noise}.
\end{remark}

\subsection{Random elements}

Let $\left(  \Omega,\mathcal{F},\mu\right)  $ be a probability space, that is,
a measure space such that $\mu\left(  \Omega\right)  =1$. Here, $\mathcal{F}$
is the system of subsets $S$ of $\Omega$ such that the measure $\mu\left(
S\right)  $ is defined, and it is a $\sigma$-algebra,%
\[
\emptyset\in\mathcal{F,\quad}A\in\mathcal{F\Longrightarrow}\Omega
\mathcal{\setminus A\in F,\quad}A_{1},A_{2},\ldots\in\mathcal{F\Longrightarrow
}%
%TCIMACRO{\dbigcup \limits_{n=1}^{\infty}}%
%BeginExpansion
{\displaystyle\bigcup\limits_{n=1}^{\infty}}
%EndExpansion
A_{n}\in\mathcal{F}%
\]
and $\Pr$ is a measure: $\mu\left(  A\right)  \geq0$ defined for all
$A\in\mathcal{F}$, $\mu\left(  \emptyset\right)  =0$, and $\mu$ is $\sigma
$-additive:%
\[
A_{1},A_{2},\ldots\in\mathcal{F}\text{ disjoint }\Longrightarrow\mu\left(
%TCIMACRO{\dbigcup \limits_{n=1}^{\infty}}%
%BeginExpansion
{\displaystyle\bigcup\limits_{n=1}^{\infty}}
%EndExpansion
A_{n}\right)  =%
%TCIMACRO{\dsum \limits_{n=1}^{\infty}}%
%BeginExpansion
{\displaystyle\sum\limits_{n=1}^{\infty}}
%EndExpansion
\mu\left(  A_{n}\right)  .
\]
We emphasize that $\sigma$\emph{-additivity is required for the usual
probability theory to go through}.

\emph{Random variable} is a real-valued measurable function $X$ on $\left(
\Omega,\mathcal{F}\right)  $, that is, the inverse image $X^{-1}\left(
A\right)  =\left\{  \omega\in\Omega|X\left(  \omega\right)  \in A\right\}
\in\mathcal{F}$ for any Borel set $A\subset\mathbb{R}$.\footnote{Unlike in
Real Analysis, non-measurable functions in probability are not uncommon. When
$f$ is not measurable, it simply means that it can distinguish subsets of
$\Omega$ which are not visible by the $\sigma$-algebra $\mathcal{F}$. For
example, if you throw a dice and cannot distinguish between $2$ and $3$ (say,
those sides are smeared), any function that depends on this distinction is not
measurable.}\footnote{Borel sets are the smallest $\sigma$-algebra containing
all open sets.}. The notation%
\[
\Pr\left(  X\in A\right)  =\mu\left(  X^{-1}\left(  A\right)  \right)
=\mu\left(  \left\{  \omega\in\Omega|X\left(  \omega\right)  \in A\right\}
\right)
\]
for the probability that the value of $X$ is in $A$ is used. The mean value
(if it exists) of a random variable $X$ is the abstract (Lebesgue) integral
with respect to the measure $\mu$,
\[
E\left(  X\right)  =\int_{\Omega}X\left(  \omega\right)  \mu\left(
d\omega\right)  .
\]
To simplify notation, the measure $\mu$ in the integrals is assumed and we
write simply
\[
E\left(  X\right)  =\int_{\Omega}X\left(  \omega\right)  d\omega.
\]

Now let $H$ be a separable Hilbert space. The $\sigma$-algebra of all Borel
sets in $H$ is denoted by $\mathcal{B}\left(  H\right)  $. A random element
$X$ with values in $H$ is a measurable function $X:\Omega\rightarrow H$, that
is, such that\footnote{The usual definition of measurable function with values
in a metric space.}%
\begin{equation}
\forall B\in\mathcal{B}\left(  H\right)  :X^{-1}\left(  B\right)
\in\mathcal{F} \label{eq:measurable}%
\end{equation}
Given an $H$-valued random element $X$,%
\[
\mu_{X}\left(  B\right)  =\mu\left(  X^{-1}\left(  B\right)  \right)
=\Pr\left(  X\in B\right)
\]
is a probability measure on $\mathcal{B}\left(  H\right)  $ (commonly called
just a probability measure on $H$), called the \emph{distribution} of $X$.

\subsection{$L^{p}$ spaces}

\label{sec:Lp}Since a continuous function of a measurable function is
measurable, it follows that $\left\vert X\right\vert $, and $\left\langle
u,X\right\rangle $, for any $u\in H$, are random variables\footnote{The latter
property is called weak measurability, while measurable in the usual sense
(\ref{eq:measurable}) is called strongly measurable. When $H$ is separable,
weak measurability is equivalent to strong measurability by Pettis
measurability theorem \cite{Pettis-1938-IVS}.}. For $1\leq p<\infty$, define
$L^{p}=L^{p}\left(  \Omega,H\right)  =L^{p}\left(  \Omega,\mathcal{F}%
,\mu,H\right)  $ as the space of all $H$-valued random elements $X$ such that
the \emph{moment}%
\[
E\left(  \left\vert X\right\vert ^{p}\right)  =\int_{\Omega}\left\vert
X\left(  \omega\right)  \right\vert ^{p}d\omega<\infty
\]
As usual, we do not distinguish between elements of $L^{p}$ equal a.s. The
space $L^{p}$ equipped with the norm
\[
\left\Vert X\right\Vert _{p}=\left(  E\left(  \left\vert X\right\vert
^{p}\right)  \right)  ^{1/p}%
\]
is a Banach space, and $L^{2}\left(  \Omega,H\right)  $ equipped with the
inner product
\[
E\left(  \left\langle U,V\right\rangle \right)  =\int_{\Omega}\left\langle
U\left(  \omega\right)  ,V\left(  \omega\right)  \right\rangle d\omega
\]
is a Hilbert space\footnote{The corresponding definition of $L^{\infty}$ is
the space of all random elements $X$ such that for some $c\in\mathbb{R}$,
$\left\vert X\right\vert \leq c$ a.s., but are not concerned with the case
$p=\infty$ here.}\footnote{In the case $H=\mathbb{R}$, these are the same
spaces as the $L^{p}$ spaces of real functions of a real variable, except on
an abstract set $\Omega$, without any topology, rather than on $\mathbb{R}$.
Abstract properties of the measure and integral that do not require the
topology of $\mathbb{R}$, carry over.}.

In particular, given a Hilbert space $K$ and a probability measure $\mu$ on
$K$, the space
\[
L^{p}\left(  K,\mu\right)  =L^{p}\left(  K,\mathcal{B}\left(  K\right)
,\mu,\mathbb{R}\right)
\]
is the $L^{p}$ space of all real functions on $H$ that are measurable on $H$
with respect to the Borel sets of $H$, and such that $\int_{H}\left\vert
f\right\vert ^{p}d\mu<\infty$.

If $X\in L^{2}\left(  \Omega,H\right)  $, then by the Cauchy inequality on
$L^{2}\left(  \Omega,H\right)  $,%
\[
\left\Vert X\right\Vert _{1}=\left\vert E\left(  \left\vert X\right\vert
\right)  \right\vert \leq\left\Vert X\right\Vert _{2}%
\]
because%
\[
\left\Vert X\right\Vert _{1}=E\left(  \left\vert X\right\vert \right)
=E\left(  \left\vert X\right\vert \cdot1\right)  \leq\left\Vert X\right\Vert
_{2}\left\Vert 1\right\Vert _{2}=\left\Vert X\right\Vert _{2},
\]
so%
\[
L^{2}\left(  \Omega,H\right)  \subset L^{1}\left(  \Omega,H\right)  .
\]
More generally,
\[
p>r\geq1\Longrightarrow L^{p}\left(  \Omega,H\right)  \subset L^{r}\left(
\Omega,H\right)  .
\]

\subsection{Mean value of random element}

For $X\in L^{2}\left(  \Omega,H\right)  $, the mean value $E\left(  X\right)
$ defined by\footnote{Thus, $E\left(  U\right)  =%
%TCIMACRO{\tint \nolimits_{\Omega}}%
%BeginExpansion
{\textstyle\int\nolimits_{\Omega}}
%EndExpansion
Ud\omega$ is the Gelfand-Pettis integral \cite{Pettis-1938-IVS} (the weak
integral), not the Bochner integral (the strong integral).}%
\begin{equation}
E\left(  X\right)  \in H,\quad\left\langle v,E\left(  X\right)  \right\rangle
=E\left(  \left\langle v,X\right\rangle \right)  , \quad\forall v\in H,
\label{eq:def-mean}%
\end{equation}
exists and is unique from the Riesz representation theorem:\footnote{For every
bounded linear functional $f$ on $H$, there exists a unique $v\in H$ such that
$\left\langle u,v\right\rangle =f\left(  v\right)  $ $\forall v\in H$. In
addition, $\left\vert u\right\vert _{H}=\left\vert f\right\vert _{H^{\prime}}%
$.} Since
\[
\left\vert E\left(  \left\langle v,X\right\rangle \right)  \right\vert \leq
E\left(  \left\vert v\right\vert \left\vert X\right\vert \right)
\leq\left\vert v\right\vert E\left(  \left\vert X\right\vert \right)  ,
\]
by the Cauchy inequality on $H$, the mapping $v\mapsto E\left(  \left\langle
v,X\right\rangle \right)  $ is a bounded linear functional on $H$, therefore
there exists unique
\[
u\in H\text{ such that }\left\langle v,u\right\rangle =E\left(  \left\langle
v,X\right\rangle \right)  \text{ for all }v\in H.
\]
Note that when $H=\mathbb{R}^{n}$ and $v=e_{k}$, the $k$-th coordinate vector,
then $\left\langle v,E\left(  X\right)  \right\rangle $ is simply entry $k$ of
the vector $E\left(  X\right)  $.

For \emph{operator-valued random element} $U\in L^{1}\left(  \Omega,\left[
H\right]  \right)  $, the mean value $E\left(  U\right)  $, defined by%
\begin{equation}
E\left(  U\right)  \in\left[  H\right]  ,\quad\left\langle u,E\left(
U\right)  v\right\rangle =E\left(  \left\langle u,Uv\right\rangle \right)
\quad\forall u,v\in H, \label{eq:def-operator-mean}%
\end{equation}
exists in $\left[  H\right]  $ and it is unique, from Lemma \ref{lem:exist-op}
and the Cauchy inequality in $H$,%
\[
\left\vert E\left(  \left\langle u,Uv\right\rangle \right)  \right\vert
\leq\left\vert E\left(  \left\vert u\right\vert \left\vert U\right\vert
\left\vert v\right\vert \right)  \right\vert =\left\vert u\right\vert
\left\vert v\right\vert E\left(  \left\vert U\right\vert \right)  .
\]

From standard properties of the abstract integral \cite{Lang-1993-RFA},%
\[
\left\vert E\left(  X\right)  \right\vert \leq E\left(  \left\vert
X\right\vert \right)  =\left\Vert X\right\Vert _{1}.
\]

A random variable $X$ and its distribution are called \emph{centered} if
$E\left(  X\right)  =0$.

A random element $X$ is a constant if $X\left(  \omega\right)  $ is the same
for all $\omega\in\Omega$. A constant random element with values in $H$ can be
considered as simply a (non-random, deterministic) element of $H$, and we can
write $X\in H$. If $X$ is a constant, then%
\begin{equation}
\left\Vert X\right\Vert _{p}=\left(  E\left(  \left\vert X\right\vert
^{p}\right)  \right)  ^{1/p}=\left\vert X\right\vert ,\quad1\leq p<\infty.
\label{eq:const-norm}%
\end{equation}
In particular, a constant $H$-valued random element is in all spaces $L^{p}$,
$p\geq1$.

\subsection{Mean of tensor product}

The tensor product with a constant and the mean commute: If $X,Y\in
L^{1}\left(  \Omega,H\right)  $, $x,y\in H,$ then%

\begin{equation}
E\left(  X\otimes y\right)  =E\left(  X\right)  \otimes y,
\label{eq:tensor-mean-Xy}%
\end{equation}
and%

\begin{equation}
E\left(  x\otimes Y\right)  =x\otimes E\left(  Y\right)
\label{eq:tensor-mean-xY}%
\end{equation}

Indeed, if and $u,v\in H$, then%
\begin{align*}
\left\langle u,E\left(  X\otimes y\right)  v\right\rangle  &  =E\left(
\left\langle u,\left(  X\otimes y\right)  v\right\rangle \right) \\
&  =E\left(  \left\langle u,X\right\rangle \left\langle y,v\right\rangle
\right) \\
&  =\left\langle u,E\left(  X\right)  \right\rangle \left\langle
y,v\right\rangle \\
&  =\left\langle u,\left(  E\left(  X\right)  \otimes y\right)
v\right\rangle
\end{align*}
by the definition (\ref{eq:def-operator-mean}) of the mean of a random
mapping, the definition (\ref{eq:def-tensor}) of tensor product, the
definition (\ref{eq:def-mean}) of mean of a random vector, and again the the
definition (\ref{eq:def-tensor}) of tensor product. This shows
(\ref{eq:tensor-mean-Xy}). The proof of (\ref{eq:tensor-mean-xY}) is similar.
See also Lemma \ref{lem:exist-op}.

\subsection{Covariance}

The covariance of random elements $X$, $Y$ is the linear operator
$\operatorname*{Cov}\left(  X,Y\right)  $, defined by\footnote{The existence
and uniqueness of $\operatorname*{Cov}\left(  X,Y\right)  $ follows again from
Riesz representation theorem. See \cite[Ch. 1]{DaPrato-2006-IIA} for more
details. However we used the Riesz representation theorem in the definition of
tensor product and the mean of the tensor product... it is exactly the same
but we need it several times so we did it just once.}%
\begin{equation}
\left\langle u,\operatorname*{Cov}\left(  X,Y\right)  v\right\rangle =E\left(
\left\langle u,X-E\left(  X\right)  \right\rangle \left\langle v,Y-E\left(
Y\right)  \right\rangle \right)  \quad\forall u,v\in H,
\label{eq:def-covariance}%
\end{equation}

$\operatorname*{Cov}\left(  X\right)  $ stands for $\operatorname*{Cov}\left(
X,X\right)  $. Clearly, if $\operatorname*{Cov}\left(  X\right)  $ exists, it
is self-adjoint and positive semidefinite.

\begin{theorem}
\label{thm:cov-existence}If $X,Y\in L^{2}\left(  \Omega,H\right)  $,
$\operatorname*{Cov}\left(  X,Y\right)  \in\left[  H\right]  $ exists and%
\begin{align}
\operatorname*{Cov}\left(  X,Y\right)   &  =E\left(  \left(  X-E\left(
X\right)  \right)  \otimes\left(  Y-E\left(  Y\right)  \right)  \right)
\nonumber\\
&  =E\left(  X\otimes Y\right)  -E\left(  X\right)  \otimes E\left(  Y\right)
. \label{eq:covariance2}%
\end{align}

\end{theorem}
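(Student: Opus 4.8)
The plan is to establish existence first through the bounded bilinear form lemma, and then verify the two formulas by testing against arbitrary $u,v\in H$ together with linearity of the operator-valued mean. For existence, observe that the right-hand side of the defining relation (\ref{eq:def-covariance}) is the bilinear form $a(u,v)=E(\langle u,X-E(X)\rangle\langle v,Y-E(Y)\rangle)$. I would bound it by applying the Cauchy inequality twice: first in $H$, to get $|\langle u,X-E(X)\rangle\langle v,Y-E(Y)\rangle|\leq|u|\,|v|\,|X-E(X)|\,|Y-E(Y)|$, and then the Cauchy inequality for the scalar expectation, to get $E(|X-E(X)|\,|Y-E(Y)|)\leq\Vert X-E(X)\Vert_{2}\,\Vert Y-E(Y)\Vert_{2}$. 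Since $X,Y\in L^{2}$ and the means $E(X),E(Y)$ are constants, hence in every $L^{p}$ by (\ref{eq:const-norm}), the centered elements lie in $L^{2}$ and these norms are finite. Thus $|a(u,v)|\leq M|u|\,|v|$ with $M=\Vert X-E(X)\Vert_{2}\,\Vert Y-E(Y)\Vert_{2}$, and Lemma \ref{lem:exist-op} yields a unique $\operatorname*{Cov}(X,Y)\in[H]$.

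For the first equality I would check that the operator-valued random element $(X-E(X))\otimes(Y-E(Y))$ lies in $L^{1}(\Omega,[H])$ so that its mean is defined: by the tensor norm identity (\ref{eq:tensor-norm}) one has $E|(X-E(X))\otimes(Y-E(Y))|=E(|X-E(X)|\,|Y-E(Y)|)<\infty$ by the very same two-step estimate. Then I would test the operator mean against arbitrary $u,v\in H$, invoking the definition (\ref{eq:def-operator-mean}) of the operator-valued mean, the definition (\ref{eq:def-tensor}) of the tensor product, and the symmetry of the real inner product, to obtain $\langle u,E((X-E(X))\otimes(Y-E(Y)))v\rangle=E(\langle u,X-E(X)\rangle\langle v,Y-E(Y)\rangle)=\langle u,\operatorname*{Cov}(X,Y)v\rangle$, which identifies the two operators.

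For the second equality I would expand bilinearly, $(X-E(X))\otimes(Y-E(Y))=X\otimes Y-X\otimes E(Y)-E(X)\otimes Y+E(X)\otimes E(Y)$, and apply the operator-valued mean termwise, using its linearity (which follows from its definition and linearity of the scalar expectation). The commutation identities (\ref{eq:tensor-mean-Xy}) and (\ref{eq:tensor-mean-xY}) collapse each of the three correction terms to $E(X)\otimes E(Y)$, leaving $E(X\otimes Y)-E(X)\otimes E(Y)$; here $X\otimes Y\in L^{1}(\Omega,[H])$ follows once more from (\ref{eq:tensor-norm}) and the Cauchy inequality.

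The computations are routine; the only point requiring genuine care is the repeated verification that the relevant tensor products lie in $L^{1}(\Omega,[H])$, which is precisely what makes the operator-valued means well-defined. This integrability rests entirely on the identity $|x\otimes y|=|x|\,|y|$ together with the two Cauchy inequalities, so I would state that estimate once and reuse it throughout. Measurability of $X\otimes Y$ as an $[H]$-valued map is inherited from the measurability of $X,Y$ and the continuity of the tensor product noted in (\ref{eq:tensor-continuous}).
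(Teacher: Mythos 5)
Your proposal is correct and follows essentially the same route as the paper: existence via Lemma \ref{lem:exist-op} after bounding the bilinear form $E\left(\left\langle u,X-E\left(X\right)\right\rangle\left\langle v,Y-E\left(Y\right)\right\rangle\right)$ by Cauchy--Schwarz, identification of $\operatorname*{Cov}\left(X,Y\right)$ with $E\left(\left(X-E\left(X\right)\right)\otimes\left(Y-E\left(Y\right)\right)\right)$ by testing against $u,v$ through (\ref{eq:def-operator-mean}) and (\ref{eq:def-tensor}), and the second equality by bilinear expansion combined with (\ref{eq:tensor-mean-Xy}) and (\ref{eq:tensor-mean-xY}). If anything, your two-step estimate (Cauchy in $H$, then Cauchy--Schwarz for the scalar random variables) is the clean form of the bound the paper intends, and your explicit verification that the tensor products lie in $L^{1}\left(\Omega,\left[H\right]\right)$ makes the use of the operator-valued mean more careful than the paper's own write-up.
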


\begin{proof}
Writing%
\[
\left\langle u,X-E\left(  X\right)  \right\rangle \left\langle v,Y-E\left(
Y\right)  \right\rangle =\left\langle u,\left(  X-E\left(  X\right)  \right)
\otimes\left(  Y-E\left(  Y\right)  \right)  v\right\rangle
\]
we have from the definition of the mean of a linear operator
(\ref{eq:def-operator-mean}),
\begin{align*}
\left\langle u,\operatorname*{Cov}\left(  X,Y\right)  v\right\rangle  &
=E\left(  \left\langle u,\left(  X-E\left(  X\right)  \right)  \otimes\left(
Y-E\left(  Y\right)  \right)  v\right\rangle \right) \\
&  =\left\langle u,E\left(  \left(  X-E\left(  X\right)  \right)
\otimes\left(  Y-E\left(  Y\right)  \right)  \right)  v\right\rangle
\end{align*}
and by the Cauchy inequality and triangle inequality%
\begin{align*}
\left\vert E\left(  \left\langle u,X-E\left(  X\right)  \right\rangle
\left\langle v,Y-E\left(  Y\right)  \right\rangle \right)  \right\vert  &
\leq\left\vert E\left(  \left\langle u,X-E\left(  X\right)  \right\rangle
\right)  \right\vert \left\vert E\left(  \left\langle v,Y-E\left(  Y\right)
\right\rangle \right)  \right\vert \\
&  \leq\left\vert E\left(  \left\vert u\right\vert \left\vert X-E\left(
X\right)  \right\vert \right)  \right\vert \left\vert E\left(  \left\vert
v\right\vert \left\vert Y-E\left(  Y\right)  \right\vert \right)  \right\vert
\\
&  \leq\left\vert u\right\vert \left\Vert X-E\left(  X\right)  \right\Vert
_{2}\left\vert v\right\vert \left\Vert Y-E\left(  Y\right)  \right\Vert _{2}\\
&  \leq4\left\vert u\right\vert \left\vert v\right\vert \left\Vert
X\right\Vert \left\Vert Y\right\Vert _{2}%
\end{align*}
for any $u,v\in H$. Therefore, $\operatorname*{Cov}\left(  X,Y\right)  $
exists and $\operatorname*{Cov}\left(  X,Y\right)  $ $\in L\left(  H\right)  $
by Lemma \ref{lem:exist-op} (corollary of Riesz representation theorem). To
prove (\ref{eq:covariance2}),%
\begin{align*}
\operatorname*{Cov}\left(  X,Y\right)   &  =E\left(  \left(  X-E\left(
X\right)  \right)  \otimes\left(  Y-E\left(  Y\right)  \right)  \right) \\
&  =E\left(  X\otimes Y-X\otimes E\left(  Y\right)  -E\left(  X\right)
\otimes Y+E\left(  X\right)  \otimes E\left(  Y\right)  \right) \\
&  =E\left(  X\otimes Y\right)  -E\left(  X\right)  \otimes E\left(  Y\right)
-E\left(  X\right)  \otimes E\left(  Y\right)  +E\left(  X\right)  \otimes
E\left(  Y\right) \\
&  =E\left(  X\otimes Y\right)  -E\left(  X\right)  \otimes E\left(  Y\right)
,
\end{align*}
using (\ref{eq:tensor-mean-Xy}) and (\ref{eq:tensor-mean-xY}).
\end{proof}

\begin{theorem}
Suppose that $X\in L^{1}\left(  \Omega,H\right)  $. Then $X\in L^{2}\left(
\Omega,H\right)  $ if and only if $\operatorname*{Cov}\left(  X\right)  $
exists and $\operatorname*{Tr}\operatorname*{Cov}\left(  X\right)  <\infty$.
In fact,%
\[
\left\Vert X-E\left(  X\right)  \right\Vert _{L^{2}\left(  \Omega,H\right)
}^{2}=\operatorname*{Tr}\operatorname*{Cov}\left(  X\right)  .
\]

\end{theorem}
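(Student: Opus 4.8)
The plan is to reduce the whole statement to the scalar identity that Parseval's equality produces after taking expectations, the only delicate point being the interchange of an infinite sum with the integral.

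First I would set $Y=X-E\left(X\right)$. Since $X\in L^{1}\left(\Omega,H\right)$, the Riesz-representation argument behind the definition (\ref{eq:def-mean}) of the mean needs only $E\left(\left\vert X\right\vert\right)<\infty$, so $E\left(X\right)$ exists, $Y$ is a well-defined centered element of $L^{1}\left(\Omega,H\right)$, and $\operatorname*{Cov}\left(X\right)=\operatorname*{Cov}\left(Y\right)$. Because $E\left(X\right)$ is a constant, $\left\vert X\right\vert^{2}\leq 2\left\vert Y\right\vert^{2}+2\left\vert E\left(X\right)\right\vert^{2}$ and symmetrically, so $E\left(\left\vert X\right\vert^{2}\right)<\infty$ if and only if $E\left(\left\vert Y\right\vert^{2}\right)<\infty$; equivalently $X\in L^{2}$ if and only if $Y\in L^{2}$. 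Thus it suffices to tie $E\left(\left\vert Y\right\vert^{2}\right)$ to $\operatorname*{Tr}\operatorname*{Cov}\left(X\right)$.

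The core step is the identity $\operatorname*{Tr}\operatorname*{Cov}\left(X\right)=E\left(\left\vert Y\right\vert^{2}\right)$, read as an equality in $\left[0,+\infty\right]$. Fix a complete orthonormal system $\left\{e_{n}\right\}$. Taking $u=v=e_{n}$ in the definition (\ref{eq:def-covariance}) gives $\left\langle \operatorname*{Cov}\left(X\right)e_{n},e_{n}\right\rangle =E\left(\left\vert\left\langle e_{n},Y\right\rangle\right\vert^{2}\right)$. Summing over $n$ and exchanging the sum with the expectation --- legitimate by Tonelli's theorem since each summand $\left\vert\left\langle e_{n},Y\right\rangle\right\vert^{2}$ is nonnegative --- yields $\sum_{n}\left\langle \operatorname*{Cov}\left(X\right)e_{n},e_{n}\right\rangle =E\left(\sum_{n}\left\vert\left\langle e_{n},Y\right\rangle\right\vert^{2}\right)$, and the inner sum equals $\left\vert Y\right\vert^{2}$ pointwise by Parseval's equality (\ref{eq:abstract-parseval}). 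Hence $\operatorname*{Tr}\operatorname*{Cov}\left(X\right)=E\left(\left\vert Y\right\vert^{2}\right)=\left\Vert X-E\left(X\right)\right\Vert_{2}^{2}$, which is exactly the asserted formula. Both implications then fall out: if $X\in L^{2}$, Theorem \ref{thm:cov-existence} already provides $\operatorname*{Cov}\left(X\right)\in\left[H\right]$ and the identity gives $\operatorname*{Tr}\operatorname*{Cov}\left(X\right)=\left\Vert X-E\left(X\right)\right\Vert_{2}^{2}<\infty$; conversely, if $\operatorname*{Cov}\left(X\right)$ exists with finite trace, the identity forces $E\left(\left\vert Y\right\vert^{2}\right)=\operatorname*{Tr}\operatorname*{Cov}\left(X\right)<\infty$, so $Y\in L^{2}$ and therefore $X\in L^{2}$.

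I expect the main obstacle to be exactly the interchange of the infinite sum and the integral: in finite dimension this is a finite sum and is trivial, whereas here it is precisely where one must invoke monotone convergence (Tonelli) and work in the extended nonnegative reals, so that the identity holds even before either side is known to be finite --- which is what lets the equivalence, and not merely one direction, come out. A secondary point deserving a sentence is that ``$\operatorname*{Cov}\left(X\right)$ exists'' must be read as the bilinear form $\left(u,v\right)\mapsto E\left(\left\langle u,Y\right\rangle\left\langle v,Y\right\rangle\right)$ being represented by a bounded operator; this guarantees that each diagonal entry $\left\langle \operatorname*{Cov}\left(X\right)e_{n},e_{n}\right\rangle =E\left(\left\vert\left\langle e_{n},Y\right\rangle\right\vert^{2}\right)$ is finite, and that the trace, being equal to the basis-free quantity $E\left(\left\vert Y\right\vert^{2}\right)$, does not depend on the chosen complete orthonormal system.
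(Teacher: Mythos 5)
Your proof is correct and follows essentially the same route as the paper's: both reduce to the centered case, expand $X-E\left(X\right)$ in a complete orthonormal system, and combine Parseval's equality with the monotone convergence theorem to obtain $\operatorname*{Tr}\operatorname*{Cov}\left(X\right)=E\left(\left\vert X-E\left(X\right)\right\vert ^{2}\right)$ as an identity in $\left[0,+\infty\right]$, from which both implications follow. Your additions --- the explicit reduction via $Y=X-E\left(X\right)$ instead of ``without loss of generality,'' and the remark on what ``$\operatorname*{Cov}\left(X\right)$ exists'' means for an $L^{1}$ element --- merely spell out steps the paper leaves implicit.
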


\begin{proof}
We follow \cite[proof of Theorem 2.1]{Kuo-1975-GMB}. Without loss of
generality, assume $E\left(  X\right)  =0$. Suppose for the moment that
$C=\operatorname*{Cov}\left(  X\right)  $ exists. Then%
\begin{equation}
\left\langle u,Cv\right\rangle =E\left(  \left\langle X,u\right\rangle
\left\langle X,v\right\rangle \right)  \quad\forall u,v\in H. \label{eq:cov-0}%
\end{equation}
Choose a complete orthonormal set $\left\{  e_{n}\right\}  $. For a fixed
$\omega$, decompose $X\left(  \omega\right)  $ in the abstract Fourier series%
\[
X\left(  \omega\right)  =\sum_{n=1}^{\infty}\,\left\langle X\left(
\omega\right)  ,e_{n}\right\rangle e_{n}%
\]
convergent in the norm of $H$. By Parseval equality,
\[
\left\vert X\left(  \omega\right)  \right\vert ^{2}=\sum_{n=1}^{\infty
}\left\vert \,\left\langle X\left(  \omega\right)  ,e_{n}\right\rangle
\right\vert ^{2}.
\]
Because this is a series of nonegative random variables (=real-valued
measurable functions of $\omega$), by the monotone convergence theorem,%
\begin{equation}
\int_{\Omega}\left\vert X\left(  \omega\right)  \right\vert ^{2}d\omega
=\sum_{n=1}^{\infty}\int_{\Omega}\left\vert \,\left\langle X\left(
\omega\right)  ,e_{n}\right\rangle \right\vert ^{2}d\omega=\sum_{n=1}^{\infty
}\left\langle Ce_{n},e_{n}\right\rangle =\operatorname*{Tr}C \label{eq:L2-tr}%
\end{equation}
because $\left\langle Ce_{n},e_{n}\right\rangle =\lambda_{n}$.

Let $\operatorname*{Tr}C<\infty$. Then $X\in L^{2}\left(  \Omega,H\right)  $
from (\ref{eq:L2-tr}).

Let $X\in L^{2}\left(  \Omega,H\right)  $. By Theorem \ref{thm:cov-existence},
$C$ exists, and $\operatorname*{Tr}C<\infty$ from (\ref{eq:L2-tr}).
\end{proof}

\begin{remark}
When $H=\mathbb{R}^{n}$ and $u=e_{k}$, $v=e_{\ell}$, and the linear operator
$\operatorname*{Cov}\left(  X,Y\right)  $ is identified with its matrix, then
$\left\langle u,\operatorname*{Cov}\left(  X,Y\right)  v\right\rangle $ is
simply the entry $k$, $\ell$ of the covariance matrix. In the case when $n=1$,
$\operatorname*{Cov}\left(  X\right)  $ is also called the variance of the
random variable $X$ and denoted by $\operatorname*{Var}\left(  X\right)  $.
\end{remark}

\subsection{Convergence of random elements}

There are many types of convergence of random variables. With many types of
convergence in infinitely dimensional spaces, there are many, many more. But
here just replace the absolute value by the norm in $H$.

\emph{Convergence in norm almost surely (a.s.)} is defined as%
\begin{align*}
X_{k}\rightarrow X\text{ in }H\text{ a.s.}  &  \Leftrightarrow\lim
_{k\rightarrow\infty}\left\vert X_{k}\left(  \omega\right)  -X\left(
\omega\right)  \right\vert =0\quad\forall\omega\in\Omega\setminus
\Theta,\text{\quad}\mu\left(  \Theta\right)  =0\\
&  \Leftrightarrow\Pr\left(  \lim_{k\rightarrow\infty}\left\vert X_{k}\left(
\omega\right)  -X\left(  \omega\right)  \right\vert =0\right)  =1
\end{align*}

Since $L^{p}\left(  \Omega,H\right)  $ is a normed space, we have the usual
notion of \emph{convergence in the }$L^{p}$\emph{ norm},%
\[
X_{k}\rightarrow X\text{ in }L^{p}\Longleftrightarrow X\in L^{p}\left(
\Omega,H\right)  \wedge\left\Vert X_{k}-X\right\Vert _{p}\rightarrow0.
\]

\emph{Convergence in distribution} of random elements, denoted by
$X_{k}\Longrightarrow X$, is defined as weak convergence of their
distributions,%
\begin{equation}
\lim_{k\rightarrow\infty}%
%TCIMACRO{\dint \limits_{H}}%
%BeginExpansion
{\displaystyle\int\limits_{H}}
%EndExpansion
\phi d\mu_{X_{k}}=%
%TCIMACRO{\dint \limits_{H}}%
%BeginExpansion
{\displaystyle\int\limits_{H}}
%EndExpansion
\phi d\mu_{X}\quad\forall\phi\in C_{b}\left(  H\right)  ,
\label{eq:def-weak-conv}%
\end{equation}
where $C_{b}\left(  H\right)  $ is the space of all continuous bounded
functions on $H$.

\emph{Convergence in probability} of random elements is defined as%
\[
\lim_{k\rightarrow\infty}\Pr\left(  \left\vert X_{k}-X\right\vert
>\varepsilon\right)  =0\quad\forall\varepsilon>0.
\]

Convergence a.s. implies convergence in probability, which implies convergence
in distribution. Convergence in distribution to a constant implies convergence
in probability \cite[Lemma 4.7]{Kallenberg-2002-FMP}.\footnote{In fact, in any
metric space.}

Convergence in $L^{p}$ implies convergence a.s. and in probability, but the
converse is in general not true. However, we have the converse under an
additional boundedness assumption. This result is a strengthening of the
Lebesgue dominated convergence theorem.

\begin{lemma}
[Uniform integrability]\label{eq:lem-uniform-integrability}If $\left\{
X_{k}\right\}  $ is a bounded sequence in $L^{p}\left(  \Omega,H\right)  $ and
$X_{k}\rightarrow X$ in probability, then $\left\Vert X_{k}-X\right\Vert
_{q}\rightarrow0$ for all $1\leq q<p$.
\end{lemma}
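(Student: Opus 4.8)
The plan is to prove a Vitali-type convergence theorem: boundedness in $L^{p}$ upgrades convergence in probability to convergence in $L^{q}$ for every $q<p$, because an $L^{p}$-bound with $p>q$ forces the family $\left\{  \left\vert X_{k}-X\right\vert ^{q}\right\}  $ to be uniformly integrable, and uniform integrability combined with convergence in probability yields $L^{q}$ convergence.

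First I would check that the candidate limit actually lies in $L^{p}$, so that the differences $X_{k}-X$ are legitimate $L^{q}$ elements. Writing $M=\sup_{k}\left\Vert X_{k}\right\Vert _{p}<\infty$, I use the standard fact that convergence in probability yields a subsequence $X_{k_{j}}\rightarrow X$ a.s.; applying Fatou's lemma to $\left\vert X_{k_{j}}\right\vert ^{p}$ gives $E\left(  \left\vert X\right\vert ^{p}\right)  \leq\liminf_{j}E\left(  \left\vert X_{k_{j}}\right\vert ^{p}\right)  \leq M^{p}$, so $X\in L^{p}\subset L^{q}$. Setting $Z_{k}=\left\vert X_{k}-X\right\vert $, the triangle inequality in $L^{p}$ gives $\left\Vert Z_{k}\right\Vert _{p}\leq 2M$, and $X_{k}\rightarrow X$ in probability means exactly $Z_{k}\rightarrow 0$ in probability.

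The technical heart is the uniform integrability estimate. For $R>0$, Hölder's inequality with conjugate exponents $p/q$ and $p/\left(  p-q\right)  $ bounds $E\left(  Z_{k}^{q}\mathbf{1}_{\left\{  Z_{k}^{q}>R\right\}  }\right)  $ by $\left(  E\left(  Z_{k}^{p}\right)  \right)  ^{q/p}\Pr\left(  Z_{k}^{q}>R\right)  ^{1-q/p}$, while Markov's inequality together with Jensen's inequality $E\left(  Z_{k}^{q}\right)  \leq\left(  E\left(  Z_{k}^{p}\right)  \right)  ^{q/p}$ gives $\Pr\left(  Z_{k}^{q}>R\right)  \leq\left(  2M\right)  ^{q}/R$. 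Combining, $E\left(  Z_{k}^{q}\mathbf{1}_{\left\{  Z_{k}^{q}>R\right\}  }\right)  \leq\left(  2M\right)  ^{q}\left(  \left(  2M\right)  ^{q}/R\right)  ^{1-q/p}$, which tends to $0$ as $R\rightarrow\infty$ uniformly in $k$ since $1-q/p>0$.

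Finally I would combine uniform integrability with convergence in probability by splitting into three regions. Fix $\eta>0$, pick $R$ so that $\sup_{k}E\left(  Z_{k}^{q}\mathbf{1}_{\left\{  Z_{k}^{q}>R\right\}  }\right)  <\eta/3$, then pick $\delta>0$ with $\delta^{q}<\eta/3$, and estimate
\[
E\left(  Z_{k}^{q}\right)  \leq\delta^{q}+R\,\Pr\left(  Z_{k}>\delta\right)  +E\left(  Z_{k}^{q}\mathbf{1}_{\left\{  Z_{k}^{q}>R\right\}  }\right)  ,
\]
where the middle term handles the set $\left\{  Z_{k}>\delta\right\}  \cap\left\{  Z_{k}^{q}\leq R\right\}  $ and the first handles $\left\{  Z_{k}\leq\delta\right\}  $. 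Since $R$ and $\delta$ are now fixed and $Z_{k}\rightarrow 0$ in probability, $R\,\Pr\left(  Z_{k}>\delta\right)  <\eta/3$ for all large $k$, giving $E\left(  Z_{k}^{q}\right)  <\eta$ eventually; hence $\left\Vert X_{k}-X\right\Vert _{q}^{q}=E\left(  Z_{k}^{q}\right)  \rightarrow 0$. The main obstacle is conceptual rather than computational: convergence in probability is too weak to feed directly into Fatou or dominated convergence along the whole sequence, so one must pass to a subsequence solely to locate the limit in $L^{p}$, and then freeze the threshold $R$ (and $\delta$) \emph{before} invoking convergence in probability, so that the term $R\,\Pr\left(  Z_{k}>\delta\right)  $ can be driven to zero with $R$ held constant.
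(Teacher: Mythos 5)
Your proof is correct and follows the same route as the paper: both reduce to the scalar random variables $\left\vert X_{k}-X\right\vert$ and use uniform integrability to upgrade convergence in probability to $L^{q}$ convergence. The only difference is one of detail: the paper simply cites Billingsley for the scalar uniform-integrability theorem, whereas you prove it from scratch (Fatou along an a.s.\ convergent subsequence to place $X$ in $L^{p}$, H\"{o}lder and Markov for the tail bound uniform in $k$, and the three-region split), which makes the argument self-contained and, usefully, makes explicit the step that $X\in L^{p}$, which the citation-based proof leaves implicit.
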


\begin{proof}
It is well known that for random variables, the lemma follows from uniform
integrability \cite[p. 338]{Billingsley-1995-PM}. For random elements $X_{k}$,
$X$ on $H$, consider the random variables $U_{k}=\left\vert X_{k}-X\right\vert
$.
\end{proof}

\begin{lemma}
[{Slutsky's theorem, \cite[page 254, Corollary 2]{Chow-1988-PT}}]If $\left\{
X_{k}\right\}  $, $\left\{  Y_{k}\right\}  $are random variables,
$X_{k}\Longrightarrow X$, $Y_{k}\Longrightarrow Y$, and $Y$ is constant, then
$X_{k}Y_{k}\Longrightarrow XY$ and $X_{k}+Y_{k}\Longrightarrow X+Y$.
\end{lemma}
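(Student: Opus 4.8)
The plan is to reduce both claims to a single \emph{perturbation principle}: if $W_{k}\Longrightarrow W$ and $Z_{k}\rightarrow0$ in probability, then $W_{k}+Z_{k}\Longrightarrow W$. Since a random variable is real-valued, I work in $H=\mathbb{R}$, so the relevant test functions are $\phi\in C_{b}\left(\mathbb{R}\right)$ and weak convergence means $E[\phi(X_{k})]\rightarrow E[\phi(X)]$. First I would record that, because $Y$ is constant, say $Y\equiv c$, the hypothesis $Y_{k}\Longrightarrow Y$ together with the stated fact that convergence in distribution to a constant implies convergence in probability yields $Y_{k}\rightarrow c$ in probability.

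For the sum, write $X_{k}+Y_{k}=\left(X_{k}+c\right)+\left(Y_{k}-c\right)$. The shifted sequence satisfies $X_{k}+c\Longrightarrow X+c$ directly from the definition, because the function $x\mapsto\phi\left(x+c\right)$ lies in $C_{b}\left(\mathbb{R}\right)$ whenever $\phi$ does, so that $E[\phi(X_{k}+c)]\rightarrow E[\phi(X+c)]$. Since $Y_{k}-c\rightarrow0$ in probability, the sum claim follows from the perturbation principle with $W_{k}=X_{k}+c$ and $Z_{k}=Y_{k}-c$. For the product, write $X_{k}Y_{k}=cX_{k}+X_{k}\left(Y_{k}-c\right)$; again $cX_{k}\Longrightarrow cX=XY$ directly, since $x\mapsto\phi\left(cx\right)\in C_{b}\left(\mathbb{R}\right)$. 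The one new ingredient is that the remainder $X_{k}\left(Y_{k}-c\right)\rightarrow0$ in probability. Given $\eta,\epsilon>0$, I would use the tightness of the distribution-convergent sequence $\left\{X_{k}\right\}$ to pick $A$ with $\Pr\left(\left\vert X_{k}\right\vert >A\right)<\epsilon$ for all $k$, and then estimate $\Pr\left(\left\vert X_{k}\left(Y_{k}-c\right)\right\vert >\eta\right)\leq\Pr\left(\left\vert X_{k}\right\vert >A\right)+\Pr\left(\left\vert Y_{k}-c\right\vert >\eta/A\right)$, where the second term vanishes as $k\rightarrow\infty$ by $Y_{k}\rightarrow c$ in probability. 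Thus $X_{k}\left(Y_{k}-c\right)\Longrightarrow0$, and the product claim again follows from the perturbation principle.

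It remains to prove the perturbation principle. Fix $\phi\in C_{b}\left(\mathbb{R}\right)$ with $\left\vert \phi\right\vert \leq M$ and split $E[\phi(W_{k}+Z_{k})]-E[\phi(W)]$ into $\left(E[\phi(W_{k}+Z_{k})]-E[\phi(W_{k})]\right)+\left(E[\phi(W_{k})]-E[\phi(W)]\right)$; the second difference tends to $0$ by $W_{k}\Longrightarrow W$. The main obstacle is the first difference, because a general bounded continuous $\phi$ need not be \emph{uniformly} continuous, so one cannot directly bound $\left\vert \phi\left(W_{k}+Z_{k}\right)-\phi\left(W_{k}\right)\right\vert $ by a modulus of continuity. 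I would resolve this via tightness of $\left\{W_{k}\right\}$: choose a compact interval $K$ with $\Pr\left(W_{k}\notin K\right)<\epsilon$ uniformly in $k$, exploit uniform continuity of $\phi$ on a bounded enlargement of $K$ to select $\delta\in\left(0,1\right)$ forcing $\left\vert \phi\left(s\right)-\phi\left(t\right)\right\vert <\epsilon$ for nearby $s,t$ in that enlargement, and split on the event $\left\{W_{k}\in K,\ \left\vert Z_{k}\right\vert <\delta\right\}$ versus its complement. On the event the integrand is below $\epsilon$; off it the integrand is at most $2M$ and the probability is at most $\epsilon+\Pr\left(\left\vert Z_{k}\right\vert \geq\delta\right)$, which tends to $\epsilon$ because $Z_{k}\rightarrow0$ in probability.

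Letting $k\rightarrow\infty$ and then $\epsilon\rightarrow0$ gives the first difference $\rightarrow0$, completing the perturbation principle and hence both assertions. The decisive steps are therefore the reduction $Y_{k}\rightarrow c$ in probability, the convergence $X_{k}\left(Y_{k}-c\right)\rightarrow0$ in probability, and the tightness device that tames non-uniformly-continuous test functions; I expect this last point to be the only genuinely delicate part. If one prefers to avoid tightness, one may instead invoke that bounded \emph{uniformly} continuous functions already determine weak convergence, in which case the modulus-of-continuity estimate applies without confining $W_{k}$ to a compact set.
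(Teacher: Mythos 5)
Your proof is correct, but there is nothing in the paper to compare it against: the lemma is stated purely by citation to \cite[page 254, Corollary 2]{Chow-1988-PT}, consistent with the paper's announced policy of reviewing standard results without proof. Your argument is the classical self-contained route to Slutsky's theorem: first upgrade $Y_{k}\Longrightarrow c$ to $Y_{k}\rightarrow c$ in probability (a fact the paper records separately, citing \cite[Lemma 4.7]{Kallenberg-2002-FMP}), then reduce both claims, via the decompositions $X_{k}+Y_{k}=\left(  X_{k}+c\right)  +\left(  Y_{k}-c\right)  $ and $X_{k}Y_{k}=cX_{k}+X_{k}\left(  Y_{k}-c\right)  $, to the converging-together (perturbation) lemma: $W_{k}\Longrightarrow W$ and $Z_{k}\rightarrow0$ in probability imply $W_{k}+Z_{k}\Longrightarrow W$. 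The two delicate points are handled soundly: uniform tightness of the weakly convergent sequence $\left\{  X_{k}\right\}  $ yields both the estimate $\Pr\left(  \left\vert X_{k}\left(  Y_{k}-c\right)  \right\vert >\eta\right)  \leq\Pr\left(  \left\vert X_{k}\right\vert >A\right)  +\Pr\left(  \left\vert Y_{k}-c\right\vert >\eta/A\right)  $ and the compact-enlargement device that compensates for test functions $\phi\in C_{b}\left(  \mathbb{R}\right)  $ failing to be uniformly continuous. What your route buys is an explicit, elementary proof resting only on the paper's own definitions of convergence in distribution and in probability; what the citation buys is brevity. Two caveats: uniform tightness of a weakly convergent sequence is itself a standard but nontrivial fact (on $\mathbb{R}$ it follows from convergence of the distribution functions at continuity points of the limit, plus tightness of each individual law) which you should prove or cite rather than assert; and your closing alternative of testing weak convergence against bounded \emph{uniformly} continuous functions removes the tightness argument only from the perturbation principle, not from the product step, where a uniform-in-$k$ bound on $\left\vert X_{k}\right\vert $ in probability is still required.
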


\begin{lemma}
[{Slutsky's theorem, \cite[Theorem 18.8, page 161]{Jacod-2003-PE}}]If
$\left\{  X_{k}\right\}  $, $\left\{  Y_{k}\right\}  $are random elements in
$\mathbb{R}^{m}$, $X_{k}\Longrightarrow X$, $Y_{k}\Longrightarrow Y$, and $Y$
is constant, $X_{k}+Y_{k}\Longrightarrow X+Y$.
\end{lemma}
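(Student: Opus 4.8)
The plan is to reduce the claim to a convenient test class and then exploit that a constant limit upgrades convergence in distribution to convergence in probability. Write $c\in\mathbb{R}^{m}$ for the constant value of $Y$, so $X+Y=X+c$ and the goal is $X_{k}+Y_{k}\Longrightarrow X+c$. Since $Y$ is constant, the fact quoted above (convergence in distribution to a constant implies convergence in probability) gives $Y_{k}\to c$ in probability. By the definition (\ref{eq:def-weak-conv}) it suffices to show $E\left(\phi(X_{k}+Y_{k})\right)\to E\left(\phi(X+c)\right)$ for every $\phi$ in a class determining weak convergence; by the Portmanteau theorem, in $\mathbb{R}^{m}$ it is enough to take $\phi$ bounded and Lipschitz, say $|\phi|\le M$ with Lipschitz constant $L$. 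This reduction is the single fact I borrow from outside the excerpt, and it is precisely the point that needs care (see below).

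Next I would telescope:
\[
E\left(\phi(X_{k}+Y_{k})\right)-E\left(\phi(X+c)\right)=E\left(\phi(X_{k}+Y_{k})-\phi(X_{k}+c)\right)+\left[E\left(\phi(X_{k}+c)\right)-E\left(\phi(X+c)\right)\right],
\]
calling the two summands $(\mathrm{I})$ and $(\mathrm{II})$. Term $(\mathrm{II})$ is immediate: the shifted function $\psi(x)=\phi(x+c)$ is again bounded and continuous, so $X_{k}\Longrightarrow X$ and (\ref{eq:def-weak-conv}) give $E(\psi(X_{k}))\to E(\psi(X))$, that is $(\mathrm{II})\to0$.

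For term $(\mathrm{I})$ I would use the Lipschitz bound pointwise,
\[
\left|\phi(X_{k}+Y_{k})-\phi(X_{k}+c)\right|\le\min\left(2M,\,L\left|Y_{k}-c\right|\right)=:Z_{k},
\]
so that $|(\mathrm{I})|\le E(Z_{k})=\|Z_{k}\|_{1}$. The $Z_{k}$ are bounded by $2M$, hence form a bounded sequence in $L^{2}(\Omega,\mathbb{R})$, and since $y\mapsto\min(2M,L|y|)$ is continuous and vanishes at $0$, the convergence $Y_{k}\to c$ in probability forces $Z_{k}\to0$ in probability. Applying the uniform integrability Lemma~\ref{eq:lem-uniform-integrability} with $p=2$, $q=1$ then yields $\|Z_{k}\|_{1}\to0$, so $(\mathrm{I})\to0$. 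Combining with $(\mathrm{II})\to0$ finishes the argument.

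The only genuine obstacle is the reduction to bounded Lipschitz test functions. The definition (\ref{eq:def-weak-conv}) ranges over all of $C_{b}(\mathbb{R}^{m})$, and a general continuous bounded $\phi$ need not be uniformly continuous, so the pointwise estimate used for $(\mathrm{I})$ would fail without it. I would resolve this by invoking the Portmanteau equivalence (bounded Lipschitz functions determine weak convergence) as above; alternatively, staying closer to the excerpt's toolkit, one can argue by tightness, since weak convergence of $\{X_{k}\}$ in $\mathbb{R}^{m}$ makes the sequence tight, whence the contribution outside a large ball is uniformly small and on the ball $\phi$ is uniformly continuous. The Lipschitz route is shorter and is the one I would write.
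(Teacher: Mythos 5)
Your proof is correct, but note that the paper itself does not prove this lemma at all: it is stated as a quoted result, with the proof deferred to \cite[Theorem 18.8]{Jacod-2003-PE}, so the comparison here is between your self-contained argument and a bare citation. Your route stays almost entirely inside the paper's toolkit: the step $Y_{k}\Longrightarrow c$ implies $Y_{k}\rightarrow c$ in probability is exactly the fact the paper quotes from \cite[Lemma 4.7]{Kallenberg-2002-FMP}; the telescoping decomposition is standard; term $(\mathrm{II})$ is handled correctly by shifting the test function; and for term $(\mathrm{I})$ the dominating variable $Z_{k}=\min\left(  2M,L\left\vert Y_{k}-c\right\vert \right)  $ together with Lemma~\ref{eq:lem-uniform-integrability} (applied with $p=2$, $q=1$, using $\left\Vert Z_{k}\right\Vert _{2}\leq2M$) does give $E\left(  Z_{k}\right)  \rightarrow0$. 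The one ingredient imported from outside the paper is the Portmanteau reduction to bounded Lipschitz test functions, which you correctly flag as the essential point: the definition (\ref{eq:def-weak-conv}) quantifies over all of $C_{b}\left(  \mathbb{R}^{m}\right)  $, and the pointwise Lipschitz estimate in $(\mathrm{I})$ fails for a merely continuous bounded $\phi$; either that reduction or your tightness alternative is needed, and both are standard and correctly invoked. What your argument buys is an explicit, constructive proof in the spirit of the paper's $L^{p}$ and in-probability machinery; what the citation buys is brevity. One simplification worth noting: Lemma~\ref{eq:lem-uniform-integrability} is heavier than needed for term $(\mathrm{I})$, since $Z_{k}$ is uniformly bounded and tends to $0$ in probability, so directly
\[
E\left(  Z_{k}\right)  \leq\varepsilon+2M\Pr\left(  Z_{k}>\varepsilon\right)
\rightarrow\varepsilon\quad\text{for every }\varepsilon>0,
\]
which gives $E\left(  Z_{k}\right)  \rightarrow0$ without any appeal to uniform integrability.
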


\begin{lemma}
[{Continuous mapping theorem, \cite[Theorem 2.3]{vanderVaart-2000-AS}}]If
$X_{n}$ are random elements with values on a metric space $M$, and $X_{k},X$
random elements with values in $M$, $f$ function from $M$ to another metric
space, the probability that $X$ attains value where $f$ is discontinuous is
zero, then $X_{k}\rightarrow X$ implies $f\left(  X_{k}\right)  \rightarrow
f\left(  X\right)  $. This is true for convergence in distribution,
convergence in probability, and convergence almost surely.
\end{lemma}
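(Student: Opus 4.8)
The plan is to handle the three modes of convergence separately, writing $(M,d)$ and $(M',d')$ for the two metric spaces, $D_f\subset M$ for the set of points at which $f$ is discontinuous, and using throughout the hypothesis $\Pr\left(X\in D_f\right)=0$. (The set $D_f$ is Borel, and $f$ is tacitly assumed measurable, so that $f\left(X_k\right)$ and $f\left(X\right)$ are genuine random elements and the sets below are measurable.) I would first dispose of the almost sure case, which is immediate: on the complement of a null set $\Theta_1$ we have $X_k\left(\omega\right)\to X\left(\omega\right)$, and on the complement of the null set $\Theta_2=X^{-1}\left(D_f\right)$ the map $f$ is continuous at $X\left(\omega\right)$. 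For $\omega\notin\Theta_1\cup\Theta_2$, continuity of $f$ at $X\left(\omega\right)$ gives $f\left(X_k\left(\omega\right)\right)\to f\left(X\left(\omega\right)\right)$, and since $\mu\left(\Theta_1\cup\Theta_2\right)=0$ we conclude $f\left(X_k\right)\to f\left(X\right)$ a.s.

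For convergence in probability I would reduce to the a.s. case via the subsequence principle (a sequence converges in probability to a limit in a metric space iff every subsequence has a further subsequence converging a.s.). Given any subsequence of $\left\{f\left(X_k\right)\right\}$, the corresponding subsequence of $\left\{X_k\right\}$ has, since $X_k\to X$ in probability, a further subsequence converging to $X$ a.s.; applying the already-proved a.s. statement to that sub-subsequence yields $f\left(X_{k_{j_i}}\right)\to f\left(X\right)$ a.s. Hence every subsequence of $\left\{f\left(X_k\right)\right\}$ has a further subsequence converging a.s. to $f\left(X\right)$, which gives $f\left(X_k\right)\to f\left(X\right)$ in probability. (Alternatively one argues directly: for $\varepsilon,\delta>0$ set $B_{\varepsilon,\delta}=\left\{x:\exists\,y,\ d\left(x,y\right)<\delta,\ d'\left(f\left(x\right),f\left(y\right)\right)>\varepsilon\right\}$, observe $\left\{d'\left(f\left(X_k\right),f\left(X\right)\right)>\varepsilon\right\}\subseteq\left\{X\in B_{\varepsilon,\delta}\right\}\cup\left\{d\left(X_k,X\right)\geq\delta\right\}$, take $\limsup_k$ and then let $\delta\downarrow0$, using $\bigcap_{\delta>0}B_{\varepsilon,\delta}\subseteq D_f$ and $\Pr\left(X\in D_f\right)=0$.)

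The substantive case is convergence in distribution, and the main external tool is the portmanteau theorem in the form: $X_k\Longrightarrow X$ if and only if $\limsup_k\Pr\left(X_k\in F\right)\leq\Pr\left(X\in F\right)$ for every closed set $F$. The one genuinely new idea is the topological inclusion
\[
\overline{f^{-1}\left(F\right)}\subseteq f^{-1}\left(F\right)\cup D_f\quad\text{for closed }F\subset M',
\]
which holds because if $x_n\in f^{-1}\left(F\right)$ with $x_n\to x$ and $f$ is continuous at $x$, then $f\left(x_n\right)\to f\left(x\right)$ with $f\left(x_n\right)\in F$ closed, forcing $f\left(x\right)\in F$, i.e. $x\in f^{-1}\left(F\right)$. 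I would then estimate
\[
\Pr\left(f\left(X_k\right)\in F\right)=\Pr\left(X_k\in f^{-1}\left(F\right)\right)\leq\Pr\bigl(X_k\in\overline{f^{-1}\left(F\right)}\bigr),
\]
apply portmanteau to $X_k\Longrightarrow X$ on the closed set $\overline{f^{-1}\left(F\right)}$, and use the inclusion together with $\Pr\left(X\in D_f\right)=0$ to obtain $\Pr\bigl(X\in\overline{f^{-1}\left(F\right)}\bigr)\leq\Pr\left(X\in f^{-1}\left(F\right)\right)=\Pr\left(f\left(X\right)\in F\right)$. Chaining these inequalities gives $\limsup_k\Pr\left(f\left(X_k\right)\in F\right)\leq\Pr\left(f\left(X\right)\in F\right)$ for every closed $F$, hence $f\left(X_k\right)\Longrightarrow f\left(X\right)$ by portmanteau. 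This distributional case is the part I expect to be the real obstacle: it is the only step that is not bookkeeping, requiring both the honest invocation of the portmanteau theorem and the closure inclusion above, whereas the a.s. and in-probability cases are essentially assembled from the definitions and standard facts.
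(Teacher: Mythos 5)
The paper never proves this lemma: it is stated purely as a citation to \cite[Theorem 2.3]{vanderVaart-2000-AS}, in line with the authors' announced policy of omitting proofs of standard results, so there is no internal proof to compare against. Measured against the cited source, your proposal holds up, and it is in fact essentially the proof given there: the almost sure case by intersecting the two null sets, the in-probability case by the subsequence principle (every subsequence of $\left\{ X_{k}\right\}$ has a further a.s.-convergent subsequence, to which the a.s. case applies), and the distributional case by the portmanteau characterization via closed sets combined with the closure inclusion $\overline{f^{-1}\left( F\right) }\subseteq f^{-1}\left( F\right) \cup D_{f}$, which you verify correctly. The distributional step is indeed the only one with real content, and your chain of inequalities through $\Pr \bigl( X_{k}\in \overline{f^{-1}\left( F\right) }\bigr)$ is exactly right.

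Two small technical remarks. First, in the parenthetical \emph{direct} argument for convergence in probability, the set $B_{\varepsilon ,\delta }$ is defined by an existential quantifier over $y\in M$ and need not be Borel, so $\left\{ X\in B_{\varepsilon ,\delta }\right\}$ is not obviously an event; one must either pass to outer probability or invoke universal measurability of analytic sets. Your primary route through the subsequence principle avoids this entirely, so the proof as structured is sound. Second, your parenthetical hygiene is slightly more careful than the paper's own statement: measurability of $f$ is genuinely needed so that $f\left( X_{k}\right)$ and $f\left( X\right)$ are random elements (while $D_{f}$ is automatically Borel, being the complement of the $G_{\delta }$ set of continuity points), and the lemma as printed in the paper leaves this hypothesis tacit.
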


\subsection{Karhunen-Lo\`{e}ve explansion and random orthonormal series}

\label{sec:smooth}

Suppose $U\in L^{2}\left(  \Omega,H\right)  $ with the covariance operator
$C=\operatorname*{Cov}\left(  X\right)  $. Since $C$ is compact self-adjoint
operator, there exists a complete orthonormal system $\left\{  u_{n}\right\}
$ of eigenvectors of $U$:%
\[
Cu_{n}=\lambda_{n}u_{n}.
\]
Assume for the moment that $E\left(  U\right)  =0$ and for a fixed $\omega
\in\Omega$, expand $U$ in an abstract Fourier series using the orthonormal
system $\left\{  u_{n}\right\}  $:%
\begin{equation}
U\left(  \omega\right)  =%
%TCIMACRO{\tsum \nolimits_{n=1}^{\infty}}%
%BeginExpansion
{\textstyle\sum\nolimits_{n=1}^{\infty}}
%EndExpansion
\theta_{n}\left(  \omega\right)  u_{n},\text{ where }\theta_{n}\left(
\omega\right)  =\left\langle U\left(  \omega\right)  ,u_{n}\right\rangle .
\label{eq:U-fourier-expansion}%
\end{equation}
So the sum (\ref{eq:U-fourier-expansion}) converges in $H$ a.s. Compute:%
\begin{align*}
E\left(  \theta_{n}\right)   &  =E\left(  \left\langle U,u_{m}\right\rangle
\right)  =\left\langle E\left(  U\right)  ,u_{m}\right\rangle =0\\
E\left(  \theta_{m}\theta_{n}\right)   &  =E\left(  \left\langle
U,u_{m}\right\rangle \left\langle U,u_{n}\right\rangle \right)  =\left\langle
u_{m},Cu_{n}\right\rangle =\left\{
\begin{array}
[c]{c}%
\lambda_{n}\text{ if }m=n\\
0\text{ if }m\neq0
\end{array}
\right.
\end{align*}

Back to the of general $E\left(  U\right)  $ and substituting $\theta
_{n}=\lambda_{n}^{1/2}\xi_{n}$ (and choosing suitable $\xi_{n}$ if
$\lambda_{n}=0$) we have $E\left(  \xi_{m}\xi_{n}\right)  =\delta_{mn}$ and
(\ref{eq:U-fourier-expansion})\ becomes the \emph{Karhunen-Lo\`{e}ve
expansion,}%
\begin{equation}
U=E\left(  U\right)  +%
%TCIMACRO{\tsum \nolimits_{n=1}^{\infty}}%
%BeginExpansion
{\textstyle\sum\nolimits_{n=1}^{\infty}}
%EndExpansion
\lambda_{n}^{1/2}\xi_{n}u_{n}\text{,\quad}E\left(  \xi_{n}\right)  =0.
\label{eq:KL-expansion}%
\end{equation}
convergent a.s. in $H$ and \emph{double-orthogonal}:%
\[
\left\langle u_{m},u_{n}\right\rangle _{H}=\delta_{mn},\quad\left\langle
\xi_{m},\xi_{n}\right\rangle _{L^{2}\left(  \Omega\right)  }=\delta_{mn}%
\]
because $\left\langle \xi_{m},\xi_{n}\right\rangle _{L^{2}\left(
\Omega\right)  }=E\left(  \xi_{m}\xi_{n}\right)  $. See \cite{Loeve-1963-PT}
for further details.

Karhunen-Lo\`{e}ve explansion is used in practice in many ways. Here are some:

\begin{enumerate}
\item \emph{Data analysis}: Estimate the covariance from data (realizations of
$U$) by sample covariance, and compute the eigenvalues and eigenvectors to
represent the random element $U$. This is known as \emph{Principal Component
Analysis (PCA), Karhunen-Lo\`{e}ve transform (KLT), or Proper Orthogonal
Decomposition (POD)}. The components with several largest eigenvalues
$\lambda_{n}$ are responsible for most of the variance in the random element
$U$. (The practical computation is done by SVD of the data minus sample mean,
which is much less expensive and less prone to numerical errors than computing
the sample covariance first.)

\item Prescribe the covariance so that it has suitable eigenvectors, and use
the Karhunen-Lo\`{e}ve expansion to \emph{generate the random element} $U$
(rather, the first few terms to generate a version of $U$ in finite
dimension). This is the method of random Fourier series in Section
\ref{sec:laplace}, where the covariance was chosen so that it has
trigonometric functions as its eigenvectors.

\item Use the first few terms Karhunen-Lo\`{e}ve expansion of the to build
random coefficients and assumed form of the solution (called trial space in
variational methods) of \emph{stochastic partial differential equations}. The
solution is then found numerically as a \emph{deterministic function of a
small number random variables} $\xi_{1},\ldots,\xi_{n}$. This is the
foundation of methods such as \emph{stochastic Galerkin}
\cite{Babuska-2002-EES}, \emph{stochastic collocation} \cite{Ganis-2008-SCM}
and \emph{polynomial chaos} \cite{Xiu-2010-NMS}.
\end{enumerate}

\section{Inequalities}

\subsection{Cauchy-Schwarz inequality}

In the Hilbert space $H$, the Cauchy-Schwarz inequality reads%
\begin{equation}
\left\vert \left\langle X,Y\right\rangle \right\vert \leq\left\vert
X\right\vert \left\vert Y\right\vert . \label{eq:Cauchy-Schwarz-H}%
\end{equation}

For random variables $X$ and $Y$, the Cauchy-Schwarz inequality in
$L^{2}\left(  \Omega,\mathbb{R}\right)  $,%
\[
\left\vert \int_{\Omega}XYd\omega\right\vert ^{2}\leq\int_{\Omega}\left\vert
X\right\vert ^{2}d\omega\int_{\Omega}\left\vert Y\right\vert ^{2}d\omega
\]
becomes%
\begin{equation}
\left\vert E\left(  XY\right)  \right\vert \leq\left\Vert X\right\Vert
_{2}\left\Vert Y\right\Vert _{2}. \label{eq:cauchy-random-variables}%
\end{equation}
This is a special case (with $H=\mathbb{R}$) of Cauchy-Schwarz inequality in
the Hilbert space $L^{2}\left(  \Omega,H\right)  $ below.

\begin{lemma}
If $X,Y\in L^{2}\left(  \Omega,H\right)  $, then
\begin{align}
\left\vert E\left(  \left\langle X,Y\right\rangle \right)  \right\vert  &
\leq\left\Vert X\right\Vert _{2}\left\Vert Y\right\Vert _{2}%
,\label{eq:cauchy-random-elements}\\
\left\Vert X\otimes Y\right\Vert _{1}  &  \leq\left\Vert X\right\Vert
_{2}\left\Vert Y\right\Vert _{2}, \label{eq:mean-tensor-bound}%
\end{align}
and%
\begin{equation}
\left\Vert X\right\Vert _{1}\leq\left\Vert X\right\Vert _{2}.
\label{eq:L12-comparison}%
\end{equation}

\end{lemma}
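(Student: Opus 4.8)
The plan is to reduce all three inequalities to the scalar Cauchy--Schwarz inequality (\ref{eq:cauchy-random-variables}), applied to the real-valued random variables $\left\vert X\right\vert$ and $\left\vert Y\right\vert$, by first passing from $H$-valued quantities to scalar ones via the pointwise norm identities already established. The preliminary observation is that, since $X,Y\in L^{2}\left(\Omega,H\right)$, the functions $\left\vert X\right\vert$ and $\left\vert Y\right\vert$ are random variables lying in $L^{2}\left(\Omega,\mathbb{R}\right)$, with $\left\Vert\,\left\vert X\right\vert\,\right\Vert_{2}=\left\Vert X\right\Vert_{2}$ and likewise for $Y$; this is precisely what makes the scalar inequality applicable.

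For (\ref{eq:cauchy-random-elements}), I would begin from the pointwise Cauchy--Schwarz inequality (\ref{eq:Cauchy-Schwarz-H}) in $H$, which gives $\left\vert\left\langle X\left(\omega\right),Y\left(\omega\right)\right\rangle\right\vert\leq\left\vert X\left(\omega\right)\right\vert\left\vert Y\left(\omega\right)\right\vert$ for every $\omega$. Taking expectations and using $\left\vert E\left(Z\right)\right\vert\leq E\left(\left\vert Z\right\vert\right)$ for the real random variable $Z=\left\langle X,Y\right\rangle$, followed by (\ref{eq:cauchy-random-variables}) applied to $\left\vert X\right\vert$ and $\left\vert Y\right\vert$, yields
\[
\left\vert E\left(\left\langle X,Y\right\rangle\right)\right\vert\leq E\left(\left\vert X\right\vert\left\vert Y\right\vert\right)\leq\left\Vert X\right\Vert_{2}\left\Vert Y\right\Vert_{2}.
\]

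For (\ref{eq:mean-tensor-bound}), the key observation is that the operator-norm identity (\ref{eq:tensor-norm}) gives the pointwise equality $\left\vert X\left(\omega\right)\otimes Y\left(\omega\right)\right\vert=\left\vert X\left(\omega\right)\right\vert\left\vert Y\left(\omega\right)\right\vert$. Hence $\left\Vert X\otimes Y\right\Vert_{1}=E\left(\left\vert X\otimes Y\right\vert\right)=E\left(\left\vert X\right\vert\left\vert Y\right\vert\right)$, and the same application of (\ref{eq:cauchy-random-variables}) closes the bound. Finally, (\ref{eq:L12-comparison}) is the special case obtained by pairing $\left\vert X\right\vert$ with the constant $1$ in (\ref{eq:cauchy-random-variables}), since $\left\Vert 1\right\Vert_{2}=1$ and $E\left(\left\vert X\right\vert\right)=\left\Vert X\right\Vert_{1}$; this is exactly the computation already displayed in the $L^{p}$ subsection.

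I do not expect a genuine obstacle here, since all three statements follow immediately from the scalar case together with the pointwise norm identities. The only points needing care are bookkeeping rather than analysis: confirming that $\left\langle X,Y\right\rangle$ and the operator norm $\left\vert X\otimes Y\right\vert$ are measurable real-valued functions, so that the expectations are legitimate, and checking integrability, which is automatic once the right-hand sides are seen to be finite. In particular, the finiteness of $\left\Vert X\otimes Y\right\Vert_{1}$, and hence membership of $X\otimes Y$ in $L^{1}\left(\Omega,\left[H\right]\right)$, falls out of the bound itself.
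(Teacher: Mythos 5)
Your proof is correct, and for (\ref{eq:mean-tensor-bound}) and (\ref{eq:L12-comparison}) it coincides with the paper's: both reduce to the scalar Cauchy--Schwarz inequality (\ref{eq:cauchy-random-variables}) via the pointwise identity $\left\vert X\otimes Y\right\vert =\left\vert X\right\vert \left\vert Y\right\vert$ from (\ref{eq:tensor-norm}), and pair $\left\vert X\right\vert$ with the constant $1$. The only divergence is (\ref{eq:cauchy-random-elements}): the paper disposes of it in one line by citing the abstract Cauchy--Schwarz inequality in the Hilbert space $L^{2}\left(\Omega,H\right)$, whose inner product is exactly $E\left(\left\langle X,Y\right\rangle\right)$ (Section \ref{sec:Lp}), whereas you derive it in two steps --- pointwise Cauchy--Schwarz in $H$, then scalar Cauchy--Schwarz for the random variables $\left\vert X\right\vert$ and $\left\vert Y\right\vert$. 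Your route is marginally more elementary, since it never uses that $L^{2}\left(\Omega,H\right)$ carries an inner product, and it yields the slightly stronger chain $\left\vert E\left(\left\langle X,Y\right\rangle\right)\right\vert \leq E\left(\left\vert X\right\vert \left\vert Y\right\vert\right) \leq \left\Vert X\right\Vert _{2}\left\Vert Y\right\Vert _{2}$, exhibiting the same quantity $E\left(\left\vert X\right\vert \left\vert Y\right\vert\right)$ that controls the tensor bound; the paper's citation is shorter but presupposes the Hilbert structure of $L^{2}\left(\Omega,H\right)$. A minor point in your favor: you bound $\left\Vert X\otimes Y\right\Vert _{1}=E\left(\left\vert X\otimes Y\right\vert\right)$ directly, which is what (\ref{eq:mean-tensor-bound}) asserts, while the paper's displayed chain starts from the weaker-looking quantity $\left\vert E\left(X\otimes Y\right)\right\vert$ and its final expression drops the factor $\left\Vert Y\right\Vert _{2}$ (a typo); your version is the clean one. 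Your measurability remarks are fine and match what the paper establishes in Section \ref{sec:Lp}.
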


\begin{proof}
Inequality (\ref{eq:cauchy-random-elements}) is Cauchy-Schwarz inequality in
the Hilbert space $L^{2}\left(  \Omega,H\right)  $. We have used it few times
already, this is for completeness only. Inequality (\ref{eq:mean-tensor-bound}%
) follows from (\ref{eq:tensor-norm}),%
\[
\left\vert E\left(  X\otimes Y\right)  \right\vert \leq E\left(  \left\vert
X\otimes Y\right\vert \right)  =E\left(  \left\vert X\right\vert \left\vert
Y\right\vert \right)  \leq\left\Vert X\right\Vert _{2},
\]
using Cauchy-Schwarz inequality for the random variables $\left\vert
X\right\vert $ and $\left\vert Y\right\vert $. From
(\ref{eq:cauchy-random-elements}),%
\[
E\left(  \left\vert X\right\vert \right)  =E\left(  1\left\vert X\right\vert
\right)  \leq E\left(  1\right)  ^{1/2}E\left(  \left\vert X\right\vert
^{2}\right)  ^{1/2}=\left\Vert X\right\Vert _{2},
\]
which yields (\ref{eq:L12-comparison}).
\end{proof}

\begin{lemma}
For any $U$, $V\in L^{p}\left(  \Omega,H\right)  $,%
\begin{align}
\left\Vert U\otimes V\right\Vert _{p}  &  =\left\Vert \left\vert U\right\vert
\left\vert V\right\vert \right\Vert _{p}=\left(  E\left(  \left\vert
U\right\vert ^{p}\left\vert V\right\vert ^{p}\right)  \right)  ^{1/p}%
\label{eq:tensor-high-cauchy}\\
&  \leq E\left(  \left\vert U\right\vert ^{2p}\right)  ^{1/2p}E\left(
\left\vert U\right\vert ^{2p}\right)  ^{1/2p}=\left\Vert U\right\Vert
_{2p}\left\Vert U\right\Vert _{2p}.\nonumber
\end{align}

\end{lemma}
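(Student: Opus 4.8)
The plan is to reduce the whole statement to two facts already established: the operator-norm identity for tensor products, $|x\otimes y|=|x||y|$ from (\ref{eq:tensor-norm}), and the scalar Cauchy-Schwarz inequality for random variables (\ref{eq:cauchy-random-variables}). There is no genuine infinite-dimensional difficulty here; the content is entirely the bookkeeping of norms, together with one integrability caveat noted below.

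First I would dispose of the two equalities. The map $\omega\mapsto U(\omega)\otimes V(\omega)$ is measurable into $[H]$ by continuity of the tensor product (\ref{eq:tensor-continuous}), so $U\otimes V$ is a genuine operator-valued random element and $\|U\otimes V\|_p=(E(|U\otimes V|^p))^{1/p}$ is well defined, where $|\cdot|$ is the operator norm. Applying (\ref{eq:tensor-norm}) pointwise in $\omega$ gives $|(U\otimes V)(\omega)|=|U(\omega)|\,|V(\omega)|$, a nonnegative real random variable. Hence $\|U\otimes V\|_p=(E((|U||V|)^p))^{1/p}=\||U||V|\|_p=(E(|U|^p|V|^p))^{1/p}$, which is exactly the first displayed line of the claim.

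For the inequality I would apply (\ref{eq:cauchy-random-variables}) to the nonnegative random variables $X=|U|^p$ and $Y=|V|^p$, obtaining $E(|U|^p|V|^p)=E(XY)\le\|X\|_2\|Y\|_2=(E(|U|^{2p}))^{1/2}(E(|V|^{2p}))^{1/2}$. Raising both sides to the power $1/p$ and using the definition $\|U\|_{2p}=(E(|U|^{2p}))^{1/(2p)}$ (and likewise for $V$) yields $(E(|U|^p|V|^p))^{1/p}\le\|U\|_{2p}\|V\|_{2p}$, which closes the chain (the second factor on the right of the statement should of course read $\|V\|_{2p}$, not $\|U\|_{2p}$).

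The only point deserving care, and the closest thing to an obstacle, is integrability. Cauchy-Schwarz in $L^2(\Omega)$ presumes $|U|^p,|V|^p\in L^2(\Omega)$, i.e.\ $U,V\in L^{2p}$; when this fails the right-hand side $\|U\|_{2p}\|V\|_{2p}$ equals $+\infty$ and the inequality holds vacuously, while when it holds the same bound shows $E(|U|^p|V|^p)<\infty$, so $U\otimes V\in L^p(\Omega,[H])$ and the equalities relate finite quantities. I would therefore read the whole chain as a statement in $[0,\infty]$, so that the argument is uniform and the stated hypothesis $U,V\in L^p$ is harmless, the $L^{2p}$ integrability being precisely what makes the final bound informative.
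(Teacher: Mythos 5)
Your proof is correct and takes essentially the same route as the paper, whose entire argument is the one-line application of the Cauchy--Schwarz inequality to the random variables $\left\vert U\right\vert ^{p}$ and $\left\vert V\right\vert ^{p}$, with the equalities coming from the pointwise identity $\left\vert x\otimes y\right\vert =\left\vert x\right\vert \left\vert y\right\vert $ of (\ref{eq:tensor-norm}) exactly as you say. Your additional observations --- that the factors $\left\Vert U\right\Vert _{2p}\left\Vert U\right\Vert _{2p}$ in the statement should read $\left\Vert U\right\Vert _{2p}\left\Vert V\right\Vert _{2p}$, and that under the stated hypothesis $U,V\in L^{p}$ the chain should be read in $[0,\infty]$ since the right-hand side is finite only when $U,V\in L^{2p}$ --- are both accurate refinements that the paper leaves implicit.
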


\begin{proof}
The proof follows from Cauchy-Schwarz inequality for the random variables
$\left\vert U\right\vert ^{p}$ and $\left\vert V\right\vert ^{p}$.
\end{proof}

\subsection{H\"{o}lder's inequality}

Let $1<p,q<\infty$, and $1/p+1/q=1$. For random variables $X$ and $Y$,
H\"{o}lder's inequality for integrals,%
\[
\left\vert \int_{\Omega}XYd\omega\right\vert \leq\left(  \int_{\Omega
}\left\vert X\right\vert ^{p}d\omega\right)  ^{1/p}\left(  \int_{\Omega
}\left\vert Y\right\vert ^{p}d\omega\right)  ^{1/q},
\]
becomes%
\begin{equation}
\left\vert E\left(  XY\right)  \right\vert \leq E\left(  \left\vert
X\right\vert ^{p}\right)  ^{1/p}E\left(  \left\vert Y\right\vert ^{q}\right)
^{1/q}. \label{eq:Holder-scalar}%
\end{equation}
Estimates for $H$-valued random elements follow.

\begin{lemma}
\label{lem:holder}If $1<p,q<\infty$, $1/p+1/q=1$, and $X\in L^{p}\left(
\Omega,H\right)  $, $Y\in L^{q}\left(  \Omega,H\right)  $, then%
\begin{align}
E\left(  \left\vert X\right\vert \left\vert Y\right\vert \right)   &
\leq\left\Vert X\right\Vert _{p}\left\Vert Y\right\Vert _{q}%
,\label{eq:holder-norm}\\
\left\Vert \left\langle X,Y\right\rangle \right\Vert _{1}  &  \leq\left\Vert
X\right\Vert _{p}\left\Vert Y\right\Vert _{q}, \label{eq:holder-inner-product}%
\\
\left\Vert X\otimes Y\right\Vert _{1}  &  \leq\left\Vert X\right\Vert
_{p}\left\Vert Y\right\Vert _{q}, \label{eq:holder-tensor}%
\end{align}

\end{lemma}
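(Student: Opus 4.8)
The plan is to reduce all three inequalities to the scalar Hölder inequality (\ref{eq:Holder-scalar}) applied to the nonnegative random variables $|X|$ and $|Y|$, which are genuine real-valued random variables by the discussion in Section~\ref{sec:Lp}. First I would establish (\ref{eq:holder-norm}), which is the heart of the matter. Observing that $\left(E(|X|^{p})\right)^{1/p}=\|X\|_{p}$ and $\left(E(|Y|^{q})\right)^{1/q}=\|Y\|_{q}$, I apply (\ref{eq:Holder-scalar}) with the random variables $|X|$ and $|Y|$ in the roles of $X$ and $Y$; since $|X|,|Y|\geq0$ the absolute value on the left disappears and we obtain
\[
E\left(|X||Y|\right)\leq E\left(|X|^{p}\right)^{1/p}E\left(|Y|^{q}\right)^{1/q}=\|X\|_{p}\|Y\|_{q},
\]
which is exactly (\ref{eq:holder-norm}). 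The hypotheses $X\in L^{p}$ and $Y\in L^{q}$ guarantee that both factors on the right are finite.

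Next I would deduce (\ref{eq:holder-inner-product}) and (\ref{eq:holder-tensor}) as immediate consequences, each by a pointwise-in-$\omega$ domination followed by (\ref{eq:holder-norm}). For the inner product, $\langle X,Y\rangle$ is a real-valued random variable and the Cauchy-Schwarz inequality (\ref{eq:Cauchy-Schwarz-H}) in $H$ gives $\left|\langle X(\omega),Y(\omega)\rangle\right|\leq|X(\omega)||Y(\omega)|$ for every $\omega$; taking expectations and using (\ref{eq:holder-norm}),
\[
\|\langle X,Y\rangle\|_{1}=E\left(\left|\langle X,Y\rangle\right|\right)\leq E\left(|X||Y|\right)\leq\|X\|_{p}\|Y\|_{q}.
\]
For the tensor product I would invoke the norm identity (\ref{eq:tensor-norm}), which yields $|X(\omega)\otimes Y(\omega)|=|X(\omega)||Y(\omega)|$ pointwise, whence
\[
\|X\otimes Y\|_{1}=E\left(|X\otimes Y|\right)=E\left(|X||Y|\right)\leq\|X\|_{p}\|Y\|_{q}.
\]

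The argument is essentially mechanical, so there is no deep obstacle; the only points deserving a moment's care are measurability and the very meaning of the left-hand sides. For (\ref{eq:holder-inner-product}) I should note that $\langle X,Y\rangle$ is measurable because the inner product is continuous and, in the separable space $H$, weak and strong measurability coincide (Pettis); for (\ref{eq:holder-tensor}) the map $\omega\mapsto X(\omega)\otimes Y(\omega)$ is measurable into $[H]$ by the continuity of the tensor product recorded in (\ref{eq:tensor-continuous}). In both cases the finiteness of the right-hand side shows not merely that the stated bound holds but also that $\langle X,Y\rangle\in L^{1}(\Omega,\mathbb{R})$ and $X\otimes Y\in L^{1}(\Omega,[H])$, so the $L^{1}$ norms on the left are well defined.
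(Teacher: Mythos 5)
Your proof is correct and follows the same route as the paper's: scalar H\"{o}lder applied to the random variables $\left\vert X\right\vert$ and $\left\vert Y\right\vert$ gives (\ref{eq:holder-norm}), then the pointwise bounds from Cauchy-Schwarz in $H$ and from (\ref{eq:tensor-norm}) reduce (\ref{eq:holder-inner-product}) and (\ref{eq:holder-tensor}) to it. The added remarks on measurability and well-definedness of the $L^{1}$ norms are a reasonable bit of extra care that the paper leaves implicit.
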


\begin{proof}
Inequality (\ref{eq:holder-norm}) is H\"{o}lder's inequality
(\ref{eq:Holder-scalar}) applied to the random variables $\left\vert
X\right\vert $ and $\left\vert Y\right\vert $. (\ref{eq:holder-inner-product})
follows from (\ref{eq:holder-norm}) by Cauchy-Schwarz inequality
(\ref{eq:Cauchy-Schwarz-H}) in $H$,%
\[
E\left(  \left\vert \left\langle X,Y\right\rangle \right\vert \right)  \leq
E\left(  \left\vert X\right\vert \left\vert Y\right\vert \right)  .
\]
Finally, (\ref{eq:tensor-norm}),%
\[
E\left(  \left\vert X\otimes Y\right\vert \right)  \leq E\left(  \left\vert
X\right\vert \left\vert Y\right\vert \right)
\]
and (\ref{eq:holder-norm}) give (\ref{eq:holder-tensor}).
\end{proof}

Lemma \ref{lem:holder} provides bounds on the $L^{1}\left(  \Omega,H\right)  $
norm. Bounds on higher norms are similar.

\begin{lemma}
\label{lem:holder-high}If $1<p,q<\infty$, $1/p+1/q=1$, $s\geq1$, and $X\in
L^{ps}\left(  \Omega,H\right)  $, $Y\in L^{qs}\left(  \Omega,H\right)  $, then%
\begin{align}
\left\Vert \left\vert X\right\vert \left\vert Y\right\vert \right\Vert _{s}
&  \leq\left\Vert X\right\Vert _{ps}\left\Vert Y\right\Vert _{qs}%
,\label{eq:holder-norm-high}\\
\left\Vert \left\langle X,Y\right\rangle \right\Vert _{s}  &  \leq\left\Vert
X\right\Vert _{ps}\left\Vert Y\right\Vert _{qs},\label{eq:holder-inner-high}\\
\left\Vert X\otimes Y\right\Vert _{s}  &  \leq\left\Vert X\right\Vert
_{ps}\left\Vert Y\right\Vert _{qs}. \label{eq:holder-tensor-high}%
\end{align}

\end{lemma}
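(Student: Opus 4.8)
The plan is to reduce all three inequalities to the first one, (\ref{eq:holder-norm-high}), and to prove that one by applying the scalar H\"{o}lder inequality (\ref{eq:Holder-scalar}) not to $|X|$ and $|Y|$ directly, but to the nonnegative random variables $|X|^{s}$ and $|Y|^{s}$. This is the same structural idea as in Lemma \ref{lem:holder}, with the extra power $s$ absorbed into the arguments of scalar H\"{o}lder so that the conjugate pair $(p,q)$ produces exactly the exponents $ps$ and $qs$ on the right-hand side.

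First I would establish (\ref{eq:holder-norm-high}). Unwinding the definition of the $L^{s}$ norm,
\[
\left\Vert \left\vert X\right\vert \left\vert Y\right\vert \right\Vert _{s}^{s}=E\left(  \left\vert X\right\vert ^{s}\left\vert Y\right\vert ^{s}\right)  .
\]
Since $X\in L^{ps}\left(  \Omega,H\right)  $ and $Y\in L^{qs}\left(  \Omega,H\right)  $, the random variables $\left\vert X\right\vert ^{s}$ and $\left\vert Y\right\vert ^{s}$ lie in $L^{p}\left(  \Omega,\mathbb{R}\right)  $ and $L^{q}\left(  \Omega,\mathbb{R}\right)  $ respectively, so scalar H\"{o}lder (\ref{eq:Holder-scalar}) applies to them and gives
\[
E\left(  \left\vert X\right\vert ^{s}\left\vert Y\right\vert ^{s}\right)  \leq E\left(  \left\vert X\right\vert ^{ps}\right)  ^{1/p}E\left(  \left\vert Y\right\vert ^{qs}\right)  ^{1/q}.
\]
Taking $s$-th roots and recognizing $E\left(  \left\vert X\right\vert ^{ps}\right)  ^{1/\left(  ps\right)  }=\left\Vert X\right\Vert _{ps}$ and $E\left(  \left\vert Y\right\vert ^{qs}\right)  ^{1/\left(  qs\right)  }=\left\Vert Y\right\Vert _{qs}$ yields (\ref{eq:holder-norm-high}).

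Next, (\ref{eq:holder-inner-high}) and (\ref{eq:holder-tensor-high}) follow by pointwise domination together with the monotonicity of the $L^{s}$ norm on nonnegative random variables. By the Cauchy--Schwarz inequality (\ref{eq:Cauchy-Schwarz-H}) in $H$ we have $\left\vert \left\langle X,Y\right\rangle \right\vert \leq\left\vert X\right\vert \left\vert Y\right\vert $ a.s., and by the tensor-norm identity (\ref{eq:tensor-norm}) we have $\left\vert X\otimes Y\right\vert =\left\vert X\right\vert \left\vert Y\right\vert $ a.s. Hence both $\left\Vert \left\langle X,Y\right\rangle \right\Vert _{s}$ and $\left\Vert X\otimes Y\right\Vert _{s}$ are bounded above by $\left\Vert \left\vert X\right\vert \left\vert Y\right\vert \right\Vert _{s}$, and (\ref{eq:holder-norm-high}) closes both estimates.

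I expect no genuine obstacle here; the whole proof is a one-line invocation of scalar H\"{o}lder followed by two pointwise dominations. The only point worth stating explicitly is the exponent bookkeeping: one must feed $\left\vert X\right\vert ^{s}$ and $\left\vert Y\right\vert ^{s}$ (rather than $\left\vert X\right\vert $, $\left\vert Y\right\vert $) into (\ref{eq:Holder-scalar}), and correspondingly check that the hypotheses $X\in L^{ps}$, $Y\in L^{qs}$ are exactly what is needed to place $\left\vert X\right\vert ^{s}\in L^{p}$ and $\left\vert Y\right\vert ^{s}\in L^{q}$ so that scalar H\"{o}lder is legitimately applicable.
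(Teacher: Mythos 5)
Your proof is correct and matches the paper's argument exactly: the paper likewise applies the scalar H\"{o}lder inequality to the random variables $\left\vert X\right\vert^{s}$ and $\left\vert Y\right\vert^{s}$, and then notes that (\ref{eq:holder-inner-high}) and (\ref{eq:holder-tensor-high}) follow by the same pointwise dominations (Cauchy--Schwarz in $H$ and the identity $\left\vert X\otimes Y\right\vert=\left\vert X\right\vert\left\vert Y\right\vert$) used in Lemma \ref{lem:holder}. Your version merely spells out the exponent bookkeeping that the paper leaves implicit.
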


\begin{proof}
By H\"{o}lder inequality for the random variables $\left\vert X\right\vert
^{s}$ and $\left\vert Y\right\vert ^{s}$,%
\[
E\left(  \left\vert X\right\vert ^{s}\left\vert Y\right\vert ^{s}\right)  \leq
E\left(  \left\vert X\right\vert ^{sp}\right)  ^{1/p}E\left(  \left\vert
Y\right\vert ^{sq}\right)  ^{1/q}.
\]
The proofs of (\ref{eq:holder-inner-high}) and (\ref{eq:holder-tensor-high})
carry over from the proof of Lemma \ref{lem:holder}.
\end{proof}

We now generalize (\ref{eq:L12-comparison}) to arbitrary $L^{p}$ spaces.

\begin{lemma}
If $1\leq s<t$ and $X\in L^{t}\left(  \Omega,H\right)  $, then%
\begin{equation}
\left\Vert X\right\Vert _{s}\leq\left\Vert X\right\Vert _{t}.
\label{eq:Lp-comparison}%
\end{equation}

\end{lemma}

\begin{proof}
Choose $p>1$ so that $t=ps$, apply H\"{o}lder inequality for the random
variables $\left\vert X\right\vert ^{s}$ and $1$,
\[
E\left(  \left\vert X\right\vert ^{s}1^{s}\right)  \leq E\left(  \left\vert
X\right\vert ^{sp}\right)  ^{1/p}E\left(  1^{sq}\right)  ^{1/q}%
\]
and note that $\left\Vert 1\right\Vert _{qs}=1$ since $1$ is constant.
\end{proof}

\subsection{Chebychev's inequality}

The generalized Chebychev's inequality for a nonnegative random variable $U$
states that if $f$ is a measurable, nonnegative, nondecreasing function on
$\left(  0,+\infty\right)  $, then%
\[
\Pr\left(  f\left(  U\right)  >t\right)  \leq\frac{1}{f\left(  t\right)
}E\left(  f\left(  U\right)  \right)  .
\]
For $X\in L^{p}$, the choices $f\left(  t\right)  =t$ and $U=\left\vert
X\right\vert ^{p}$ yield%
\[
\Pr\left(  \left\vert X\right\vert ^{p}>t\right)  \leq\frac{1}{t}E\left(
\left\vert X\right\vert ^{p}\right)  ,\quad\forall t>0,
\]
or, equivalently, by the substitution $t=\theta^{p}$,%
\begin{equation}
\Pr\left(  \left\vert X\right\vert >\theta\right)  \leq\left(  \frac
{\left\Vert X\right\Vert _{p}}{\theta}\right)  ^{p},\quad\forall\theta>0,\quad
p\geq1. \label{eq:chebychev-p}%
\end{equation}

\subsection{Marcinkiewicz-Zygmund inequality}

\begin{theorem}
[{\cite{Marcinkiewicz-1937-FI}, \cite[p. 367]{Chow-1988-PT}}]If $1\leq
p<+\infty$ and $X_{k}$, $k=1,\ldots,n$, are independent random variables such
that $E\left(  X_{k}\right)  =0$ and $E\left(  \left\vert X_{k}\right\vert
^{p}\right)  <+\infty$, then%
\begin{equation}
A_{p}E\left(  \left(  \sum_{k=1}^{n}\left\vert X_{k}\right\vert ^{2}\right)
^{p/2}\right)  \leq E\left(  \left\vert \sum_{k=1}^{n}X_{k}\right\vert
^{p}\right)  \leq B_{p}E\left(  \left(  \sum_{k=1}^{n}\left\vert
X_{k}\right\vert ^{2}\right)  ^{p/2}\right)  \label{eq:M-Z-inequality}%
\end{equation}
where $A_{p}$ and $B_{p}$ are positive constants, which depend only on $p$.
\end{theorem}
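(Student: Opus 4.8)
The plan is to reduce to the case of \emph{symmetric} summands and then invoke \emph{Khintchine's inequality}, which is precisely \eqref{eq:M-Z-inequality} specialized to Rademacher variables $\varepsilon_k$ (independent, equal to $\pm 1$ with probability $1/2$) and deterministic coefficients $c_k$:
\[
a_p\left(\sum_{k=1}^n c_k^2\right)^{1/2}\leq\left(E\left\vert\sum_{k=1}^n\varepsilon_k c_k\right\vert^p\right)^{1/p}\leq b_p\left(\sum_{k=1}^n c_k^2\right)^{1/2}.
\]
I would first establish this separately: the upper bound follows from the moment bound $E(e^{t\sum\varepsilon_k c_k})\leq e^{t^2\sum c_k^2/2}$ together with Chebychev-type optimization, and the lower bound follows by duality (or interpolation) from the upper bound and the exact identity $E\vert\sum\varepsilon_k c_k\vert^2=\sum c_k^2$.

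Next I would treat the symmetric case, assuming each $X_k$ has a symmetric distribution. Let $(\varepsilon_k)$ be a Rademacher sequence independent of $(X_k)$. Symmetry and independence give $(\varepsilon_k X_k)\stackrel{d}{=}(X_k)$ jointly, so $E\vert\sum_k X_k\vert^p=E\vert\sum_k\varepsilon_k X_k\vert^p$. Conditioning on $X_k=x_k$ and applying Khintchine's inequality with $c_k=x_k$, then integrating over $(X_k)$ by Tonelli (everything is nonnegative), yields both inequalities of \eqref{eq:M-Z-inequality} for symmetric $X_k$, with $A_p=a_p^p$ and $B_p=b_p^p$.

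The remaining step is symmetrization of the general centered case. Introduce independent copies $X_k'$ and set $\tilde X_k=X_k-X_k'$, which is symmetric. Conditional Jensen (using $E(X_k')=0$ and convexity of $t\mapsto\vert t\vert^p$) gives $E\vert\sum_k X_k\vert^p\leq E\vert\sum_k\tilde X_k\vert^p$, while $\vert a+b\vert^p\leq 2^{p-1}(\vert a\vert^p+\vert b\vert^p)$ gives $E\vert\sum_k\tilde X_k\vert^p\leq 2^p E\vert\sum_k X_k\vert^p$. Combining with the symmetric case, it remains only to compare $E((\sum_k X_k^2)^{p/2})$ with $E((\sum_k\tilde X_k^2)^{p/2})$. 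The upper bound of \eqref{eq:M-Z-inequality} uses $\tilde X_k^2\leq 2(X_k^2+(X_k')^2)$ with subadditivity (if $p<2$) or convexity (if $p\geq 2$) of $t\mapsto t^{p/2}$, and works for all $p\geq 1$. For the lower bound, when $p\geq 2$ one has $E\vert X_k\vert^p<\infty\Rightarrow\operatorname{Var}(X_k)<\infty$, and conditional Jensen gives $E((\sum_k\tilde X_k^2)^{p/2}\mid(X_k))\geq(\sum_k E(\tilde X_k^2\mid X_k))^{p/2}=(\sum_k(X_k^2+\operatorname{Var}(X_k)))^{p/2}\geq(\sum_k X_k^2)^{p/2}$.

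I expect the main obstacle to be the lower bound of \eqref{eq:M-Z-inequality} in the range $1\leq p<2$. There $t\mapsto t^{p/2}$ is concave, so the conditional Jensen comparison runs the wrong way; moreover $E\vert X_k\vert^p<\infty$ no longer forces $\operatorname{Var}(X_k)<\infty$, so the quadratic term need not even be integrable and can genuinely shrink pointwise under symmetrization. Resolving this requires a more delicate \emph{desymmetrization}: one compares the tail probabilities $\Pr(\sum_k X_k^2>t)$ and $\Pr(\sum_k\tilde X_k^2>t)$ through a weak-symmetrization (median) argument, exploiting that a symmetric difference dominates, in distribution up to a fixed factor, the centering of $X_k$, and then transfers this to the $L^{p/2}$ quasi-norm by integrating the tails. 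Apart from this point, and the self-contained proof of Khintchine's inequality, every step is a routine application of Jensen, Tonelli, and the elementary convexity inequalities already used in the preceding sections.
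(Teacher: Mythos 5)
The paper itself gives no proof of this theorem: it is stated as a classical result attributed to the cited sources, so there is no internal argument to compare against, and what follows assesses your proposal on its own merits. Your route---Khintchine's inequality, the symmetric case by conditioning and Tonelli, then symmetrization---is the classical one, and most of it is correct: the symmetric case, the upper bound for general centered summands, and the lower bound for $p\geq2$ are all fine as you describe them.

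The genuine gap is exactly where you place it, the lower bound for $1\leq p<2$, and your proposed repair does not close it. Weak symmetrization bounds the tails of $X_{k}-m_{k}$ ($m_{k}$ a median) by those of $\widetilde{X}_{k}=X_{k}-X_{k}^{\prime}$; after integrating tails you are still left with the centering contribution (the medians, or the median of the square function itself), and bounding that by $E((\sum_{k}\widetilde{X}_{k}^{2})^{p/2})$ or by $E\vert\sum_{k}X_{k}\vert^{p}$ is a problem of essentially the same difficulty as the one you started with; nothing in your sketch addresses it. The obstacle, however, is illusory, and the fix is one line: treat the square function as a norm rather than coordinate by coordinate. Let $Z=(X_{1},\ldots,X_{n})$ and $Z^{\prime}=(X_{1}^{\prime},\ldots,X_{n}^{\prime})$ be the corresponding random vectors in $\mathbb{R}^{n}$ with the Euclidean norm, so that $(\sum_{k}X_{k}^{2})^{1/2}=\vert Z\vert$ and $(\sum_{k}\widetilde{X}_{k}^{2})^{1/2}=\vert Z-Z^{\prime}\vert$. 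Since $E(Z^{\prime})=0$ and $Z^{\prime}$ is independent of $Z$, we have $Z=E(Z-Z^{\prime}\mid Z)$, and Jensen's inequality for the convex function $x\mapsto\vert x\vert^{p}$, $p\geq1$, gives
\[
E\left(  \left(  \sum_{k=1}^{n}X_{k}^{2}\right)  ^{p/2}\right)  \leq E\left(
\left(  \sum_{k=1}^{n}\widetilde{X}_{k}^{2}\right)  ^{p/2}\right)
\]
for every $p\geq1$, with no case split, no medians, and no finite variances required; your coordinate-wise argument needed $p\geq2$ only because you applied conditional Jensen to $t\mapsto t^{p/2}$ in each coordinate instead of to the norm of the whole vector. (Alternatively, you can avoid comparing square functions altogether: apply Khintchine conditionally to the original variables with independent signs, $E((\sum_{k}X_{k}^{2})^{p/2})\leq a_{p}^{-p}E\vert\sum_{k}\varepsilon_{k}X_{k}\vert^{p}$, then remove the signs by Jensen and symmetry, $E\vert\sum_{k}\varepsilon_{k}X_{k}\vert^{p}\leq E\vert\sum_{k}\varepsilon_{k}\widetilde{X}_{k}\vert^{p}=E\vert\sum_{k}\widetilde{X}_{k}\vert^{p}\leq2^{p}E\vert\sum_{k}X_{k}\vert^{p}$.) With either repair, your argument becomes a complete and correct proof.
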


Marcinkiewicz-Zygmund inequality in terms of norms becomes%
\[
A_{p}\left\Vert \left(  \sum_{k=1}^{n}\left\vert X_{k}\right\vert ^{2}\right)
^{1/2}\right\Vert _{p}\leq\left\Vert \sum_{k=1}^{n}X_{k}\right\Vert _{p}\leq
B_{p}\left\Vert \left(  \sum_{k=1}^{n}\left\vert X_{k}\right\vert ^{2}\right)
^{1/2}\right\Vert _{p},
\]
still for random variables only. For $p=2$, the inequality in a Hilbert space
holds with $A_{2}=B_{2}=1,$%
\[
E\left(  \left\vert \sum_{k=1}^{n}X_{k}\right\vert ^{2}\right)  =E\left(
\sum_{i=1}^{n}\sum_{j=1}^{n}\left\langle X_{i},X_{j}\right\rangle \right)
=E\left(  \sum_{i=1}^{n}\left\vert X_{i}\right\vert ^{2}\right)  ,
\]
since $E\left(  X_{k}\right)  =0$, which is the well-known property%
\[
\operatorname{Var}\left(  \sum_{k=1}^{n}X_{k}\right)  =\sum_{i=1}%
^{n}\operatorname{Var}\left(  X_{i}\right)  .
\]

However, an extension from $\mathbb{R}$ to Banach spaces relies on the theory
of geometry of Banach spaces. The Marcinkiewicz-Zygmund inequality is not
valid in Banach spaces in general, and in fact it defines a special type of
Banach spaces. We will concentrate on the upper bound, cf. Corollary
\ref{cor:MZB} below.

\begin{definition}
\label{def:type-p}Let $1\leq p\leq2$. A separable Banach space $U$ is said to
be of Rademacher type $p$ if there exists $C$ such that for every $n$ and for
all $x_{1},\ldots,x_{n}\in U$ and for every sequence $r_{i}$ of real
independent random variables with $\Pr\left(  r_{i}=\pm1/2\right)  =1/2$,%
\begin{equation}
E\left(  \left\vert \sum_{i=1}^{n}r_{i}x_{i}\right\vert \right)  \leq C\left(
\sum_{i=1}^{n}\left\vert x_{i}\right\vert ^{p}\right)  ^{1/p}.
\label{eq:def-type-p}%
\end{equation}

\end{definition}

\begin{remark}
[{\cite[p. 158]{Araujo-1980-CLT}}]Rademacher type is defined for $p\leq2$ only
because the only Banach space that satifies (\ref{eq:def-type-p}) with $p>2$
is the trivial space $\left\{  0\right\}  $.
\end{remark}

\begin{example}
Every separable Banach space is of type $1$. By the triangle inequality,%
\[
E\left(  \left\vert \sum_{i=1}^{n}r_{i}x_{i}\right\vert \right)  \leq E\left(
\sum_{i=1}^{n}\left\vert r_{i}\right\vert \left\vert x_{i}\right\vert \right)
=\frac{1}{2}\sum_{i=1}^{n}\left\vert x_{i}\right\vert .
\]

\end{example}

\begin{example}
The space $l^{1}$ is not of type $p$ for any $p>1$. Consider $x_{i}=\left(
0,\ldots,0,1,0,\ldots\right)  $ with the $1$ in the $i$-the place. Then,%
\[
E\left(  \left\vert \sum_{i=1}^{n}r_{i}x_{i}\right\vert \right)  =E\left(
\sum_{i=1}^{n}\left\vert r_{i}\right\vert \right)  =\frac{n}{2},\text{but
}\left(  \sum_{i=1}^{n}\left\vert x_{i}\right\vert ^{p}\right)  ^{1/p}%
=n^{1/p}.
\]

\end{example}

\begin{example}
The space $\operatorname*{Tr}H$ of trace class operators on a Hilbert space
$H$ is is not of type $p$ for any $p>1$, because it contains a copy of $l^{1}%
$, namely, diagonal matrices.
\end{example}

\begin{example}
[{\cite[p. 159]{Araujo-1980-CLT}}]A separable Hilbert space $H$ is of
Rademacher type 2. Let $x_{1},\ldots,x_{n}\in H$. Then
\begin{align*}
E\left(  \left\vert \sum_{i=1}^{n}r_{i}x_{i}\right\vert \right)  ^{2}  &  \leq
E\left(  \left\vert \sum_{i=1}^{n}r_{i}x_{i}\right\vert ^{2}\right)  =E\left(
\sum_{i=1}^{n}\sum_{j=1}^{n}r_{i}r_{j}\left(  x_{i},x_{j}\right)  \right) \\
&  =E\left(  \sum_{i=1}^{n}r_{i}^{2}\left\vert x_{i}\right\vert ^{2}\right)
=\frac{1}{4}\left(  \sum_{i=1}^{n}\left\vert x_{i}\right\vert ^{2}\right)  ,
\end{align*}
by the independence of $r_{i}$.
\end{example}

\begin{remark}
It follows that for any $\sigma$-finite measure, $L^{2}$ is type 2 since it is
a separable Hilbert space. In addition, $L^{p}$, $p>2$, is also type 2
\cite[p. 160]{Araujo-1980-CLT}, and $L^{p}$, $1<p<2$ is type $p$
(\cite[Exercise 1, p. 202]{Araujo-1980-CLT},
\cite{Hoffmann-Jorgensen-1974-SIB}).
\end{remark}

\begin{proposition}
[{\cite[page 120, Proposition 2.1]{Woyczynski-1980-MLL}, \cite[Theorem 7.2
(2)]{Araujo-1980-CLT} for $q=1$}]\label{prop:type-p}Let $1\leq p\leq2$ and
$q\geq1$. Banach space $U$ is of Rademacher type $p$ if and only if there
exists constant $C$ such that for every $n$ and for any sequence $X_{i}$ of
independent random elements in $U$ with $E\left(  X_{i}\right)  =0,$%
\begin{equation}
E\left(  \left\vert \sum_{i=1}^{n}X_{i}\right\vert ^{q}\right)  \leq CE\left(
\sum_{i=1}^{n}\left\vert X_{i}\right\vert ^{p}\right)  ^{q/p} \label{eq:MZ-q}%
\end{equation}

\end{proposition}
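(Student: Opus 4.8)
The plan is to prove the two implications separately, with the forward direction (type $p$ $\Rightarrow$ the moment inequality) carrying essentially all of the difficulty.

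For the easy direction, assume (\ref{eq:MZ-q}) holds for the given $q$. To recover the defining inequality (\ref{eq:def-type-p}), I would specialize to $X_i = r_i x_i$ with deterministic $x_i \in U$ and $r_i$ the $\pm 1/2$-valued Rademacher variables. These $X_i$ are independent with $E(X_i) = x_i E(r_i) = 0$, and $|X_i| = \tfrac12 |x_i|$, so $\sum_i |X_i|^p = 2^{-p}\sum_i |x_i|^p$. Plugging into (\ref{eq:MZ-q}) bounds $E(|\sum_i r_i x_i|^q)$ by a constant times $(\sum_i |x_i|^p)^{q/p}$; since $q \ge 1$, the comparison of norms (\ref{eq:Lp-comparison}) gives $E(|\sum_i r_i x_i|) \le (E(|\sum_i r_i x_i|^q))^{1/q}$, which yields (\ref{eq:def-type-p}) with a constant depending only on $C$ and $q$.

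For the forward direction, assume $U$ is of type $p$ and fix independent mean-zero $X_i$. First I introduce an independent copy $(X_i')$ of the family $(X_i)$ on a product space and set $Y_i = X_i - X_i'$. Since $E(X_i') = 0$, Jensen's inequality applied to the convex map $t \mapsto |t|^q$ and to conditional expectation over the primed variables gives $E(|\sum_i X_i|^q) \le E(|\sum_i Y_i|^q)$. The $Y_i$ are independent and symmetric, so $(Y_i)$ has the same law as $(\epsilon_i Y_i)$ for an independent Rademacher sequence $\epsilon_i \in \{\pm 1\}$; writing $\epsilon_i = 2 r_i$ (so that a harmless factor $2^q$ appears) and conditioning on the values of $(Y_i)$, I reduce matters to estimating $E_r(|\sum_i r_i y_i|^q)$ for frozen vectors $y_i$. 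For such frozen $y_i$, the type-$p$ hypothesis (\ref{eq:def-type-p}) controls only the first moment, $E_r(|\sum_i r_i y_i|) \le C(\sum_i |y_i|^p)^{1/p}$, so the crux is to pass from this first moment to the $q$-th moment: this is exactly Kahane's inequality, which asserts that for Rademacher sums of vectors in a Banach space all $L^q$-norms, $0 < q < \infty$, are mutually equivalent with constants depending only on $q$. Thus $E_r(|\sum_i r_i y_i|^q) \le (K_q C)^q (\sum_i |y_i|^p)^{q/p}$. Substituting back and integrating over $(Y_i)$ leaves me to bound $E((\sum_i |Y_i|^p)^{q/p})$; using $|Y_i|^p \le 2^{p-1}(|X_i|^p + |X_i'|^p)$ (convexity of $t \mapsto t^p$, $p \ge 1$) and an elementary estimate for $(a+b)^{q/p}$ — subadditive if $q \le p$, and the power-mean bound $(a+b)^{q/p} \le 2^{q/p-1}(a^{q/p}+b^{q/p})$ if $q > p$ — together with the fact that $(X_i')$ is distributed as $(X_i)$, I can replace this by a constant multiple of $E((\sum_i |X_i|^p)^{q/p})$, giving (\ref{eq:MZ-q}).

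The genuinely nontrivial ingredient is Kahane's inequality: the definition of type $p$ is stated only for first moments, and there is no elementary way to recover the $q$-th moment of a Banach-space-valued Rademacher sum from its first moment without this comparison. For $q = 1$ the argument collapses to symmetrization plus convexity and needs nothing beyond the hypothesis, which is why the result cited above for $q=1$ requires no such tool; the extension to arbitrary $q \ge 1$ is precisely where Kahane's inequality enters.
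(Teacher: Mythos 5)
The paper does not prove this proposition at all: it is stated as a quoted result, with the proof deferred to the cited references (Woyczynski, and Araujo--Gin\'{e} for $q=1$), so there is no internal argument to compare yours against. Your proof is correct, and it is in substance the standard argument from that literature: symmetrize via an independent copy and Jensen's inequality, replace the symmetric differences $Y_i=X_i-X_i'$ by $\epsilon_i Y_i$, condition on the $Y_i$'s, apply the type-$p$ hypothesis to the frozen Rademacher sum, upgrade from the first moment to the $q$-th moment by Kahane's inequality, and desymmetrize the right-hand side with the convexity bound $\left\vert a+b\right\vert^{p}\leq2^{p-1}\left(  \left\vert a\right\vert^{p}+\left\vert b\right\vert ^{p}\right)$; the converse direction by specializing to $X_i=r_i x_i$ and monotonicity of $L^{q}$ norms is also right, including the bookkeeping for the paper's nonstandard $\pm1/2$ normalization of the $r_i$. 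Two remarks. First, you correctly prove the inequality in the form $E\bigl(\bigl(\sum_{i}\left\vert X_{i}\right\vert^{p}\bigr)^{q/p}\bigr)$ on the right, i.e.\ with the power inside the expectation; the display (\ref{eq:MZ-q}) is typographically ambiguous on this point, but your reading is the one the paper actually uses later (in the proof of Lemma \ref{lem:Lp-large-numbers}), so this is the right target. Second, your proof is not self-contained: Kahane's inequality is imported as a black box, and you are right that it cannot be avoided for $q>1$ --- this is exactly the dividing line reflected in the paper's two citations, since the $q=1$ case needs only symmetrization and convexity. Given that the paper itself treats the whole proposition as a citation, relying on Kahane at that one point is a perfectly proportionate level of rigor.
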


\begin{corollary}
\label{cor:MZB}The upper bound in Marcinkiewicz-Zygmund inequality
(\ref{eq:M-Z-inequality}) holds in a separable Banach space $U$ if and only if
$U$ is of Rademacher type $2$.
\end{corollary}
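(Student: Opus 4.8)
The plan is to recognize the upper half of the Marcinkiewicz--Zygmund inequality (\ref{eq:M-Z-inequality}) as exactly the special case $p=2$ of the type characterization already proved in Proposition \ref{prop:type-p}, so that the corollary reduces to matching two exponents rather than to any new estimate.

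To keep the notation straight, write the Marcinkiewicz--Zygmund exponent as $r$ (this is the $p$ appearing in (\ref{eq:M-Z-inequality}), with $1\le r<\infty$), reserving $p$ for the type parameter. Interpreted in $U$, with $\left|\cdot\right|$ the norm, the upper bound in (\ref{eq:M-Z-inequality}) asserts the existence of $B_r$ such that
\[
E\left(\left|\sum_{k=1}^{n}X_k\right|^{r}\right)\le B_r\,E\left(\left(\sum_{k=1}^{n}\left|X_k\right|^{2}\right)^{r/2}\right)
\]
for every $n$ and all independent, mean-zero $X_1,\dots,X_n$ in $U$ with $E\left(\left|X_k\right|^{r}\right)<\infty$. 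I would then invoke Proposition \ref{prop:type-p} with its type parameter set to $2$ and its free exponent $q$ set to $r$: with these choices the right-hand side of (\ref{eq:MZ-q}) is $C\,E(\sum_{k=1}^{n}\left|X_k\right|^{2})^{r/2}$, so (\ref{eq:MZ-q}) becomes word for word the displayed upper bound.

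Both implications then drop out of Proposition \ref{prop:type-p}. If $U$ is of Rademacher type $2$, the proposition (with $p=2$, $q=r$) furnishes a constant $C$ for which (\ref{eq:MZ-q}) holds, and this is precisely the Marcinkiewicz--Zygmund upper bound. Conversely, if that upper bound holds for even a single exponent $r\ge 1$, then (\ref{eq:MZ-q}) holds with $p=2$, $q=r$, and the proposition returns that $U$ is of type $2$; the forward direction then upgrades this to the bound for every $r$ at once.

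I expect the only real care to be bookkeeping, not analysis. One must confirm that the hypotheses align---both (\ref{eq:M-Z-inequality}) and (\ref{eq:MZ-q}) concern independent, centered random elements, and the finiteness of $E((\sum\left|X_k\right|^{2})^{r/2})$ is exactly the integrability already implicit in (\ref{eq:M-Z-inequality})---so nothing is added or lost in the translation. The main expository obstacle is to explain clearly why Proposition \ref{prop:type-p}, which characterizes type through a single fixed value of $q$, lets the case $p=2$ with one fixed $q=r$ capture the whole Marcinkiewicz--Zygmund upper bound independently of the chosen $r$; once that identification is spelled out, no further content remains beyond citing the proposition.
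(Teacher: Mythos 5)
Your proposal is correct and is exactly the argument the paper intends: Corollary \ref{cor:MZB} is stated as an immediate consequence of Proposition \ref{prop:type-p}, obtained by setting the type parameter to $2$ and the free exponent $q$ in (\ref{eq:MZ-q}) equal to the Marcinkiewicz--Zygmund exponent, which is precisely your identification. The paper supplies no separate proof, and your handling of the quantifiers (one fixed exponent suffices for the converse, while the forward direction yields the bound for every exponent) matches how the paper uses the proposition, e.g.\ in Corollary \ref{cor:MZH}.
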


\begin{corollary}
[Marcinkiewicz-Zygmund inequality in Hilbert space]\label{cor:MZH} If $1\leq
q<+\infty$ and $X_{k}$, $k=1,\ldots,n$, are independent random elements in a
separable Hilbert space $H$ such that $E\left(  X_{k}\right)  =0$ and
$E\left(  \left\vert X_{k}\right\vert ^{q}\right)  <+\infty$, then%
\begin{equation}
E\left(  \left\vert \sum_{i=1}^{n}X_{i}\right\vert ^{q}\right)  \leq
B_{p}E\left(  \sum_{i=1}^{n}\left\vert x_{i}\right\vert ^{2}\right)  ^{q/2},
\label{eq:M-Z-inequality-Hilbert}%
\end{equation}
where $B_{p}$ depends on $p$ only.
\end{corollary}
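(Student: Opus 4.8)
The plan is to deduce this as a direct specialization of the general type-$p$ theory to $p=2$, so the work is really just matching hypotheses and bookkeeping the constant. First I would invoke the Example above establishing that every separable Hilbert space $H$ is of Rademacher type $2$: the computation there verifies (\ref{eq:def-type-p}) with $p=2$ (and $C=1/2$), using the orthogonality relation $E\left(r_i r_j \langle x_i,x_j\rangle\right)=\tfrac14\delta_{ij}|x_i|^2$ together with Jensen's inequality to pass from the squared norm to the norm. This is the only structural property of $H$ the argument requires; nothing in the corollary is special to Hilbert space beyond type $2$.

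Next I would apply Proposition \ref{prop:type-p} with the choice $p=2$ and with the given exponent $q\ge 1$. Since $H$ is of type $2$, the ``only if'' direction of the proposition supplies a constant $C$ such that for every $n$ and every independent, centered family $X_1,\ldots,X_n$ of random elements in $H$,
\[
E\left(\left|\sum_{i=1}^n X_i\right|^q\right)\le C\,E\left(\sum_{i=1}^n |X_i|^2\right)^{q/2}.
\]
This is exactly the asserted inequality (\ref{eq:M-Z-inequality-Hilbert}), so setting $B_p:=C$ completes the proof. The hypotheses align precisely: independence and $E(X_k)=0$ are assumed outright, while the moment bound $E(|X_k|^q)<\infty$ ensures both sides are finite, so the estimate is nonvacuous.

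There is essentially no genuine obstacle here, since the corollary is a one-step consequence of the equivalence in Proposition \ref{prop:type-p} once type $2$ of $H$ is in hand. The only point deserving a word of care is the constant: what the proposition produces is a $C$ that in general depends on $q$ as well as on the type-$2$ constant of the space (which for Hilbert spaces is universal), so the phrase ``$B_p$ depends on $p$ only'' should be read with $p=2$ fixed and the residual $q$-dependence absorbed into the constant furnished by the proposition. One could alternatively route through Corollary \ref{cor:MZB}, but that corollary is phrased for the single exponent appearing in (\ref{eq:M-Z-inequality}), whereas invoking Proposition \ref{prop:type-p} directly accommodates the arbitrary $q\ge 1$ demanded here.
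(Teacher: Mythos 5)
Your proposal is correct and follows essentially the same route as the paper: cite the Example showing a separable Hilbert space has Rademacher type $2$, then apply Proposition \ref{prop:type-p} with $p=2$ and the given exponent $q$ to obtain (\ref{eq:M-Z-inequality-Hilbert}). Your closing caveat about the constant is well taken — the constant really depends on $q$ (the paper's phrase \textquotedblleft $B_p$ depends on $p$ only\textquotedblright\ is a notational slip), and the paper addresses universality instead by noting that every separable Hilbert space is isometric to $\ell^2$, so the constant does not depend on the particular space $H$.
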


\begin{proof}
Hilbert space is of Rademacher type 2, so it is enough to set $p=2$ in
(\ref{eq:MZ-q}). Since every separable Hilbert space $H$ is isometric to
$l^{2}$, the constant $B_{p}$ is determined on $l^{2}$ and it does not depend
on the particular space $H$.
\end{proof}

\begin{remark}
Strangely, even if Rademacher type is a property of the Banach space, and so
one would expect a formulation as an inequality that does not involve
probability, no such characterization seems to be known. (The probabilistic
formulation does rely on the norm only, but in a complicated manner: the norm
defines Borel measures and thus random variables.) However, Banach spaces of
Rademacher of type $p$ can be characterized as not containing subspaces
isomorphic to $l^{q}$, $q>p$, in a (rather complicated) approximate sense
\cite{Ledoux-1991-PBS}.
\end{remark}

\section{Properties of covariance}

\subsection{Independent random elements}

We now generalize the well-known property that indepent random variables are
uncorrelated. Note that the variables need to be in $L^{2}$ to guarantee that
the correlation is defined.

\begin{lemma}
\label{lem:uncorrelated}If $X,$ $Y\in L^{2}\left(  \Omega,H\right)  $ are
independent, then $\operatorname*{Cov}\left(  X,Y\right)  =0.$
\end{lemma}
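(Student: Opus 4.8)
The plan is to show that the bilinear form defining $\operatorname*{Cov}\left( X,Y\right)$ vanishes identically. By the defining relation (\ref{eq:def-covariance}), it suffices to prove that
\[
E\left( \left\langle u,X-E\left( X\right) \right\rangle \left\langle v,Y-E\left( Y\right) \right\rangle \right) =0
\]
for every fixed $u,v\in H$; once this holds, $\left\langle u,\operatorname*{Cov}\left( X,Y\right) v\right\rangle =0$ for all $u,v\in H$ forces the operator $\operatorname*{Cov}\left( X,Y\right)$ to be $0$. So the whole statement reduces to the scalar identity above.

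First I would fix $u,v\in H$ and introduce the real-valued random variables $\alpha =\left\langle u,X-E\left( X\right) \right\rangle$ and $\beta =\left\langle v,Y-E\left( Y\right) \right\rangle$. The map $x\mapsto\left\langle u,x-E\left( X\right) \right\rangle$ from $H$ to $\mathbb{R}$ is continuous, hence Borel measurable, so $\alpha$ is a measurable function of $X$ alone and $\beta$ is a measurable function of $Y$ alone. By the Cauchy--Schwarz inequality in $H$, $\left\vert \alpha\right\vert \leq\left\vert u\right\vert \left\vert X-E\left( X\right) \right\vert$, so $\alpha\in L^{2}\left( \Omega,\mathbb{R}\right)$ because $X\in L^{2}\left( \Omega,H\right)$; similarly $\beta\in L^{2}$. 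In particular both have finite mean, and the definition of the mean (\ref{eq:def-mean}) gives $E\left( \alpha\right) =E\left( \left\langle u,X\right\rangle \right) -\left\langle u,E\left( X\right) \right\rangle =0$, and likewise $E\left( \beta\right) =0$.

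The key step is that $\alpha$ and $\beta$ are independent: they are Borel measurable functions of the independent random elements $X$ and $Y$, and functions of independent random elements are independent. Consequently the product rule for the expectation of independent integrable random variables applies, $E\left( \alpha\beta\right) =E\left( \alpha\right) E\left( \beta\right)$, which equals $0$ because each factor has zero mean. This is exactly the quantity that had to vanish, so the argument closes.

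The main obstacle is measure-theoretic rather than computational: one must justify that $\alpha$ and $\beta$ are genuinely independent scalar random variables and that the expectation factorizes. Independence of $X$ and $Y$ means the generated $\sigma$-algebras $\sigma\left( X\right)$ and $\sigma\left( Y\right)$ are independent; since $\alpha$ is $\sigma\left( X\right)$-measurable and $\beta$ is $\sigma\left( Y\right)$-measurable, they are independent as well, and separability of $H$ ensures that the inner-product functionals used here are measurable. The factorization $E\left( \alpha\beta\right) =E\left( \alpha\right) E\left( \beta\right)$ for independent integrable random variables is standard, holding here because $\alpha,\beta\in L^{2}$ makes $\alpha\beta\in L^{1}$ by Cauchy--Schwarz, so no delicate estimate is required. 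Everything reduces to the one-dimensional fact that independent random variables are uncorrelated, transported to the operator setting through the bilinear-form definition of covariance.
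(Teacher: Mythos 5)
Your proposal is correct and follows essentially the same route as the paper's proof: project onto fixed $u,v\in H$, note the scalar projections lie in $L^{2}\left(\Omega,\mathbb{R}\right)$ by the Cauchy--Schwarz inequality, invoke independence of measurable functions of independent random elements to factor the expectation, and conclude from the definition of the mean that each factor vanishes. The only cosmetic difference is that the paper centers $X$ and $Y$ at the outset (noting that adding constants preserves independence) while you carry the terms $E\left(X\right)$, $E\left(Y\right)$ through explicitly; your extra measure-theoretic remarks about $\sigma\left(X\right)$, $\sigma\left(Y\right)$ simply make precise what the paper leaves implicit.
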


\begin{proof}
Adding constants to $X$ and $Y$ does not change independence, so without loss
of generality assume that $E\left(  X\right)  =E\left(  Y\right)  =0$. Let
$u,v\in H$. From the Cauchy inequality in $H$, $\left\vert \left\langle
u,X\left(  \omega\right)  \right\rangle \right\vert ^{2}\leq\left\vert
u\right\vert \left\vert X\left(  \omega\right)  \right\vert $, it follows that
the random variable $\left\langle u,X\right\rangle \in L^{2}\left(
\Omega,\mathbb{R}\right)  $. Similarly, $\left\langle u,X\right\rangle \in
L^{2}\left(  \Omega,\mathbb{R}\right)  $. Since $\left\langle v,X\right\rangle
$ and $\left\langle v,Y\right\rangle $ are independent,%
\[
E\left(  \left\langle u,X\right\rangle \left\langle v,Y\right\rangle \right)
=E\left(  \left\langle u,X\right\rangle \right)  E\left(  \left\langle
v,Y\right\rangle \right)  =\left\langle u,E\left(  X\right)  \right\rangle
\left\langle v,E\left(  Y\right)  \right\rangle =0,
\]
using the definition of the mean (\ref{eq:def-mean}).
\end{proof}

\subsection{Bounds on covariance}

The following estimate generalizes a well-known inequality for random vectors
in $\mathbb{R}^{n}$. Taking advantage or tensor products and properties of the
norm, we can carry over a straightforward proof for $\mathbb{R}^{n}$.

\begin{lemma}
If $X$, $Y\in L^{2}\left(  \Omega,H\right)  $, then
\[
\left\vert \operatorname*{Cov}\left(  X,Y\right)  \right\vert \leq E\left(
\left\vert X\right\vert \left\vert Y\right\vert \right)  +\left\vert E\left(
X\right)  \right\vert \left\vert E\left(  Y\right)  \right\vert .
\]
In particular,
\begin{equation}
\left\vert \operatorname*{Cov}\left(  X,Y\right)  \right\vert \leq2\left\Vert
X\right\Vert _{2}\left\Vert Y\right\Vert _{2}. \label{eq:cov-bound}%
\end{equation}
and, if, in addition, $E\left(  X\right)  =E\left(  Y\right)  =0$, then%
\begin{equation}
\left\vert \operatorname*{Cov}\left(  X,Y\right)  \right\vert \leq\left\Vert
X\right\Vert _{2}\left\Vert Y\right\Vert _{2}. \label{eq:cov-bound-zero-mean}%
\end{equation}

\end{lemma}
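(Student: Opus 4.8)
The plan is to use the expression for covariance from Theorem~\ref{thm:cov-existence}, namely $\operatorname*{Cov}\left(  X,Y\right)  =E\left(  X\otimes Y\right)  -E\left(  X\right)  \otimes E\left(  Y\right)$, and bound each of the two terms separately by the triangle inequality for the operator norm $\left\vert \cdot\right\vert$ on $\left[  H\right]$. First I would write
\[
\left\vert \operatorname*{Cov}\left(  X,Y\right)  \right\vert \leq\left\vert E\left(  X\otimes Y\right)  \right\vert +\left\vert E\left(  X\right)  \otimes E\left(  Y\right)  \right\vert .
\]

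For the first term, I would apply the bound $\left\vert E\left(  X\otimes Y\right)  \right\vert \leq E\left(  \left\vert X\otimes Y\right\vert \right)$, which is the property $\left\vert E\left(  U\right)  \right\vert \leq E\left(  \left\vert U\right\vert \right)$ of the operator-valued mean noted just after (\ref{eq:def-operator-mean}), and then use the tensor-norm identity (\ref{eq:tensor-norm}), $\left\vert X\otimes Y\right\vert =\left\vert X\right\vert \left\vert Y\right\vert$, to get $\left\vert E\left(  X\otimes Y\right)  \right\vert \leq E\left(  \left\vert X\right\vert \left\vert Y\right\vert \right)$. For the second term, $E\left(  X\right)$ and $E\left(  Y\right)$ are fixed (deterministic) elements of $H$, so another application of (\ref{eq:tensor-norm}) gives $\left\vert E\left(  X\right)  \otimes E\left(  Y\right)  \right\vert =\left\vert E\left(  X\right)  \right\vert \left\vert E\left(  Y\right)  \right\vert$. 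Combining the two yields the first displayed inequality of the lemma.

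To obtain (\ref{eq:cov-bound}) I would control both resulting terms by $\left\Vert X\right\Vert _{2}\left\Vert Y\right\Vert _{2}$. The first term is bounded by Cauchy-Schwarz for the random variables $\left\vert X\right\vert$ and $\left\vert Y\right\vert$, i.e.\ (\ref{eq:cauchy-random-variables}), giving $E\left(  \left\vert X\right\vert \left\vert Y\right\vert \right)  \leq\left\Vert X\right\Vert _{2}\left\Vert Y\right\Vert _{2}$. For the second term I would use $\left\vert E\left(  X\right)  \right\vert \leq E\left(  \left\vert X\right\vert \right)  =\left\Vert X\right\Vert _{1}\leq\left\Vert X\right\Vert _{2}$ (and similarly for $Y$), invoking (\ref{eq:L12-comparison}). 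Adding the two bounds gives the factor $2$ in (\ref{eq:cov-bound}).

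Finally, when $E\left(  X\right)  =E\left(  Y\right)  =0$ the second term vanishes identically, so only the Cauchy-Schwarz bound on $E\left(  \left\vert X\right\vert \left\vert Y\right\vert \right)$ remains, which is exactly (\ref{eq:cov-bound-zero-mean}). There is no real obstacle here; the only point requiring slight care is making sure the operator-norm triangle inequality and the bound $\left\vert E\left(  U\right)  \right\vert \leq E\left(  \left\vert U\right\vert \right)$ are applied to the operator-valued mean (justified by Theorem~\ref{thm:cov-existence}, which guarantees $\operatorname*{Cov}\left(  X,Y\right)  \in\left[  H\right]$ for $X,Y\in L^{2}$), rather than conflating them with the scalar or vector versions.
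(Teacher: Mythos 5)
Your proposal is correct and follows essentially the same route as the paper's own proof: decompose via (\ref{eq:covariance2}), apply the operator-norm triangle inequality, use $\left\vert E\left(U\right)\right\vert \leq E\left(\left\vert U\right\vert\right)$ together with (\ref{eq:tensor-norm}), and finish with Cauchy--Schwarz and (\ref{eq:L12-comparison}). No gaps; the zero-mean case (\ref{eq:cov-bound-zero-mean}) follows exactly as you describe, since the term $E\left(X\right)\otimes E\left(Y\right)$ vanishes.
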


\begin{proof}
From (\ref{eq:covariance2}), the triangle inequality, the property of the
integral $\left\vert E\left(  U\right)  \right\vert \leq E\left(  \left\vert
U\right\vert \right)  $, the equality $\left\vert x\otimes y\right\vert
=\left\vert x\right\vert \left\vert y\right\vert $, and Cauchy-Schwarz
inequality,%
\begin{align*}
\left\vert \operatorname*{Cov}\left(  X,Y\right)  \right\vert  &  =\left\vert
E\left(  X\otimes Y\right)  -E\left(  X\right)  \otimes E\left(  Y\right)
\right\vert \\
&  \leq\left\vert E\left(  X\otimes Y\right)  \right\vert +\left\vert E\left(
X\right)  \otimes E\left(  Y\right)  \right\vert \\
&  \leq E\left(  \left\vert X\otimes Y\right\vert \right)  +\left\vert
E\left(  X\right)  \otimes E\left(  Y\right)  \right\vert \\
&  \leq E\left(  \left\vert X\right\vert \left\vert Y\right\vert \right)
+\left\vert E\left(  X\right)  \right\vert \left\vert E\left(  Y\right)
\right\vert .
\end{align*}
The rest follows from Cauchy-Schwarz inequality, $E\left(  \left\vert
X\right\vert \left\vert Y\right\vert \right)  \leq\left\Vert X\right\Vert
_{2}\left\Vert Y\right\Vert _{2}$, $\left\vert E\left(  X\right)  \right\vert
\leq E\left(  \left\vert X\right\vert 1\right)  \leq\left\Vert X\right\Vert
_{2}$.
\end{proof}

Note that the same proof, using H\"{o}lder's inequality instead of
Cauchy-Schwarz inequality, yields%
\begin{equation}
\left\vert \operatorname*{Cov}\left(  X,Y\right)  \right\vert \leq2\left\Vert
X\right\Vert _{p}\left\Vert Y\right\Vert _{q},\quad p,q>1,\quad\frac{1}%
{p}+\frac{1}{q}=1. \label{eq:cov-bound-Lp}%
\end{equation}

Here is continuity of covariance.

\begin{lemma}
If $X$, $Y\in L^{2}\left(  \Omega,H\right)  $, then%
\begin{equation}
\left\vert \operatorname*{Cov}(X,X)-\operatorname*{Cov}(Y,Y)\right\vert
\leq2\left\Vert X-Y\right\Vert _{2}\left(  \left\Vert Y\right\Vert
_{2}+\left\Vert X\right\Vert _{2}\right)  . \label{eq:cov-cont}%
\end{equation}

\end{lemma}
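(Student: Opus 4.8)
The plan is to reduce the difference of two covariances to a telescoping sum involving the bilinear (cross-)covariance, exploiting the algebraic identity
\[
\operatorname*{Cov}(X,X)-\operatorname*{Cov}(Y,Y)=\operatorname*{Cov}(X-Y,X)+\operatorname*{Cov}(Y,X-Y).
\]
This mirrors the standard finite-dimensional trick $xx^{\mathrm T}-yy^{\mathrm T}=(x-y)x^{\mathrm T}+y(x-y)^{\mathrm T}$, and it is the one genuinely clever step; the rest is bookkeeping. I would first verify this identity directly from bilinearity of $\operatorname*{Cov}$ in its two arguments, which is immediate from the defining formula (\ref{eq:def-covariance}): the map $(U,V)\mapsto\operatorname*{Cov}(U,V)$ is linear in each slot, so
\[
\operatorname*{Cov}(X-Y,X)+\operatorname*{Cov}(Y,X-Y)=\operatorname*{Cov}(X,X)-\operatorname*{Cov}(Y,X)+\operatorname*{Cov}(Y,X)-\operatorname*{Cov}(Y,Y),
\]
and the two middle terms cancel.

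Next I would apply the triangle inequality in the operator norm and then the covariance bound (\ref{eq:cov-bound}) to each of the two resulting cross-covariances, giving
\[
\left\vert\operatorname*{Cov}(X,X)-\operatorname*{Cov}(Y,Y)\right\vert\leq\left\vert\operatorname*{Cov}(X-Y,X)\right\vert+\left\vert\operatorname*{Cov}(Y,X-Y)\right\vert\leq2\left\Vert X-Y\right\Vert_{2}\left\Vert X\right\Vert_{2}+2\left\Vert Y\right\Vert_{2}\left\Vert X-Y\right\Vert_{2}.
\]
Factoring out $2\left\Vert X-Y\right\Vert_{2}$ yields exactly (\ref{eq:cov-cont}). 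One small point worth checking is that $X-Y\in L^{2}(\Omega,H)$ so that each cross-covariance is defined and Theorem \ref{thm:cov-existence} applies; this is immediate since $L^{2}(\Omega,H)$ is a linear space and $X,Y\in L^{2}(\Omega,H)$.

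The \textbf{main obstacle} is really just recognizing the telescoping identity in its bilinear form rather than trying to expand everything into $E(X\otimes X)$ terms by brute force; once bilinearity of $\operatorname*{Cov}$ is invoked, the computation is a one-line consequence of the already-proved bound (\ref{eq:cov-bound}). An alternative route would be to work through the expansion (\ref{eq:covariance2}) and estimate $E(X\otimes X)-E(Y\otimes Y)$ together with the mean terms directly, using the tensor continuity estimate (\ref{eq:tensor-continuous}) and Cauchy--Schwarz, but this is messier and introduces the mean-product terms explicitly, so I would prefer the bilinear telescoping argument for its cleanliness.
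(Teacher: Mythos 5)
Your proof is correct and follows essentially the same route as the paper: a telescoping decomposition of $\operatorname*{Cov}(X,X)-\operatorname*{Cov}(Y,Y)$ via bilinearity (the paper writes it as $\operatorname*{Cov}(X-Y,Y)+\operatorname*{Cov}(X,X-Y)$, a mirror-image of your grouping), followed by the triangle inequality and the bound (\ref{eq:cov-bound}) on each cross-covariance. Your additional remark that $X-Y\in L^{2}(\Omega,H)$ so that Theorem \ref{thm:cov-existence} applies is a small point the paper leaves implicit.
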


\begin{proof}
Since $\operatorname*{Cov}$ is bilinear,%
\[
\operatorname*{Cov}(X,X)-\operatorname*{Cov}(Y,Y)=\operatorname*{Cov}%
(X-Y,Y)+\operatorname*{Cov}(X,X-Y),
\]
which gives
\[
\left\vert \operatorname*{Cov}(X,X)-\operatorname*{Cov}(Y,Y)\right\vert
\leq2\left\Vert X-Y\right\Vert _{2}\left\Vert Y\right\Vert _{2}+2\left\Vert
X\right\Vert _{2}\left\Vert X-Y\right\Vert _{2}%
\]
from (\ref{eq:cov-bound}).
\end{proof}

\subsection{Sample mean and covariance}

Given $X_{k}\in H$, $k=1,\ldots,n$, the sample mean is defined by%
\begin{equation}
E_{n}\left(  X_{k}\right)  =\frac{1}{n}\sum_{k=1}^{n}X_{k}. \label{eq:def-En}%
\end{equation}
Given also $Y_{k}$, $k=1,\ldots,n$, the sample covariance $C_{n}\left(
X_{k},Y_{k}\right)  $ is defined as
\[
C_{n}(X_{k},Y_{k})=E_{n}\left(  X_{k}-E_{n}\left(  X_{k}\right)  \right)
\otimes\left(  Y_{k}-E_{n}\left(  Y_{k}\right)  \right)  ,
\]
Again, we write $C_{n}\left(  X_{k}\right)  $ for $C_{n}\left(  X_{k}%
,X_{k}\right)  $. We will use the sample mean notation also with more general
terms, for example%
\[
E_{n}\left(  \left\vert X_{k}\right\vert ^{2}\right)  =\frac{1}{n}\sum
_{k=1}^{n}\left\vert X_{k}\right\vert ^{2}.
\]

We first estimate sample covariance pointwise.

\begin{lemma}
If $X_{k},Y_{k}\in H$, $k=1,\ldots,n$, then%
\begin{align}
C_{n}(X_{k},Y_{k})  &  =E_{n}\left(  X_{k}\otimes Y_{k}\right)  -E_{n}\left(
X_{k}\right)  \otimes E_{n}\left(  Y_{k}\right)  \label{eq:sample-cov-tensor}%
\\
\left\vert C_{n}\left(  X_{k},Y_{k}\right)  \right\vert  &  \leq2E_{n}\left(
\left\vert X_{k}\right\vert ^{2}\right)  ^{1/2}E_{n}\left(  \left\vert
Y_{k}\right\vert ^{2}\right)  ^{1/2},\label{eq:sample-cov-bound-pointwise}\\
\left\vert C_{n}(X_{k})-C_{n}(Y_{k})\right\vert  &  \leq2E_{n}\left(
\left\vert X_{k}-Y_{k}\right\vert ^{2}\right)  ^{1/2}\left(  E_{n}\left(
\left\vert X_{k}\right\vert ^{2}\right)  ^{1/2}+E_{n}\left(  \left\vert
Y_{k}\right\vert ^{2}\right)  ^{1/2}\right)  .
\label{eq:sample-cov-cont-pointwise}%
\end{align}

\end{lemma}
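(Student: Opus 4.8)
The plan is to recognize each of the three assertions as the sample-level shadow of a population result already proved, by viewing the sample mean $E_{n}$ as an honest expectation. Take the finite probability space $\Omega_{n}=\left\{ 1,\ldots,n\right\} $ equipped with the uniform measure $\mu\left( \left\{ k\right\} \right) =1/n$, and regard the deterministic data $X_{k},Y_{k}\in H$ as the values of random elements $\widetilde{X},\widetilde{Y}:\Omega_{n}\rightarrow H$ given by $\widetilde{X}\left( k\right) =X_{k}$ and $\widetilde{Y}\left( k\right) =Y_{k}$. By construction, $E\left( \widetilde{X}\right) =\frac{1}{n}\sum_{k=1}^{n}X_{k}=E_{n}\left( X_{k}\right) $, $\operatorname*{Cov}\left( \widetilde{X},\widetilde{Y}\right) =C_{n}\left( X_{k},Y_{k}\right) $, and $\left\Vert \widetilde{X}\right\Vert _{2}=E\left( \left\vert \widetilde{X}\right\vert ^{2}\right) ^{1/2}=E_{n}\left( \left\vert X_{k}\right\vert ^{2}\right) ^{1/2}$. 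Because $\Omega_{n}$ is finite, every random element on it lies in every $L^{p}$, so all integrability hypotheses of the cited results hold automatically.

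With this dictionary, identity \eqref{eq:sample-cov-tensor} is precisely \eqref{eq:covariance2} of Theorem \ref{thm:cov-existence} read on $\Omega_{n}$; inequality \eqref{eq:sample-cov-bound-pointwise} is \eqref{eq:cov-bound}, whose right-hand side $2\left\Vert \widetilde{X}\right\Vert _{2}\left\Vert \widetilde{Y}\right\Vert _{2}$ becomes $2E_{n}\left( \left\vert X_{k}\right\vert ^{2}\right) ^{1/2}E_{n}\left( \left\vert Y_{k}\right\vert ^{2}\right) ^{1/2}$; and \eqref{eq:sample-cov-cont-pointwise} is the continuity estimate \eqref{eq:cov-cont} with $X,Y$ replaced by $\widetilde{X},\widetilde{Y}$. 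All three thus follow at once.

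If a self-contained argument mirroring the earlier proofs is preferred, each line can be reproduced directly. For \eqref{eq:sample-cov-tensor} one expands the tensor product and uses linearity of $E_{n}$ together with the fact that $E_{n}$ commutes with tensoring against the constants $E_{n}\left( X_{k}\right) $ and $E_{n}\left( Y_{k}\right) $, the finite analogues of \eqref{eq:tensor-mean-Xy}--\eqref{eq:tensor-mean-xY}. For \eqref{eq:sample-cov-bound-pointwise} one bounds $\left\vert C_{n}\right\vert \leq E_{n}\left( \left\vert X_{k}\right\vert \left\vert Y_{k}\right\vert \right) +\left\vert E_{n}\left( X_{k}\right) \right\vert \left\vert E_{n}\left( Y_{k}\right) \right\vert $ using $\left\vert x\otimes y\right\vert =\left\vert x\right\vert \left\vert y\right\vert $ and $\left\vert E_{n}\left( \cdot\right) \right\vert \leq E_{n}\left( \left\vert \cdot\right\vert \right) $, then applies the Cauchy-Schwarz inequality for finite averages, $E_{n}\left( \left\vert X_{k}\right\vert \left\vert Y_{k}\right\vert \right) \leq E_{n}\left( \left\vert X_{k}\right\vert ^{2}\right) ^{1/2}E_{n}\left( \left\vert Y_{k}\right\vert ^{2}\right) ^{1/2}$ together with $\left\vert E_{n}\left( X_{k}\right) \right\vert \leq E_{n}\left( \left\vert X_{k}\right\vert ^{2}\right) ^{1/2}$. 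For \eqref{eq:sample-cov-cont-pointwise} one writes, using bilinearity of $C_{n}$, the splitting $C_{n}\left( X_{k}\right) -C_{n}\left( Y_{k}\right) =C_{n}\left( X_{k}-Y_{k},Y_{k}\right) +C_{n}\left( X_{k},X_{k}-Y_{k}\right) $ and applies \eqref{eq:sample-cov-bound-pointwise} to each term.

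I expect no genuine obstacle here: the only point requiring care is the bookkeeping that the abstract $L^{2}\left( \Omega_{n},H\right) $ norm specializes to the sample root-mean-square $E_{n}\left( \left\vert \cdot\right\vert ^{2}\right) ^{1/2}$, after which the constant $2$ and the exponents $1/2$ match the population statements verbatim.
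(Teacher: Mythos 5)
Your proposal is correct and matches the paper's own proof, which states that the results follow either by repeating the earlier covariance arguments verbatim or, alternatively, by considering random elements that attain the values $X_{k}$, $Y_{k}$ with equal probability and invoking (\ref{eq:covariance2}), (\ref{eq:cov-bound}), (\ref{eq:cov-cont}) --- precisely your uniform-measure construction on $\Omega_{n}=\left\{ 1,\ldots,n\right\} $. You simply spell out, with more care than the paper, the dictionary between $E_{n}$, $C_{n}$ and the abstract expectation, covariance, and $L^{2}$ norm.
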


\begin{proof}
These are same as the properties of covariance (\ref{eq:covariance2}),
(\ref{eq:cov-bound}), (\ref{eq:cov-cont}), and the proofs are exactly the
same. Alternatively, one can consider random variables $X$ and $Y$ that attain
the values of $X_{k}$, $Y_{k}$, $k=1,\ldots,n$, respectively, with equal
probability, and apply (\ref{eq:covariance2}), (\ref{eq:cov-bound}), and
(\ref{eq:cov-cont}).
\end{proof}

If $X_{k}$ and $Y_{k}$ are random variables, then the sample mean is
$H$-valued random element, while the sample covariance is $\left[  H\right]
$-valued random element. We estimate these random elements in $L^{p}$ norms.

\begin{lemma}
\label{lem:bound-sample-covariance}For all $p\geq1$, if $X_{k}\in L^{p}\left(
\Omega,H\right)  $, $k=1,\ldots,n$, are identically distributed and $Y_{k}\in
L^{p}\left(  \Omega,H\right)  $, $k=1,\ldots,n$, are identically distributed,
then
\begin{align}
\left\Vert E_{n}\left(  X_{k}\right)  \right\Vert _{p}  &  \leq\left\Vert
X_{1}\right\Vert _{p},\label{eq:bound-sample-mean}\\
\left\Vert C_{n}\left(  X_{k},Y_{k}\right)  \right\Vert _{p}  &
\leq2\left\Vert X_{1}\right\Vert _{2p}\left\Vert Y_{1}\right\Vert _{2p}.
\label{eq:bound-sample-covariance}%
\end{align}

\end{lemma}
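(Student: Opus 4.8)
The plan is to prove the two bounds separately, in each case reducing to an inequality already available in the excerpt by exploiting the fact that the summands are identically distributed.

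For the sample-mean bound \eqref{eq:bound-sample-mean}, I would first apply the triangle inequality in $L^{p}\left(  \Omega,H\right)  $ to the definition \eqref{eq:def-En}, writing
\[
\left\Vert E_{n}\left(  X_{k}\right)  \right\Vert _{p}=\left\Vert \frac{1}{n}\sum_{k=1}^{n}X_{k}\right\Vert _{p}\leq\frac{1}{n}\sum_{k=1}^{n}\left\Vert X_{k}\right\Vert _{p}.
\]
Since the $X_{k}$ are identically distributed, each $\left\Vert X_{k}\right\Vert _{p}=\left\Vert X_{1}\right\Vert _{p}$ (the $L^{p}$ norm depends only on the distribution of $\left\vert X_{k}\right\vert $), so the sum collapses to $\left\Vert X_{1}\right\Vert _{p}$, giving the claim.

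For the sample-covariance bound \eqref{eq:bound-sample-covariance}, I would take the $L^{p}\left(  \Omega,H\right)  $ norm of the pointwise estimate \eqref{eq:sample-cov-bound-pointwise}. That inequality gives, for almost every $\omega$,
\[
\left\vert C_{n}\left(  X_{k},Y_{k}\right)  \right\vert \leq2E_{n}\left(  \left\vert X_{k}\right\vert ^{2}\right)  ^{1/2}E_{n}\left(  \left\vert Y_{k}\right\vert ^{2}\right)  ^{1/2},
\]
so that $\left\Vert C_{n}\left(  X_{k},Y_{k}\right)  \right\Vert _{p}\leq2\left\Vert E_{n}\left(  \left\vert X_{k}\right\vert ^{2}\right)  ^{1/2}E_{n}\left(  \left\vert Y_{k}\right\vert ^{2}\right)  ^{1/2}\right\Vert _{p}$. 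I would then apply the Cauchy-Schwarz inequality for the random variables $E_{n}\left(  \left\vert X_{k}\right\vert ^{2}\right)  ^{1/2}$ and $E_{n}\left(  \left\vert Y_{k}\right\vert ^{2}\right)  ^{1/2}$ in the form \eqref{eq:tensor-high-cauchy}, which bounds the $L^{p}$ norm of their product by $\left\Vert E_{n}\left(  \left\vert X_{k}\right\vert ^{2}\right)  ^{1/2}\right\Vert _{2p}\left\Vert E_{n}\left(  \left\vert Y_{k}\right\vert ^{2}\right)  ^{1/2}\right\Vert _{2p}$. Finally I would observe that $\left\Vert E_{n}\left(  \left\vert X_{k}\right\vert ^{2}\right)  ^{1/2}\right\Vert _{2p}^{2}=\left\Vert E_{n}\left(  \left\vert X_{k}\right\vert ^{2}\right)  \right\Vert _{p}$, and apply the already-proved sample-mean bound \eqref{eq:bound-sample-mean} to the identically distributed nonnegative random variables $\left\vert X_{k}\right\vert ^{2}$, giving $\left\Vert E_{n}\left(  \left\vert X_{k}\right\vert ^{2}\right)  \right\Vert _{p}\leq\left\Vert \left\vert X_{1}\right\vert ^{2}\right\Vert _{p}=\left\Vert X_{1}\right\Vert _{2p}^{2}$, and symmetrically for $Y$; taking square roots yields the factor $\left\Vert X_{1}\right\Vert _{2p}\left\Vert Y_{1}\right\Vert _{2p}$.

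The only delicate point is bookkeeping with the norm exponents: one must track carefully that the $L^{p}$ norm of the product of two square roots turns into $2p$-norms via Cauchy-Schwarz, and that $\left\Vert \left\vert X_{1}\right\vert ^{2}\right\Vert _{p}$ equals $\left\Vert X_{1}\right\Vert _{2p}^{2}$ (by the identity $\left(E|X_1|^{2p}\right)^{1/p}=\left(\left(E|X_1|^{2p}\right)^{1/2p}\right)^2$). The identically-distributed hypothesis is used twice, once for each of the two bounds, and it is essential precisely because it lets the $n$ equal terms in the triangle-inequality sum be replaced by $n$ copies of the first term. I expect the substance of both bounds to be routine once the reduction to \eqref{eq:sample-cov-bound-pointwise}, \eqref{eq:tensor-high-cauchy}, and \eqref{eq:bound-sample-mean} is set up correctly.
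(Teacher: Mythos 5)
Your proof is correct, but it takes a different route from the paper's. The paper proves (\ref{eq:bound-sample-covariance}) directly from the algebraic decomposition (\ref{eq:sample-cov-tensor}), $C_{n}(X_{k},Y_{k})=E_{n}\left(X_{k}\otimes Y_{k}\right)-E_{n}\left(X_{k}\right)\otimes E_{n}\left(Y_{k}\right)$, bounding each of the two operator-valued terms in $L^{p}$ by $\left\Vert X_{1}\right\Vert_{2p}\left\Vert Y_{1}\right\Vert_{2p}$ (triangle inequality on the average of tensor products plus (\ref{eq:tensor-high-cauchy}) for the first term; (\ref{eq:tensor-high-cauchy}) followed by the sample-mean bound for the second), so the factor $2$ comes from summing the two contributions. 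You instead start from the pointwise estimate (\ref{eq:sample-cov-bound-pointwise}), which already carries the factor $2$, apply the scalar Cauchy-Schwarz inequality $\left\Vert fg\right\Vert_{p}\leq\left\Vert f\right\Vert_{2p}\left\Vert g\right\Vert_{2p}$ to the two square-root factors, and then reduce to the already-proved bound (\ref{eq:bound-sample-mean}) applied to the real-valued, identically distributed random variables $\left\vert X_{k}\right\vert^{2}$ and $\left\vert Y_{k}\right\vert^{2}$; your exponent identities $\left\Vert f^{1/2}\right\Vert_{2p}^{2}=\left\Vert f\right\Vert_{p}$ and $\left\Vert \left\vert X_{1}\right\vert^{2}\right\Vert_{p}=\left\Vert X_{1}\right\Vert_{2p}^{2}$ are both verified correctly. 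Since (\ref{eq:sample-cov-bound-pointwise}) is itself proved in the paper from the same decomposition, the difference is largely one of packaging, but your organization has two genuine advantages: it separates the $X$'s from the $Y$'s at the outset, so only the marginal distributions of $\left\vert X_{k}\right\vert^{2}$ and $\left\vert Y_{k}\right\vert^{2}$ ever enter (nothing about the joint law of the pairs is needed, which the hypothesis indeed does not provide), and it recycles part (\ref{eq:bound-sample-mean}) of the lemma at the scalar level instead of re-running the averaging argument for operator-valued elements. Both proofs share the implicit convention that the bound is vacuous when $X_{1}\notin L^{2p}$ or $Y_{1}\notin L^{2p}$, since the hypothesis only guarantees membership in $L^{p}$.
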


\begin{proof}
Inequality (\ref{eq:bound-sample-mean}) follows immediately from the triangle
inequality,%
\begin{equation}
\left\Vert E_{n}\left(  X_{k}\right)  \right\Vert _{p}=\left\Vert \frac{1}%
{n}\sum_{k=1}^{n}X_{k}\right\Vert _{p}\leq\frac{1}{n}\sum_{k=1}^{n}\left\Vert
X_{k}\right\Vert _{p}=\left\Vert X_{k}\right\Vert _{p}. \label{eq:Lp-mean}%
\end{equation}
Then, by the same argument as in (\ref{eq:Lp-mean}) and from
(\ref{eq:tensor-high-cauchy}),%
\[
\left\Vert E_{n}\left(  X_{k}\otimes Y_{k}\right)  \right\Vert _{p}=\left\Vert
\frac{1}{n}\sum_{k=1}^{n}X_{k}\otimes Y_{k}\right\Vert _{p}\leq\left\Vert
X_{k}\otimes Y_{k}\right\Vert _{p}\leq\left\Vert X_{k}\right\Vert
_{2p}\left\Vert Y_{k}\right\Vert _{2p}%
\]
and by using (\ref{eq:tensor-high-cauchy}) again and then (\ref{eq:Lp-mean}),%
\[
\left\Vert E_{n}\left(  X_{k}\right)  \otimes E_{n}\left(  Y_{k}\right)
\right\Vert _{p}\leq\left\Vert E\left(  X_{1}\right)  \right\Vert
_{2p}\left\Vert E\left(  Y_{1}\right)  \right\Vert _{2p}\leq\left\Vert
X_{1}\right\Vert _{2p}\left\Vert Y_{1}\right\Vert _{2p},
\]
which gives%
\begin{align*}
\left\Vert C_{n}\left(  X_{k},Y_{k}\right)  \right\Vert _{p}  &
\leq\left\Vert E_{n}\left(  X_{k}\otimes Y_{k}\right)  \right\Vert
_{p}+\left\Vert E_{n}\left(  X_{k}\right)  \otimes E_{n}\left(  Y_{k}\right)
\right\Vert _{p}\\
&  \leq\left\Vert X_{1}\right\Vert _{2p}\left\Vert Y_{1}\right\Vert
_{2p}+\left\Vert X_{1}\right\Vert _{2p}\left\Vert Y_{1}\right\Vert _{2p},
\end{align*}
and completes the proof of (\ref{eq:bound-sample-covariance}).
\end{proof}

\begin{lemma}
\label{lem:sample-cov-norm-cont}If $X_{k}\in L^{4}\left(  \Omega,H\right)  $,
$k=1,\ldots,n$, are identically distributed and $Y_{k}\in L^{4}\left(
\Omega,H\right)  $, $k=1,\ldots,n$, are identically distributed, then
\begin{equation}
\left\Vert C_{n}(X_{k})-C_{n}(Y_{k})\right\Vert _{2}\leq\sqrt{8}\max
_{k}\left\Vert X_{k}-Y_{k}\right\Vert _{4}\sqrt{\left\Vert X_{k}\right\Vert
_{4}^{2}+\left\Vert Y_{k}\right\Vert _{4}^{2}} \label{eq:sample-cov-norm-cont}%
\end{equation}

\end{lemma}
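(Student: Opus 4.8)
The plan is to start from the pointwise estimate~(\ref{eq:sample-cov-cont-pointwise}), which already bounds the operator-norm-valued random variable $\left\vert C_n(X_k)-C_n(Y_k)\right\vert$ pointwise in $\omega$ by a product of two nonnegative scalar random variables, and then to take $L^2\left(\Omega\right)$ norms. Writing $S=E_n\left(\left\vert X_k-Y_k\right\vert^2\right)^{1/2}$ and $T=E_n\left(\left\vert X_k\right\vert^2\right)^{1/2}+E_n\left(\left\vert Y_k\right\vert^2\right)^{1/2}$, inequality~(\ref{eq:sample-cov-cont-pointwise}) reads $\left\vert C_n(X_k)-C_n(Y_k)\right\vert\le 2ST$, so by monotonicity of the $L^2$ norm, $\left\Vert C_n(X_k)-C_n(Y_k)\right\Vert_2\le 2\left\Vert ST\right\Vert_2$.

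Next I would split $\left\Vert ST\right\Vert_2$ into a product using H\"{o}lder's inequality~(\ref{eq:Holder-scalar}) applied to the random variables $S^2$ and $T^2$ with $p=q=2$, which gives $\left\Vert ST\right\Vert_2\le\left\Vert S\right\Vert_4\left\Vert T\right\Vert_4$. The reason $L^4$ norms appear is that each of $S,T$ already carries a square under a square root, so squaring once more to form the $L^2$ norm of the product produces fourth moments of the original $X_k,Y_k$; this is exactly why the hypothesis is $L^4$ rather than $L^2$.

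The third step bounds the two $L^4$ norms separately. For the difference factor, $\left\Vert S\right\Vert_4^2=\left\Vert E_n\left(\left\vert X_k-Y_k\right\vert^2\right)\right\Vert_2$, and the triangle inequality applied to the sample mean in $L^2\left(\Omega\right)$ gives $\left\Vert E_n\left(\left\vert X_k-Y_k\right\vert^2\right)\right\Vert_2\le\frac{1}{n}\sum_{k=1}^n\left\Vert\left\vert X_k-Y_k\right\vert^2\right\Vert_2=\frac{1}{n}\sum_{k=1}^n\left\Vert X_k-Y_k\right\Vert_4^2\le\max_k\left\Vert X_k-Y_k\right\Vert_4^2$, hence $\left\Vert S\right\Vert_4\le\max_k\left\Vert X_k-Y_k\right\Vert_4$. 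The identical computation on each summand of $T$, together with identical distribution (which makes $\left\Vert X_k\right\Vert_4$ and $\left\Vert Y_k\right\Vert_4$ independent of $k$), yields $\left\Vert T\right\Vert_4\le\left\Vert X_k\right\Vert_4+\left\Vert Y_k\right\Vert_4$.

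Combining these bounds and using the elementary inequality $a+b\le\sqrt{2}\sqrt{a^2+b^2}$ to turn the sum of norms into the square-root form of the statement produces the constant $2\cdot\sqrt{2}=\sqrt{8}$ and completes the argument. I expect no deep obstacle: the whole proof is bookkeeping built on the pointwise bound~(\ref{eq:sample-cov-cont-pointwise}) and the H\"{o}lder and triangle inequalities already in hand. The one thing to watch is matching the constant and the precise shape $\sqrt{\left\Vert X_k\right\Vert_4^2+\left\Vert Y_k\right\Vert_4^2}$, which comes from that last elementary inequality rather than from any probabilistic estimate, and keeping in mind that only $\left\Vert X_k\right\Vert_4$ and $\left\Vert Y_k\right\Vert_4$ (not $\left\Vert X_k-Y_k\right\Vert_4$) may be collapsed via identical distribution, so the $\max_k$ over the difference terms must be retained.
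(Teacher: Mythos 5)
Your proof is correct and follows essentially the same route as the paper's: both start from the pointwise bound (\ref{eq:sample-cov-cont-pointwise}), split the resulting product of random variables in $L^{2}\left(  \Omega\right)  $ by Cauchy--Schwarz, and reduce everything to the bound $\left\Vert E_{n}\left(  \left\vert X_{k}\right\vert ^{2}\right)  \right\Vert _{2}\leq\max_{k}\left\Vert X_{k}\right\Vert _{4}^{2}$. The only organizational differences are that the paper applies $\left(  a+b\right)  ^{2}\leq2\left(  a^{2}+b^{2}\right)  $ pointwise at the start where you apply $a+b\leq\sqrt{2}\sqrt{a^{2}+b^{2}}$ at the end, and the paper proves the sample-mean bound by expanding the square and using Cauchy--Schwarz on the cross terms where you use the triangle inequality in $L^{2}\left(  \Omega\right)  $; both yield the same constant $\sqrt{8}$.
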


\begin{proof}
From (\ref{eq:sample-cov-cont-pointwise}) and the inequality $\left(
a+b\right)  ^{2}\leq2\left(  a^{2}+b^{2}\right)  ,$ we have%
\[
\left\vert C_{n}(X_{k})-C_{n}(Y_{k})\right\vert ^{2}\leq8E_{n}\left(
\left\vert X_{k}-Y_{k}\right\vert ^{2}\right)  \left(  E_{n}\left(  \left\vert
X_{k}\right\vert ^{2}\right)  +E_{n}\left(  \left\vert Y_{k}\right\vert
^{2}\right)  \right)  .
\]
Integrating and using the Cauchy-Schwarz inequality for random variables for
each term yields%
\begin{align}
E\left(  \left\vert C_{n}(X_{k})-C_{n}(Y_{k})\right\vert ^{2}\right)   &
\leq8E\left(  E_{n}\left(  \left\vert X_{k}-Y_{k}\right\vert ^{2}\right)
\left[  E_{n}\left(  \left\vert X_{k}\right\vert ^{2}\right)  +E_{n}\left(
\left\vert Y_{k}\right\vert ^{2}\right)  \right]  \right)
\label{eq:mean-cov-diff-s}\\
&  \leq8\left\Vert E_{n}\left(  \left\vert X_{k}-Y_{k}\right\vert ^{2}\right)
\right\Vert _{2}\left[  \left\Vert E_{n}\left(  \left\vert X_{k}\right\vert
^{2}\right)  \right\Vert _{2}+\left\Vert E_{n}\left(  \left\vert
X_{k}\right\vert ^{2}\right)  \right\Vert _{2}\right]  ,\nonumber
\end{align}
where by Cauchy-Schwarz inequality,%
\begin{equation}
\left\Vert E_{n}\left(  \left\vert X_{k}\right\vert ^{2}\right)  \right\Vert
_{2}\leq\max_{k=1:n}\left\Vert X_{k}\right\Vert _{4}^{2}
\label{eq:est-mean-squares}%
\end{equation}
since%
\begin{align*}
\left\Vert E_{n}\left(  \left\vert X_{k}\right\vert ^{2}\right)  \right\Vert
_{2}^{2}  &  =E\left(  \left(  \frac{1}{n}\sum_{k=1}^{n}\left\vert
X_{k}\right\vert ^{2}\right)  ^{2}\right) \\
&  =\frac{1}{n^{2}}\sum_{k=1}^{n}\sum_{\ell=1}^{n}E\left(  \left\vert
X_{k}\right\vert ^{2}\left\vert X_{\ell}\right\vert ^{2}\right) \\
&  \leq\frac{1}{n^{2}}\sum_{k=1}^{n}\sum_{\ell=1}^{n}E\left(  \left\vert
X_{k}\right\vert ^{4}\right)  ^{1/2}E\left(  \left\vert X_{\ell}\right\vert
^{4}\right)  ^{1/2}\\
&  \leq\max_{k=1:n}E\left(  \left\vert X_{k}\right\vert ^{4}\right)
=\max_{k=1:n}\left\Vert X_{k}\right\Vert _{4}^{4},
\end{align*}
and similarly for the other terms. Now taking the square root of
(\ref{eq:mean-cov-diff-s})\ and using (\ref{eq:est-mean-squares})\ three times
gives the desired estimate (\ref{eq:sample-cov-norm-cont}).
\end{proof}

\section{Laws of large numbers}

\subsection{$L^{2}$ and weak law of large numbers}

The following theorem generalizes the weak law of large numbers to the Hilbert
space setting. The usual proof caries over.

\begin{lemma}
\label{lem:WLLN}Let $X_{k}\in L^{2}\left(  \Omega,H\right)  $ be i.i.d. Then
\begin{equation}
\left\Vert E_{n}\left(  X_{k}\right)  -E\left(  X_{1}\right)  \right\Vert
_{2}\leq\frac{1}{\sqrt{n}}\left\Vert X_{1}-E\left(  X_{1}\right)  \right\Vert
_{2}\leq\frac{2}{\sqrt{n}}\left\Vert X_{1}\right\Vert _{2} \label{eq:L2-LLN}%
\end{equation}
and $E_{n}\left(  X_{k}\right)  \Rightarrow E\left(  X_{1}\right)  $ as
$n\rightarrow\infty$.
\end{lemma}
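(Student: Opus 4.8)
The plan is to reduce everything to the centered variables $Y_k = X_k - E\left(X_1\right)$, which are again i.i.d.\ with $E\left(Y_k\right)=0$ (since i.i.d.\ forces $E\left(X_k\right)=E\left(X_1\right)$), and then exploit the orthogonality of independent mean-zero random elements. First I would write the sample mean error as an average of the $Y_k$,
\[
E_n\left(X_k\right) - E\left(X_1\right) = \frac{1}{n}\sum_{k=1}^n \left(X_k - E\left(X_1\right)\right) = \frac{1}{n}\sum_{k=1}^n Y_k,
\]
so that, by the definition of the $L^2$ norm,
\[
\left\Vert E_n\left(X_k\right) - E\left(X_1\right)\right\Vert_2^2 = \frac{1}{n^2}\,E\left(\left|\sum_{k=1}^n Y_k\right|^2\right).
\]

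Next I would evaluate the right-hand side exactly as in the $p=2$ case of the Marcinkiewicz--Zygmund inequality: expanding $\left|\sum_k Y_k\right|^2 = \sum_{i,j}\langle Y_i,Y_j\rangle$ and taking expectations, the cross terms vanish because for $i\neq j$ the elements $Y_i,Y_j$ are independent and centered, so $\operatorname{Cov}\left(Y_i,Y_j\right)=0$ by Lemma~\ref{lem:uncorrelated} and hence $E\left(\langle Y_i,Y_j\rangle\right)=0$. This leaves only the diagonal,
\[
E\left(\left|\sum_{k=1}^n Y_k\right|^2\right) = \sum_{k=1}^n E\left(\left|Y_k\right|^2\right) = n\,\left\Vert Y_1\right\Vert_2^2,
\]
using that the $Y_k$ are identically distributed. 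Substituting gives $\left\Vert E_n\left(X_k\right)-E\left(X_1\right)\right\Vert_2^2 = \tfrac{1}{n}\left\Vert Y_1\right\Vert_2^2$, which is the first inequality in \eqref{eq:L2-LLN} (in fact an equality). For the second inequality I would use the triangle inequality $\left\Vert Y_1\right\Vert_2 \leq \left\Vert X_1\right\Vert_2 + \left\Vert E\left(X_1\right)\right\Vert_2$, then bound $\left\Vert E\left(X_1\right)\right\Vert_2 = \left|E\left(X_1\right)\right|$ by \eqref{eq:const-norm} (as $E\left(X_1\right)$ is constant) and $\left|E\left(X_1\right)\right| \leq E\left(\left|X_1\right|\right) = \left\Vert X_1\right\Vert_1 \leq \left\Vert X_1\right\Vert_2$ by \eqref{eq:L12-comparison}, giving $\left\Vert Y_1\right\Vert_2 \leq 2\left\Vert X_1\right\Vert_2$.

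Finally, for the convergence in distribution, I would observe that the right-hand side of \eqref{eq:L2-LLN} tends to $0$ as $n\to\infty$, so $E_n\left(X_k\right)\to E\left(X_1\right)$ in $L^2\left(\Omega,H\right)$. Convergence in $L^2$ implies convergence in probability, which in turn implies convergence in distribution, and since the limit $E\left(X_1\right)$ is a constant this is precisely $E_n\left(X_k\right)\Rightarrow E\left(X_1\right)$.

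I do not expect a serious obstacle: the whole argument is the familiar finite-dimensional computation transported verbatim. The only point requiring care is the vanishing of the cross terms $E\left(\langle Y_i,Y_j\rangle\right)$ for $i\neq j$, which is exactly Lemma~\ref{lem:uncorrelated} combined with the zero-mean reduction; everything else reduces to the norm identities and inequalities already established.
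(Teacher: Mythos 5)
Your proposal is correct and takes essentially the same route as the paper's proof: the same orthogonality expansion of $E\bigl(\vert\sum_{k}Y_{k}\vert^{2}\bigr)$ with cross terms annihilated via Lemma \ref{lem:uncorrelated}, the same bound $\left\Vert X_{1}-E\left(X_{1}\right)\right\Vert_{2}\leq2\left\Vert X_{1}\right\Vert_{2}$ through the constant-norm identity and $\left\Vert \cdot\right\Vert_{1}\leq\left\Vert \cdot\right\Vert_{2}$, and the same conclusion through convergence in probability to a constant. The only cosmetic difference is that the paper makes the passage from the $L^{2}$ bound to convergence in probability explicit via Chebychev's inequality, whereas you invoke the general implication that $L^{2}$ convergence implies convergence in probability; these are the same argument.
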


\begin{proof}
First, without loss of generality, let $E\left(  X_{1}\right)  =0$. Since
$X_{k}$ are independent, they are uncorrelated (Lemma \ref{lem:uncorrelated}),
and
\begin{align*}
\left\Vert E_{n}\left(  X_{k}\right)  \right\Vert _{2}^{2}  &  =E\left(
\left\vert \left(  \frac{1}{N}\sum_{k=1}^{n}X_{k}\right)  \right\vert
^{2}\right)  =\frac{1}{n^{2}}\sum_{k=1}^{n}\sum_{\ell=1}^{n}E\left(
\left\langle X_{k},X_{\ell}\right\rangle \right) \\
&  =\frac{1}{n}E\left(  \left\vert X_{1}\right\vert ^{2}\right)  =\frac{1}%
{n}\left\Vert X_{1}\right\Vert _{2}^{2}.
\end{align*}
For general $E\left(  X_{1}\right)  $, from $E_{n}\left(  X_{k}-E\left(
X_{1}\right)  \right)  =E_{n}\left(  X_{k}\right)  -E\left(  X_{1}\right)  $
and from the triangle inequality,
\[
\left\Vert E_{n}\left(  X_{k}\right)  -E\left(  X_{1}\right)  \right\Vert
_{2}=\frac{1}{\sqrt{n}}\left\Vert X_{1}-E\left(  X_{1}\right)  \right\Vert
_{2}\leq\frac{1}{\sqrt{n}}\left(  \left\Vert X_{1}\right\Vert _{2}+\left\Vert
E\left(  X_{1}\right)  \right\Vert _{2}\right)  .
\]
Since $E\left(  X_{1}\right)  $ is a constant and by properties of the
integral and monotonicity $L^{p}$ norms,%
\[
\left\Vert E\left(  X_{1}\right)  \right\Vert _{2}=\left\vert E\left(
X_{1}\right)  \right\vert \leq E\left(  \left\vert X_{1}\right\vert \right)
=\left\Vert X_{1}\right\Vert _{1}\leq\left\Vert X_{1}\right\Vert _{2},
\]
which gives (\ref{eq:L2-LLN}). Now by Chebyschev's inequality for the random
variable $\left\vert E_{n}\left(  X_{k}\right)  -E\left(  X_{1}\right)
\right\vert $, for any $\varepsilon>0$,%
\[
\Pr\left(  \left\vert E_{n}\left(  X_{k}\right)  -E\left(  X_{1}\right)
\right\vert >\varepsilon\right)  \leq\left(  \frac{\left\Vert E_{n}\left(
X_{k}\right)  -E\left(  X_{1}\right)  \right\Vert _{2}}{\varepsilon}\right)
^{2}=\frac{4\left\Vert X_{1}\right\Vert _{2}^{2}}{n\varepsilon^{2}}%
\rightarrow0,
\]
thus $\ E_{n}\left(  X_{k}\right)  \rightarrow E\left(  X_{1}\right)  $ in
probability. The result now follows from the fact that for random elements in
a metric space, convergence in probability to a constant implies convergence
in distribution \cite[Lemma 4.7]{Kallenberg-2002-FMP}.
\end{proof}

\subsection{$L^{p}$ law of large numbers}

Marcinkiewicz-Zygmund inequality allows to prove a variant of the weak law of
large numbers in stronger $L_{p}$ norms. The following statement is
essentially the same as \cite[Corollary 2, page 368]{Chow-1988-PT}, just with
more detail, and using the Marzinkiewicz-Zygmund inequality in Hilbert spaces.

\begin{lemma}
\label{lem:Lp-large-numbers} Let $H$ be a Hilbert space, $X_{i}\in
L^{p}\left(  \Omega,H\right)  $ be i.i.d., $p\geq2$, and $E_{n}\left(
X_{k}\right)  =\frac{1}{n}\sum_{k=1}^{n}X_{k}$. Then,%
\begin{equation}
\left\Vert E_{n}\left(  X_{k}\right)  -E\left(  X_{1}\right)  \right\Vert
_{p}\leq\frac{C_{p}}{\sqrt{n}}\left\Vert X_{1}-E\left(  X_{1}\right)
\right\Vert _{p}\leq\frac{2C_{p}}{\sqrt{n}}\left\Vert X_{1}\right\Vert _{p},
\label{eq:Lp-LLN}%
\end{equation}
where $C_{p}$ depends on $p$ only.
\end{lemma}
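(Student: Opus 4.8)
The plan is to reduce the Hilbert-space law of large numbers to the Marcinkiewicz--Zygmund upper bound proved in Corollary~\ref{cor:MZH}, exactly as the scalar $L^p$ LLN is proved, but taking advantage of the fact that $H$ is of Rademacher type $2$. First I would reduce to the centered case: set $Y_k=X_k-E\left( X_1\right) $, so that $E\left( Y_k\right) =0$, the $Y_k$ are i.i.d.\ and in $L^p\left( \Omega,H\right) $, and $E_n\left( X_k\right) -E\left( X_1\right) =\frac{1}{n}\sum_{k=1}^n Y_k$. This is the same normalization used in Lemma~\ref{lem:WLLN}, and it lets me work with mean-zero summands throughout.

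Next I would apply Corollary~\ref{cor:MZH} with $q=p$ to the independent, centered random elements $Y_k$, obtaining
\begin{equation*}
E\left( \left\vert \sum_{k=1}^n Y_k\right\vert ^p\right) \leq B_p\, E\left( \sum_{k=1}^n\left\vert Y_k\right\vert ^2\right) ^{p/2},
\end{equation*}
where $B_p$ depends only on $p$. The right-hand side is where the type-$2$ structure of $H$ does the real work: the factor $n^{-1/2}$ in the conclusion ultimately comes from bounding $\left\Vert \left( \sum_{k=1}^n\left\vert Y_k\right\vert ^2\right) ^{1/2}\right\Vert _p$ by a multiple of $\sqrt{n}\,\left\Vert Y_1\right\Vert _p$. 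Since $p\geq2$, I would estimate the $L^p$ norm of $\sum_{k=1}^n\left\vert Y_k\right\vert ^2$ (a sum of nonnegative random variables) by the triangle inequality in $L^{p/2}\left( \Omega,\mathbb{R}\right) $: writing $\left\Vert \sum_{k=1}^n\left\vert Y_k\right\vert ^2\right\Vert _{L^{p/2}}\leq\sum_{k=1}^n\left\Vert \left\vert Y_k\right\vert ^2\right\Vert _{L^{p/2}}=n\left\Vert Y_1\right\Vert _p^2$, using that the $Y_k$ are identically distributed and that $\left\Vert \left\vert Y_1\right\vert ^2\right\Vert _{L^{p/2}}=\left\Vert Y_1\right\Vert _p^2$. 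Taking $p/2$-th roots gives $E\left( \left( \sum_{k=1}^n\left\vert Y_k\right\vert ^2\right) ^{p/2}\right) ^{2/p}\leq n\left\Vert Y_1\right\Vert _p^2$.

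Combining these, and dividing by $n^p$ from the $\frac{1}{n}$ in the sample mean, yields
\begin{equation*}
\left\Vert E_n\left( X_k\right) -E\left( X_1\right) \right\Vert _p^p=\frac{1}{n^p}E\left( \left\vert \sum_{k=1}^n Y_k\right\vert ^p\right) \leq\frac{B_p}{n^p}\left( n\left\Vert Y_1\right\Vert _p^2\right) ^{p/2}=\frac{B_p}{n^{p/2}}\left\Vert Y_1\right\Vert _p^p,
\end{equation*}
so that $\left\Vert E_n\left( X_k\right) -E\left( X_1\right) \right\Vert _p\leq B_p^{1/p}\,n^{-1/2}\left\Vert X_1-E\left( X_1\right) \right\Vert _p$, which is the first inequality in (\ref{eq:Lp-LLN}) with $C_p=B_p^{1/p}$. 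The second inequality then follows from the triangle inequality together with $\left\Vert E\left( X_1\right) \right\Vert _p=\left\vert E\left( X_1\right) \right\vert \leq\left\Vert X_1\right\Vert _p$, exactly as at the end of the proof of Lemma~\ref{lem:WLLN}, giving the factor $2$.

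The step I expect to be the main obstacle is the bookkeeping of constants and exponents rather than any deep idea: one must be careful that $p\geq2$ is exactly what makes $p/2\geq1$ so that the triangle inequality in $L^{p/2}$ is legitimate, and that the sharp $n^{-1/2}$ rate is genuinely produced by the type-$2$ inequality (it would degrade for a merely type-$r$ space with $r<2$). A secondary point to state carefully is that Corollary~\ref{cor:MZH} requires $E\left( \left\vert Y_k\right\vert ^p\right) <\infty$, which holds because $X_k\in L^p\left( \Omega,H\right) $ and $E\left( X_1\right) $ is a constant, hence in every $L^p$ by (\ref{eq:const-norm}); so $Y_k\in L^p\left( \Omega,H\right) $ as required.
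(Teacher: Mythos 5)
Your proof is correct and follows essentially the same route as the paper's: both rest on the Marcinkiewicz--Zygmund inequality in Hilbert space (Corollary \ref{cor:MZH}) applied to the centered summands, followed by the bound $E\left(\left(\sum_{k=1}^{n}\left\vert Y_{k}\right\vert ^{2}\right)^{p/2}\right)\leq n^{p/2}\left\Vert Y_{1}\right\Vert _{p}^{p}$ and the triangle-inequality step $\left\Vert X_{1}-E\left(X_{1}\right)\right\Vert _{p}\leq2\left\Vert X_{1}\right\Vert _{p}$. The only cosmetic difference is that you obtain the middle bound via Minkowski's inequality in $L^{p/2}\left(\Omega,\mathbb{R}\right)$, whereas the paper applies the discrete H\"{o}lder inequality pointwise in $\omega$ and then takes expectations; the two devices give the identical estimate and constant, and your version has the slight tidiness of covering $p=2$ uniformly rather than as a separate case.
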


\begin{proof}
If $p=2$, the statement becomes (\ref{eq:L2-LLN}). Let $p>2$, and, without
loss of generality, assume first that $E\left(  X_{1}\right)  =0$. By
H\"{o}lder's inequality,%
\begin{equation}
\sum_{k=1}^{n}\left\vert X_{k}\right\vert ^{2}=\sum_{k=1}^{n}1\left(
\left\vert X_{k}\right\vert ^{2}\right)  \leq n^{\left(  p-2\right)
/p}\left(  \sum_{k=1}^{n}\left(  \left\vert X_{k}\right\vert ^{2}\right)
^{p/2}\right)  ^{2/p}=n^{\left(  p-2\right)  /p}\left(  \sum_{k=1}%
^{n}\left\vert X_{k}\right\vert ^{p}\right)  ^{2/p}, \label{eq:MZH}%
\end{equation}
thus, using Marcinkiewicz-Zygmund inequality (\ref{eq:M-Z-inequality-Hilbert}%
),%
\begin{align*}
E\left(  \left\vert \sum_{k=1}^{n}X_{k}\right\vert ^{p}\right)   &  \leq
B_{p}E\left(  \left(  \sum_{k=1}^{n}\left\vert X_{k}\right\vert ^{2}\right)
^{p/2}\right) \\
&  \leq B_{p}n^{p/2-1}E\left(  \sum_{k=1}^{n}\left\vert X_{k}\right\vert
^{p}\right) \\
&  =B_{p}n^{p/2}E\left(  \left\vert X_{1}\right\vert ^{p}\right)
\end{align*}
because $\frac{p}{2}\frac{p-2}{p}=\frac{p}{2}-1$ and $X_{k}$ are identically
distributed. Consequently,%
\[
\left\Vert \sum_{k=1}^{n}X_{k}\right\Vert _{p}\leq B_{p}^{1/p}n^{1/2}%
\left\Vert X_{1}\right\Vert _{p},
\]
and the first inequality in (\ref{eq:Lp-LLN}) follows. The rest follows from
the triangle inequality.
\end{proof}

\subsection{Convergence of sample covariance}

Recall that from (\ref{eq:sample-cov-tensor}),%
\[
C_{n}(X_{k},Y_{k})=E_{n}\left(  X_{k}\otimes Y_{k}\right)  -E_{n}\left(
X_{k}\right)  \otimes E_{n}\left(  Y_{k}\right)  ,
\]
while%
\[
\operatorname*{Cov}\left(  X_{1},Y_{1}\right)  =E\left(  X_{1}\otimes
Y_{1}\right)  -E\left(  X_{1}\right)  \otimes E\left(  Y_{1}\right)  .
\]
We need convergence of $C_{n}(X_{k},Y_{k})$ to $\operatorname*{Cov}%
(X_{1},Y_{1})$ as $n\rightarrow\infty$, in some sense, i.e., the laws of large
numbers for the sample covariance.

Since covariance is in the Hilbert-Schmidt space $HS\left(  H\right)  $, which
is a Hilbert space, we can use laws of large numbers in the space of
Hilbert-Schmidt operators.

\begin{lemma}
\label{lem:LpLLN-sample-cov}If $p\geq2$, $H$ is Hilbert space, and $X_{k}\in
L^{2p}\left(  \Omega,H\right)  $ are i.i.d. Then
\end{lemma}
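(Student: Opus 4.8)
The plan is to establish the $L^{p}$ rate
\[
\left\Vert C_{n}\left(  X_{k}\right)  -\operatorname*{Cov}\left(  X_{1}\right)
\right\Vert _{p}\leq\frac{C}{\sqrt{n}}\left\Vert X_{1}\right\Vert _{2p}
^{2}\rightarrow0,
\]
with $C$ depending only on $p$, the covariance operators being viewed as random elements of the separable Hilbert space $HS\left(  H\right)$. The first step is to subtract the tensor-product identities (\ref{eq:sample-cov-tensor}) and (\ref{eq:covariance2}) and split the error into a second-moment part and a squared-mean part,
\[
C_{n}\left(  X_{k}\right)  -\operatorname*{Cov}\left(  X_{1}\right)  =\left[
E_{n}\left(  X_{k}\otimes X_{k}\right)  -E\left(  X_{1}\otimes X_{1}\right)
\right]  -\left[  E_{n}\left(  X_{k}\right)  \otimes E_{n}\left(  X_{k}\right)
-E\left(  X_{1}\right)  \otimes E\left(  X_{1}\right)  \right]  ,
\]
and then to bound each bracket by a law of large numbers, combining at the end with the triangle inequality.

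For the first bracket I would regard $Z_{k}=X_{k}\otimes X_{k}$ as i.i.d. random elements of $HS\left(  H\right)$. By (\ref{eq:hilbert-schmidt-tensor}), $\left\vert Z_{k}\right\vert _{HS}=\left\vert X_{k}\right\vert ^{2}$, so $E\left(  \left\vert Z_{k}\right\vert _{HS}^{p}\right)  =E\left(  \left\vert X_{k}\right\vert ^{2p}\right)  <\infty$ under the hypothesis $X_{k}\in L^{2p}$; hence $Z_{k}\in L^{p}\left(  \Omega,HS\left(  H\right)  \right)$ with $E\left(  Z_{1}\right)  =E\left(  X_{1}\otimes X_{1}\right)$. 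Because $HS\left(  H\right)$ is a separable Hilbert space and $p\geq2$, the $L^{p}$ law of large numbers (Lemma \ref{lem:Lp-large-numbers}) applies verbatim in $HS\left(  H\right)$ and yields
\[
\left\Vert E_{n}\left(  X_{k}\otimes X_{k}\right)  -E\left(  X_{1}\otimes
X_{1}\right)  \right\Vert _{p}\leq\frac{2C_{p}}{\sqrt{n}}\left\Vert
X_{1}\otimes X_{1}\right\Vert _{p}=\frac{2C_{p}}{\sqrt{n}}\left\Vert
X_{1}\right\Vert _{2p}^{2}.
\]

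For the second bracket, set $m=E\left(  X_{1}\right)$ and use bilinearity of the tensor product, exactly as in the proof of (\ref{eq:cov-cont}),
\[
E_{n}\left(  X_{k}\right)  \otimes E_{n}\left(  X_{k}\right)  -m\otimes
m=\left(  E_{n}\left(  X_{k}\right)  -m\right)  \otimes E_{n}\left(
X_{k}\right)  +m\otimes\left(  E_{n}\left(  X_{k}\right)  -m\right)  .
\]
Each summand is a rank-one tensor, so I would bound it by the tensor Cauchy--Schwarz inequality (\ref{eq:tensor-high-cauchy}): the factor $\left\Vert E_{n}\left(  X_{k}\right)  -m\right\Vert _{2p}$ is controlled by Lemma \ref{lem:Lp-large-numbers} used with exponent $2p\geq2$, giving $\tfrac{2C_{2p}}{\sqrt{n}}\left\Vert X_{1}\right\Vert _{2p}$, while the remaining factors $\left\Vert E_{n}\left(  X_{k}\right)  \right\Vert _{2p}$ and $\left\vert m\right\vert$ are each at most $\left\Vert X_{1}\right\Vert _{2p}$ by the sample-mean bound (\ref{eq:bound-sample-mean}) and monotonicity of the $L^{p}$ norms. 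This gives $\tfrac{4C_{2p}}{\sqrt{n}}\left\Vert X_{1}\right\Vert _{2p}^{2}$ for the second bracket; adding the two brackets proves the claim with $C=2C_{p}+4C_{2p}$, where $C_{p}$, $C_{2p}$ are the constants of Lemma \ref{lem:Lp-large-numbers}.

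The only point requiring care is the change of ambient space: the whole estimate takes place in $HS\left(  H\right)$, and the argument rests on the two facts, both established earlier, that $HS\left(  H\right)$ is itself a \emph{separable Hilbert space} (so that the Hilbert-space law of large numbers is available there) and that $\left\vert x\otimes x\right\vert _{HS}=\left\vert x\right\vert ^{2}$ (so that $X_{k}\in L^{2p}\left(  \Omega,H\right)$ is precisely the integrability needed to place $X_{k}\otimes X_{k}$ in $L^{p}\left(  \Omega,HS\left(  H\right)  \right)$). No essentially new difficulty appears beyond this bookkeeping; moreover, since the operator norm is dominated by the Hilbert--Schmidt norm by (\ref{eq:compare-hilbert-schmidt-norm}), the same $O\!\left(  1/\sqrt{n}\right)$ rate holds for $\left\vert C_{n}\left(  X_{k}\right)  -\operatorname*{Cov}\left(  X_{1}\right)  \right\vert$ in the operator norm as well. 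The cross-covariance version $C_{n}(X_{k},Y_{k})\to\operatorname*{Cov}(X_{1},Y_{1})$ would follow by the identical argument.
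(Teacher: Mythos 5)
Your proposal is correct and takes essentially the same route as the paper: both proofs lift the problem to the separable Hilbert space $HS\left(H\right)$, use $\left\vert x\otimes x\right\vert _{HS\left(H\right)}=\left\vert x\right\vert ^{2}$ to turn $X_{k}\in L^{2p}\left(\Omega,H\right)$ into $X_{k}\otimes X_{k}\in L^{p}\left(\Omega,HS\left(H\right)\right)$, apply Lemma \ref{lem:Lp-large-numbers} in $HS\left(H\right)$ for the second-moment term and in $H$ with exponent $2p$ for the sample-mean term, and finish with the triangle inequality. The only difference is in handling the mean term: the paper first centers (WLOG $E\left(X_{1}\right)=0$), so that $E_{n}\left(X_{k}\right)\otimes E_{n}\left(X_{k}\right)$ is quadratically small, of order $4C_{2p}^{2}/n$, whereas your bilinear splitting around $E\left(X_{1}\right)$ gives a term of order $4C_{2p}/\sqrt{n}$ — the same dimension-independent $O\left(n^{-1/2}\right)$ bound, with a marginally different constant, and without the centering step's implicit replacement of $\left\Vert X_{1}\right\Vert _{2p}$ by $\left\Vert X_{1}-E\left(X_{1}\right)\right\Vert _{2p}$.
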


\begin{equation}
\left(  E\left\vert C_{n}\left(  X_{k}\right)  -\operatorname*{Cov}\left(
X_{1}\right)  \right\vert _{HS\left(  H\right)  }^{p}\right)  ^{1/p}%
\leq\left(  \frac{2C_{p}}{\sqrt{n}}+\frac{4C_{2p}^{2}}{n}\right)  \left\Vert
X_{1}\right\Vert _{2p}^{2}. \label{eq:LpLLN-sample-cov}%
\end{equation}
where $C_{r}$ is a constant which depends on $r$ only; in particular,
$C_{2}=1.$

\begin{proof}
Without loss of generality, let $E\left(  X_{1}\right)  =0$. Then%
\begin{equation}
C_{n}\left(  X_{k}\otimes Y_{k}\right)  -\operatorname*{Cov}\left(
X_{1}\otimes Y_{1}\right)  =\left(  E_{n}\left(  X_{k}\otimes Y_{k}\right)
-E\left(  X_{1}\otimes Y_{1}\right)  \right)  +E_{n}\left(  X_{k}\right)
\otimes E_{n}\left(  Y_{k}\right)  . \label{eq:cov-err}%
\end{equation}
The first term in (\ref{eq:cov-err}) is estimated by%
\begin{align*}
\left(  E\left\vert E_{n}\left(  X_{k}\otimes X_{k}\right)  -E\left(
X_{1}\otimes X_{1}\right)  \right\vert _{HS\left(  H\right)  }^{p}\right)
^{1/p}  &  \leq\frac{2C_{p}}{\sqrt{n}}\left(  E\left\vert X_{1}\otimes
X_{1}\right\vert _{HS\left(  H\right)  }^{p}\right)  ^{1/p}\\
&  =\frac{2C_{p}}{\sqrt{n}}\left(  E\left\vert X_{1}\right\vert _{H}%
^{2p}\right)  ^{1/p}=\frac{2C_{p}}{\sqrt{n}}\left\Vert X\right\Vert _{2p}^{2}%
\end{align*}
from the $L^{p}$ law of large numbers (\ref{eq:Lp-LLN}) in the Hilbert space
$HS\left(  H\right)  $, and computing the Hilbert-Schmidt norm of tensor
product by (\ref{eq:hilbert-schmidt-tensor}).

For the second term in (\ref{eq:cov-err}), first note that%
\[
\left\vert E_{n}\left(  X_{k}\right)  \otimes E_{n}\left(  X_{k}\right)
\right\vert _{HS\left(  H\right)  }=\left\vert E_{n}\left(  X_{k}\right)
\right\vert _{H}^{2}.
\]
Then, (\ref{eq:tensor-norm}) and the $L^{p}$ law of large numbers in $H$ yield%
\[
\left(  E\left\vert E_{n}\left(  X_{k}\right)  \otimes E_{n}\left(
X_{k}\right)  \right\vert _{HS\left(  H\right)  }^{p}\right)  ^{1/p}=\left(
E\left\vert E_{n}\left(  X_{k}\right)  \right\vert _{H}^{2p}\right)
^{1/p}=\left\Vert E_{n}\left(  X\right)  -0\right\Vert _{2p}^{2}\leq\left(
\frac{2C_{2p}}{\sqrt{n}}\left\Vert X_{1}\right\Vert _{2p}\right)  ^{2}.
\]

The proof is concluded by the use of the triangle inequality.
\end{proof}

Lemma \ref{lem:LpLLN-sample-cov} proves convergence of the sample covariance
in the Hilbert-Schmidt norm, which is weaker than the trace norm, which would
be the natural norm for covariances. However, the space $\operatorname*{Tr}%
\left(  H\right)  $ of trace class operators is not a Hilbert space.

\begin{corollary}
\label{cor:LpLLN-sample-cov-operator-norm}Using the inequality between the
operator norm and the Hilbert-Schmidt norm
(\ref{eq:compare-hilbert-schmidt-norm}), we have from
(\ref{eq:LpLLN-sample-cov}) that
\[
\left\Vert C_{n}\left(  X_{k}\right)  -\operatorname*{Cov}\left(
X_{1}\right)  \right\Vert _{p}\leq\left(  \frac{2C_{p}}{\sqrt{n}}%
+\frac{4C_{2p}^{2}}{n}\right)  \left\Vert X_{1}\right\Vert _{2p}^{2}.
\]

\end{corollary}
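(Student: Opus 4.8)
The plan is to deduce this directly from the Hilbert-Schmidt estimate (\ref{eq:LpLLN-sample-cov}) of Lemma \ref{lem:LpLLN-sample-cov} together with the pointwise operator-norm bound (\ref{eq:compare-hilbert-schmidt-norm}); no new probabilistic content is needed, only monotonicity of the $L^{p}$ norm under a pointwise inequality. The whole point of the corollary is to trade the (unavailable) trace-norm control for a clean operator-norm estimate, exploiting the inclusion $\left[  H\right]  \supset HS\left(  H\right)$ with $\left\vert \cdot\right\vert \leq\left\vert \cdot\right\vert _{HS\left(  H\right)  }$.

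First I would fix $\omega\in\Omega$ and observe that $C_{n}\left(  X_{k}\right)  \left(  \omega\right)  -\operatorname*{Cov}\left(  X_{1}\right)$ is a Hilbert-Schmidt, hence bounded, operator on $H$, so by (\ref{eq:compare-hilbert-schmidt-norm}) its operator norm is dominated by its Hilbert-Schmidt norm,
\[
\left\vert C_{n}\left(  X_{k}\right)  \left(  \omega\right)  -\operatorname*{Cov}\left(  X_{1}\right)  \right\vert \leq\left\vert C_{n}\left(  X_{k}\right)  \left(  \omega\right)  -\operatorname*{Cov}\left(  X_{1}\right)  \right\vert _{HS\left(  H\right)  }.
\]
Raising both sides to the power $p$ and integrating over $\Omega$, monotonicity of the integral gives $E\left(  \left\vert \cdot\right\vert ^{p}\right)  \leq E\left(  \left\vert \cdot\right\vert _{HS\left(  H\right)  }^{p}\right)$, and taking $p$-th roots yields
\[
\left\Vert C_{n}\left(  X_{k}\right)  -\operatorname*{Cov}\left(  X_{1}\right)  \right\Vert _{p}\leq\left(  E\left\vert C_{n}\left(  X_{k}\right)  -\operatorname*{Cov}\left(  X_{1}\right)  \right\vert _{HS\left(  H\right)  }^{p}\right)  ^{1/p},
\]
where the $L^{p}$ norm on the left is the one built from the operator norm on the $\left[  H\right]  $-valued random element.

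Finally I would substitute the bound (\ref{eq:LpLLN-sample-cov}) for the right-hand side to obtain the claim. There is essentially no obstacle here: the only thing to verify is that the direction of (\ref{eq:compare-hilbert-schmidt-norm}) is the favorable one, namely that the left-hand $L^{p}$ norm is measured in the smaller operator norm while (\ref{eq:LpLLN-sample-cov}) controls the larger Hilbert-Schmidt norm, so the inequality propagates correctly. The constants $C_{p}$ and $C_{2p}$ and the rate $n^{-1/2}$ carry over verbatim from Lemma \ref{lem:LpLLN-sample-cov}.
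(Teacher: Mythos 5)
Your proof is correct and is exactly the paper's argument: the corollary follows by combining the pointwise comparison $\left\vert A\right\vert \leq\left\vert A\right\vert _{HS\left(  H\right)  }$ from (\ref{eq:compare-hilbert-schmidt-norm}) with the Hilbert-Schmidt estimate (\ref{eq:LpLLN-sample-cov}), which is precisely what you do. The paper states this in a single line, and you have simply made explicit the routine steps (pointwise domination, monotonicity of the integral, taking $p$-th roots) that justify it.
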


\begin{remark}
The argument developed in Lemma \ref{lem:Lp-large-numbers}, Lemma
\ref{lem:LpLLN-sample-cov}, and Corollary
\ref{cor:LpLLN-sample-cov-operator-norm} is essentially a reformulation the
proof of Lemma 3.3 in \cite{LeGland-2011-LSA}. We have extended the argument
to infinite dimension and made explicit the fact that the bounds are
independent of the dimension of the space.
\end{remark}

\subsection{Strong laws of large numbers}

\begin{theorem}
[{\cite[Theorem 7.9]{Ledoux-1991-PBS}}]Let $0<p<2$, and $\left(  X_{i}\right)
$ be i.i.d. random elements with values in Banach space $B$, and $S_{n}%
=X_{1}+\cdots+X_{n}$. Then%
\[
\frac{S_{n}}{n^{1/p}}\rightarrow0\text{ almost surely}%
\]
if and only if%
\[
E\left(  \left\vert X_{1}\right\vert ^{p}\right)  <\infty\text{ and }%
\frac{S_{n}}{n^{1/p}}\rightarrow0\text{ in probability.}%
\]

\end{theorem}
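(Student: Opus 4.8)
The plan is to prove the two implications separately. The forward implication is routine, so I would dispatch it first and then concentrate on the converse, which carries all the difficulty.

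For the forward implication, suppose $S_n/n^{1/p}\to 0$ almost surely. Convergence in probability is then immediate. To recover the moment condition, I would write
\[
\frac{X_n}{n^{1/p}}=\frac{S_n}{n^{1/p}}-\left(\frac{n-1}{n}\right)^{1/p}\frac{S_{n-1}}{(n-1)^{1/p}},
\]
whose right-hand side tends to $0$ almost surely, so $X_n/n^{1/p}\to 0$ a.s. Hence, for each $\varepsilon>0$, the independent events $A_n=\{|X_n|>\varepsilon n^{1/p}\}$ occur only finitely often almost surely, and the Borel--Cantelli lemma for independent events forces $\sum_n\Pr(A_n)<\infty$. Since the $X_n$ are identically distributed, $\Pr(A_n)=\Pr(|X_1|^p>\varepsilon^p n)$, and the elementary identity $\sum_n\Pr(Y>n)<\infty\Leftrightarrow E(Y)<\infty$ for the nonnegative variable $Y=|X_1|^p/\varepsilon^p$ yields $E(|X_1|^p)<\infty$.

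For the converse, assume $E(|X_1|^p)<\infty$ and $S_n/n^{1/p}\to 0$ in probability. First I would symmetrize: taking an independent copy $(X_i')$ and setting $\tilde X_i=X_i-X_i'$ produces symmetric i.i.d. summands that inherit both hypotheses, and a final desymmetrization recovers the claim for $(X_i)$. Next I would truncate at the normalization level, $X_n'=X_n\mathbf{1}_{\{|X_n|\le n^{1/p}\}}$: the moment hypothesis gives $\sum_n\Pr(|X_n|>n^{1/p})=\sum_n\Pr(|X_1|^p>n)<\infty$, so by Borel--Cantelli $X_n=X_n'$ for all large $n$ almost surely, reducing matters to the truncated symmetric terms. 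The key structural step is then to pass from the Ces\`aro statement $n^{-1/p}\sum_{k\le n}X_k'\to 0$ to the convergence of the weighted series $\sum_n X_n'/n^{1/p}$: by Kronecker's lemma, applied pathwise and valid in any normed space for the increasing weights $b_n=n^{1/p}$, convergence of the series implies the desired almost sure Ces\`aro convergence. For independent symmetric Banach-valued summands, the It\^o--Nisio theorem guarantees that this series converges almost surely as soon as it converges in probability, and it is at this point that the hypothesis $S_n/n^{1/p}\to 0$ in probability is fed in, via an Abel summation relating the averages $S_n/n^{1/p}$ to the partial sums of $\sum_n X_n'/n^{1/p}$ and controlled by L\'evy's maximal inequality for symmetric sums.

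The hard part will be exactly this conversion of convergence in probability into almost sure convergence in the Banach-space setting. Unlike the scalar Marcinkiewicz--Zygmund law, there is no orthogonality to exploit, and in infinite dimensions the moment condition alone does \emph{not} imply almost sure convergence, so the convergence-in-probability hypothesis is an irreducible geometric assumption that must be used in full. Concretely, the delicate bookkeeping lies in the Abel summation that relates the two normalizations and in controlling the centering introduced by truncation, so that the in-probability convergence of $S_n/n^{1/p}$ is transferred to the weighted series without loss; the vector-valued tools (It\^o--Nisio, L\'evy's maximal inequality, and Kronecker's lemma) are what make the argument go through where scalar second-moment arguments fail.
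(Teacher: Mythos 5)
Your forward implication is correct and complete: the telescoping identity gives $X_n/n^{1/p}\to 0$ a.s., the converse Borel--Cantelli lemma for independent events gives $\sum_n \Pr\left(|X_1|>\varepsilon n^{1/p}\right)<\infty$, and this is equivalent to $E\left(|X_1|^p\right)<\infty$. Bear in mind that the paper offers no proof of this theorem at all --- it is quoted from Ledoux and Talagrand --- so your converse has to stand on its own, and there it has a genuine gap. The symmetrization and truncation reductions are fine, and Itô--Nisio plus Kronecker would indeed finish the job \emph{if} you could show that the partial sums $T_N=\sum_{n\le N}X_n'/n^{1/p}$ converge in probability; but that is exactly the step your proposal does not actually supply.

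The mechanism you offer for it --- Abel summation controlled by Lévy's maximal inequality --- does not work. Kronecker's lemma only goes one way, and its converse is false even for deterministic sequences (take $x_n$ with $x_n/n^{1/p}=(n\log n)^{-1}$: the normalized Cesàro sums tend to $0$ while the weighted series diverges), so the in-probability hypothesis must be converted into series convergence by a genuinely quantitative argument. Abel summation gives
\[
T_N - T_M = \frac{S_N'}{N^{1/p}} - \frac{S_M'}{M^{1/p}} + \sum_{n=M}^{N-1} S_n'\left(\frac{1}{n^{1/p}}-\frac{1}{(n+1)^{1/p}}\right),
\]
and the tail sum is of order $\sum_{n\ge M}\frac{S_n'}{n^{1/p}}\cdot\frac{1}{n}$: the quantities $S_n'/n^{1/p}$ appear with the non-summable weights $1/n$, so knowing only that each of them is small in probability (with no rate) gives no control whatsoever. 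Lévy's inequality does not rescue this: after dyadic blocking it reduces the tail to $\sum_k |S_{2^k}'|/2^{k/p}$, whose smallness requires summable tail estimates $\sum_k \Pr\left(|S_{2^k}'|>\varepsilon 2^{k/p}\right)<\infty$ --- precisely the quantitative statement that the hypotheses do not hand you and that is the real content of the theorem. The proof in the cited source (Ledoux--Talagrand, following de Acosta) obtains these summable dyadic estimates from a tool absent from your proposal: Hoffmann--Jørgensen's inequality (in Ledoux--Talagrand's presentation, the isoperimetric method of their Chapter 6), which bounds tails of the truncated block sums by quantiles of those sums (controlled by the in-probability hypothesis) together with tails of $\max_{i\le 2^k}|X_i'|$ (controlled by $E\left(|X_1|^p\right)<\infty$). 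Once one has those estimates, the conclusion follows directly from Lévy's inequality and Borel--Cantelli, with no need for the weighted series, Itô--Nisio, or Kronecker at all; without them, your argument assumes the hard estimate rather than proving it.
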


\begin{theorem}
[{\cite[Corollary 7.10]{Ledoux-1991-PBS}}]Let $\left(  X_{i}\right)  $ be
i.i.d. random elements with values in Banach space $B$ with $E\left(
\left\vert X_{1}\right\vert \right)  <\infty$, and $E_{n}=\frac{1}{n}\left(
X_{1}+\cdots+X_{n}\right)  $. Then%
\[
E_{n}\rightarrow E\left(  X\right)  \text{ almost surely.}%
\]

\end{theorem}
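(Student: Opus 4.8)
The plan is to deduce this from the preceding Theorem (Ledoux's 7.9) with exponent $p=1$, after recentering. First I would pass to the centered variables $Y_i = X_i - E(X_1)$: these are again i.i.d., $E(Y_i) = 0$, and $E(|Y_1|) \le E(|X_1|) + |E(X_1)| \le 2E(|X_1|) < \infty$ by the triangle inequality together with $|E(X_1)| \le E(|X_1|)$. Writing $T_n = Y_1 + \cdots + Y_n = S_n - nE(X_1)$, the assertion $E_n \to E(X_1)$ a.s., i.e.\ $S_n/n \to E(X_1)$ a.s., is exactly $T_n/n \to 0$ a.s. By Theorem 7.9 applied to $(Y_i)$ with $p=1$, this is equivalent to the conjunction of $E(|Y_1|) < \infty$, which I have just verified, and $T_n/n \to 0$ in probability. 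Thus the whole statement reduces to the weak law of large numbers in $B$: the centered sample mean tends to $0$ in probability.

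For this weak law I cannot copy the Hilbert-space proof of Lemma \ref{lem:WLLN}, because a general Banach space has no inner product and hence no variance identity $E(|\sum Y_i|^2) = \sum E(|Y_i|^2)$ to insert into Chebychev's inequality. Instead I would use separability through a finite-dimensional approximation. Given $\varepsilon > 0$, since $Y_1$ is a random element it is almost surely separably valued, and the finitely-valued random elements are dense in the Bochner space $L^1(\Omega, B)$; hence I can choose $Y_1'$ taking values in a finite-dimensional subspace $F \subset B$ with $E(|Y_1 - Y_1'|) < \varepsilon$, and let $Y_i'$ denote the associated i.i.d.\ copies. I then split
\[
\frac{T_n}{n} = \frac{1}{n}\sum_{i=1}^{n} Y_i' + \frac{1}{n}\sum_{i=1}^{n}(Y_i - Y_i').
\]
The first sum lives in the finite-dimensional space $F$, where the ordinary Euclidean weak law gives $\frac{1}{n}\sum_i Y_i' \to E(Y_1')$ in probability, while $|E(Y_1')| \le E(|Y_1 - Y_1'|) < \varepsilon$. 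For the second sum I estimate $|\frac{1}{n}\sum_i (Y_i - Y_i')| \le \frac{1}{n}\sum_i |Y_i - Y_i'|$ and apply the scalar weak law to the i.i.d.\ nonnegative variables $|Y_i - Y_i'|$, whose common mean is below $\varepsilon$. Each of the three contributions to $|T_n/n|$ is therefore at most of order $\varepsilon$ with probability tending to one, and since $\varepsilon$ is arbitrary, $T_n/n \to 0$ in probability.

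The main obstacle is exactly this weak-law step: the lack of any second-moment or Rademacher-type structure rules out the one-line Chebychev estimate and forces the detour through separability and the density of simple functions in $L^1(\Omega, B)$. Once convergence in probability is established, Theorem 7.9 upgrades it to almost-sure convergence at no extra cost. I would remark, finally, that running the same approximation with the scalar and finite-dimensional \emph{strong} laws in place of their weak versions produces the a.s.\ conclusion directly --- this is Mourier's classical argument --- so the appeal to Theorem 7.9 is a matter of convenience rather than necessity.
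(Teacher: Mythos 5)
The paper itself contains no proof of this statement: it is quoted verbatim from \cite[Corollary 7.10]{Ledoux-1991-PBS}, placed immediately after the Marcinkiewicz--Zygmund strong law (Theorem 7.9 of that reference), and the intended derivation is exactly the one in the cited source. Your argument is correct and reproduces that derivation: center the variables, invoke Theorem 7.9 with $p=1$ to reduce almost sure convergence to convergence in probability, and establish the latter by approximating $Y_1$ in $L^1\left(\Omega,B\right)$ by a simple, finite-dimensionally valued random element, treating the two pieces with the finite-dimensional and scalar weak laws respectively. One point worth making explicit: for the differences $\left\vert Y_i - Y_i'\right\vert$ to be i.i.d.\ you need the approximant to be a fixed Borel simple map of the variable itself, $Y_i' = \phi\left(Y_i\right)$, which is available because $Y_1$ is essentially separably valued; with that reading your coupling is legitimate and the proof is complete. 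Your closing observation is also accurate --- Mourier's classical argument runs the same approximation against the scalar and finite-dimensional strong laws and yields the conclusion without Theorem 7.9, so the corollary is genuinely weaker than the theorem it is deduced from.
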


\section{Gaussian measures}

Given a measure $\mu$ on a Hilbert space $H$, its Fourier transform is the
function%
\[
\widehat{\mu}:h\in H\mapsto\int_{H}e^{-i\left\langle h,u\right\rangle }%
\mu\left(  du\right)  .
\]
More generally, if $U$ is a linear topological space and $\mu$ a measure on
the dual space $U^{\prime}$ (the space for all continous linear functionals on
$U$), the Fourier transform of $\mu$ is the function%
\[
\widehat{\mu}:v\in U\mapsto\int_{U^{\prime}}e^{-i\left\langle v,u\right\rangle
}\mu\left(  du\right)  ,
\]
where $\left\langle v,u\right\rangle =u\left(  v\right)  $.

\begin{proposition}
Given a separable Hilbert space $H$, $a\in H$, and $Q$ symmetric positive
semidefinite linear operator on $H$ of trace class, then there exists unique
probability measure $\mu$ on $H$ such that%
\begin{equation}
\widehat{\mu}\left(  h\right)  =e^{i\left\langle a,h\right\rangle -\frac{1}%
{2}\left\langle Qh,h\right\rangle }. \label{eq:gaussian-fourier-transform}%
\end{equation}
The measure $\mu$ has mean $a$ and covariance $Q$. If $X$ is a random element
with values in $H$ and with distribution $\mu$, and $u\in H$, then
\[
\left\langle u,X\right\rangle \sim N\left(  \left\langle a,u\right\rangle
,\left\langle u,Qu\right\rangle \right)  .
\]

\end{proposition}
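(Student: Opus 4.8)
The plan is to construct $\mu$ explicitly as the law of a Karhunen--Lo\`{e}ve-type random series, read off its Fourier transform, mean and covariance from that representation, and then obtain uniqueness from the fact that the Fourier transform determines a Borel probability measure on a separable Hilbert space.

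First I would diagonalize $Q$. Being symmetric, positive semidefinite and trace class, $Q$ is compact, so the spectral theorem (\ref{eq:spectral-decomposition}) provides a complete orthonormal system $\{e_n\}$ with $Qe_n=\lambda_n e_n$, $\lambda_n\ge 0$, and $\sum_{n=1}^\infty\lambda_n=\operatorname*{Tr}Q<\infty$ by the trace-class assumption. On some probability space take independent $\xi_n\sim N(0,1)$ and define $X=a+\sum_{n=1}^\infty\lambda_n^{1/2}\xi_n e_n$. Since the $e_n$ are orthonormal and the $\xi_n$ are uncorrelated with unit variance, the partial sums $S_N=\sum_{n=1}^N\lambda_n^{1/2}\xi_n e_n$ obey $\Vert S_N-S_M\Vert_2^2=\sum_{M<n\le N}\lambda_n$, so $\{S_N\}$ is Cauchy in $L^2(\Omega,H)$; the limit, which converges a.s.\ along a subsequence and hence defines an $H$-valued random element, is $X-a$. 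Let $\mu$ be the distribution of $X$.

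Next I would identify $\widehat\mu$. Fix $h\in H$. The series $\langle h,X-a\rangle=\sum_n\lambda_n^{1/2}\xi_n\langle h,e_n\rangle$ converges in $L^2(\Omega)$ and, being a limit of sums of independent centered Gaussians, is Gaussian with mean $0$ and variance $\sum_n\lambda_n\langle h,e_n\rangle^2=\langle Qh,h\rangle$. Concretely, independence and the standard-normal characteristic function $E(e^{it\xi_n})=e^{-t^2/2}$ factor the characteristic function of $\langle h,S_N\rangle$ into $\exp(-\tfrac12\sum_{n=1}^N\lambda_n\langle h,e_n\rangle^2)$, and letting $N\to\infty$ (the exponent converges to $\langle Qh,h\rangle$ and the integrand is bounded) yields $\langle h,X\rangle\sim N(\langle a,h\rangle,\langle Qh,h\rangle)$. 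This is exactly the final assertion of the proposition, and it gives $\widehat\mu(h)=e^{i\langle a,h\rangle-\frac12\langle Qh,h\rangle}$. The mean and covariance then follow from the $L^2$ representation: $E(X)=a$ because $E(\xi_n)=0$, while from the definition (\ref{eq:def-covariance}) one computes $\langle u,\operatorname*{Cov}(X)v\rangle=E(\langle u,X-a\rangle\langle v,X-a\rangle)=\sum_n\lambda_n\langle u,e_n\rangle\langle v,e_n\rangle=\langle u,Qv\rangle$, so $\operatorname*{Cov}(X)=Q$.

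It remains to prove uniqueness, which I regard as the main obstacle. The claim is that two Borel probability measures on the separable space $H$ with identical Fourier transforms coincide. I would reduce to finite dimensions: for any finite orthonormal family $e_1,\dots,e_k$, the restriction of $\widehat\mu$ to $\operatorname{span}\{e_j\}$ is the joint characteristic function of $(\langle e_1,\cdot\rangle,\dots,\langle e_k,\cdot\rangle)$, so by the classical uniqueness theorem for characteristic functions on $\mathbb{R}^k$ all finite-dimensional cylinder marginals are determined. Because $H$ is separable the cylinder sets form a $\pi$-system generating $\mathcal{B}(H)$, so the Dynkin $\pi$--$\lambda$ theorem upgrades agreement on cylinders to equality of the measures. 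The other delicate point is the interchange of limit and expectation when passing from $S_N$ to $X$ in the Fourier computation; this is controlled by boundedness of $e^{i\langle h,\cdot\rangle}$ together with the $L^2$, and hence in-distribution, convergence $S_N\to X-a$. The convergence of the defining series itself is, by contrast, immediate from the trace condition $\sum_n\lambda_n<\infty$.
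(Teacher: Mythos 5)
Your proof is correct. There is nothing inside the paper to compare it against: the proposition is stated there as reviewed background, without proof (the source is \cite[Ch.~1]{DaPrato-2006-IIA}), so your argument fills a step the paper deliberately omits. Your route is the standard one, and it meshes with machinery the paper develops elsewhere: the series $X=a+\sum_{n}\lambda_{n}^{1/2}\xi_{n}e_{n}$ built from the spectral decomposition (\ref{eq:spectral-decomposition}) of $Q$ is precisely the Karhunen--Lo\`{e}ve-type expansion (\ref{eq:KL-expansion}), and the trace condition (\ref{eq:trace_condition}) is exactly what makes the partial sums Cauchy in $L^{2}\left(\Omega,H\right)$. The identification of the one-dimensional laws $\left\langle h,X\right\rangle \sim N\left(\left\langle a,h\right\rangle ,\left\langle Qh,h\right\rangle \right)$, and hence of $\widehat{\mu}$, is sound ($L^{2}$ convergence implies convergence in distribution, and the Gaussian characteristic functions pass to the limit), as are the mean and covariance computations against the definition (\ref{eq:def-covariance}). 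Your uniqueness argument is also complete: on a separable $H$ the cylinder sets over a fixed complete orthonormal system form a $\pi$-system generating $\mathcal{B}\left(H\right)$ --- the norm is cylinder-measurable because $\left\vert x\right\vert ^{2}=\sum_{n}\left\langle e_{n},x\right\rangle ^{2}$, so balls are in the cylinder $\sigma$-algebra --- and then finite-dimensional uniqueness of characteristic functions plus the Dynkin $\pi$--$\lambda$ theorem gives equality of measures. Two small remarks. First, the digression about a.s.\ convergence along a subsequence is unnecessary: an $L^{2}\left(\Omega,H\right)$ limit is by definition (an equivalence class of) an $H$-valued random element, which is all you need to define its law; the full series in fact converges a.s.\ (It\^{o}--Nisio, or $H$-valued martingale convergence), but nothing in your argument requires this. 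Second, a sign quirk of the paper rather than of your proof: the paper defines $\widehat{\mu}\left(h\right)=\int_{H}e^{-i\left\langle h,u\right\rangle }\mu\left(du\right)$, under which the transform of your measure would read $e^{-i\left\langle a,h\right\rangle -\frac{1}{2}\left\langle Qh,h\right\rangle }$; the stated formula (\ref{eq:gaussian-fourier-transform}) presupposes the $e^{+i\left\langle h,u\right\rangle }$ convention, which is the one you implicitly used.
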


The Gaussian measure satisfying (\ref{eq:gaussian-fourier-transform}) is
denoted by $N\left(  a,Q\right)  $.

\begin{proposition}
If $\mu$ is a Gaussian measure on a Hilbert space $H$ and $1\leq p<\infty$,
then%
\begin{equation}
M_{p}\left(  \mu\right)  =\int_{H}\left\vert x\right\vert ^{p}\mu\left(
dx\right)  <\infty. \label{eq:gaussian-moments}%
\end{equation}
That is, Gaussian measure has finite moments of all orders.
\end{proposition}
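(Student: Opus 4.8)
The plan is to reduce to even integer exponents, then to the centered case, and finally to estimate coordinatewise using the one-dimensional Gaussian moment formula and the triangle inequality in $L^{m}$. Let $X$ be a random element with distribution $\mu=N\left(a,Q\right)$, so that $M_{p}\left(\mu\right)=E\left(\left\vert X\right\vert ^{p}\right)=\left\Vert X\right\Vert _{p}^{p}$. By the monotonicity of $L^{p}$ norms (\ref{eq:Lp-comparison}) it suffices to show $X\in L^{2m}\left(\Omega,H\right)$ for every positive integer $m$: for arbitrary $1\leq p<\infty$, picking an even integer $2m\geq p$ gives $\left\Vert X\right\Vert _{p}\leq\left\Vert X\right\Vert _{2m}<\infty$, hence $M_{p}\left(\mu\right)<\infty$. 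Next I reduce to $a=0$. Writing $X_{0}=X-a$, the preceding Proposition gives $\left\langle u,X_{0}\right\rangle =\left\langle u,X\right\rangle -\left\langle u,a\right\rangle \sim N\left(0,\left\langle u,Qu\right\rangle \right)$ for every $u$, so $X_{0}$ has distribution $N\left(0,Q\right)$; and from $\left\vert X\right\vert \leq\left\vert X_{0}\right\vert +\left\vert a\right\vert$ together with $\left(s+t\right)^{2m}\leq2^{2m-1}\left(s^{2m}+t^{2m}\right)$ we get $E\left(\left\vert X\right\vert ^{2m}\right)\leq2^{2m-1}\left(E\left(\left\vert X_{0}\right\vert ^{2m}\right)+\left\vert a\right\vert ^{2m}\right)$, so it is enough to bound $E\left(\left\vert X_{0}\right\vert ^{2m}\right)$ in the centered case.

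Assume now $a=0$ and fix any complete orthonormal system $\left\{e_{n}\right\}$ of $H$. Put $\xi_{n}=\left\langle X,e_{n}\right\rangle$ and $\sigma_{n}^{2}=\left\langle e_{n},Qe_{n}\right\rangle \geq0$. By that same Proposition each $\xi_{n}$ is a real Gaussian with $\xi_{n}\sim N\left(0,\sigma_{n}^{2}\right)$, and, because $Q$ is of trace class and self-adjoint, the variances are summable with $\sum_{n}\sigma_{n}^{2}=\operatorname*{Tr}Q<\infty$. By Parseval's equality (\ref{eq:abstract-parseval}), applied pointwise to $X\left(\omega\right)\in H$, we have $\left\vert X\right\vert ^{2}=\sum_{n}\xi_{n}^{2}=:S$, a sum of nonnegative random variables.

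Now I estimate $E\left(\left\vert X\right\vert ^{2m}\right)=E\left(S^{m}\right)=\left\Vert S\right\Vert _{m}^{m}$. Applying the triangle inequality in $L^{m}\left(\Omega,\mathbb{R}\right)$ (Minkowski's inequality) to the partial sums $\sum_{n=1}^{N}\xi_{n}^{2}$ and passing to the limit by monotone convergence (the terms are nonnegative), we obtain
\[
\left\Vert S\right\Vert _{m}\leq\sum_{n=1}^{\infty}\left\Vert \xi_{n}^{2}\right\Vert _{m}=\sum_{n=1}^{\infty}\left(E\left(\xi_{n}^{2m}\right)\right)^{1/m}.
\]
The one-dimensional Gaussian moment formula gives $E\left(\xi_{n}^{2m}\right)=\left(2m-1\right)!!\,\sigma_{n}^{2m}$, so $\left(E\left(\xi_{n}^{2m}\right)\right)^{1/m}=\big(\left(2m-1\right)!!\big)^{1/m}\sigma_{n}^{2}$, and therefore
\[
\left\Vert S\right\Vert _{m}\leq\big(\left(2m-1\right)!!\big)^{1/m}\sum_{n=1}^{\infty}\sigma_{n}^{2}=\big(\left(2m-1\right)!!\big)^{1/m}\operatorname*{Tr}Q.
\]
Raising to the $m$-th power yields the explicit bound $E\left(\left\vert X\right\vert ^{2m}\right)\leq\left(2m-1\right)!!\,\left(\operatorname*{Tr}Q\right)^{m}<\infty$, which combined with the two reductions proves the claim.

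The argument is short once the reductions are in place, and the one place that genuinely needs care — the only candidate for an obstacle — is the interchange of the infinite sum with the $L^{m}$ norm. I sidestep any appeal to independence of the coordinates $\xi_{n}$ (which would hold as well, since $Q$ is diagonal in its eigenbasis) by invoking Minkowski's inequality directly; the nonnegativity of each $\xi_{n}^{2}$ makes both the triangle inequality in $L^{m}$ and the limiting step via monotone convergence immediate. The finiteness of the resulting series is exactly the trace class hypothesis $\sum_{n}\sigma_{n}^{2}=\operatorname*{Tr}Q<\infty$, which is precisely where that assumption on $Q$ enters.
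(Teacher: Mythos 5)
Your proof is correct. There is nothing in the paper to compare it against: this proposition is one of the reviewed results whose proofs the paper deliberately omits (the standard reference proofs, e.g.\ in \cite{DaPrato-2006-IIA}, go through Fernique's theorem on exponential integrability or through a series expansion with independent coordinates). Your argument fills that gap using exactly the toolkit the paper itself develops: monotonicity of $L^{p}$ norms, Parseval's equality, the trace-class hypothesis on $Q$, and the marginal statement $\left\langle u,X\right\rangle \sim N\left(  \left\langle a,u\right\rangle ,\left\langle u,Qu\right\rangle \right)  $ from the preceding existence proposition. Two features are worth highlighting. First, by replacing independence (or Fernique) with Minkowski's inequality in $L^{m}$ plus monotone convergence, you only ever use one-dimensional marginals of $\mu$, never joint distributions, which is why an arbitrary complete orthonormal system suffices; note, though, that your parenthetical claim that independence of the $\xi_{n}$ ``would hold as well'' is valid only in the eigenbasis of $Q$ --- in the arbitrary basis you fixed, the coordinates are generally correlated, but since you never use independence this is harmless. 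Second, the proof yields the explicit bound $E\left(  \left\vert X_{0}\right\vert ^{2m}\right)  \leq\left(  2m-1\right)  !!\left(  \operatorname*{Tr}Q\right)  ^{m}$ (with equality when $m=1$), which matches the paper's stated preference for explicit constants over mere finiteness. One minor presentational point: the assertion that $X_{0}=X-a$ has distribution $N\left(  0,Q\right)  $ as a measure on $H$ requires the uniqueness part of the Fourier-transform proposition, but your argument never needs this full identification --- the marginal laws $\xi_{n}\sim N\left(  0,\sigma_{n}^{2}\right)  $, which follow directly from $\left\langle e_{n},X\right\rangle \sim N\left(  \left\langle a,e_{n}\right\rangle ,\sigma_{n}^{2}\right)  $, are all that is used.
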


The space $L^{p}\left(  H,\mu\right)  $ for a Gaussian measure $\mu$ has some
addional properties.

\begin{proposition}
\label{prop:LpH}If $H$ is a Hilbert space, $\mu$ a centered Gaussian measure,
and $1\leq p<\infty$. Then $H^{\prime}\subset L^{p}\left(  H,\mu\right)  $ and
the embedding $H^{\prime}\rightarrow L^{p}\left(  H,\mu\right)  $ is
continous. If $H$ is infinitely dimensional, the $H^{\prime}$ and the
$L^{p}\left(  H,\mu\right)  $ norm are not equivalent on $H^{\prime}$.
\end{proposition}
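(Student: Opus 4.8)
The plan is to reduce everything to a one-dimensional Gaussian moment by exploiting the Riesz identification together with the distributional fact from the previous proposition. First I would identify a functional $f\in H^{\prime}$ with the vector $u\in H$ for which $f(x)=\left\langle u,x\right\rangle$, so that $\left\vert f\right\vert_{H^{\prime}}=\left\vert u\right\vert$. Writing $\mu=N(0,Q)$ and letting $X$ be a random element with distribution $\mu$, the cited proposition gives $\left\langle u,X\right\rangle\sim N\left(0,\left\langle Qu,u\right\rangle\right)$. Hence the $L^{p}\left(H,\mu\right)$ norm of $f$ collapses to a scalar Gaussian moment,
\[
\left\Vert f\right\Vert_{L^{p}\left(H,\mu\right)}^{p}=\int_{H}\left\vert \left\langle u,x\right\rangle\right\vert^{p}\mu(dx)=E\left\vert \left\langle u,X\right\rangle\right\vert^{p}=m_{p}\left\langle Qu,u\right\rangle^{p/2},
\]
where $m_{p}=E\left\vert Z\right\vert^{p}$ for $Z\sim N(0,1)$ is a finite positive constant depending on $p$ only (finiteness being the scalar case of the finite-moments proposition). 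Thus $\left\Vert f\right\Vert_{L^{p}\left(H,\mu\right)}=m_{p}^{1/p}\left\langle Qu,u\right\rangle^{1/2}$, and this single identity drives all three assertions.

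For the inclusion and the continuity of the embedding, I would use that $Q$, being trace class, is bounded and positive semidefinite, so $\left\langle Qu,u\right\rangle\leq\left\vert Q\right\vert\left\vert u\right\vert^{2}$. This yields
\[
\left\Vert f\right\Vert_{L^{p}\left(H,\mu\right)}=m_{p}^{1/p}\left\langle Qu,u\right\rangle^{1/2}\leq m_{p}^{1/p}\left\vert Q\right\vert^{1/2}\left\vert u\right\vert=m_{p}^{1/p}\left\vert Q\right\vert^{1/2}\left\vert f\right\vert_{H^{\prime}}<\infty,
\]
which shows simultaneously that $f\in L^{p}\left(H,\mu\right)$ and that the embedding $H^{\prime}\rightarrow L^{p}\left(H,\mu\right)$ is bounded, hence continuous.

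For the failure of equivalence when $\dim H=\infty$, I would observe that equivalence would force a reverse bound $\left\Vert f\right\Vert_{L^{p}\left(H,\mu\right)}\geq c\left\vert f\right\vert_{H^{\prime}}$ for some $c>0$, equivalently $\left\langle Qu,u\right\rangle\geq\left(c/m_{p}^{1/p}\right)^{2}\left\vert u\right\vert^{2}$, i.e. $Q$ bounded below by a positive multiple of the identity. But $Q$ is compact, self-adjoint, and positive semidefinite, so it has an orthonormal system of eigenvectors $u_{n}$ with eigenvalues $\lambda_{n}\geq0$ satisfying $\sum_{n}\lambda_{n}<\infty$; in infinite dimension this produces a sequence of unit eigenvectors along which $\left\langle Qu_{n},u_{n}\right\rangle=\lambda_{n}\rightarrow0$ (and if only finitely many eigenvalues are positive, one takes $u_{n}$ in the infinite-dimensional kernel, where $\left\langle Qu_{n},u_{n}\right\rangle=0$). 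For these unit vectors $\left\vert f_{n}\right\vert_{H^{\prime}}=1$ while $\left\Vert f_{n}\right\Vert_{L^{p}\left(H,\mu\right)}=m_{p}^{1/p}\lambda_{n}^{1/2}\rightarrow0$, so no such $c$ exists and the norms are not equivalent.

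The computation is routine once the distributional identity is in hand; the only step requiring real care is the last one, where the decisive input is the spectral fact that a trace-class (indeed merely compact) operator on an infinite-dimensional space cannot be bounded below, so its eigenvalues accumulate at $0$. That is precisely the structural reason the $H^{\prime}$ and $L^{p}\left(H,\mu\right)$ norms genuinely diverge in infinite dimension, whereas in finite dimension a positive definite $Q$ keeps them equivalent.
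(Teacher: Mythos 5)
Your proof is correct, and for the non-equivalence half it is in substance the same as the paper's: both exhibit unit-norm functionals along the eigenbasis of the covariance $Q$ whose $L^{p}\left(H,\mu\right)$ norms tend to zero because the eigenvalues $\lambda_{n}\rightarrow0$. Where you genuinely differ is in the embedding half and in the organization. The paper proves continuity of the embedding by applying Cauchy--Schwarz pointwise inside the integral, $\left\vert\left\langle v,u\right\rangle\right\vert\leq\left\vert v\right\vert_{H^{\prime}}\left\vert u\right\vert_{H}$, and then invoking finiteness of the Gaussian moments $M_{p}\left(\mu\right)$ of the measure on $H$; you instead use the distributional identity $\left\langle u,X\right\rangle\sim N\left(0,\left\langle Qu,u\right\rangle\right)$ to compute the $L^{p}$ norm \emph{exactly}, $\left\Vert f\right\Vert_{L^{p}\left(H,\mu\right)}=m_{p}^{1/p}\left\langle Qu,u\right\rangle^{1/2}$, and then both halves of the proposition fall out of this one formula: boundedness of $Q$ gives the embedding with the sharp constant $m_{p}^{1/p}\left\vert Q\right\vert^{1/2}$ (rather than $M_{p}\left(\mu\right)^{1/p}$, which requires the infinite-dimensional moment result), and compactness of the trace-class $Q$ gives non-equivalence, since a compact operator on an infinite-dimensional space cannot be bounded below. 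Your route is tighter and more self-contained on the Gaussian side, needing only scalar normal moments, and your explicit treatment of the degenerate case (finite rank $Q$, using kernel vectors) covers a case the paper passes over silently; the paper's embedding argument, on the other hand, uses nothing about Gaussianity beyond finiteness of the $p$-th moment, so it generalizes verbatim to any measure with $M_{p}\left(\mu\right)<\infty$. Incidentally, your computation also corrects a small slip in the paper, which writes the decaying quantity as $C_{p}\lambda_{n}^{1/p}$ where the exponent should be $1/2$; this is immaterial to the conclusion since both tend to zero.
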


\begin{proof}
Let $v\in H^{\prime}$. Then%
\begin{align}
\left\vert v\right\vert _{L^{p}\left(  H,\mu\right)  }  &  =\left(  \int%
_{H}\left\vert \left\langle v,u\right\rangle \right\vert ^{p}\mu\left(
du\right)  \right)  ^{1/p}\leq\left(  \int_{H}\left\vert v\right\vert
_{H^{\prime}}^{p}\left\vert u\right\vert _{H}^{p}\mu\left(  du\right)
\right)  ^{1/p}=\label{eq:LpH-cont}\\
&  =\left\vert v\right\vert _{H^{\prime}}^{p}\left(  \int_{H}\left\vert
u\right\vert _{H}^{p}\mu\left(  du\right)  \right)  ^{1/p}=\left\vert
v\right\vert _{H^{\prime}}M_{p}\left(  \mu\right)  ^{1/p},
\end{align}
with $M_{p}\left(  \mu\right)  <\infty$ by (\ref{eq:gaussian-moments}), which
proves that $H^{\prime}\subset L^{p}\left(  H,\mu\right)  $ with continous
embedding. To show that the opposite inequality does not hold, let $1\leq
p<\infty$ and $\left\{  e_{k}\right\}  $ be an orthonormal basis of
eigenvectors of the covariance of $\mu$ with the corresponding eigenvalues
$\lambda_{k}$, and define linear functionals $v_{n}:H\rightarrow\mathbb{R}$ by%
\[
v_{n}:u=%
%TCIMACRO{\tsum \nolimits_{k=1}^{\infty}}%
%BeginExpansion
{\textstyle\sum\nolimits_{k=1}^{\infty}}
%EndExpansion
u_{k}e_{k}\mapsto u_{n}.
\]
Then%
\[
\left\vert v_{n}\right\vert _{L^{p}\left(  H,\mu\right)  }=c_{n}\left(
\int_{-\infty}^{+\infty}\left\vert x\right\vert ^{p}e^{-x^{2}/2\lambda_{n}%
}dx\right)  ^{1/p}=C_{p}\lambda_{n}^{1/p}\rightarrow0\text{ as }%
n\rightarrow\infty,
\]
for some constant $C_{p}$, while%
\[
\left\vert v_{n}\right\vert _{H^{\prime}}=\sup\left\{  u_{n}:%
%TCIMACRO{\tsum \nolimits_{k=1}^{\infty}}%
%BeginExpansion
{\textstyle\sum\nolimits_{k=1}^{\infty}}
%EndExpansion
u_{k}^{2}=1\right\}  =1.
\]

\end{proof}

Note that the standard definition of the dual norm gives,%
\begin{equation}
\left\vert u\right\vert _{L^{p}\left(  H,\mu\right)  ^{\prime}}=\sup\left\{
\left\vert \left\langle u,v\right\rangle \right\vert :v\in H,\text{ }\int%
_{H}\left\vert \left\langle x,v\right\rangle \right\vert ^{p}\mu\left(
dx\right)  \leq1\right\}  ,\quad u\in H. \label{eq:def-dual-Lp-norm}%
\end{equation}

\begin{lemma}
The space $L^{p}\left(  H,\mu\right)  ^{\prime}\cap H$ is continously embedded
and dense in $H.$
\end{lemma}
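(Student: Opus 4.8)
The plan is to read the space $L^{p}(H,\mu)'\cap H$ through the dual-norm formula (\ref{eq:def-dual-Lp-norm}). The first observation is that the constraint appearing there is exactly $\left\vert v\right\vert _{L^{p}(H,\mu)}\leq 1$, since $\int_{H}\left\vert \left\langle x,v\right\rangle \right\vert ^{p}\mu(dx)$ is the $p$th power of the $L^{p}(H,\mu)$-norm of the function $x\mapsto\left\langle x,v\right\rangle$, i.e.\ of the element $v\in H^{\prime}\subset L^{p}(H,\mu)$. Thus $u\in L^{p}(H,\mu)'\cap H$ means precisely that the functional $v\mapsto\left\langle u,v\right\rangle$ on $H^{\prime}$ is bounded in the $L^{p}(H,\mu)$-norm, and then for every $v\in H$ one has $\left\vert \left\langle u,v\right\rangle \right\vert \leq\left\vert u\right\vert _{L^{p}(H,\mu)'}\left\vert v\right\vert _{L^{p}(H,\mu)}$.

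For the continuous embedding I would combine this inequality with the continuity of the embedding $H^{\prime}\rightarrow L^{p}(H,\mu)$ from Proposition \ref{prop:LpH}, namely $\left\vert v\right\vert _{L^{p}(H,\mu)}\leq M_{p}(\mu)^{1/p}\left\vert v\right\vert _{H}$. This gives $\left\vert \left\langle u,v\right\rangle \right\vert \leq M_{p}(\mu)^{1/p}\left\vert u\right\vert _{L^{p}(H,\mu)'}\left\vert v\right\vert _{H}$ for all $v$, and taking the supremum over $v$ with $\left\vert v\right\vert _{H}\leq 1$ yields $\left\vert u\right\vert _{H}\leq M_{p}(\mu)^{1/p}\left\vert u\right\vert _{L^{p}(H,\mu)'}$. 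Hence the inclusion of $\bigl(L^{p}(H,\mu)'\cap H,\ \left\vert \cdot\right\vert _{L^{p}(H,\mu)'}\bigr)$ into $\bigl(H,\left\vert \cdot\right\vert _{H}\bigr)$ is continuous.

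For density I would exhibit a dense subset of $H$ lying inside $L^{p}(H,\mu)'\cap H$. Let $\{e_{k}\}$ be the complete orthonormal system of eigenvectors of the compact self-adjoint covariance $Q$, with eigenvalues $\lambda_{k}>0$; this exists by the spectral theorem, and the finite linear combinations of the $e_{k}$ are dense in $H$. I claim each $e_{k}\in L^{p}(H,\mu)'\cap H$. Writing $v=\sum_{j}v_{j}e_{j}$, the Gaussian structure gives $\left\langle x,v\right\rangle \sim N(0,\left\langle Qv,v\right\rangle )$ under $\mu$, so $\left\vert v\right\vert _{L^{p}(H,\mu)}^{p}=c_{p}\left\langle Qv,v\right\rangle ^{p/2}$ where $c_{p}>0$ is the $p$th absolute moment of the standard normal; since $\left\langle Qv,v\right\rangle =\sum_{j}\lambda_{j}v_{j}^{2}\geq\lambda_{k}v_{k}^{2}$ we obtain $\left\vert \left\langle e_{k},v\right\rangle \right\vert =\left\vert v_{k}\right\vert \leq c_{p}^{-1/p}\lambda_{k}^{-1/2}\left\vert v\right\vert _{L^{p}(H,\mu)}$. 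Therefore $\left\vert e_{k}\right\vert _{L^{p}(H,\mu)'}\leq c_{p}^{-1/p}\lambda_{k}^{-1/2}<\infty$. As $L^{p}(H,\mu)'\cap H$ is a linear subspace, it contains the dense span of $\{e_{k}\}$, and density follows.

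The main obstacle is the density half, and in particular the fact that it is exactly here that the Gaussian nature of $\mu$ enters, through the law of the linear functionals $\left\langle x,v\right\rangle$; the embedding half is essentially formal once (\ref{eq:def-dual-Lp-norm}) is read correctly. One subtlety I would flag is that density in all of $H$ requires $Q$ to be nondegenerate (positive definite): if $\lambda_{k}=0$ for some $k$, then $\left\langle x,e_{k}\right\rangle =0$ $\mu$-a.s., so $\left\vert e_{k}\right\vert _{L^{p}(H,\mu)'}=\infty$ and $L^{p}(H,\mu)'\cap H$ is dense only in $(\ker Q)^{\perp}$. I would therefore carry out the density argument under the standing assumption that $\mu$ is nondegenerate, which is the relevant case.
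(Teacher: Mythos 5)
Your proof is correct, and it is in fact more complete than the paper's own. For the continuous-embedding half you argue exactly as the paper does: the paper's entire proof is the chain $|u|_{H}=\sup\{|\langle u,v\rangle|:v\in H^{\prime},\ |v|_{H^{\prime}}\leq1\}\leq\sup\{|\langle u,v\rangle|:v\in H^{\prime},\ |v|_{L^{p}(H,\mu)}\leq C\}=C\sup\{|\langle u,v\rangle|:|v|_{L^{p}(H,\mu)}\leq1\}\leq C|u|_{L^{p}(H,\mu)^{\prime}}$, which is your inequality $|u|_{H}\leq M_{p}(\mu)^{1/p}|u|_{L^{p}(H,\mu)^{\prime}}$ written with suprema instead of a pointwise bound; both rest on Proposition \ref{prop:LpH} (i.e.\ (\ref{eq:LpH-cont})) and the dual-norm formula (\ref{eq:def-dual-Lp-norm}). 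The genuine difference is the density half: the paper's proof stops after the embedding inequality and never addresses density at all, whereas you supply an actual argument. Your argument is sound: since $\mu$ is centered Gaussian, $\int_{H}|\langle x,v\rangle|^{p}\mu(dx)=c_{p}\langle Qv,v\rangle^{p/2}$ with $c_{p}=E|N(0,1)|^{p}>0$, and $\langle Qv,v\rangle\geq\lambda_{k}|\langle v,e_{k}\rangle|^{2}$ gives $|e_{k}|_{L^{p}(H,\mu)^{\prime}}\leq c_{p}^{-1/p}\lambda_{k}^{-1/2}<\infty$ for every eigenvector $e_{k}$ with $\lambda_{k}>0$; the set $L^{p}(H,\mu)^{\prime}\cap H$ is a linear subspace (the dual norm is subadditive), so it contains the span of the $e_{k}$, which is dense when $Q$ is nondegenerate. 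This is the same kind of coordinate-functional Gaussian moment computation the paper already uses in the second half of the proof of Proposition \ref{prop:LpH}, so it fits the paper's toolkit while supplying the half of the lemma the paper leaves unproved. Your closing caveat is also substantively right and worth keeping: if $\ker Q\neq\{0\}$ and $u$ has a nonzero component $u_{0}\in\ker Q$, then $v=tu_{0}$ has $L^{p}(H,\mu)$-norm zero for every $t$, so $|u|_{L^{p}(H,\mu)^{\prime}}=\infty$; hence $L^{p}(H,\mu)^{\prime}\cap H\subset(\ker Q)^{\perp}$ and density genuinely fails for degenerate $\mu$. Nondegeneracy is an implicit standing assumption here, made explicit only in the whitening subsection that follows ($\ker Q=\{0\}$), and your proof correctly isolates where it is needed.
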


\begin{proof}
From (\ref{eq:LpH-cont}), for any $u\in H$%
\begin{align*}
\left\vert u\right\vert _{H}  &  =\sup\left\{  \left\vert \left\langle
u,v\right\rangle \right\vert :v\in H^{\prime},\text{ }\left\vert v\right\vert
_{H^{\prime}}\leq1\right\} \\
&  \leq\sup\left\{  \left\vert \left\langle u,v\right\rangle \right\vert :v\in
H^{\prime},\text{ }\left\vert v\right\vert _{L^{p}\left(  H,\mu\right)  }\leq
C\right\} \\
&  =C\sup\left\{  \left\vert \left\langle u,v\right\rangle \right\vert :v\in
H^{\prime},\text{ }\left\vert v\right\vert _{L^{p}\left(  H,\mu\right)  }%
\leq1\right\} \\
&  \leq C\left\vert u\right\vert _{L^{p}\left(  H,\mu\right)  ^{\prime}}.
\end{align*}

\end{proof}

\section{White noise}

\label{ch:white noise}

\subsection{Classical definition of white noise stochastic process}

The classical concept of Gaussian white noise is a random processes, that is a
collection of random variables $F\left(  t\right)  :\Omega\rightarrow$
$\mathbb{R}$, $t\in\left[  0,1\right]  $, such that
\begin{align*}
E\left(  F\left(  t\right)  \right)   &  =0\text{ for all }t\in\left[
0,1\right]  ,\\
\operatorname*{Cov}\left(  F\left(  r\right)  ,F\left(  s\right)  \right)   &
=E\left(  F\left(  r\right)  F\left(  s\right)  \right)  =\left\{
\begin{array}
[c]{c}%
0\text{ if }r\neq s,\\
1\text{ if }r=s.
\end{array}
\right.
\end{align*}
Alternatively, $F$ is thought of as a random element whose values are real
function on $\left[  0,1\right]  $, called paths. White noise $F$ is gaussian
if the distribution of $F\left(  t\right)  $ is gaussian for all $t$, that is,
$F\left(  t\right)  \sim N\left(  0,1\right)  $. Similarly, white noise vector
$F$ in $\mathbb{R}^{n}$ is a random vector with zero mean and covariance equal
to the identity $I$, and gaussian white noise if $F\sim N(0,I)$. However, for
random elements $F$ with values in an infinitely dimensional Hilbert space,
this is not possible, because the covariance of a random element must be a
trace class operator, while $\operatorname*{Tr}I=+\infty$. Thus, a white noise
random element with values in a Hilbert space cannot exist, and something must
give way.

\subsection{White noise as a weak random variable}

One natural generalization of white noise in $\mathbb{R}^{n}$ to a Hilbert
space $H$ is a weak random variable. The space of all (not necessarily
bounded) linear functionals on $H$ is called the algebraic dual of $H$ and
denoted by $H^{\#}$. The value of linear functional $F\in H^{\#}$ at $u\in H$
is denoted by $\left\langle u,F\right\rangle $. Recall that random variable
means a measurable function on some probability space $\left(  \Omega
,\mathcal{F}\right)  $.

\begin{definition}
\label{def:weak-random-variable} Let $H$ be a Hilbert space. A map
$F:\Omega\rightarrow H^{\#}$ is a weak random variable if for every $u\in H$,
$\left\langle u,F\right\rangle $ is a random variable.
\end{definition}

The definitions of mean and covariance remain the same as in
(\ref{eq:def-mean})\ and (\ref{eq:def-covariance}).

\begin{definition}
For a weak random variable $F:\Omega\rightarrow H^{\#}$, the mean value
$E\left(  X\right)  $ (if it exists) is defined by%
\[
E\left(  F\right)  \in H,\quad\left\langle v,E\left(  F\right)  \right\rangle
=E\left(  \left\langle v,F\right\rangle \right)  \quad\forall v\in H
\]
and its covariance (if it exists) is the linear operator $\operatorname*{Cov}%
\left(  F\right)  $, defined by%
\begin{equation}
\left\langle u,\operatorname*{Cov}\left(  F\right)  v\right\rangle =E\left(
\left\langle u,F-E\left(  F\right)  \right\rangle \left\langle v,F-E\left(
F\right)  \right\rangle \right)  \quad\forall u,v\in H,
\label{eq:def-weak-covariance}%
\end{equation}

\end{definition}

\begin{definition}
A weak random variable $F$ is gaussian if for any $v_{1},\ldots,v_{n}$, the
random vector $\left(  \left\langle v_{1},F\right\rangle ,\ldots,\left\langle
v_{n},F\right\rangle \right)  $ is gaussian.
\end{definition}

\begin{definition}
\label{def:white-noise}White noise on a Hilbert space $H$ is a gaussian weak
random variable $F$ such that $E\left(  F\right)  =0$ and $\operatorname*{Cov}%
\left(  F\right)  =I.$
\end{definition}

\begin{remark}
It follows from Definition \ref{def:white-noise} that white noise $F$
satisfies%
\begin{equation}
\left\langle u,\operatorname*{Cov}\left(  F\right)  v\right\rangle =E\left(
\left\langle u,F\right\rangle \left\langle v,F\right\rangle \right)
=\left\langle u,v\right\rangle \quad\forall u,v\in H.
\label{eq:def-white-noise}%
\end{equation}
Condition (\ref{eq:def-white-noise}) can be stated equivalently that the map%
\[
u\in H\mapsto\left\langle u,F\left(  \cdot\right)  \right\rangle \in
L^{2}\left(  \Omega\right)
\]
is isometry.
\end{remark}

The definition of noise as a weak random variable means that the noise $F$
does not need to be measurable, only averages of the form $\left\langle
u,F\right\rangle $ are required to be measurable.

\begin{remark}
It follows immediately from Definition \ref{def:white-noise} that Gaussian
white noise $F$ satisfies $E\left(  \left\langle v,F\right\rangle \left\langle
u,F\right\rangle \right)  =\left\langle u,v\right\rangle $ and taking $u=v$,
we have%
\begin{equation}
E\left(  \left\vert \left\langle u,F\right\rangle \right\vert ^{2}\right)
=1\quad\forall u\in H,\left\vert u\right\vert =1. \label{eq:white-noise-avg}%
\end{equation}
While, in this sense, the norm of the random linear functional $u\mapsto
\left\langle u,F\right\rangle $ is one in squared average, and
(\ref{eq:white-noise-avg}) looks similar to the definition of the norm of a
functional,%
\[
\left\Vert F\left(  \omega\right)  \right\Vert _{H^{\prime}}=\sup_{u\in
H,\left\Vert u\right\Vert =1}\left\vert \left\langle u,F\left(  \omega\right)
\right\rangle \right\vert ,
\]
(\ref{eq:white-noise-avg}) does not say that linear functionals $u\mapsto
\left\langle u,F\left(  \omega\right)  \right\rangle $ for a fixed $\omega$
are bounded, i.e., (\ref{eq:white-noise-avg}) does not necessarily imply that%
\begin{equation}
\sup_{u\in H,\left\vert u\right\vert =1}\left\vert \left\langle u,F\left(
\omega\right)  \right\rangle \right\vert ^{2}<\infty\quad\forall\omega
\in\Omega. \label{eq:white-noise-sup}%
\end{equation}

(It would be good to find an example where (\ref{eq:white-noise-sup})\ is false.)
\end{remark}

\begin{remark}
However, we do not know yet if such weak random variable exists. In
\cite{Balakrishnan-1976-AFA}, white noise is constructed using a theory of
finitely additive measures instead of the standard measure theory. However, we
will proceed as in \cite{DaPrato-2006-IIA}, just with few more details, and
then interpret the result in the sense of Definition \ref{def:white-noise}.
\end{remark}

\subsection{Whitening of a random vector and white noise mapping}

A classical way how to construct a white noise vector in finite dimension is
by \textquotedblleft whitening" an existing random vector. Given a random
vector $X\sim N\left(  0,Q\right)  $ on $\mathbb{R}^{n}$, whitening of the
vector $X$ is defined as the random vector $Q^{-1/2}X$. Clearly,%
\begin{align*}
\operatorname*{Cov}\left(  Q^{-1/2}X\right)   &  =E\left(  Q^{-1/2}X\left(
Q^{-1/2}X\right)  ^{\mathrm{T}}\right)  =E\left(  Q^{-1/2}XX^{\mathrm{T}%
}Q^{-1/2}\right) \\
&  =Q^{-1/2}E\left(  XX^{\mathrm{T}}\right)  Q^{-1/2}=Q^{-1/2}QQ^{-1/2}=I
\end{align*}
so $Q^{-1/2}X$ is indeed white noise.

Suppose that $X$ is a random element with distribution $\mu$, where
$\mu=N\left(  0,Q\right)  $ is a Gaussian measure on an infinitely dimensional
Hilbert space $H$ with nondegenerate covariance operator $Q$, that is, $\ker
Q=\left\{  0\right\}  $. Since $Q$ is self-adjoint compact operator, there is
a complete orthonormal sequence $\left\{  v_{k}\right\}  $ consisting of the
eigenvectors of $Q$ with eigenvalues $\lambda_{1}\geq\lambda_{2}\geq\cdots>0$,
and $Q$ has the spectral decomposition%
\[
Q=\sum_{k=1}^{\infty}\lambda_{k}v_{k}\otimes v_{k}.
\]
Any real power of $Q$ is then defined by
\[
Q^{\theta}=\sum_{k=1}^{\infty}\lambda_{k}^{\theta}v_{k}\otimes v_{k}%
,\quad\theta\in\mathbb{R}.
\]
Since $Q$ is of trace class, $\sum_{k=1}^{\infty}\lambda_{k}<\infty$, and,
consequently, $\lim_{k\rightarrow\infty}\lambda_{k}=0$.

Unfortunately, the construction of white noise by whitening does not carry
over immediately to an infinitely dimensional Hilbert space because $Q^{-1/2}$
is an unbounded operator, defined only on the subspace%
\[
\mathcal{D}\left(  Q^{-1/2}\right)  =Q^{1/2}\left(  H\right)  =\left\{
\sum_{k=1}^{\infty}\lambda_{k}^{1/2}c_{k}v_{k}:\sum_{k=1}^{\infty}c_{k}%
^{2}<\infty\right\}  \subset H,
\]
which is called the \emph{Cameron-Martin space} of the measure $\mu$. A
definition of white noise as $Q^{-1/2}X$, $X\sim\mu$, then runs into a
difficulty: it is not true that $Q^{-1/2}$ is defined $\mu$-a.e. in $H$; in
fact, exactly the opposite is true \cite[Proposition 1.27]{DaPrato-2006-IIA},%
\[
\mu\left(  Q^{1/2}\left(  H\right)  \right)  =0.
\]
Consequently, $Q^{-1/2}X$ is not a random element, because it is not defined
a.s. Furthermore, the operator $Q^{-1/2}$ cannot be extended to all of $H$ by
continuity using the $H$-norm: The Cameron-Martin space $Q^{1/2}\left(
H\right)  $ is dense in $H$ but the operator $Q^{-1/2}:$ $Q^{1/2}\left(
H\right)  \rightarrow H$ is unbounded, that is, not continuous in the $H$ norm.

\subsection{White noise mapping}

Extending $Q^{-1/2}$ by continuity is possible if one can find some other
Banach space $V\supset H$ such that $Q^{-1/2}$ is continuous from $\left(
Q^{1/2}\left(  H\right)  ,\left\Vert \cdot\right\Vert \right)  $ to $\left(
V,\left\Vert \cdot\right\Vert _{V}\right)  $. Choose $V=L^{2}\left(
H,\mu\right)  $. This is a space of real-valued functions on $H$, not
necessarily linear, with the inner product%
\[
\left\langle F,G\right\rangle _{L^{2}\left(  H,\mu\right)  }=\int_{H}%
FGd\mu=\int_{H}F\left(  u\right)  G\left(  u\right)  \mu\left(  du\right)  .
\]
An element $u\in H$ is identified with the bounded linear functional
$x\mapsto\left\langle x,u\right\rangle $ in $H^{\prime}$, and then
$H=H^{\prime}$ by the Riesz representation theorem. Since
\begin{equation}
\left\vert u\right\vert _{L^{2}\left(  H,\mu\right)  }=\left(  \int%
_{H}\left\vert \left\langle x,u\right\rangle \right\vert ^{2}\mu\left(
dx\right)  \right)  ^{1/2}\leq\operatorname*{const}\left\vert u\right\vert
_{H}\quad\forall u\in H \label{eq:dual-in-L2}%
\end{equation}
by Proposition \ref{prop:LpH}, it holds that%
\[
H=H^{\prime}\subset L^{2}\left(  H,\mu\right)  .
\]
with continuous inclusion. The $L^{2}\left(  H,\mu\right)  $ inner product
becomes for $u$,
\[
v\in H\left\langle u,v\right\rangle _{L^{2}\left(  H,\mu\right)  }=\int%
_{H}\left\langle x,u\right\rangle \left\langle x,v\right\rangle \mu\left(
dx\right)  =\left\langle Qu,v\right\rangle
\]
by the definition of covariance (\ref{eq:def-weak-covariance}). Now $Q^{-1/2}$
is continuous, in fact isometry from $Q^{1/2}\left(  H\right)  $ to
$L^{2}\left(  H,\mu\right)  $:
\begin{align*}
\left\langle Q^{-1/2}w,Q^{-1/2}z\right\rangle _{L^{2}\left(  H,\mu\right)  }
&  =\int_{H}\left\langle x,Q^{-1/2}w\right\rangle \left\langle x,Q^{-1/2}%
z\right\rangle \mu\left(  dx\right) \\
&  =\left\langle Q^{-1/2}w,QQ^{-1/2}z\right\rangle =\left\langle
w,z\right\rangle .
\end{align*}
Hence, $Q^{-1/2}$ can be uniquely extended to a continuous mapping (which is
also isometry)%
\begin{equation}
W:H\rightarrow L^{2}\left(  H,\mu\right)  ,\quad\left\langle
Ww,Wz\right\rangle _{L^{2}\left(  H,\mu\right)  }=\left\langle
w,z\right\rangle , \label{eq:white-noise-isometry}%
\end{equation}
called the \emph{white noise mapping }\cite[p. 23]{DaPrato-2006-IIA}.

Since the $L^{2}\left(  H,\mu\right)  $ limit of a sequence of linear
functionals is clearly a linear functional, $W\left(  z\right)  $ are still
linear functionals on $H$, so $W\left(  z\right)  \in H^{\#}\cap L^{2}\left(
H,\mu\right)  $. However, for $z\in H\setminus Q^{1/2}\left(  H\right)  $,
they are not necessarily bounded linear functionals, because the $L^{2}\left(
H,\mu\right)  $ limit of a sequence of bounded linear functionals on $H$ does
not need to be a bounded functional.

\begin{proposition}
Let $\mu$ be a centered Gaussian measure on Hilbert space $H$, $W:H\rightarrow
H^{\#}\cap L^{2}\left(  H,\mu\right)  $ the white noise mapping, and
$X:\Omega\rightarrow H$ a random element with distribution $\mu$. Define
$F:\Omega\rightarrow H^{\#}\cap L^{2}\left(  H,\mu\right)  $ by $F\left(
\omega\right)  =WX\left(  \omega\right)  $. Then $\left\langle
u,F\right\rangle $ is a Gaussian random variable for every $u\in H$, and
\begin{equation}
E\left(  \left\langle v,F\right\rangle \left\langle u,F\right\rangle \right)
=\left\langle u,v\right\rangle ,\quad\forall u,v\in H.
\label{eq:white-noise-covariance}%
\end{equation}

\end{proposition}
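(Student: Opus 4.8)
The plan is to push everything through the isometry (\ref{eq:white-noise-isometry}) after rewriting the random variable $\langle u,F\rangle$ on $\Omega$ as the fixed function $Wu\in L^{2}\left( H,\mu\right) $ composed with $X$. The key reduction I would establish first is that, for every $u\in H$,
\[
\langle u,F\left( \omega\right) \rangle =\left( WX\left( \omega\right) \right) \left( u\right) =\left( Wu\right) \left( X\left( \omega\right) \right) \quad\text{for a.e. }\omega,
\]
i.e. $\langle u,F\rangle =\left( Wu\right) \circ X$ as a random variable on $\Omega$. The middle equality is a symmetry statement: on the Cameron--Martin space $Q^{1/2}\left( H\right) $ the mapping is explicit, $Wz=\langle \cdot ,Q^{-1/2}z\rangle $, so for $z,u\in Q^{1/2}\left( H\right) $ self-adjointness of $Q^{-1/2}$ gives $\left( Wz\right) \left( u\right) =\langle u,Q^{-1/2}z\rangle =\langle z,Q^{-1/2}u\rangle =\left( Wu\right) \left( z\right) $, so the bilinear form $\left( z,u\right) \mapsto \left( Wz\right) \left( u\right) $ is symmetric on the dense subspace $Q^{1/2}\left( H\right) $; substituting $z=X\left( \omega\right) $ and extending by density then yields the claim.

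Granting this identification, both assertions are short. Because $X$ has distribution $\mu$, composition with $X$ is an isometry $L^{2}\left( H,\mu\right) \rightarrow L^{2}\left( \Omega\right) $, since for any $g\in L^{2}\left( H,\mu\right) $ one has $E\left( \left( g\circ X\right) ^{2}\right) =\int_{H}g^{2}\,d\mu$, and in fact $g\circ X$ has the same law as $g$ regarded as a random variable on $\left( H,\mu\right) $. For the first claim I would note that $Wu$ is, by construction, the $L^{2}\left( H,\mu\right) $ limit of the bounded linear functionals $\langle \cdot ,Q^{-1/2}u_{n}\rangle $ for $u_{n}\rightarrow u$ in $H$ with $u_{n}\in Q^{1/2}\left( H\right) $; each such functional is a centered Gaussian random variable under the Gaussian measure $\mu$, and the centered Gaussians are closed under $L^{2}$ convergence (mean stays $0$, variance converges), so $Wu$ is centered Gaussian on $\left( H,\mu\right) $. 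Hence $\langle u,F\rangle =\left( Wu\right) \circ X$, having the same law, is centered Gaussian.

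For the covariance (\ref{eq:white-noise-covariance}) I would simply compute, using the isometry of composition with $X$ and then the defining isometry (\ref{eq:white-noise-isometry}) of $W$,
\[
E\left( \langle v,F\rangle \langle u,F\rangle \right) =\int_{H}\left( Wv\right) \left( Wu\right) \,d\mu =\langle Wv,Wu\rangle _{L^{2}\left( H,\mu\right) }=\langle v,u\rangle ,
\]
which is the assertion.

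The hard part is the symmetry/identification $\left( WX\left( \omega\right) \right) \left( u\right) =\left( Wu\right) \left( X\left( \omega\right) \right) $ when $X\left( \omega\right) \notin Q^{1/2}\left( H\right) $, because there $WX\left( \omega\right) $ is defined only as an $L^{2}\left( H,\mu\right) $ limit and evaluation at the single point $u$ need not be continuous under $L^{2}$ convergence. The clean route is to prove both claims first for $u\in Q^{1/2}\left( H\right) $, where $Wu=\langle \cdot ,Q^{-1/2}u\rangle $ is a genuine everywhere-defined bounded functional and $\langle u,F\rangle =\langle X,Q^{-1/2}u\rangle $ directly, and then extend to arbitrary $u\in H$ by approximation $u_{n}\rightarrow u$: since $u\mapsto \langle u,F\rangle $ is, via $W$, the $L^{2}\left( \Omega\right) $-isometric image of $u\mapsto Wu$, both Gaussianity and the bilinear identity pass to the $L^{2}\left( \Omega\right) $ limit. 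This is precisely where the density of $Q^{1/2}\left( H\right) $ in $H$ and the isometry (\ref{eq:white-noise-isometry}) carry the argument.
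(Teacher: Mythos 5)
Your proposal is correct and, at its core, it is the same argument as the paper's: both transfer the expectation over $\Omega$ to an integral over $\left(  H,\mu\right)  $ using that $X$ has law $\mu$, both rely on the symmetry $\left\langle u,Wx\right\rangle =\left\langle x,Wu\right\rangle $ to replace evaluation of the random functional $Wx$ at the point $u$ by evaluation of $Wu$ at the point $x$, and both finish with the isometry property of $W$ together with density of $Q^{1/2}\left(  H\right)  $ in $H$. The differences are in organization and coverage, and they are worth noting. You front-load the density argument into the single identification $\left\langle u,F\right\rangle =\left(  Wu\right)  \circ X$, after which (\ref{eq:white-noise-covariance}) follows for all $u,v\in H$ in one line from (\ref{eq:white-noise-isometry}); the paper instead carries out the computation $\int_{H}\left\langle x,Wv\right\rangle \left\langle x,Wu\right\rangle \mu\left(  dx\right)  =\left\langle Wv,QWu\right\rangle =\left\langle v,u\right\rangle $ only for $u,v\in Q^{1/2}\left(  H\right)  $, where $Wv=Q^{-1/2}v$, and extends by continuity at the very end. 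You also actually prove the Gaussianity assertion (centered Gaussian variables are closed under $L^{2}$ limits, and $\left(  Wu\right)  \circ X$ has the same law as $Wu$), which is part of the statement but is never addressed in the paper's proof. One caution: your remark that for $u\in Q^{1/2}\left(  H\right)  $ the identity $\left\langle u,F\right\rangle =\left\langle X,Q^{-1/2}u\right\rangle $ holds \emph{directly} is overstated, since $X\left(  \omega\right)  \notin Q^{1/2}\left(  H\right)  $ a.s., so $WX\left(  \omega\right)  $ is still only an $L^{2}\left(  H,\mu\right)  $-limit functional, and its value at the fixed point $u$ is exactly the representative-dependent evaluation that you yourself flag as the hard part; restricting $u$ to the Cameron-Martin space does not remove that difficulty. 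However, the paper's own proof performs precisely the same swap under the integral with no comment at all, so this is a gap shared with the source (and acknowledged more candidly by you), not one you introduced.
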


\begin{proof}
We already know that for a fixed $\omega$, $F\left(  \omega\right)  $ is a
linear functional, and the mapping $WX\left(  \omega\right)  $ is linear
because the mapping $u\in H\mapsto$ $W_{u}\in$ $L^{2}\left(  H,\mu\right)  $
is linear, being a continuous extension a linear mapping on a dense subspace
of $H$. By substitution from (\ref{eq:white-noise-isometry}),%
\begin{align*}
E\left(  \left\langle v,F\right\rangle \left\langle u,F\right\rangle \right)
&  =\int_{\Omega}\left\langle v,F\left(  \omega\right)  \right\rangle
\left\langle u,F\left(  \omega\right)  \right\rangle d\omega\\
&  =\int_{\Omega}\left\langle v,WX\left(  \omega\right)  \right\rangle
\left\langle u,WX\left(  \omega\right)  \right\rangle d\omega\\
&  =\int_{H}\left\langle v,Wx\right\rangle \left\langle u,Wx\right\rangle
\mu\left(  dx\right) \\
&  =\int_{H}\left\langle x,Wv\right\rangle \left\langle x,Wu\right\rangle
\mu\left(  dx\right) \\
&  =\left\langle Wv,QWu\right\rangle \\
&  =\left\langle Q^{-1/2}v,QQ^{-1/2}u\right\rangle \\
&  =\left\langle v,u\right\rangle
\end{align*}
for all $u,v\in Q^{1/2}\left(  H\right)  $. Because $Q^{1/2}\left(  H\right)
$ is dense in $H$, the covariance property (\ref{eq:white-noise-covariance})
follows by continuity.
\end{proof}

\section{Convergence analysis of the EnKF in the large ensemble limit}

\subsection{Optimal statistical interpolation}

Consider a stochastic model with the state $V\sim N\left(  \mu,Q\right)  $ in
$\mathbb{R}^{n}$, and a given observation (i.e., data) vector $d$. The
$n\times n$ matrix $Q$ is called the background covariance. The correspondence
between the state and the data is given by a linear observation operator $H$
and data error covariance $R$: the data is assumed to be distributed as $d\sim
N\left(  Hv,R\right)  $, given the value of the state $V$ equal to $v$. That
is, $Hv$ is what the data would be if there were no errors and the truth were
$v$, and $R$ is the covariance of the normally distributed data error. Then
the probability density of the state is%
\[
p_{V}\left(  v\right)  \propto e^{-\frac{1}{2}\left(  v-\mu\right)
^{\mathrm{T}}Q^{-1}\left(  v-\mu\right)  }%
\]
where $\propto$ means proportional, and the data likelihood is%
\[
p\left(  d|v\right)  \propto e^{-\frac{1}{2}\left(  Hv-d\right)  ^{\mathrm{T}%
}R^{-1}\left(  Hv-d\right)  }.
\]

From the Bayes theorem, the probability density of the model state after the
data is taken into account, called the analysis $V^{a}$, is%
\[
p_{V^{a}}(v)\propto p(d|v)p_{V}(v)\propto e^{-\frac{1}{2}\left[  \left(
v-\mu\right)  ^{\mathrm{T}}Q^{-1}\left(  v-\mu\right)  +\left(  Hv-d\right)
^{\mathrm{T}}R^{-1}\left(  Hv-d\right)  \right]  }%
\]
Note that the exponent involves a quadratic function of $v$, which we can
write in the form%
\[
\left(  v-\mu\right)  ^{\mathrm{T}}Q^{-1}\left(  v-\mu\right)  +\left(
Hv-d\right)  ^{\mathrm{T}}R^{-1}\left(  Hv-d\right)  =\left(  v-\mu
^{a}\right)  ^{\mathrm{T}}\left(  Q^{a}\right)  ^{-1}\left(  v-\mu^{a}\right)
\]
and compare the quadratic and linear terms:%
\begin{align*}
v^{\mathrm{T}}Q^{-1}v+v^{\mathrm{T}}H^{\mathrm{T}}R^{-1}Hv  &  =v^{\mathrm{T}%
}\left(  Q^{a}\right)  ^{-1}v\\
-2v^{\mathrm{T}}Q^{-1}\mu-2v^{\mathrm{T}}H^{\mathrm{T}}R^{-1}d  &
=-2v^{\mathrm{T}}\left(  Q^{a}\right)  ^{-1}\mu^{a}%
\end{align*}
which gives%
\begin{align}
Q^{-1}+H^{\mathrm{T}}R^{-1}H  &  =\left(  Q^{a}\right)  ^{-1}%
\label{eq:osi-cov}\\
Q^{-1}\mu+H^{\mathrm{T}}R^{-1}d  &  =\left(  Q^{a}\right)  ^{-1}\mu^{a}
\label{eq:osi-mean}%
\end{align}
which gives%
\begin{align*}
Q^{a}  &  =\left(  Q^{-1}+H^{\mathrm{T}}R^{-1}H\right)  ^{-1}\\
\mu^{a}  &  =Q^{a}\left(  Q^{-1}\mu+H^{\mathrm{T}}R^{-1}d\right)
\end{align*}
Using the Sherman-Morrison-Woodbury formula
\begin{equation}
\left(  A+UCV\right)  ^{-1}=A^{-1}-A^{-1}U\left(  C^{-1}+VA^{-1}U\right)
^{-1}VA^{-1}, \label{eq:SMW}%
\end{equation}
we have%
\begin{align}
Q^{a}  &  =\left(  \underbrace{Q^{-1}}_{A}+\underbrace{H^{\mathrm{T}}}%
_{U}\underbrace{R^{-1}}_{C}\underbrace{H}_{V}\right)  ^{-1}%
\label{eq:kf-covariance}\\
&  =Q-QH^{\mathrm{T}}\left(  R+HQH^{\mathrm{T}}\right)  ^{-1}HQ\nonumber\\
&  =\left(  I-QH^{\mathrm{T}}\left(  R+HQH^{\mathrm{T}}\right)  ^{-1}H\right)
Q\\
&  =(I-KH)Q, \label{eq:kf-gain-cov}%
\end{align}
where $K$ is the Kalman gain matrix, given by%
\begin{equation}
K=QH^{\mathrm{T}}(HQH^{\mathrm{T}}+R)^{-1}, \label{eq:kf-gain}%
\end{equation}
and%
\begin{align*}
\mu^{a}  &  =Q^{a}\left(  Q^{-1}\mu+H^{\mathrm{T}}R^{-1}d\right) \\
&  =(I-KH)Q\left(  Q^{-1}\mu+H^{\mathrm{T}}R^{-1}d\right) \\
&  =(I-KH)\mu+\left(  I-QH^{\mathrm{T}}\left(  R+HQH^{\mathrm{T}}\right)
^{-1}H\right)  QH^{\mathrm{T}}R^{-1}d\\
&  =(I-KH)\mu+Kd\\
&  =\mu-K\left(  H\mu-d\right)  ,
\end{align*}
since%
\begin{align*}
\left(  I-QH^{\mathrm{T}}\left(  R+HQH^{\mathrm{T}}\right)  ^{-1}H\right)
QH^{\mathrm{T}}R^{-1}  &  =QH^{\mathrm{T}}\left(  I-\left(  R+HQH^{\mathrm{T}%
}\right)  ^{-1}HQH^{\mathrm{T}}\right)  R^{-1}\\
&  =QH^{\mathrm{T}}\left(  R+HQH^{\mathrm{T}}\right)  ^{-1}\left(  \left(
R+HQH^{\mathrm{T}}\right)  -HQH^{\mathrm{T}}\right)  R^{-1}\\
&  =QH^{\mathrm{T}}(HQH^{\mathrm{T}}+R)^{-1}=K
\end{align*}

That is, the analysis distribution is%
\[
V^{a}\sim N\left(  \mu^{a},Q^{a}\right)
\]
where $\mu^{a}$ is given by the least squares estimate $v=\mu^{a}$,
\begin{equation}
\left(  v-\mu\right)  ^{\mathrm{T}}Q^{-1}\left(  v-\mu\right)  +\left(
Hv-d\right)  ^{\mathrm{T}}R^{-1}\left(  Hv-d\right)  \rightarrow\min_{v}
\label{eq:kf-opt}%
\end{equation}
with the optimality conditions%
\[
Q^{-1}\left(  v-\mu\right)  +H^{\mathrm{T}}R^{-1}\left(  Hv-d\right)  =0
\]
which gives
\begin{equation}
\mu^{a}=\left(  Q^{-1}+H^{\mathrm{T}}R^{-1}H\right)  ^{-1}\left(  Q^{-1}%
\mu+H^{\mathrm{T}}R^{-1}d\right)  . \label{eq:kf-mean-opt}%
\end{equation}
and%
\begin{equation}
\mu^{a}=\mu+K(d-H\mu). \label{eq:kf-mean}%
\end{equation}

When only the mean is advanced by the model and the background covariance is
obtained in other ways (typically by an expert judgement involving data
analysis), the method (\ref{eq:kf-mean})-(\ref{eq:kf-gain}) is called optimal
statistical interpolation \cite{Kalnay-2003-AMD}.

\subsection{Kalman filter}

Here is how a linear model transform state with normal distribution. Suppose
$u\propto N\left(  \mu_{k}^{a},Q_{k}^{a}\right)  $ is the analysis in step $k$
with linear model $Au+f$ acting on it. Then the forecast in the next step
$k+1$ is
\[
Au+f\propto N\left(  A\mu_{k}^{a}+f,AQ_{k}^{a}A^{\top}\right)
\]
The output of the model is the forecast for the next cycle (time step)
\[
u\propto N\left(  \mu_{k+1}^{f},Q_{k+1}^{f}\right)  ,\quad\mu_{k+1}^{f}%
=A\mu_{k}^{a}+f,\ Q_{k+1}^{f}=AQ_{k}^{a}A^{\top}%
\]
This is how the original filter was formulated for a linear model
\cite{Kalman-1960-NAL}. The model can of course be different from step to
step, $A_{k}u+f_{k}$. When the model is nonlinear, we use the first order
approximation%
\[
\mathcal{A}\left(  u\right)  \approx\mathcal{A}\left(  u_{k}\right)  +J\left(
u_{k}\right)  \left(  u-u_{k}\right)
\]
and take
\[
u\propto N\left(  \mu_{k+1}^{f},Q_{k+1}^{f}\right)  ,\quad\mu_{k+1}%
^{f}=\mathcal{A}\left(  \mu_{k}^{a}\right)  ,\ Q_{k+1}^{f}=J\left(
u_{k}\right)  Q_{k}^{a}J\left(  u_{k}\right)  ^{\top}+D,
\]
where $D>0$ is regularization (oftem diagonal boosting) boosting matrix. These
are the equations of the extended Kalman filter. The extension to nonlinear
case \cite{McGee-1985-DKF} made many applications possible, and is still a
de-facto standard in navigation including GPS.

For practical use, regulariation is important - we add a positive definite
matrix $D$ to the covariance, usually diagonal. This prevents the degeneration
of the Kalman filter covariance to zero, because in optimal statistical
interpolation, the variance always decreases. The regularization
\textquotedblleft accounts for\textquotedblright\ errors in the model vs. the
reality, which the data come from.

\subsection{The ensemble Kalman filter}

To avoid manipulating the covariance matrix and to allow to use the model as a
black box, the ensemble Kalman filter approximates the state distribution
$N\left(  \mu,Q\right)  $ by a collection of of random vectors $X_{k}%
\in\mathbb{R}^{n}$,
\begin{equation}
X=[X_{1},\ldots,X_{N}] \label{eq:ensemble}%
\end{equation}
called an ensemble. Note that in general the ensemble is \emph{not} a sample
(i.i.d.~set of random elements) from $N\left(  \mu,Q\right)  $; the analysis
step (\ref{eq:enkf-analysis}) below breaks the independence of the ensemble
members. It is convenient to operate with an ensemble as a matrix with the
columns $X_{k}$. The analysis step of the ensemble Kalman filter (EnKF)
consists of creating the perturbed data ensemble,%
\begin{equation}
D=[D_{1},\ldots,D_{N}],\quad D_{k}\sim N\left(  d,R\right)  ,
\label{eq:data-perturbation}%
\end{equation}
independent of $X$, and the analysis ensemble $X^{a}$, by%
\begin{equation}
X^{a}=X+K_{N}(D-HX), \label{eq:enkf-analysis}%
\end{equation}
where $K_{N}$ is the approximate Kalman gain matrix, given by%
\begin{equation}
K_{N}=Q_{N}H^{\mathrm{T}}(HQ_{N}H^{\mathrm{T}}+R)^{-1}, \label{eq:enkf-gain}%
\end{equation}
and $Q_{N}$ is the sample covariance computed from the ensemble $X$,%
\begin{equation}
Q_{N}=\frac{1}{N-1}\sum_{k=1}^{N}A_{k}A_{k}^{\mathrm{T}},\quad A_{k}=\left(
X_{k}-\overline{X}\right)  ,\quad\overline{X}=\frac{1}{N}\sum_{k=1}^{N}X_{k}.
\label{eq:sample-cov}%
\end{equation}

For practical application, note that $\overline{X}=\frac{1}{N}Xe$, where $e$
is the column vector of all ones of length $N$, and then
\[
HQ_{N}H^{\mathrm{T}}=\frac{1}{N-1}BB^{\mathrm{T}},
\]
where
\[
B=HX-HX\frac{ee^{\mathrm{T}}}{N}%
\]
and use the Sherman-Morrison-Woodbury formula to (\ref{eq:enkf-gain})\ to get%
\begin{equation}
(HQ_{N}H^{\mathrm{T}}+R)^{-1}=R^{-1}\left[  I-\frac{1}{N-1}B\left(
I+\frac{B^{\mathrm{T}}R^{-1}B}{N-1}\right)  ^{-1}B_{k}^{\mathrm{T}}R_{k}%
^{-1}\right]
\end{equation}

We can write the ensemble covariance as%
\begin{align*}
Q_{N}  &  =\frac{1}{N-1}\left(  X-X\frac{ee^{\mathrm{T}}}{N}\right)  \left(
X-X\frac{ee^{\mathrm{T}}}{N}\right)  ^{\mathrm{T}}\\
&  =X\frac{1}{N-1}\left(  I-\frac{ee^{\mathrm{T}}}{N}\right)  X^{\mathrm{T}},
\end{align*}
and the analysis ensemble (\ref{eq:enkf-analysis}) in the transformation form
as linear combination of the forecast forecast ensemble,%
\begin{equation}
X^{a}=XT,\quad T=I+\frac{1}{N-1}\left(  I-\frac{ee^{\mathrm{T}}}{N}\right)
X^{\mathrm{T}}H^{\mathrm{T}}(HQ_{N}H^{\mathrm{T}}+R)^{-1}(D-HX).
\label{eq:enkf-transform}%
\end{equation}

After the analysis step (\ref{eq:enkf-analysis}), each member of the ensemble
is advanced by the model (\ref{eq:model}) independently,%
\begin{equation}
X^{f}=\mathcal{M}\left(  X^{a}\right)  , \label{eq:enkf-model}%
\end{equation}
then we use $X^{f}$ in place of $X$ above, and the process repeats.

This is the version of EnKF from \cite{Burgers-1998-ASE}, distinguished by the
use of the randomized data (\ref{eq:data-perturbation}). See
\cite{Evensen-2009-DAE} for futher details and variants. The reason for the
data perturbation (\ref{eq:data-perturbation}) is to make the sample
covariance of the analysis what it should be.

\begin{lemma}
\label{lem:enkf-cov}Suppose $U\sim N\left(  \mu,Q\right)  $ is a random vector
with values in $\mathbb{R}^{n}$, and $D\sim N\left(  d,R\right)  $ is a random
vectors with values in $\mathbb{R}^{m}$, $U$ and $D$ are independent, and
$H\in\mathbb{R}^{m\times n}$ is a matrix. Define%
\begin{equation}
U^{a}=U+K(D-HU),\quad K=QH^{\mathrm{T}}(HQH^{\mathrm{T}}+R)^{-1}.
\label{eq:enkf-analysis-rv}%
\end{equation}
Then $U^{a}\sim N\left(  \mu^{a},Q^{a}\right)  $, where%
\begin{equation}
\mu^{a}=\mu+K(d-H\mu),\quad Q^{a}=\left(  I-KH\right)  Q
\label{eq:enkf-analysis-mean-cov-rv}%
\end{equation}

\end{lemma}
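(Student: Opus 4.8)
The plan is to recognize that $U^{a}$ is an affine function of the jointly Gaussian pair $(U,D)$, so that $U^{a}$ is itself Gaussian, and then to read off its mean and covariance, simplifying the latter by exploiting the definition of $K$. First I would rewrite the update rule (\ref{eq:enkf-analysis-rv}) in the affine form
\[
U^{a}=(I-KH)U+KD.
\]
Since $U$ and $D$ are independent Gaussian vectors, the stacked vector $(U,D)$ is jointly Gaussian, and any affine image of a Gaussian vector is again Gaussian; hence $U^{a}\sim N\left(\mu^{a},Q^{a}\right)$ for some mean $\mu^{a}$ and covariance $Q^{a}$, and it remains only to identify these two objects. (Alternatively, one could confirm Gaussianity directly through the Fourier transform as in (\ref{eq:gaussian-fourier-transform}), but in finite dimension the affine-image argument is cleaner.)

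Next, taking expectations and using $E\left(U\right)=\mu$ and $E\left(D\right)=d$ together with the independence of $U$ and $D$ gives
\[
\mu^{a}=(I-KH)\mu+Kd=\mu+K\left(d-H\mu\right),
\]
which is the asserted mean formula in (\ref{eq:enkf-analysis-mean-cov-rv}).

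The main computation is the covariance. Because $U$ and $D$ are independent, the cross terms vanish and
\[
Q^{a}=(I-KH)Q(I-KH)^{\mathrm{T}}+KRK^{\mathrm{T}}.
\]
I would then abbreviate $S=HQH^{\mathrm{T}}+R$, so that $K=QH^{\mathrm{T}}S^{-1}$ and hence $KS=QH^{\mathrm{T}}$. Expanding the first term as $(I-KH)Q-QH^{\mathrm{T}}K^{\mathrm{T}}+KHQH^{\mathrm{T}}K^{\mathrm{T}}$ and regrouping the remaining quadratic pieces as $KHQH^{\mathrm{T}}K^{\mathrm{T}}+KRK^{\mathrm{T}}=K\left(HQH^{\mathrm{T}}+R\right)K^{\mathrm{T}}=KSK^{\mathrm{T}}=QH^{\mathrm{T}}K^{\mathrm{T}}$, the two terms carrying $K^{\mathrm{T}}$ cancel exactly, leaving $Q^{a}=(I-KH)Q$, as claimed.

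The only genuine obstacle is that final cancellation: it is precisely the algebraic identity that expresses the optimality of the Kalman gain, turning the symmetric (Joseph-form) covariance $(I-KH)Q(I-KH)^{\mathrm{T}}+KRK^{\mathrm{T}}$ into the compact form $(I-KH)Q$, and it hinges entirely on the specific choice $K=QH^{\mathrm{T}}(HQH^{\mathrm{T}}+R)^{-1}$ through the relation $KS=QH^{\mathrm{T}}$. Everything else is routine bookkeeping on means and the independence of $U$ and $D$.
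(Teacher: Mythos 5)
Your proposal is correct and follows essentially the same route as the paper: Gaussianity from the affine dependence of $U^{a}$ on the independent Gaussian pair $(U,D)$, the mean by taking expectations, and the covariance via the Joseph form $(I-KH)Q(I-KH)^{\mathrm{T}}+KRK^{\mathrm{T}}$ collapsed to $(I-KH)Q$ through the cancellation $K\left(HQH^{\mathrm{T}}+R\right)K^{\mathrm{T}}=QH^{\mathrm{T}}K^{\mathrm{T}}$. If anything, your statement of why $U^{a}$ is Gaussian (as an affine image of the jointly Gaussian stacked vector) is slightly more careful than the paper's, which mentions only the Gaussianity of $D$.
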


\begin{proof}
Since the $D$ is gaussian and $K$ and $H$ are non-random, $U^{a}$ is also
gaussian. Gaussian distribution is uniquely specified by its mean and
covariance. Taking the mean in (\ref{eq:enkf-analysis-rv}) shows that $\mu
^{a}=E\left(  U_{k}^{a}\right)  $. Using the independence of $U$ and $D$, the
computation from \cite{Burgers-1998-ASE}
\begin{align*}
E\left\langle U^{a}-\mu^{a},U^{a}-\mu^{a}\right\rangle  &  =E\left\langle
U-\mu+K(\left(  D-d\right)  -H\left(  U-\mu\right)  ),U-\mu+K(\left(
D-d\right)  -H\left(  U-\mu\right)  )\right\rangle \\
&  =E\left\langle \left(  I-KH\right)  \left(  U-\mu\right)  ),\left(
I-KH\right)  \left(  U-\mu\right)  )\right\rangle +E\left\langle K\left(
D-d\right)  ,K\left(  D-d\right)  \right\rangle \\
&  =\left(  I-KH\right)  Q\left(  I-KH\right)  ^{\top}+KRK^{\top}\\
&  =Q-KHQ-QH^{\top}K^{\top}+KHQH^{\top}K^{\top}+KRK^{\top}\\
&  =\left(  I-KH\right)  Q-QH^{\top}K^{\top}+K\left(  HQH^{\top}+R\right)
K^{\top}\\
&  =\left(  I-KH\right)  Q-QH^{\top}K^{\top}+QH(HQH^{\mathrm{T}}%
+R)^{-1}\left(  HQH^{\top}+R\right)  K^{\top}\\
&  =\left(  I-KH\right)  Q
\end{align*}
shows that $\operatorname{Cov}U^{a}=Q^{a}$ from
(\ref{eq:enkf-analysis-mean-cov-rv}).
\end{proof}

Applying Theorem

\begin{corollary}
\label{cor:enkf-cov} Given the forecast enemble
\[
U=[U_{1},\ldots,U_{N}],\quad U_{k}\sim N\left(  \mu,Q\right)  \text{ i.i.d.,}%
\]
and the perturbed data ensemble
\[
D=[D_{1},\ldots,D_{N}],\quad D_{k}\sim N\left(  d,R\right)  ,
\]
independent of $U$, define $U^{a}$ by the EnKF analysis formula with the exact
covariance,
\begin{equation}
U^{a}=U+K(D-HU).\quad K=QH^{\mathrm{T}}(HQH^{\mathrm{T}}+R)^{-1}
\label{eq:def-Ua}%
\end{equation}
Then
\begin{equation}
U^{a}=[U_{1}^{a},\ldots,U_{N}^{a}],\quad U_{k}^{a}\sim N\left(  \mu^{a}%
,Q^{a}\right)  \text{ i.i.d}. \label{eq:Ua}%
\end{equation}

\end{corollary}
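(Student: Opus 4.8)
The plan is to reduce the corollary to Lemma~\ref{lem:enkf-cov} applied to each ensemble member separately, and then to supply the one ingredient the lemma does not address, namely the mutual independence of the analysis members $U_{1}^{a},\ldots,U_{N}^{a}$.

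First I would settle the distributional claim. Each member satisfies $U_{k}^{a}=U_{k}+K(D_{k}-HU_{k})$ with the \emph{exact} gain $K=QH^{\mathrm{T}}(HQH^{\mathrm{T}}+R)^{-1}$, which is exactly the transformation (\ref{eq:enkf-analysis-rv}) applied to the single pair $(U_{k},D_{k})$. Since $U_{k}\sim N(\mu,Q)$, $D_{k}\sim N(d,R)$, and $U_{k}$ is independent of $D_{k}$ (the whole forecast ensemble $U$ is independent of the whole perturbed data ensemble $D$, so in particular any single $U_{k}$ is independent of any single $D_{k}$), Lemma~\ref{lem:enkf-cov} yields $U_{k}^{a}\sim N(\mu^{a},Q^{a})$ with $\mu^{a}$ and $Q^{a}$ given by (\ref{eq:enkf-analysis-mean-cov-rv}), and this holds identically for every $k$. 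This establishes the ``identically distributed'' half of (\ref{eq:Ua}).

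The remaining work is independence. I would write $U_{k}^{a}=(I-KH)U_{k}+KD_{k}=g(U_{k},D_{k})$, where $g:\mathbb{R}^{n}\times\mathbb{R}^{m}\rightarrow\mathbb{R}^{n}$ is a fixed affine (hence Borel measurable) map not depending on $k$. The structural fact to exploit is that the pairs $(U_{1},D_{1}),\ldots,(U_{N},D_{N})$ are mutually independent: the members $U_{1},\ldots,U_{N}$ are i.i.d., the perturbed data $D_{1},\ldots,D_{N}$ are i.i.d., and the $\sigma$-algebra generated by all the $U_{k}$ is independent of the $\sigma$-algebra generated by all the $D_{k}$ (the hypothesis $U\perp D$). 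Combining these three statements shows that the joint law of the pairs factors into the product of the laws of the individual pairs, and since $U_{k}^{a}=g(U_{k},D_{k})$ depends only on the $k$-th pair, independence of the pairs passes to independence of the images $U_{1}^{a},\ldots,U_{N}^{a}$, because functions of independent random elements are independent.

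The main obstacle is precisely the bookkeeping in that combination step: one must check that ``$\{U_{k}\}$ mutually independent'' together with ``$\{D_{k}\}$ mutually independent'' and ``$U\perp D$'' really gives mutual independence of the \emph{pairs}, not merely pairwise independence or independence within each family. I would make this precise by verifying that the joint distribution over product Borel sets $A_{1}\times B_{1}\times\cdots\times A_{N}\times B_{N}$ factors as $\prod_{k}\Pr(U_{k}\in A_{k})\,\Pr(D_{k}\in B_{k})$: the independence of the $U$-block from the $D$-block splits the probability into a $U$-part and a $D$-part, and the within-family i.i.d.\ structure factors each part over $k$. Once the pairs are mutually independent and each $U_{k}^{a}$ is a function of its own pair alone, the i.i.d.\ conclusion (\ref{eq:Ua}) is immediate.
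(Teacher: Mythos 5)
Your proposal is correct and matches the paper's intended route: the paper offers no written argument for this corollary (only the dangling fragment ``Applying Theorem'' precedes it), i.e.\ the result is meant to follow by applying Lemma~\ref{lem:enkf-cov} to each pair $(U_{k},D_{k})$, exactly as you do. Your extra verification that the pairs $(U_{k},D_{k})$ are \emph{mutually} independent --- via factorization of the joint law over product rectangles, using the within-family i.i.d.\ structure together with the block independence $U\perp D$ --- and hence that the $U_{k}^{a}=g(U_{k},D_{k})$ are independent rather than merely identically distributed, supplies precisely the step the paper leaves implicit, and it is done correctly.
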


\subsection{Finite exchangeable sequences}

An $N$-tuple of random elements $\left[  X_{1},\ldots,X_{N}\right]  $ with
values in $H\times\ldots\times H=H^{N}$ is exchangeable if their joint
distribution is invariant to a permutation of the indices; that is, for any
permutation $\pi$ of the numbers $1,\ldots,N$ and any Borel set $B\subset
H^{N}$,%
\[
\Pr\left(  \left[  X_{\pi\left(  1\right)  },\ldots,X_{\pi\left(  N\right)
}\right]  \in B\right)  =\Pr\left(  \left[  X_{1},\ldots,X_{N}\right]  \in
B\right)  .
\]

Clearly, an i.i.d. sequence is exchangeable. The following lemmas are almost obvious.

\begin{lemma}
\label{lem:ex-sum}If random elements $Y_{1},\ldots,Y_{N}$ are exchangeable,
$Z_{1},\ldots,Z_{N}$ are exchangeable, and $Y_{1},\ldots,Y_{N}$ are
independent of $Z_{1},\ldots,Z_{N}$, then $Y_{1}+Z_{1},\ldots,Y_{N}+Z_{N}$ are exchangeable.
\end{lemma}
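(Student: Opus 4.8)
The plan is to show that the distribution of the summed tuple is invariant under every permutation by reducing the claim to the two given invariances through independence. Write $\mathbf{Y}=[Y_1,\ldots,Y_N]$ and $\mathbf{Z}=[Z_1,\ldots,Z_N]$, viewed as random elements of $H^N$, and for a permutation $\pi$ let $\sigma_\pi:H^N\to H^N$ be the coordinate permutation $\sigma_\pi(x_1,\ldots,x_N)=(x_{\pi(1)},\ldots,x_{\pi(N)})$ and $S:H^N\times H^N\to H^N$ the componentwise addition $S(\mathbf{y},\mathbf{z})=(y_1+z_1,\ldots,y_N+z_N)$. Both maps are continuous, hence Borel measurable. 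The summed tuple is $S(\mathbf{Y},\mathbf{Z})$, and the asserted exchangeability of the $Y_k+Z_k$ is exactly the statement $(\sigma_\pi)_*\,\mu_{S(\mathbf{Y},\mathbf{Z})}=\mu_{S(\mathbf{Y},\mathbf{Z})}$ for every $\pi$, where $(\cdot)_*$ denotes pushforward of measures and $\mu$ the distribution as defined earlier in the excerpt.

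First I would use the independence hypothesis to record that the joint distribution of $(\mathbf{Y},\mathbf{Z})$ on $H^N\times H^N$ is the product measure $\mu_{\mathbf{Y}}\otimes\mu_{\mathbf{Z}}$, so that $\mu_{S(\mathbf{Y},\mathbf{Z})}=S_*(\mu_{\mathbf{Y}}\otimes\mu_{\mathbf{Z}})$. The key algebraic observation is that permuting coordinates commutes with componentwise addition,
\[
\sigma_\pi\circ S=S\circ(\sigma_\pi\times\sigma_\pi),
\]
since both sides send $(\mathbf{y},\mathbf{z})$ to $(y_{\pi(1)}+z_{\pi(1)},\ldots,y_{\pi(N)}+z_{\pi(N)})$. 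Pushing the product measure forward, and using that the pushforward of a product measure through a product map is the product of the pushforwards, I get
\[
(\sigma_\pi)_*\,S_*(\mu_{\mathbf{Y}}\otimes\mu_{\mathbf{Z}})=S_*\big((\sigma_\pi)_*\mu_{\mathbf{Y}}\otimes(\sigma_\pi)_*\mu_{\mathbf{Z}}\big).
\]
Exchangeability of $Y_1,\ldots,Y_N$ says $(\sigma_\pi)_*\mu_{\mathbf{Y}}=\mu_{\mathbf{Y}}$, and likewise $(\sigma_\pi)_*\mu_{\mathbf{Z}}=\mu_{\mathbf{Z}}$; substituting these back gives $(\sigma_\pi)_*\mu_{S(\mathbf{Y},\mathbf{Z})}=\mu_{S(\mathbf{Y},\mathbf{Z})}$, which is the desired exchangeability.

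If one prefers to avoid pushforward notation, the same argument can be spelled out with a conditioning integral in the style of the rest of the paper: fixing a Borel set $B\subset H^N$, condition on $\mathbf{Z}=\mathbf{z}$, apply exchangeability of $\mathbf{Y}$ to the translated set $B-\sigma_\pi(\mathbf{z})$, and then integrate the resulting function of $(z_{\pi(1)},\ldots,z_{\pi(N)})$ against $\mu_{\mathbf{Z}}$, invoking exchangeability of $\mathbf{Z}$ to replace $(z_{\pi(1)},\ldots,z_{\pi(N)})$ by $(z_1,\ldots,z_N)$. The only place that needs genuine care — and hence the main (if modest) obstacle — is the justification that independence yields the product joint law and that pushing a product measure through $\sigma_\pi\times\sigma_\pi$ factors as a product of pushforwards; both are standard consequences of Fubini's theorem for the product of the Borel probability measures $\mu_{\mathbf{Y}}$ and $\mu_{\mathbf{Z}}$. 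Everything else is a formal manipulation of continuous, hence measurable, maps, consistent with the lemma being labeled almost obvious.
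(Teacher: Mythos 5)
Your proof is correct. Note, however, that the paper offers no proof of this lemma at all: it is introduced with the remark that ``the following lemmas are almost obvious'' and left unproved, so there is no in-paper argument to compare against; your write-up supplies exactly the details the author chose to omit. The pushforward formulation is the natural one: the identity $\sigma_\pi\circ S=S\circ(\sigma_\pi\times\sigma_\pi)$ together with $(\sigma_\pi\times\sigma_\pi)_*(\mu_{\mathbf{Y}}\otimes\mu_{\mathbf{Z}})=\bigl((\sigma_\pi)_*\mu_{\mathbf{Y}}\bigr)\otimes\bigl((\sigma_\pi)_*\mu_{\mathbf{Z}}\bigr)$ reduces the claim to the two assumed invariances, and your translation of the paper's definition of exchangeability into $(\sigma_\pi)_*\mu_{\mathbf{X}}=\mu_{\mathbf{X}}$ is faithful. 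One technical point worth making explicit, since the paper works with Borel measurability on Hilbert spaces: identifying the joint law of $(\mathbf{Y},\mathbf{Z})$ with the product measure $\mu_{\mathbf{Y}}\otimes\mu_{\mathbf{Z}}$ on the Borel sets of $H^N\times H^N$ uses that $\mathcal{B}(H^N\times H^N)=\mathcal{B}(H^N)\otimes\mathcal{B}(H^N)$, which holds because $H$ is assumed separable; without separability the product $\sigma$-algebra could be strictly smaller and the uniqueness argument on rectangles would only give equality on that smaller $\sigma$-algebra. Your alternative conditioning sketch is also sound and closer in spirit to the elementary style of the rest of the paper, but either version is a complete and correct proof.
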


%\begin{proof}
%It is enough to prove this for rectangles. TO\ BE\ COMPLETED.
%\end{proof}

\begin{lemma}
[{\cite[Lemma 1]{Mandel-2011-CEK}}]\label{lem:ex-fun}If random elements
$Z_{1},\ldots,Z_{N}$ are exchangeable and $Y_{k}=F\left(  Z_{1},\ldots
,Z_{N},Z_{k}\right)  $, where $F$ is measurable, and permutation invariant in
the first $N$ arguments, then $Y_{1},\ldots,Y_{N}$ are also exchangeable.
\end{lemma}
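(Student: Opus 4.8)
The plan is to reduce the exchangeability of the $Y_k$ to the exchangeability of the $Z_k$ by exhibiting both the original output tuple and any permuted output tuple as a single fixed measurable map applied to the input tuple. First I would package the defining relation into one map: let $G$ send $\left(z_1,\ldots,z_N\right)$ to the tuple whose $k$-th coordinate is $F\left(z_1,\ldots,z_N,z_k\right)$. Then $\left(Y_1,\ldots,Y_N\right)=G\left(Z_1,\ldots,Z_N\right)$ holds by the very definition $Y_k=F\left(Z_1,\ldots,Z_N,Z_k\right)$, and $G$ is measurable because $F$ and the coordinate projections are.

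The key step is to handle a permuted output tuple. Fix a permutation $\pi$ of $1,\ldots,N$. By the definition of $Y_j$ with $j=\pi\left(k\right)$ we have $Y_{\pi\left(k\right)}=F\left(Z_1,\ldots,Z_N,Z_{\pi\left(k\right)}\right)$, and here I would invoke the hypothesis that $F$ is permutation invariant in its first $N$ arguments to rewrite the first $N$ slots in permuted order, obtaining $Y_{\pi\left(k\right)}=F\left(Z_{\pi\left(1\right)},\ldots,Z_{\pi\left(N\right)},Z_{\pi\left(k\right)}\right)$. Comparing with the definition of $G$, the right-hand side is exactly the $k$-th coordinate of $G\left(Z_{\pi\left(1\right)},\ldots,Z_{\pi\left(N\right)}\right)$, so the permuted output tuple is $G$ applied to the permuted input tuple, namely $\left(Y_{\pi\left(1\right)},\ldots,Y_{\pi\left(N\right)}\right)=G\left(Z_{\pi\left(1\right)},\ldots,Z_{\pi\left(N\right)}\right)$.

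It then remains to use exchangeability of the $Z_k$: the tuples $\left(Z_{\pi\left(1\right)},\ldots,Z_{\pi\left(N\right)}\right)$ and $\left(Z_1,\ldots,Z_N\right)$ have the same distribution on $H^N$. Since $G$ is a fixed measurable map, it pushes equal distributions forward to equal distributions, so $G\left(Z_{\pi\left(1\right)},\ldots,Z_{\pi\left(N\right)}\right)$ and $G\left(Z_1,\ldots,Z_N\right)$ have the same distribution. By the two displayed identities this says that $\left(Y_{\pi\left(1\right)},\ldots,Y_{\pi\left(N\right)}\right)$ and $\left(Y_1,\ldots,Y_N\right)$ have the same distribution, and since $\pi$ is arbitrary the $Y_k$ are exchangeable.

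I do not expect a genuine obstacle here, in keeping with the author's remark that these lemmas are almost obvious; the only point requiring care is the bookkeeping in the key step, where the permutation invariance must be applied to the first $N$ slots only, while the final argument faithfully tracks the distinguished index $\pi\left(k\right)$. The measurability of $G$ and the push-forward property of a measurable map on identically distributed random elements are the only background facts needed, and both are immediate.
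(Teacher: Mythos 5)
Your proof is correct; the one delicate step—applying the permutation invariance of $F$ only to its first $N$ slots so that $\left(Y_{\pi(1)},\ldots,Y_{\pi(N)}\right)=G\left(Z_{\pi(1)},\ldots,Z_{\pi(N)}\right)$ for a single fixed measurable map $G$, and then pushing the exchangeability of the $Z_k$ forward through $G$—is handled correctly. For comparison, the paper itself supplies no proof of this lemma (it states it with a citation to Lemma 1 of the 2011 EnKF convergence paper and the remark that such lemmas are ``almost obvious''), and your push-forward argument is precisely the standard one that citation intends.
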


\subsection{Convergence of the EnKF in the large ensemble limit}

Here is a simplified presentation of the proof from \cite{Mandel-2011-CEK}.
Clearly, the hope is that by a suitable law of large numbers, $Q_{N}%
\rightarrow Q$, as $N\rightarrow\infty$, then $K_{N}\rightarrow K$, and, in
some sense, the EnKF analysis step (\ref{eq:enkf-analysis}) is asymptotically
correct. The use of Slutsky's theorem for showing that $K_{N}\rightarrow K$
was suggested in \cite{Furrer-2007-EHP}. However, the actual formulation of
the result and its proof were done only later in
\cite{LeGland-2011-LSA,Mandel-2011-CEK}. We follow \cite{Mandel-2011-CEK}
here, with some simplifications discovered since \cite{Mandel-2011-CEK} was written.

Here we need to introduce the analysis cycle index $m$, which we were avoiding
so far in order not to complicate the notation. We will drop it later whenever
possible. The data assimilation process starts with an initial ensemble
$X^{\left(  0\right)  }$, which we assume to be gaussian i.i.d., and it
proceeds through a sequence of analysis cycle. First advance the initial
ensemble by the model (\ref{eq:enkf-model}), $X^{\left(  1\right)
}=\mathcal{M}^{\left(  1\right)  }\left(  X^{\left(  0\right)  }\right)  $.
The analysis $X^{\left(  1\right)  ,a}$ is obtained by the use of
(\ref{eq:enkf-analysis}) with $X^{\left(  1\right)  }$ in place of $X$. In
each cycle $m>1$, the ensemble is advanced by the model (\ref{eq:enkf-model}),
$X^{\left(  m\right)  }=\mathcal{M}^{\left(  m\right)  }\left(  X^{\left(
m-1\right)  ,a}\right)  $ and the analysis step (\ref{eq:enkf-analysis}) is
applied with $X^{\left(  m\right)  }$ in place of $X$, giving $X^{\left(
m\right)  ,a}$. The data $d^{\left(  m\right)  }$ is given in each step, and
the observation matrix $H^{\left(  m\right)  }$ and data covariance
$R^{\left(  m\right)  }$ can also change from step to step.

To maintain gaussian distribution at least in the limit as $N\rightarrow
\infty$, we assume that the model in each step $m$ is linear, as in
(\ref{eq:model}), with $A=A^{\left(  m\right)  }$ and $b=b^{\left(  m\right)
}$.

Instead of proving properties of the ensembles $X^{\left(  m\right)  }$ alone,
we want to show that they approach the ensembles $U^{\left(  m\right)  }$,
obtained by using the exact state covariance $Q^{\left(  m\right)  }$ in each
step. Thus, we put $U^{\left(  0\right)  }=X^{\left(  0\right)  }$ (which is
i.i.d. gaussian), and define $U^{\left(  m\right)  }$ in the same way as
$X^{\left(  m\right)  }$, except that we use the exact state covariance in
every step following (\ref{eq:def-Ua}), instead of the sample covariance from
the ensemble $X^{\left(  m\right)  }$.

We will need the property that for each $m$, the ordered pairs $\left[
X_{k}^{\left(  m\right)  };U_{k}^{\left(  m\right)  }\right]  $ are
identically distributed for all $k=1,\ldots,m$. We find it convenient to write
the pairs vertically as
\begin{equation}
\left[  X_{k}^{\left(  m\right)  };U_{k}^{\left(  m\right)  }\right]  =\left[
\begin{array}
[c]{c}%
X_{k}^{\left(  m\right)  }\\
U_{k}^{\left(  m\right)  }%
\end{array}
\right]  ,\quad k=1,\ldots,N, \label{eq:stacked}%
\end{equation}
and prove a stronger property, namely that they are exchangeable. A set of of
random elements is called \emph{exchangeable} if their joint distribution does
not depend on a permutation of its arguments. Clearly, i.i.d. random elements
are exchangeable.

When we know that (\ref{eq:stacked}) are identically distributed, we can
formulate the convergence of the ensemble members in terms of the convergence
of just one ensemble member, say $X_{1}\rightarrow U_{1}$ (in a suitable
sense) as the number of ensemble members $N\rightarrow\infty$. We will measure
the differences in $L^{p}$ norms. If $W$ is a random element (either vector or
matrix), let $\left\vert W\right\vert $ be the usual Euclidean norm (for
vectors) or spectral norm (for a matrix). For $1\leq p<\infty$, denote%
\[
\left\Vert W\right\Vert _{p}=\left(  E\left(  \left\vert W\right\vert
^{p}\right)  \right)  ^{1/p}.
\]
The space $L^{p}$ (of vectors or matrices) consists of all random elements $W$
such that the moment $E\left(  \left\vert W\right\vert ^{p}\right)  <\infty$.

\begin{theorem}
Suppose that $U_{k}^{\left(  0\right)  }$ and $D_{k}^{\left(  m\right)  }$,
$k=1,\ldots,N$ are taken from the beginning of fixed infinite sequences of
random elements. Then, for each $m$, (\ref{eq:stacked}) are exchangeable, and
$X_{1}^{\left(  m\right)  }\rightarrow U_{1}^{\left(  m\right)  }$ in $L^{p}$
as $N\rightarrow\infty$, for any $1\leq p<\infty$.
\end{theorem}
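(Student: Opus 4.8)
The plan is to prove both conclusions --- exchangeability of the stacked pairs and $L^{p}$ convergence of a single member --- \emph{simultaneously} by induction on the cycle index $m$, because the convergence argument relies on the identical-distribution consequence of exchangeability. I will also carry along an auxiliary induction hypothesis that the member moments $\left\Vert X_{1}^{\left(  m\right)  }\right\Vert _{r}$ are bounded uniformly in $N$ for every finite $r$ (the order needed grows with $m$ but stays finite); this controls the data-mismatch factor below, and it holds because everything is built from Gaussians, which have finite moments of all orders (Gaussian-moments proposition), and because the sample-covariance bounds of Lemma \ref{lem:bound-sample-covariance} are independent of both the dimension and $N$. The base case $m=0$ is immediate, since $U^{\left(  0\right)  }=X^{\left(  0\right)  }$ is i.i.d.\ Gaussian, so the pairs are exchangeable and $\left\Vert X_{1}^{\left(  0\right)  }-U_{1}^{\left(  0\right)  }\right\Vert _{p}=0$.

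For the inductive step I first pass through the model. As $\mathcal{M}^{\left(  m\right)  }\left(  x\right)  =A^{\left(  m\right)  }x+b^{\left(  m\right)  }$ is linear and applied memberwise, it sends the exchangeable analysis pairs $\left[  X_{k}^{\left(  m-1\right)  ,a};U_{k}^{\left(  m-1\right)  ,a}\right]  $ to exchangeable forecast pairs $\left[  X_{k}^{\left(  m\right)  };U_{k}^{\left(  m\right)  }\right]  $ (Lemma \ref{lem:ex-fun} with $F$ depending only on its $k$-th argument), and $\left\Vert X_{1}^{\left(  m\right)  }-U_{1}^{\left(  m\right)  }\right\Vert _{p}\leq\left\vert A^{\left(  m\right)  }\right\vert \left\Vert X_{1}^{\left(  m-1\right)  ,a}-U_{1}^{\left(  m-1\right)  ,a}\right\Vert _{p}\rightarrow0$. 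This already yields the theorem's stated (forecast-level) conclusion at cycle $m$; the analysis-level statements are carried through the induction to feed cycle $m+1$.

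Next comes the analysis step. Exchangeability is structural: the perturbed data $D_{k}^{\left(  m\right)  }$ are i.i.d.\ and independent of the ensembles, so the triples $\left[  X_{k}^{\left(  m\right)  };U_{k}^{\left(  m\right)  };D_{k}^{\left(  m\right)  }\right]  $ are jointly exchangeable (independent exchangeable families combine, cf.\ Lemma \ref{lem:ex-sum}); since $K_{N}$ depends on the ensemble only through the permutation-invariant sample covariance, each analysis pair $\left[  X_{k}^{\left(  m\right)  ,a};U_{k}^{\left(  m\right)  ,a}\right]  $ is a measurable function of the whole triple that is permutation-invariant in its first $N$ arguments, and Lemma \ref{lem:ex-fun} gives exchangeability. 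For convergence, the two analysis formulas collapse algebraically to
\[
X_{1}^{\left(  m\right)  ,a}-U_{1}^{\left(  m\right)  ,a}=\left(  I-K^{\left(  m\right)  }H^{\left(  m\right)  }\right)  \left(  X_{1}^{\left(  m\right)  }-U_{1}^{\left(  m\right)  }\right)  +\left(  K_{N}^{\left(  m\right)  }-K^{\left(  m\right)  }\right)  \left(  D_{1}^{\left(  m\right)  }-H^{\left(  m\right)  }X_{1}^{\left(  m\right)  }\right)  .
\]
The first summand tends to zero in $L^{p}$ by the model step, so everything reduces to the second.

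To control $\left(  K_{N}-K\right)  \left(  D_{1}-HX_{1}\right)  $ I use two facts. First, because $R^{\left(  m\right)  }\succ0$, the resolvent $\left(  HQ_{N}H^{\mathrm{T}}+R\right)  ^{-1}$ exists for \emph{every} positive semidefinite $Q_{N}$ and is bounded by $\left\vert R^{-1}\right\vert $; expanding $K_{N}-K$ through the resolvent identity then gives a \emph{deterministic} Lipschitz bound $\left\vert K_{N}-K\right\vert \leq C\left\vert Q_{N}-Q^{\left(  m\right)  }\right\vert $. Hence by the pointwise operator/vector estimate and Hölder (Lemma \ref{lem:holder-high}), $\left\Vert \left(  K_{N}-K\right)  \left(  D_{1}-HX_{1}\right)  \right\Vert _{p}\leq C\left\Vert Q_{N}-Q^{\left(  m\right)  }\right\Vert _{2p}\left\Vert D_{1}-HX_{1}\right\Vert _{2p}$, the last factor being uniformly bounded by the moment hypothesis. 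Second --- and this is the crux --- I must show $\left\Vert Q_{N}-Q^{\left(  m\right)  }\right\Vert _{2p}\rightarrow0$ for the sample covariance of $X^{\left(  m\right)  }$, which is \emph{not} i.i.d.\ for $m\geq1$. I resolve this by comparison with the ideal ensemble $U^{\left(  m\right)  }$, which \emph{is} i.i.d.\ Gaussian with covariance $Q^{\left(  m\right)  }$ (Corollary \ref{cor:enkf-cov}): splitting $\left\Vert Q_{N}\left(  X^{\left(  m\right)  }\right)  -Q^{\left(  m\right)  }\right\Vert _{2p}\leq\left\Vert Q_{N}\left(  X^{\left(  m\right)  }\right)  -Q_{N}\left(  U^{\left(  m\right)  }\right)  \right\Vert _{2p}+\left\Vert Q_{N}\left(  U^{\left(  m\right)  }\right)  -Q^{\left(  m\right)  }\right\Vert _{2p}$, the second term vanishes by the law of large numbers for sample covariance (Corollary \ref{cor:LpLLN-sample-cov-operator-norm}), and the first is bounded by the higher-moment analogue of Lemma \ref{lem:sample-cov-norm-cont} (proved identically from the pointwise estimate (\ref{eq:sample-cov-cont-pointwise})) in terms of $\max_{k}\left\Vert X_{k}^{\left(  m\right)  }-U_{k}^{\left(  m\right)  }\right\Vert $ and the uniformly bounded member moments. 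Exchangeability is indispensable here: it makes the pairs identically distributed, so $\max_{k}\left\Vert X_{k}^{\left(  m\right)  }-U_{k}^{\left(  m\right)  }\right\Vert =\left\Vert X_{1}^{\left(  m\right)  }-U_{1}^{\left(  m\right)  }\right\Vert \rightarrow0$. The main obstacle is thus precisely this coupling --- the sample covariance of a non-i.i.d.\ ensemble cannot be handled by a law of large numbers directly, and routing it through the i.i.d.\ ideal ensemble is legitimate only because the simultaneous induction supplies identical distribution of the members.
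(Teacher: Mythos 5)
Your proof is correct, and the induction scaffolding coincides with the paper's: exchangeability via Lemmas \ref{lem:ex-sum} and \ref{lem:ex-fun}, a-priori moment bounds uniform in $N$ (with the order doubling each cycle, which is harmless for Gaussian initial data), and the splitting of the sample-covariance error through the ideal i.i.d.\ ensemble $U^{\left(m\right)}$, handled by Corollary \ref{cor:LpLLN-sample-cov-operator-norm} for the law-of-large-numbers part and by the continuity estimate (\ref{eq:sample-cov-cont-pointwise}) for the coupling part. Where you genuinely depart from the paper is the treatment of the gain and the final passage to $L^{p}$. The paper uses only that $S\mapsto SH^{\mathrm{T}}(HSH^{\mathrm{T}}+R)^{-1}$ is \emph{continuous} at positive semidefinite $S$: it downgrades the $L^{2}$ convergence of $C_{N}(X)$ to convergence in probability, invokes the continuous mapping theorem to get $K_{N}\xrightarrow{\mathrm{P}}K$, deduces $X_{1,N}^{a}\xrightarrow{\mathrm{P}}U_{1}^{a}$, and then climbs back up to $L^{p}$ by uniform integrability (Lemma \ref{eq:lem-uniform-integrability}) combined with the a-priori $L^{q}$ bounds. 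You instead prove, via the resolvent identity and $\left\vert (HQ_{N}H^{\mathrm{T}}+R)^{-1}\right\vert \leq\left\vert R^{-1}\right\vert$, a deterministic Lipschitz-at-$Q$ bound $\left\vert K_{N}-K\right\vert \leq C\left\vert Q_{N}-Q\right\vert$ with $C$ depending only on $\left\vert H\right\vert$, $\left\vert R^{-1}\right\vert$, $\left\vert Q\right\vert$, and push $L^{p}$ estimates directly through H\"{o}lder, never leaving the $L^{p}$ framework. Your route buys explicit constants and an $O\left(N^{-1/2}\right)$ rate, and it removes exactly the step the paper's own remark flags as nonconstructive (uniform integrability yields no explicit bound on the $L^{p}$ convergence); the price is that you must track convergence at moment order $4p$ to conclude at order $p$ (harmless, since the induction hypothesis holds for every finite order) and must supply the higher-moment analogue of Lemma \ref{lem:sample-cov-norm-cont}, which, as you say, follows from (\ref{eq:sample-cov-cont-pointwise}) by Jensen and Cauchy--Schwarz exactly as in the $L^{2}$ case. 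The paper's softer route needs only continuity of the gain map and $L^{4}$ convergence of the members (plus boundedness in some higher $L^{q}$), so each approach has a legitimate advantage.
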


\begin{proof}
The statement is true for $m=0$ because $U^{\left(  0\right)  }=X^{\left(
0\right)  }$ are i.i.d., and $U_{1}^{\left(  0\right)  }$ is gaussian, so it
has finite moments of all orders, thus $U_{1}^{\left(  0\right)  }\in L^{p}$
for all $1\leq p<\infty$. Assume that the statement is true for some $m>0$. To
simplify the notation, we drop the index $m$.

We first prove that $\left[  X_{k}^{a};U_{k}^{a}\right]  $, $k=1,\ldots,N$ are
exchangeable. Write (\ref{eq:enkf-analysis}) and (\ref{eq:def-Ua}) as%
\begin{align*}
\left[
\begin{array}
[c]{c}%
X^{a}\\
U^{a}%
\end{array}
\right]   &  =\left[
\begin{array}
[c]{c}%
X+K_{N}(D-HX)\\
U+K(D-HU)
\end{array}
\right] \\
&  =\left[
\begin{array}
[c]{c}%
X\\
U
\end{array}
\right]  +\left[
\begin{array}
[c]{cc}%
K_{N} & 0\\
0 & K
\end{array}
\right]  \left(  \left[
\begin{array}
[c]{c}%
D\\
D
\end{array}
\right]  -\left[
\begin{array}
[c]{cc}%
H & 0\\
0 & H
\end{array}
\right]  \left[
\begin{array}
[c]{c}%
X\\
U
\end{array}
\right]  \right)  .
\end{align*}
Now
\[
\left[
\begin{array}
[c]{c}%
D\\
D
\end{array}
\right]  -\left[
\begin{array}
[c]{cc}%
H & 0\\
0 & H
\end{array}
\right]  \left[
\begin{array}
[c]{c}%
X\\
U
\end{array}
\right]
\]
has exchangeable columns by Lemma \ref{lem:ex-sum}, and $K_{N}$ is a function
of the sample covariance (\ref{eq:sample-cov}), which is permulation
invariant, so $\left[  X_{k}^{a};U_{k}^{a}\right]  $, $k=1,\ldots,N$, are
exchangeable by Lemma \ref{lem:ex-fun}.

Next we give an a-priori bound on the $L^{p}$ norms of $U_{1}^{a}$ and
$X_{1}^{a}$. Since $U_{1}^{a}$ is gaussian, it has all moments. For $X_{1}%
^{a}$, note that $R$ is positive definite and $Q_{N}$ positive semidefinite,
so we have%
\[
\left\vert (HQ_{N}H^{\mathrm{T}}+R)^{-1}\right\vert \leq\left\vert
R^{-1}\right\vert .
\]
From Lemma \ref{lem:bound-sample-covariance},
\[
\left\Vert Q_{N}\right\Vert _{p}\leq2\left\Vert X_{1}\right\Vert _{2p}^{2},
\]
thus%
\[
\left\Vert Q_{N}H^{\mathrm{T}}(HQ_{N}H^{\mathrm{T}}+R)^{-1}\right\Vert
_{p}\leq2\left\Vert X_{1}\right\Vert _{2p}^{2}\left\vert R^{-1}\right\vert
\left\vert H\right\vert ,
\]
hence from (\ref{eq:enkf-analysis}), using the triangle inequality and Cauchy
inequality,
\begin{align*}
\left\Vert X_{1}^{a}\right\Vert _{p}  &  =\left\Vert X_{1}+K_{N}%
(D-HX_{1})\right\Vert _{p}\leq\left\Vert X_{1}\right\Vert _{p}+\left\Vert
K_{N}\right\Vert _{2p}\left\Vert D-HX_{1}\right\Vert _{2p}\\
&  \leq\left\Vert X_{1}\right\Vert _{p}+\left\Vert Q_{N}H^{\mathrm{T}}%
(HQ_{N}H^{\mathrm{T}}+R)^{-1}\right\Vert _{2p}\left(  \left\Vert
D_{k}\right\Vert _{2p}+\left\vert H\right\vert \left\Vert X_{1}\right\Vert
_{2p}\right) \\
&  \leq\left\Vert X_{1}\right\Vert _{p}+2\left\Vert X_{1}\right\Vert _{2p}%
^{2}\left\vert R^{-1}\right\vert \left\vert H\right\vert \left(  \left\Vert
D_{1}\right\Vert _{2p}+\left\vert H\right\vert \left\Vert X_{1}\right\Vert
_{2p}\right)  .
\end{align*}

We now add the subscript $N$ to ensemble members to indicate explicitly they
come from an ensemble of size $N$. Thus, $X_{1,N}$ is the first element in the
ensemble. The key to the convergence proof is an estimate the difference
between the sample covariance $Q_{N}$ computed from the ensemble $X$ and the
exact covariance $Q$. Denote by $C_{N}\left(  Y\right)  $ the sample
covariance computed from $Y=\left[  Y_{1},\ldots,Y_{N}\right]  $. From Lemma
\ref{lem:LpLLN-sample-cov}, since $U_{k}$ are i.i.d., we have the weak law of
large numbers in $L^{2}$,%
\[
\left\Vert C_{N}\left(  U\right)  -Q\right\Vert _{2}=E\left(  \left\vert
C_{N}\left(  U\right)  -Q\right\vert ^{2}\right)  ^{1/2}\leq\left(  \frac
{2}{\sqrt{N}}+\frac{\operatorname*{const}}{N}\right)  \left\Vert
U_{1}\right\Vert _{4}^{2}.
\]
From Lemma \ref{lem:sample-cov-norm-cont} and the fact that $X_{k}$ and
$U_{k}$ are identically distributed, we have the continuity of the sample
covariance,
\[
\left\Vert C_{N}(X)-C_{N}(U)\right\Vert _{2}\leq\sqrt{8}\left\Vert
X_{1,N}-U_{1}\right\Vert _{4}\sqrt{\left\Vert X_{1,N}\right\Vert _{4}%
^{2}+\left\Vert U_{1}\right\Vert _{4}^{2}}%
\]
Thus, by the triangle inequality , and since $L^{2}$ convergence implies
convergence in probability,%
\[
X_{1,N}\rightarrow U_{1}\text{ in }L^{4}\Longrightarrow C_{N}(X)\rightarrow
Q\text{ in }L^{2}\Longrightarrow C_{N}(X)\xrightarrow{\mathrm{P}}Q\text{.}%
\]

Since $R$ is positive definite, the mapping $S\mapsto K=SH^{\mathrm{T}%
}(HSH^{\mathrm{T}}+R)^{-1}$ is continuous at any symmetric positive
semidefinite matrix $S$, thus by the continuous mapping theorem for
convergence in probability \cite[Theorem 2.3(ii)]{vanderVaart-2000-AS},%
\begin{equation}
X_{1,N}\rightarrow U_{1}\text{ in }L^{4}\Longrightarrow K_{N}%
(X)\xrightarrow{\mathrm{P}}K\text{.} \label{eq:convergence-gain}%
\end{equation}
So if $X_{1,N}\rightarrow U_{1}$ in $L^{4}$, we have also $X_{1,N}%
\xrightarrow{\mathrm{P}}U_{1}$, and from (\ref{eq:kf-gain}) and
(\ref{eq:def-Ua}), and the standard properties of convergence in probability,
for each $k=1,\ldots,N$,%
\[
X_{1,N}\rightarrow U_{1}\text{ in }L^{4}\Longrightarrow X_{1,N}^{a}%
=X_{1}-K_{N}\left(  D_{1}-HX_{1}\right)  \xrightarrow{\mathrm{P}}U^{a}%
_{1}=U_{1}-K\left(  D-HU_{1}\right)  .
\]

Now we leverage convergence in probability to convergence in $L^{p}$, which
will complete the induction step. By Lemma \ref{eq:lem-uniform-integrability}
(uniform integrability), $X_{1,N}^{a}\xrightarrow{\mathrm{P}}U_{1}^{a}$ as
$N\rightarrow\infty$ and $\left\Vert X_{1,N}^{a}\right\Vert _{q}$ bounded
independently of $N$ implies that $X_{1,N}^{a}\rightarrow U_{1}^{a}$ in
$L^{p}$ for all $1\leq p<q$.
\end{proof}

\begin{remark}
[Exchangeability and identical distributions]We use that $\left[  X_{k}%
^{a};U_{k}^{a}\right]  $, $k=1,\ldots,N$ are identically distributed in
several places. In the $L^{p}$ estimates, we would otherwise need estimates of
the form $\left\Vert X_{k}^{a}-U_{k}^{a}\right\Vert _{p}\leq a_{N}%
\rightarrow0$ as $N\rightarrow\infty$, uniformly in $k$; however, the uniform
integrability estimate is nonconstructive and it does not give an explicit
bound on the $L^{p}$ convergence, let alone uniform in $k$. Using that
$X_{k}^{a}$, $k=1,\ldots,N$ are identically distributed and $U_{k}^{a}$,
$k=1,\ldots,N$ are identically distributed does not seem to be enough, because
this says nothing about the differences $X_{k}^{a}-U_{k}^{a}$. We do not need
exchangeability, we use it only to show that $\left[  X_{k}^{a};U_{k}%
^{a}\right]  $, $k=1,\ldots,N$ are identically distributed.
\end{remark}

\begin{remark}
[Extension to Hilbert space]The arguments carry over to gaussian measures on a
Hilbert spaces, with the data covariance $R$ bounded below (so that $R^{-1}$
is a bounded operator), except in the case when the data space is infinite
dimensional because then $R$ cannot be the covariance of a~probability
measure. See~\cite{Kasanicky-2017-EKF,Kasanicky-2017-WBD} for proofs. The use
of a data covariance bounded below and unbounded above was suggested in
particle filters~\cite{Robinson-2018-IPF}.
\end{remark}

\section{The ETKF}

We now summarize the \textquotedblleft Ensemble Transform Kalman
Filter\textquotedblright\ (ETKF) \cite{Bishop-2001-ASE,Wei-2006-ETK}. The ETKF
is a type of square root filter \cite{Livings-2008-UES,Tippett-2003-ESR}. We
follow the descripton of ETKF from \cite[Sec. 2.2.1 -- 2.3.3]{Hunt-2007-EDA}. .

\subsection{Base scheme}

Given an ensemble
\[
X=[X_{1},\ldots,X_{N}],
\]
define the ensemble mean, matrix of deviates, the sample covariance
\begin{equation}
\overline{X}=\frac{1}{N}\sum_{i=1}^{N}X_{i}\quad A=\left[  X_{1}-\overline
{X},\ldots,X_{N}-\overline{X}\right]  =X\left(  I-\frac{ee^{\mathrm{T}}}%
{N}\right)  ,\label{eq:def-A}%
\end{equation}%
\begin{equation}
P=\frac{1}{N-1}AA^{\mathrm{T}},\label{eq:def-P}%
\end{equation}
and the ensemble space%
\[
S=\operatorname*{Range}A=\operatorname*{Range}P,
\]
since for any matrix,%
\[
\operatorname*{Range}AA^{\mathrm{T}}=\operatorname*{Range}A,
\]
see (\ref{eq:AAT}). For a given data vector $d$, the ETKF first solves the
problem,%
\begin{equation}
\left\Vert \delta x\right\Vert _{P^{-1}}^{2}+\left\Vert d-\mathcal{H}\left(
\overline{X}+\delta x\right)  \right\Vert _{R^{-1}}^{2}\rightarrow\min_{\delta
x\in S},\label{eq:LETKF-obj-x}%
\end{equation}
and set the analysis mean
\begin{equation}
\overline{X}^{a}=\overline{X}+\delta x.\label{eq:letkf-analysis-mean}%
\end{equation}
Here,%
\[
\left\Vert \delta x\right\Vert _{P^{-1}}^{2}=\left\langle P^{-1}\delta
x,\delta x\right\rangle ,
\]
which is well defined if $A\ $is full rank, since $\delta x\in
S=\operatorname*{Range}P=\operatorname{dom}P^{-1}$.

Since $\delta x\in S=\operatorname*{Range}A$, we can look for $\delta x$ in
the form $\delta x=Aw$. Substituting in (\ref{eq:letkf-analysis-mean}), we
have%
\begin{equation}
\overline{X}^{a}=\overline{X}+Aw \label{eq:letkf-subst}%
\end{equation}
and that $P^{-1}=\left(  \frac{AA^{\mathrm{T}}}{N-1}\right)  ^{-1}=\left(
N-1\right)  \left(  AA^{\mathrm{T}}\right)  ^{-1}$, (\ref{eq:LETKF-obj-x})
becomes%
\begin{align}
&  \left\Vert Aw\right\Vert _{P^{-1}}^{2}+\left\Vert d-\mathcal{H}\left(
\overline{X}+Aw\right)  \right\Vert _{R^{-1}}^{2}\nonumber\\
&  =\left(  Aw\right)  ^{T}\left(  N-1\right)  \left(  AA^{\mathrm{T}}\right)
^{-1}Aw+\left\Vert d-\mathcal{H}\left(  \overline{X}+Aw\right)  \right\Vert
_{R^{-1}}^{2}\\
&  =\left(  N-1\right)  w^{T}\underbrace{A\left(  AA^{\mathrm{T}}\right)
^{-1}A}_{\Pi}w+\left\Vert d-\mathcal{H}\left(  \overline{X}+Aw\right)
\right\Vert _{R^{-1}}^{2}\\
&  =\left(  N-1\right)  w^{\mathrm{T}}\Pi w+\left\Vert d-\mathcal{H}\left(
\overline{X}+Aw\right)  \right\Vert _{R^{-1}}^{2}\rightarrow\min_{w}
\label{eq:letkf-min}%
\end{align}
where
\[
\Pi=A^{\mathrm{T}}\left(  AA^{\mathrm{T}}\right)  ^{-1}A
\]
is orthogonal projection, and
\[
\ker\Pi=\ker A\perp\operatorname*{Range}\Pi=\operatorname*{Range}A^{\top}.
\]
Using the orthogonal decomposition
\[
w^{\mathrm{T}}w=\left(  \Pi w\right)  ^{\mathrm{T}}\Pi w+\left(  \left(
I-\Pi\right)  w\right)  ^{\mathrm{T}}\left(  I-\Pi\right)  w\geq\left(  \Pi
w\right)  ^{\mathrm{T}}\Pi w,
\]
(\ref{eq:letkf-min}) is in turn equivalent to%
\begin{equation}
\left(  N-1\right)  w^{\mathrm{T}}w+\left\Vert d-\mathcal{H}\left(
\overline{X}+Aw\right)  \right\Vert _{R^{-1}}^{2}\rightarrow\min_{w},
\label{eq:letkf-min-w}%
\end{equation}
since the minimum in (\ref{eq:letkf-min-w}) occurs at $w$ orthogonal to
$\operatorname*{Null}\left(  A\right)  $ and then the objective functions in
(\ref{eq:letkf-min-w}) and (\ref{eq:letkf-min}) equal.

\subsection{Formulation in the ensemble space}

Define%
\begin{equation}
Y_{i}=\mathcal{H}\left(  X_{i}\right)  ,\quad\overline{Y}=\frac{1}{N}%
\sum_{i=1}^{N}Y_{i},\quad B=\left[  Y_{1}-\overline{Y},\ldots,Y_{N}%
-\overline{Y}\right]  \label{eq:def-B}%
\end{equation}
and consider the approximation%
\begin{equation}
\mathcal{H}\left(  \overline{X}+Aw\right)  \approx\overline{Y}+Bw,
\label{eq:letkf-approx}%
\end{equation}
which is exact when the map $\mathcal{H}$ is affine:\ if $\mathcal{H}\left(
x\right)  =Hx+h$, then%
\begin{align*}
\mathcal{H}\left(  \overline{X}\right)   &  =H\left(  \frac{1}{N}\sum
_{i=1}^{N}X_{i}\right)  +h=\frac{1}{N}\sum_{i=1}^{N}HX_{i}+h\\
&  =\frac{1}{N}\sum_{i=1}^{N}\left(  HX_{i}+h\right)  =\frac{1}{N}\sum
_{i=1}^{N}\mathcal{H}\left(  X_{i}\right)  =\frac{1}{N}\sum_{i=1}^{N}%
Y_{i}=\overline{Y,}%
\end{align*}
thus%
\begin{align*}
\mathcal{H}\left(  \overline{X}+Aw\right)   &  =H\left(  \overline{X}%
+\sum_{i=1}^{N}\left(  X_{i}-\overline{X}\right)  w_{i}\right)  +h\\
&  =H\overline{X}+h+\sum_{i=1}^{N}\left(  HX_{i}+h-H\overline{X}-h\right)
w_{i}\\
&  =\overline{Y}+\sum_{i=1}^{N}\left(  Y_{i}-\overline{Y}\right)  w_{i}\\
&  =\overline{Y}+Bw.
\end{align*}

Now, (\ref{eq:letkf-min-w}) can we written using only quantities in the
observation space,%
\begin{equation}
\left(  N-1\right)  w^{\mathrm{T}}w+\underbrace{\left\Vert d-\left(
\overline{Y}+Bw\right)  \right\Vert _{R^{-1}}^{2}}_{\left\Vert d-\mathcal{H}%
\left(  \overline{X}+Aw\right)  \right\Vert _{R^{-1}}^{2}}\rightarrow\min_{w}.
\label{eq:etkf-min-obs}%
\end{equation}
Since (\ref{eq:etkf-min-obs}) is the same as the least squares form
(\ref{eq:kf-opt})\footnote{$\left(  v-\mu\right)  ^{\mathrm{T}}Q^{-1}\left(
v-\mu\right)  +\left\Vert Hv-d\right\Vert _{R^{-1}}^{2}\rightarrow\min_{v}$}
of the Kalman filter, with $\left(  N-1\right)  I$ for the inverse forecast
covariance $Q^{-1}$, $B$ playing the role of the observation operator $H$,
$d-\overline{Y}$ for data $d$, and $\mu=0$, (\ref{eq:osi-cov}%
)\footnote{$Q^{-1}+H^{\mathrm{T}}R^{-1}H=\left(  Q^{a}\right)  ^{-1}$} and
(\ref{eq:osi-mean})\footnote{$Q^{-1}\mu+H^{\mathrm{T}}R^{-1}d=\left(
Q^{a}\right)  ^{-1}\mu^{a}$} give the covariance $\widetilde{Q}^{a}$ and the
mean $w^{a}$ for $w$ as%
\begin{align}
\widetilde{Q}^{a}  &  =\left(  \left(  N-1\right)  I+B^{\mathrm{T}}%
R^{-1}B\right)  ^{-1}\label{eq:letkf-Qa}\\
w^{a}  &  =\widetilde{Q}^{a}B^{\mathrm{T}}R^{-1}\left(  d-\overline{Y}\right)
. \label{eq:letkf-wa}%
\end{align}
From the substitution (\ref{eq:letkf-subst}), we then have the analysis mean
and covariance%
\begin{equation}
\overline{X}^{a}=\overline{X}+Aw^{a},\quad Q^{a}=A\widetilde{Q}^{a}%
A^{\mathrm{T}}. \label{eq:letkf-mean-cov}%
\end{equation}
To create the analysis ensemble with this mean and covariance $\frac
{A^{a}A^{aT}}{N-1}=Q^{a}$, we choose%
\[
A^{a}=AW,\quad\frac{AWW^{T}A^{T}}{N-1}=Q^{a}=A\widetilde{Q}^{a}A^{\mathrm{T}}%
\]
i.e.,%
\begin{equation}
X_{i}^{a}=\overline{X}^{a}+A_{i}^{a},\quad A^{a}=AW,\quad WW^{\mathrm{T}%
}=\left(  N-1\right)  \widetilde{Q}^{a}=\left(  I+\frac{B^{\mathrm{T}}R^{-1}%
B}{N-1}\right)  ^{-1}. \label{eq:letkf-final}%
\end{equation}

\begin{remark}
Note that $B\approx O\left(  N\right)  $, $B^{\mathrm{T}}R^{-1}B\approx
O\left(  N^{2}\right)  $, so $\left(  I+\frac{B^{\mathrm{T}}R^{-1}B}%
{N-1}\right)  ^{-1}\approx O\left(  N^{-1}\right)  $, and since also $A\approx
O\left(  N\right)  $, we have $AW\approx O\left(  1\right)  $ so it has a
chance to converge as $N\rightarrow\infty$.
\end{remark}

\begin{theorem}
[Unbiased square root filter \cite{Livings-2008-UES}]\bigskip The sample mean
of the ensemble $X^{a}$ from (\ref{eq:letkf-final})\ is $\overline{X}^{a}$ and
its sample covariance is ${Q}^{a}$. In particular, $A^{a}{A^{a}}^{T}$ depends
only on $AA^{T}$ and not on $A$ itself.
\end{theorem}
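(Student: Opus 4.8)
The plan is to reduce all three assertions to the single algebraic fact that the columns of the analysis deviation matrix $A^a$ sum to zero, i.e.\ $A^a e = 0$, where $e = (1,\ldots,1)^{\mathrm{T}}$. First I would record how $e$ interacts with the deviation matrices. By (\ref{eq:def-A}) and (\ref{eq:def-B}), both $A$ and $B$ are formed by subtracting a sample mean, $A = X(I - ee^{\mathrm{T}}/N)$ and $B = Y(I - ee^{\mathrm{T}}/N)$ with $Y = [Y_1,\ldots,Y_N]$; since $(I - ee^{\mathrm{T}}/N)e = 0$, this gives $Ae = 0$ and $Be = 0$. From $Be = 0$ it follows that $B^{\mathrm{T}}R^{-1}Be = 0$, so $e$ is an eigenvector of $I + B^{\mathrm{T}}R^{-1}B/(N-1)$ with eigenvalue $1$, hence an eigenvector of its inverse $WW^{\mathrm{T}}$ (see (\ref{eq:letkf-final})) with eigenvalue $1$: $WW^{\mathrm{T}}e = e$.

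The crucial and genuinely load-bearing step is the choice of the square root $W$. I would take $W$ to be the symmetric positive-definite (principal) square root of $WW^{\mathrm{T}} = (I + B^{\mathrm{T}}R^{-1}B/(N-1))^{-1}$, which is symmetric positive definite. Because this matrix has $e$ as an eigenvector of eigenvalue $1$ and the principal square root acts on eigenvalues by $\lambda \mapsto \sqrt{\lambda}$, the same $e$ is fixed by $W$, so $We = e$. Combining with $Ae = 0$ gives $A^a e = A(We) = Ae = 0$. I would emphasize that this is exactly where \emph{unbiasedness} lives: a non-symmetric square root (such as a Cholesky factor) need not satisfy $We = e$, and then the analysis mean would be perturbed, so the symmetric root is essential and is the heart of the Livings et al.\ result.

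With $A^a e = 0$ the first two claims are immediate. The sample mean of $X^a$ is $\frac1N\sum_i X_i^a = \overline{X}^a + \frac1N A^a e = \overline{X}^a$ by (\ref{eq:letkf-final}). Since the sample mean coincides with $\overline{X}^a$, the centered deviations of $X^a$ are precisely the columns of $A^a$, so the sample covariance is $\frac{1}{N-1}A^a(A^a)^{\mathrm{T}} = \frac{1}{N-1}A\,WW^{\mathrm{T}}A^{\mathrm{T}} = A\widetilde{Q}^a A^{\mathrm{T}} = Q^a$, using $WW^{\mathrm{T}} = (N-1)\widetilde{Q}^a$ from (\ref{eq:letkf-final}) and $Q^a = A\widetilde{Q}^a A^{\mathrm{T}}$ from (\ref{eq:letkf-mean-cov}).

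For the final assertion, note that $A^a(A^a)^{\mathrm{T}} = (N-1)A\widetilde{Q}^a A^{\mathrm{T}}$ already shows independence of the particular square root, since only $WW^{\mathrm{T}}$ enters. To see that it depends on $A$ only through $AA^{\mathrm{T}}$, I would work in the affine case $\mathcal{H}(x) = Hx + h$ set up before (\ref{eq:letkf-approx}), where $B = HA$, so $\widetilde{Q}^a = ((N-1)I + A^{\mathrm{T}}H^{\mathrm{T}}R^{-1}HA)^{-1}$ by (\ref{eq:letkf-Qa}). The one nonobvious move is the push-through identity $A((N-1)I + A^{\mathrm{T}}MA)^{-1}A^{\mathrm{T}} = AA^{\mathrm{T}}((N-1)I + M AA^{\mathrm{T}})^{-1}$ with $M = H^{\mathrm{T}}R^{-1}H$, which I would verify by cross-multiplying, since $((N-1)I + A^{\mathrm{T}}MA)A^{\mathrm{T}} = A^{\mathrm{T}}((N-1)I + MAA^{\mathrm{T}})$ and both bracketed matrices are invertible. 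This yields $A^a(A^a)^{\mathrm{T}} = (N-1)AA^{\mathrm{T}}((N-1)I + H^{\mathrm{T}}R^{-1}H\,AA^{\mathrm{T}})^{-1}$, manifestly a function of $AA^{\mathrm{T}}$ alone. The main obstacle is thus not computation but the square-root choice: the statement is false for a general root, so identifying that the symmetric root fixes $e$ is the real content, and the push-through identity is the only other step that is not pure bookkeeping.
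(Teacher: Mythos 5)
Your proof is correct and, for the first two claims, follows essentially the same route as the paper: both arguments reduce everything to $A^{a}e=0$, obtained from $Be=0$, so that $e$ is an eigenvector of $(N-1)\widetilde{Q}^{a}=WW^{\mathrm{T}}$ with eigenvalue $1$, and then use the fact that the spectral square root preserves eigenvectors to get $We=e$ and $A^{a}e=AWe=Ae=0$; the covariance computation $\frac{1}{N-1}A^{a}\left(  A^{a}\right)  ^{\mathrm{T}}=A\widetilde{Q}^{a}A^{\mathrm{T}}=Q^{a}$ is identical. You are more explicit than the paper on the one point where the claim could genuinely fail: equation (\ref{eq:letkf-final}) determines $W$ only up to a right orthogonal factor, and only the principal (symmetric) square root, as in (\ref{eq:W}), is guaranteed to fix $e$; the paper glosses over this with \emph{then the same is true of $W$}, while you correctly identify it as the content of unbiasedness. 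Your proof also goes beyond the paper's on the final clause: the paper's proof stops after the covariance identity and never argues that $A^{a}\left(  A^{a}\right)  ^{\mathrm{T}}$ depends on $A$ only through $AA^{\mathrm{T}}$, whereas you prove it, in the affine case $B=HA$ (the only case where the claim is meaningful, since for nonlinear $\mathcal{H}$ the matrix $B$ is not a function of $A$ at all), via the push-through identity $A\left(  (N-1)I+A^{\mathrm{T}}MA\right)  ^{-1}A^{\mathrm{T}}=AA^{\mathrm{T}}\left(  (N-1)I+MAA^{\mathrm{T}}\right)  ^{-1}$ with $M=H^{\mathrm{T}}R^{-1}H$, which is correct (the cross-multiplication argument is valid and both bracketed matrices are invertible) and fills a gap the paper leaves implicit.
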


\begin{proof}
To show that $\overline{X}^{a}=\frac{1}{N}\sum_{i=1}^{N}X_{i}^{a}$, it is
sufficient to show that $A^{a}e=0$, where $e$ is vector length $N$ of all
ones. By (\ref{eq:def-B}), $Be=0$, hence $e$ is eigenvector of $\left(
N-1\right)  \widetilde{Q}^{a}$ with eigenvalue $1$. Then the same is true of
$W$, and we have%
\[
A^{a}e=AWe=Ae=0
\]
by the definition of $A$ in (\ref{eq:def-A}). The sample covariance of the
analysis ensemble is%
\begin{align*}
\frac{1}{N-1}\sum_{k=1}^{N}\left(  X_{k}^{a}-\overline{X}^{a}\right)  \left(
X_{k}^{a}-\overline{X}^{a}\right)  ^{\mathrm{T}}  &  =\frac{1}{N-1}%
A^{a}\left(  A^{a}\right)  ^{\mathrm{T}}=\frac{1}{N-1}AWW^{\mathrm{T}%
}A^{\mathrm{T}}\\
&  =\frac{1}{N-1}A\left(  N-1\right)  \widetilde{Q}^{a}A^{\mathrm{T}%
}=A\widetilde{Q}^{a}A^{\mathrm{T}}=Q^{a},
\end{align*}
as desired in (\ref{eq:letkf-mean-cov}).
\end{proof}

\begin{theorem}
Each member of the the analysis\ ensemble $X^{a}$ is a linear combination of
the forecast ensemble $X$.
\end{theorem}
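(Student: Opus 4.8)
The plan is to show directly that $X^{a}=XT$ for a single $N\times N$ matrix $T$, since the columns of $X^{a}$ are exactly the analysis members $X_{i}^{a}$ and the columns of $X$ the forecast members, so an identity $X^{a}=XT$ says precisely that $X_{i}^{a}=\sum_{j}T_{ji}X_{j}$ is a linear combination of the forecast ensemble. First I would assemble the columnwise relation $X_{i}^{a}=\overline{X}^{a}+A_{i}^{a}$ from \eqref{eq:letkf-final} into matrix form $X^{a}=\overline{X}^{a}e^{\mathrm{T}}+A^{a}$, where $e$ is the length-$N$ vector of all ones, and then substitute $A^{a}=AW$, again from \eqref{eq:letkf-final}.

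The key observation is that both the ensemble mean and the matrix of deviates are themselves linear images of $X$. From \eqref{eq:def-A} we have $\overline{X}=\tfrac1N Xe$ and $A=X\bigl(I-\tfrac{ee^{\mathrm{T}}}{N}\bigr)$, and from \eqref{eq:letkf-mean-cov} the analysis mean is $\overline{X}^{a}=\overline{X}+Aw^{a}=X\bigl[\tfrac1N e+\bigl(I-\tfrac{ee^{\mathrm{T}}}{N}\bigr)w^{a}\bigr]$. Substituting $\overline{X}^{a}$ and $A=X\bigl(I-\tfrac{ee^{\mathrm{T}}}{N}\bigr)$ into $X^{a}=\overline{X}^{a}e^{\mathrm{T}}+AW$ and factoring $X$ out on the left yields $X^{a}=XT$ with
\[
T=\Bigl(\tfrac1N e+\bigl(I-\tfrac{ee^{\mathrm{T}}}{N}\bigr)w^{a}\Bigr)e^{\mathrm{T}}+\bigl(I-\tfrac{ee^{\mathrm{T}}}{N}\bigr)W,
\]
so each analysis member is the advertised linear combination of $X_{1},\ldots,X_{N}$.

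There is essentially no analytical obstacle here; the argument is pure bookkeeping once one recognizes that $\overline{X}$, $A$, and hence $\overline{X}^{a}$ and $A^{a}$ are all obtained from $X$ by right multiplication. The one point I would emphasize is that the coefficient matrix $T$ is itself ensemble-dependent, through $w^{a}$ and $W$ and thus through $B$, $R$, and the data $d$; so the claim is understood realization by realization, with random weights but a genuinely linear dependence on the forecast members. I would also note in passing that this mirrors the transform form \eqref{eq:enkf-transform} obtained for the perturbed-data EnKF, the present statement being its square-root (ETKF) analogue.
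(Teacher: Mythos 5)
Your proof is correct and takes essentially the same route as the paper's: write $\overline{X}$, $A$, $\overline{X}^{a}$, and $A^{a}=AW$ all as right multiplications of $X$, substitute into $X^{a}=\overline{X}^{a}e^{\mathrm{T}}+A^{a}$, and factor out $X$ to obtain $X^{a}=XT$ with the same transform matrix $T$ (noting $W=(\widetilde{P}^{a})^{1/2}$). If anything, your bookkeeping is slightly more careful than the paper's displayed formula, which adds the column vector $w^{a}$ to the $N\times N$ matrix $(\widetilde{P}^{a})^{1/2}$; your rank-one term $w^{a}e^{\mathrm{T}}$ is the dimensionally correct reading of that expression.
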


\begin{proof}
Expanding the sample covariance, we can rewrite (\ref{eq:letkf-Qa}%
)--(\ref{eq:letkf-final}) as%
\[
X^{a}=X\frac{ee^{\mathrm{T}}}{N}+X\left(  I-\frac{ee^{\mathrm{T}}}{N}\right)
\left(  w^{a}+\left(  \widetilde{P}^{a}\right)  ^{1/2}\right)  ,
\]
where%
\begin{align}
\widetilde{P}^{a}  &  =\left(  I+\frac{B^{\mathrm{T}}R^{-1}B}{N-1}\right)
^{-1},\quad\label{eq:Pa}\\
B  &  =\left[  \mathcal{H}\left(  X_{1}\right)  ,\ldots,\mathcal{H}\left(
X_{N}\right)  \right]  \left(  I-\frac{ee^{\mathrm{T}}}{N}\right) \nonumber\\
w^{a}  &  =\frac{1}{N-1}\widetilde{P}^{a}B^{\mathrm{T}}R^{-1}\left(
d-\overline{Y}\right) \nonumber
\end{align}
which shows that
\[
X^{a}=XT,\quad T=\frac{ee^{\mathrm{T}}}{N}+\left(  I-\frac{ee^{\mathrm{T}}}%
{N}\right)  \left(  w^{a}+\left(  \widetilde{P}^{a}\right)  ^{1/2}\right)  .
\]

\end{proof}

\begin{remark}
The matrix
\[
\left(  N-1\right)  Q^{a}=\widetilde{P}^{a}=\left(  I+\frac{B^{\mathrm{T}%
}R^{-1}B}{N-1}\right)  ^{-1}%
\]
from (\ref{eq:letkf-final},\ref{eq:Pa}) is size $N$, the number of ensemble
members, and it is not clear what convergence as $N\rightarrow\infty$ may be
taking place. Using the Sherman-Morrison-Woodbury formula (\ref{eq:SMW})
\[
\left(  A+UCV\right)  ^{-1}=A^{-1}-A^{-1}U\left(  C^{-1}+VA^{-1}U\right)
^{-1}VA^{-1},
\]
with $C=R^{-1}/\left(  N-1\right)  $, $U=B^{\mathrm{T}}$, $V=B$, we can write%
\begin{align}
\left(  I+\frac{B^{\mathrm{T}}R^{-1}B}{N-1}\right)  ^{-1}  &  =\left(
I+B^{\mathrm{T}}\frac{R^{-1}}{N-1}B\right)  ^{-1}\nonumber\\
&  =I-B^{\mathrm{T}}\left(  \left(  N-1\right)  R+BB^{\mathrm{T}}\right)
^{-1}B\nonumber\\
&  =I-B^{\mathrm{T}}\left(  \left(  N-1\right)  \left(  R+\frac{1}%
{N-1}BB^{\mathrm{T}}\right)  \right)  ^{-1}B\nonumber\\
&  =I-\frac{1}{N-1}B^{\mathrm{T}}\left(  R+S\right)  ^{-1}B,
\label{eq:after-smw}%
\end{align}
where $S$ is the size of the number of data points, and%
\[
S=\frac{BB^{\mathrm{T}}}{N-1}\rightarrow\operatorname*{cov}Y\text{ as
}N\rightarrow\infty
\]
The matrix in (\ref{eq:after-smw}) is a type of regularized orthogonal
projection; when $R\rightarrow0$ and $B$ is full rank, (\ref{eq:after-smw})
becomes%
\[
\lim_{R\rightarrow0}I-\frac{1}{N-1}B^{\mathrm{T}}\left(  R+S\right)
^{-1}B=I-\frac{1}{N-1}B^{\mathrm{T}}\left(  \frac{BB^{\mathrm{T}}}%
{N-1}\right)  ^{-1}B=I-B^{\mathrm{T}}\left(  BB^{\mathrm{T}}\right)
^{-1}B=P_{\operatorname{null}B}%
\]
A more refined argument is needed when the ensemble members do not span the
whole state space.
\end{remark}

\begin{remark}
Analysis of the LETKF in \cite{Kwiatkowski-2015-CSR}, limited to linear model,
is formulated in terms of the transformations of the ensemble mean and
covariance by the model and by the analysis step. The ensemble itself is not used.
\end{remark}

To summarize:

\begin{algorithm}
[ETKF]Given an ensemble
\[
X=[X_{1},\ldots,X_{N}],
\]
observation operator%
\[
\mathcal{H}\left(  x\right)  =d-Hx
\]
and data covariance $R$, compute:
\begin{align}
Y_{i} &  =\mathcal{H}\left(  X_{i}\right)  ,\quad i=1,\ldots,N\quad\\
\overline{Y} &  =\frac{1}{N}\sum_{i=1}^{N}Y_{i},\quad B=\left[  Y_{1}%
-\overline{Y},\ldots,Y_{N}-\overline{Y}\right]
\end{align}%
\begin{equation}
\overline{X}=\frac{1}{N}\sum_{i=1}^{N}X_{i}\quad A=\left[  X_{1}-\overline
{X},\ldots,X_{N}-\overline{X}\right]  =X\left(  I-\frac{ee^{\mathrm{T}}}%
{N}\right)  ,
\end{equation}%
\begin{align}
\widetilde{Q}^{a} &  =\left(  \left(  N-1\right)  I+B^{\mathrm{T}}%
R^{-1}B\right)  ^{-1}\\
w^{a} &  =\widetilde{Q}^{a}B^{\mathrm{T}}R^{-1}\left(  d-\overline{Y}\right)
.
\end{align}%
\begin{equation}
\overline{X}^{a}=\overline{X}+Aw^{a}%
\end{equation}%
\begin{align}
W &  =\left(  \left(  N-1\right)  \widetilde{Q}^{a}\right)  ^{1/2}%
\label{eq:W}\\
A^{a} &  =AW\\
X_{i}^{a} &  =\overline{X}^{a}+A_{i}^{a}\quad i=1,\ldots,N
\end{align}
%\marginpar{Maybe get rid of $N-1$ in (\ref{eq:W}) and check the $N-1$ here}
Check:%
\[
\operatorname*{mean}(X^{a})=\overline{X}^{a}%
\]%
\[
\operatorname*{cov}(X^{a})=A\widetilde{Q}^{a}A^{\mathrm{T}}%
\]
exactly. Statistically, with error $O\left(  N^{-1/2}\right)  $,
$\operatorname*{mean}(X^{a})$ and $\operatorname*{cov}(X^{a})$ should be the
same as from EnKF.
\end{algorithm}

\subsection{Scalar data}

The L\ in LETKF stands for local. The first step in localization is to
assimilate data for one point at a time. In the case of scalar data, $h_{0}\in
R,\quad h_{1}\in R^{n},$
\[
\mathcal{H}\left(  X\right)  =h_{0}+h_{1}^{\top}X.
\]
for all $X$. Then, by direct computation, (\ref{eq:letkf-approx}) becomes%
\[
\mathcal{H}\left(  \overline{X}+Aw\right)  =\mathcal{H}\left(  \overline
{X}\right)  +h_{1}^{\top}Aw
\]%
\[
Y_{i}=\mathcal{H}\left(  X_{i}\right)  =h_{0}+h_{1}^{\top}X_{i}%
\]%
\begin{equation}
\overline{Y}=\frac{1}{N}\sum_{i=1}^{N}Y_{i}=\frac{1}{N}\sum\nolimits_{i=1}%
^{N}\left(  h_{0}+h_{1}^{\top}X_{i}\right)  =h_{0}+h_{1}^{\top}\overline
{X}=\mathcal{H}\left(  \overline{X}\right)  \label{eq:avg2avg}%
\end{equation}%
\[
Y_{i}-\overline{Y}=\mathcal{H}\left(  X_{i}\right)  -\mathcal{H}\left(
\overline{X}\right)  =h_{0}+h_{1}^{\top}X_{i}-\left(  h_{0}+h_{1}^{\top
}\overline{X}\right)  =h_{1}^{\top}\left(  X_{i}-\overline{X}\right)
\]%
\[
\sum\nolimits_{i=1}^{N}\left(  Y_{i}-\overline{Y}\right)  w_{i}=\sum
\nolimits_{i=1}^{N}h_{1}^{\top}\left(  X_{i}-\overline{X}\right)  w_{i}%
\]%
\begin{equation}
Bw=h_{1}^{\top}Aw. \label{eq:BishA}%
\end{equation}
From (\ref{eq:avg2avg})\ and (\ref{eq:BishA}), we can conclude that
\[
\mathcal{H}\left(  \overline{X}+Aw\right)  =\mathcal{H}\left(  \overline
{X}\right)  +h_{1}^{\top}Aw=\overline{Y}+Bw,
\]
which is (\ref{eq:letkf-approx})\ satisfied exactly instead of only
approximately.\addcontentsline{toc}{section}{References}
\bibliographystyle{siam}
\bibliography{../../references/other,../../references/geo}

\begin{thebibliography}{10}

\bibitem{Araujo-1980-CLT}
{\sc A.~Araujo and E.~Gin{\'e}}, {\em The central limit theorem for real and
  {B}anach valued random variables}, Wiley Series in Probability and
  Mathematical Statistics, John Wiley \& Sons, New York-Chichester-Brisbane,
  1980.

\bibitem{Babuska-2002-EES}
{\sc I.~Babu{\v{s}}ka and P.~Chatzipantelidis}, {\em On solving elliptic
  stochastic partial differential equations}, Comput. Methods Appl. Mech.
  Engrg., 191 (2002), pp.~4093--4122.

\bibitem{Babuska-2004-GFE}
{\sc I.~Babu{\v{s}}ka, R.~Tempone, and G.~E. Zouraris}, {\em {G}alerkin finite
  element approximations of stochastic elliptic partial differential
  equations}, SIAM J. Numer. Anal., 42 (2004), pp.~800--825.

\bibitem{Balakrishnan-1976-AFA}
{\sc A.~V. Balakrishnan}, {\em Applied functional analysis}, Springer-Verlag,
  New York, 1976.

\bibitem{Beezley-2009-HDA}
{\sc J.~D. Beezley}, {\em High-Dimensional Data Assimilation and Morphing
  Ensemble {K}alman Filters with Applications in Wildfire Modeling}, PhD
  thesis, University of Colorado Denver, 2009.

\bibitem{Bengtsson-2008-CRC}
{\sc T.~Bengtsson, P.~Bickel, and B.~Li}, {\em Curse-of-dimensionality
  revisited: collapse of the particle filter in very large scale systems}, in
  Probability and statistics: essays in honor of {D}avid {A}. {F}reedman,
  vol.~2 of Inst. Math. Stat. Collect., Inst. Math. Statist., Beachwood, OH,
  2008, pp.~316--334.

\bibitem{Billingsley-1995-PM}
{\sc P.~Billingsley}, {\em Probability and measure}, John Wiley \& Sons Inc.,
  New York, third~ed., 1995.

\bibitem{Bishop-2001-ASE}
{\sc C.~H. Bishop, B.~J. Etherton, and S.~J. Majumdar}, {\em Adaptive sampling
  with the ensemble transform {K}alman filter. {P}art {I}: {T}heoretical
  aspects}, Monthly Weather Review, 129 (2001), pp.~420--436.

\bibitem{Bogachev-1998-GM}
{\sc V.~I. Bogachev}, {\em Gaussian measures}, Mathematical Surveys and
  Monographs, Vol. 62, American Mathematical Society, Providence, RI, 1998.

\bibitem{Burgers-1998-ASE}
{\sc G.~Burgers, P.~J. van Leeuwen, and G.~Evensen}, {\em Analysis scheme in
  the ensemble {K}alman filter}, Monthly Weather Review, 126 (1998),
  pp.~1719--1724.

\bibitem{Chow-1988-PT}
{\sc Y.~S. Chow and H.~Teicher}, {\em Probability theory. {I}ndependence,
  interchangeability, martingales}, Springer-Verlag, New York, second~ed.,
  1988.

\bibitem{Cotter-2009-BIP}
{\sc S.~L. Cotter, M.~Dashti, J.~C. Robinson, and A.~M. Stuart}, {\em Bayesian
  inverse problems for functions and applications to fluid mechanics}, Inverse
  Problems, 25 (2009), pp.~115008, 43.

\bibitem{Cressie-1993-SSD}
{\sc N.~A.~C. Cressie}, {\em Statistics for Spatial Data}, John Wiley \& Sons
  Inc., New York, 1993.

\bibitem{Cupidon-2007-DMA}
{\sc J.~Cupidon, D.~S. Gilliam, R.~Eubank, and F.~Ruymgaart}, {\em The delta
  method for analytic functions of random operators with application to
  functional data}, Bernoulli, 13 (2007), pp.~1179--1194.

\bibitem{DaPrato-2006-IIA}
{\sc G.~Da~Prato}, {\em An introduction to infinite-dimensional analysis},
  Springer-Verlag, Berlin, 2006.

\bibitem{DaPrato-1992-SEI}
{\sc G.~Da~Prato and J.~Zabczyk}, {\em Stochastic equations in infinite
  dimensions}, vol.~44 of Encyclopedia of Mathematics and its Applications,
  Cambridge University Press, Cambridge, 1992.

\bibitem{Dauxois-1982-ATP}
{\sc J.~Dauxois, A.~Pousse, and Y.~Romain}, {\em Asymptotic theory for the
  principal component analysis of a vector random function: some applications
  to statistical inference}, J. Multivariate Anal., 12 (1982), pp.~136--154.

\bibitem{Evensen-2009-DAE}
{\sc G.~Evensen}, {\em Data Assimilation: The Ensemble {K}alman Filter},
  Springer, second~ed., 2009.

\bibitem{Furrer-2007-EHP}
{\sc R.~Furrer and T.~Bengtsson}, {\em Estimation of high-dimensional prior and
  posterior covariance matrices in {K}alman filter variants}, Journal of
  Multivariate Analysis, 98 (2007), pp.~227 -- 255.

\bibitem{Shi-xin-2002-CTp}
{\sc S.-x. Gan}, {\em Characterization of type {$p$} {B}anach spaces by the
  weak law of large numbers}, Wuhan Univ. J. Nat. Sci., 7 (2002), pp.~14--19.

\bibitem{Ganis-2008-SCM}
{\sc B.~Ganis, H.~Klie, M.~F. Wheeler, T.~Wildey, I.~Yotov, and D.~Zhang}, {\em
  Stochastic collocation and mixed finite elements for flow in porous media},
  Comput. Methods Appl. Mech. Engrg., 197 (2008), pp.~3547--3559.

\bibitem{Hida-2008-LWN}
{\sc T.~Hida and S.~Si}, {\em Lectures on white noise functionals}, World
  Scientific Publishing Co. Pte. Ltd., Hackensack, NJ, 2008.

\bibitem{Hoffmann-Jorgensen-1974-SIB}
{\sc J.~Hoffmann-J{\o}rgensen}, {\em Sums of independent {B}anach space valued
  random variables}, Studia Math., 52 (1974), pp.~159--186.

\bibitem{Hunt-2007-EDA}
{\sc B.~R. Hunt, E.~J. Kostelich, and I.~Szunyogh}, {\em Efficient data
  assimilation for spatiotemporal chaos: {A} local ensemble transform {K}alman
  filter}, Physica D: Nonlinear Phenomena, 230 (2007), pp.~112--126.

\bibitem{Jacod-2003-PE}
{\sc J.~Jacod and P.~Protter}, {\em Probability essentials}, Universitext,
  Springer-Verlag, Berlin, second~ed., 2003.

\bibitem{Kallenberg-2002-FMP}
{\sc O.~Kallenberg}, {\em Foundations of modern probability}, Probability and
  its Applications (New York), Springer-Verlag, New York, second~ed., 2002.

\bibitem{Kalman-1960-NAL}
{\sc R.~E. Kalman}, {\em A new approach to linear filtering and prediction
  problems}, Transactions of the ASME -- Journal of Basic Engineering, Series
  D, 82 (1960), pp.~35--45.

\bibitem{Kalnay-2003-AMD}
{\sc E.~Kalnay}, {\em Atmospheric Modeling, Data Assimilation and
  Predictability}, Cambridge University Press, 2003.

\bibitem{Kasanicky-2017-EKF}
{\sc I.~Kasanick\'{y}}, {\em Ensemble {K}alman filter on high and infinite
  dimensional spaces}, PhD thesis, Department of Probability and Mathematical
  Statistics, Faculty of Mathematics and Physics, Charles University, Prague,
  Czech Republic, 2017.
\newblock \url{http://hdl.handle.net/20.500.11956/86462}.

\bibitem{Kasanicky-2017-WBD}
{\sc I.~Kasanick\'y and J.~Mandel}, {\em On well-posedness of {B}ayesian data
  assimilation and inverse problems in {H}ilbert space}.
\newblock arXiv:1701.08298, 2017.

\bibitem{Kitanidis-1999-GCF}
{\sc P.~K. Kitanidis}, {\em Generalized covariance functions associated with
  the {L}aplace equation and their use in interpolation and inverse problems},
  {Water Resour. Res.}, {35} ({1999}), pp.~{1361--1367}.

\bibitem{Kreyszig-1989-IFA}
{\sc E.~Kreyszig}, {\em Introductory functional analysis with applications},
  Wiley Classics Library, John Wiley \& Sons Inc., New York, 1989.

\bibitem{Kuo-1975-GMB}
{\sc H.~H. Kuo}, {\em Gaussian measures in {B}anach spaces}, Lecture Notes in
  Mathematics, Vol. 463, Springer-Verlag, Berlin, 1975.

\bibitem{Kuo-1996-WND}
{\sc H.-H. Kuo}, {\em White noise distribution theory}, Probability and
  Stochastics Series, CRC Press, Boca Raton, FL, 1996.

\bibitem{Kwiatkowski-2015-CSR}
{\sc E.~Kwiatkowski and J.~Mandel}, {\em Convergence of the square root
  ensemble {K}alman filter in the large ensemble limit}, SIAM/ASA Journal on
  Uncertainty Quantification, 3 (2015), pp.~1--17.

\bibitem{Lang-1993-RFA}
{\sc S.~Lang}, {\em Real and functional analysis}, vol.~142 of Graduate Texts
  in Mathematics, Springer-Verlag, New York, third~ed., 1993.

\bibitem{Lax-2002-FA}
{\sc P.~D. Lax}, {\em Functional analysis}, Pure and Applied Mathematics (New
  York), Wiley-Interscience [John Wiley \& Sons], New York, 2002.

\bibitem{LeGland-2009-LSA}
{\sc F.~{Le~Gland}, V.~Monbet, and V.-D. Tran}, {\em Large sample asymptotics
  for the ensemble {K}alman filter}.
\newblock INRIA Report 7014, August 2009.

\bibitem{LeGland-2011-LSA}
{\sc F.~{Le~Gland}, V.~Monbet, and V.-D. Tran}, {\em Large sample asymptotics
  for the ensemble {K}alman filter}, in The Oxford Handbook of Nonlinear
  Filtering, D.~Crisan and B.~Rozovski\v{\i}, eds., Oxford University Press,
  2011, pp.~598--631.

\bibitem{Ledoux-1991-PBS}
{\sc M.~Ledoux and M.~Talagrand}, {\em Probability in {B}anach spaces},
  Ergebnisse der Mathematik und ihrer Grenzgebiete (3), Vol. 23,
  Springer-Verlag, Berlin, 1991.

\bibitem{Livings-2008-UES}
{\sc D.~M. Livings, S.~L. Dance, and N.~K. Nichols}, {\em Unbiased ensemble
  square root filters}, Phys. D, 237 (2008), pp.~1021--1028.

\bibitem{Loeve-1963-PT}
{\sc M.~Lo{\`e}ve}, {\em Probability theory}, Third edition, D. Van Nostrand
  Co., Inc., Princeton, N.J.-Toronto, Ont.-London, 1963.

\bibitem{Mandel-2009-CEK}
{\sc J.~Mandel, L.~Cobb, and J.~D. Beezley}, {\em On the convergence of the
  ensemble {K}alman filter}.
\newblock arXiv:0901.2951, January 2009.

\bibitem{Mandel-2011-CEK}
\leavevmode\vrule height 2pt depth -1.6pt width 23pt, {\em On the convergence
  of the ensemble {K}alman filter}, Applications of Mathematics, 56 (2011),
  pp.~533--541.

\bibitem{Marcinkiewicz-1964-CP}
{\sc J.~Marcinkiewicz}, {\em Collected papers}, Edited by Antoni Zygmund. With
  the collaboration of Stanislaw Lojasiewicz, Julian Musielak, Kazimierz
  Urbanik and Antoni Wiweger. Instytut Matematyczny Polskiej Akademii Nauk,
  Pa\'nstwowe Wydawnictwo Naukowe, Warsaw, 1964.

\bibitem{Marcinkiewicz-1937-FI}
{\sc J.~Marcinkiewicz and A.~Zygmund}, {\em Sur les foncions
  ind{\'e}pendantes}, Fund. Math., 29 (1937), pp.~60--90.
\newblock Reprinted in \cite{Marcinkiewicz-1964-CP}, pp. 233--259.

\bibitem{Marcus-1981-RFS}
{\sc M.~B. Marcus and G.~Pisier}, {\em Random {F}ourier series with
  applications to harmonic analysis}, vol.~101 of Annals of Mathematics
  Studies, Princeton University Press, Princeton, N.J., 1981.

\bibitem{McGee-1985-DKF}
{\sc L.~A. McGee and S.~F. Schmidt}, {\em Discovery of the {K}alman filter as a
  practical tool for aerospace and industry}.
\newblock NASA Technical Memorandum TM-86847, 1985.
\newblock \url{https://ntrs.nasa.gov/citations/19860003843}, accessed December
  3, 2021.

\bibitem{Mirouze-2010-RCF}
{\sc I.~Mirouze and A.~T. Weaver}, {\em Representation of correlation functions
  in variational assimilation using an implicit diffusion operator}, Quarterly
  Journal of the Royal Meteorological Society, {136} (2010), pp.~1421--1443.

\bibitem{Ocana-1999-FPC}
{\sc F.~A. Oca{\~n}a, A.~M. Aguilera, and M.~J. Valderrama}, {\em Functional
  principal components analysis by choice of norm}, J. Multivariate Anal., 71
  (1999), pp.~262--276.

\bibitem{Pettis-1938-IVS}
{\sc B.~J. Pettis}, {\em On integration in vector spaces}, Trans. Amer. Math.
  Soc., 44 (1938), pp.~277--304.

\bibitem{Quang-2006-WLL}
{\sc N.~V. Quang and L.~H. Son}, {\em On the weak law of large numbers for
  sequences of {B}anach space valued random elements}, Bull. Korean Math. Soc.,
  43 (2006), pp.~551--558.

\bibitem{Robinson-2018-IPF}
{\sc G.~Robinson, I.~Grooms, and W.~Kleiber}, {\em Improving particle filter
  performance by smoothing observations}, Monthly Weather Review, 146 (2018),
  pp.~2433--2446.

\bibitem{Sepanski-2005-WLL}
{\sc S.~Sepanski and Z.~Pan}, {\em {A Weak Law of Large Numbers for the Sample
  Covariance Matrix}}, Electronic Communications in Probability, 5 (2005),
  pp.~73--76.

\bibitem{Stuart-2010-IPB}
{\sc A.~M. Stuart}, {\em Inverse problems: A {B}ayesian perspective}, Acta
  Numer., 19 (2010), pp.~451--559.

\bibitem{Tippett-2003-ESR}
{\sc M.~K. Tippett, J.~L. Anderson, C.~H. Bishop, T.~M. Hamill, and J.~S.
  Whitaker}, {\em Ensemble square root filters}, Monthly Weather Review, 131
  (2003), pp.~1485--1490.

\bibitem{vanderVaart-2000-AS}
{\sc A.~W. {V}an~der Vaart}, {\em Asymptotic Statistics}, Cambridge University
  Press, 2000.

\bibitem{Weaver-2003-CBC}
{\sc A.~T. Weaver and S.~Ricci}, {\em Constructing a background-error
  correlation model using generalized diffusion operators}, in Proceedings of
  the ECMWF Seminar Series on Recent Developments in Atmospheric and Ocean Data
  Assimilation, Reading, UK, 2003, ECMWF, pp.~8--12.

\bibitem{Wei-2006-ETK}
{\sc M.~Wei, Z.~Toth, R.~Wobus, Y.~Zhu, C.~H. Bishop, and X.~Wang}, {\em
  Ensemble transform {K}alman filter-based ensemble perturbations in an
  operational global prediction system at ncep}, Tellus A: Dynamic Meteorology
  and Oceanography, 58 (2006), pp.~28--44.

\bibitem{Woyczynski-1978-GMB}
{\sc W.~A. Woyczy{\'n}ski}, {\em Geometry and martingales in {B}anach spaces.
  {II}. {I}ndependent increments}, in Probability on {B}anach spaces,
  J.~Kuelbs, ed., vol.~4 of Adv. Probab. Related Topics, Dekker, New York,
  1978, pp.~267--517.

\bibitem{Woyczynski-1980-MLL}
\leavevmode\vrule height 2pt depth -1.6pt width 23pt, {\em On
  {M}arcinkiewicz-{Z}ygmund laws of large numbers in {B}anach spaces and
  related rates of convergence}, Probab. Math. Statist., 1 (1980),
  pp.~117--131.

\bibitem{Xiu-2010-NMS}
{\sc D.~Xiu}, {\em Numerical methods for stochastic computations}, Princeton
  University Press, Princeton, NJ, 2010.
\newblock A spectral method approach.

\end{thebibliography}

\appendix

\section{Other approaches to convergence of sample covariance}

The proof of convergence of sample covariance in \cite[Lemma 3.3]%
{LeGland-2009-LSA} (see also Lemma \ref{lem:LeGland-cov} below) is similar to
the proof of Lemma \ref{lem:LpLLN-sample-cov}, but it proceeds separately on
each entry of the covariance matrix and it does not yield an explicit,
dimension independent bound. For other types of convergence of sample
covariance and convergence of its eigenvalues, see \cite{Dauxois-1982-ATP},
and used in \cite{Cupidon-2007-DMA} for the convergence of functions of the
sample covariance. The relevant field is called \textquotedblleft functional
principal component analysis (PCA)\textquotedblright, e.g.,
\cite{Cupidon-2007-DMA,Dauxois-1982-ATP,Ocana-1999-FPC}: functions are
considered as elements of a Hilbert space and PCA, the dominant application,
is based on eigenvalues and eigenvectors of the sample covariance.

Consider other possible arguments, first in finite dimension. Denote the
entries of a vector $X\in\mathbb{R}^{m}$ by $\left[  X\right]  _{i},$ the
entries of a matrix $A\in\mathbb{R}^{m\times m}$ by $\left[  A\right]  _{ij}$,
and note that%
\[
\left[  X\otimes Y\right]  _{ij}=\left[  X\right]  _{i}\left[  Y\right]  _{j}%
\]
for any $X$, $Y\in\mathbb{R}^{m}$.

The first argument follows \cite[proof of Lemma 3]{Mandel-2009-CEK}, with
additional details and using the framework here.

\begin{lemma}
\label{lem:MCB-cov}If $U_{k}\in L^{4}\left(  \Omega,\mathbb{R}^{m}\right)  $,
$k=1,...$ are i.i.d., then $C_{n}(X_{k})\Longrightarrow\operatorname*{Cov}%
\left(  X_{1}\right)  $, $n\rightarrow\infty$.
\end{lemma}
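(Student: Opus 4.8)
The plan is to reduce the convergence of the matrix-valued sample covariance to the scalar weak law of large numbers applied entry by entry, and then to reassemble the entries via the continuous mapping theorem. Since the limit $\operatorname{Cov}\left(  X_{1}\right)  $ is a constant (deterministic) matrix, convergence in distribution is equivalent to convergence in probability, so it suffices to establish the latter for each of the finitely many entries.

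First I would use the tensor-product entry formula $\left[  X\otimes Y\right]  _{ij}=\left[  X\right]  _{i}\left[  Y\right]  _{j}$ together with (\ref{eq:sample-cov-tensor}) to write, for each $i,j\in\left\{  1,\ldots,m\right\}  $,
\[
\left[  C_{n}(X_{k})\right]  _{ij}=E_{n}\left(  \left[  X_{k}\right]
_{i}\left[  X_{k}\right]  _{j}\right)  -E_{n}\left(  \left[  X_{k}\right]
_{i}\right)  E_{n}\left(  \left[  X_{k}\right]  _{j}\right)  ,
\]
and likewise $\left[  \operatorname*{Cov}\left(  X_{1}\right)  \right]
_{ij}=E\left(  \left[  X_{1}\right]  _{i}\left[  X_{1}\right]  _{j}\right)
-E\left(  \left[  X_{1}\right]  _{i}\right)  E\left(  \left[  X_{1}\right]
_{j}\right)  $ from (\ref{eq:covariance2}). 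Thus every entry of the sample covariance is an explicit continuous function (a difference of products) of the scalar sample means $E_{n}\left(  \left[  X_{k}\right]  _{i}\right)  $ and $E_{n}\left(  \left[  X_{k}\right]  _{i}\left[  X_{k}\right]  _{j}\right)  $.

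Next I would record the integrability that makes the $L^{2}$ weak law available; this is exactly where the $L^{4}$ hypothesis is used. Since $X_{k}\in L^{4}$, each coordinate $\left[  X_{k}\right]  _{i}$ lies in $L^{4}\subset L^{2}$, and by the Cauchy--Schwarz inequality each product $\left[  X_{k}\right]  _{i}\left[  X_{k}\right]  _{j}$ lies in $L^{2}$. Because the $X_{k}$ are i.i.d., the scalar sequences $\left\{  \left[  X_{k}\right]  _{i}\right\}  _{k}$ and $\left\{  \left[  X_{k}\right]  _{i}\left[  X_{k}\right]  _{j}\right\}  _{k}$ are each i.i.d. in $L^{2}\left(  \Omega,\mathbb{R}\right)  $. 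Applying Lemma \ref{lem:WLLN} (with $H=\mathbb{R}$) to each then gives $E_{n}\left(  \left[  X_{k}\right]  _{i}\right)  \Rightarrow E\left(  \left[  X_{1}\right]  _{i}\right)  $ and $E_{n}\left(  \left[  X_{k}\right]  _{i}\left[  X_{k}\right]  _{j}\right)  \Rightarrow E\left(  \left[  X_{1}\right]  _{i}\left[  X_{1}\right]  _{j}\right)  $; as the limits are constants, these convergences hold in probability.

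Finally I would assemble the entries. Convergence in probability of each scalar sample mean to its deterministic limit implies joint convergence in probability of the whole vector of sample means to the vector of expectations, so the continuous mapping theorem (for convergence in probability), applied to the continuous map that forms each covariance entry, yields $\left[  C_{n}(X_{k})\right]  _{ij}\rightarrow\left[  \operatorname*{Cov}\left(  X_{1}\right)  \right]  _{ij}$ in probability for every $i,j$ (equivalently, one may invoke Slutsky's theorem for the products and differences). Since convergence in probability of each of the finitely many entries gives convergence in probability of the matrix $C_{n}(X_{k})\rightarrow\operatorname*{Cov}\left(  X_{1}\right)  $ in any (equivalent) norm on $\mathbb{R}^{m\times m}$, and the limit is constant, this is precisely $C_{n}(X_{k})\Longrightarrow\operatorname*{Cov}\left(  X_{1}\right)  $. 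The argument is essentially routine bookkeeping; the only point demanding care is the integrability check that the products $\left[  X_{k}\right]  _{i}\left[  X_{k}\right]  _{j}$ are square integrable, which is exactly why the $L^{4}$ assumption is imposed rather than merely $L^{2}$.
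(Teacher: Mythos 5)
Your proof is correct and follows essentially the same route as the paper's: both reduce to the entries $\left[ C_{n}\right]_{ij}=E_{n}\left( \left[ X_{k}\right]_{i}\left[ X_{k}\right]_{j}\right) -E_{n}\left( \left[ X_{k}\right]_{i}\right) E_{n}\left( \left[ X_{k}\right]_{j}\right) $, use the $L^{4}$ hypothesis so the products are in $L^{2}$ and Lemma \ref{lem:WLLN} applies to each scalar sequence, and then recombine via Slutsky's theorem (your continuous-mapping phrasing through convergence in probability is an equivalent packaging of the same step). The only cosmetic difference is that the paper applies Slutsky directly to the convergence-in-distribution statements, while you pass through convergence in probability and convert at the end; both are valid since all limits are constants.
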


\begin{proof}
The proof follows \cite[proof of Lemma 3]{Mandel-2009-CEK}, with more details.
Since $U_{k}\in L^{4}\left(  \Omega,\mathbb{R}^{m}\right)  $, we have%
\begin{align*}
\left\Vert \left[  U_{1}\otimes U_{1}\right]  _{ij}\right\Vert _{2}^{2}  &
\leq\left\Vert U_{1}\otimes U_{1}\right\Vert _{2}^{2}=E\left(  \left\vert
U_{1}\otimes U_{1}\right\vert ^{2}\right) \\
&  =E\left(  \left\vert U_{1}\right\vert ^{4}\right)  =\left\Vert
U_{k}\right\Vert _{4}^{4},
\end{align*}
hence by the weak law of large numbers (Lemma \ref{lem:WLLN}),%
\begin{equation}
\left[  E_{n}\left(  U_{k}\otimes U_{k}\right)  \right]  _{ij}\Rightarrow
\left[  E\left(  U_{1}\otimes U_{1}\right)  \right]  _{ij}.
\label{eq:cov-conv-1}%
\end{equation}

Since $U_{1}\in L^{4}\left(  \Omega,\mathbb{R}^{m}\right)  $, we have
from$\left\vert \left[  U_{1}\right]  _{j}\right\vert \leq\left\vert
U_{1}\right\vert $that%
\[
\left\Vert \left[  U_{1}\right]  _{j}\right\Vert _{2}\leq\left\Vert
U_{1}\right\Vert _{2}\leq\left\Vert U_{1}\right\Vert _{4}%
\]
Then, for each entry separately, we have from the weak law of large numbers
(Lemma \ref{lem:WLLN})%
\[
\lbrack E_{n}\left(  U_{k}\right)  ]_{j}\Rightarrow\lbrack E(U_{1})]_{j},
\]
and by Slutsky's theorem,%
\begin{align}
\left[  E_{n}\left(  U_{k}\right)  \otimes E_{n}\left(  U_{k}\right)  \right]
_{ij}  &  =[E_{n}\left(  U_{k}\right)  ]_{i}[E_{n}\left(  U_{k}\right)
]_{j}\label{eq:cov-conv-2}\\
&  \Rightarrow[E(U_{1})]_{i}[E(U_{1})]_{j}=\left[  E\left(  U_{1}\right)
\otimes E\left(  U_{1}\right)  \right]  _{ij}.\nonumber
\end{align}

From (\ref{eq:cov-conv-1}) and (\ref{eq:cov-conv-2}) by Slutsky's theorem,%
\[
\left[  C_{n}\left(  U_{k}\right)  \right]  _{ij}\Rightarrow\left[
\operatorname*{Cov}\left(  U_{1}\right)  \right]  _{ij}%
\]
for all indices $i$, $j$.
\end{proof}

The next argument follows \cite{LeGland-2009-LSA}, with some additional
details and some simplifications afforded by the present framework.

\begin{lemma}
[{\cite[Lemma 3.3]{LeGland-2009-LSA}}]\label{lem:LeGland-cov}If $U_{k}\in
L^{p}\left(  \Omega,\mathbb{R}^{m}\right)  $, $p\geq2$, are i.i.d., then%
\begin{equation}
\varepsilon_{n}=\left\vert C_{n}\left(  U_{k}\right)  -\operatorname*{Cov}%
\left(  U_{1}\right)  \right\vert \rightarrow0\text{\quad a.s. as
}n\rightarrow\infty, \label{eq:conv-cov-LeGland-as}%
\end{equation}
and%
\begin{equation}
\sup_{n\geq1}\sqrt{n}\left\Vert \varepsilon_{n}\right\Vert _{p}<\infty.
\label{eq:conv-cov-LeGland-Lp}%
\end{equation}

\end{lemma}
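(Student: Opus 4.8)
The plan is to treat the two assertions separately, both reducing to laws of large numbers applied to the pointwise identity \eqref{eq:sample-cov-tensor}, which here reads $C_n(U_k)=E_n(U_k\otimes U_k)-E_n(U_k)\otimes E_n(U_k)$ and $\operatorname*{Cov}(U_1)=E(U_1\otimes U_1)-E(U_1)\otimes E(U_1)$. Subtracting splits the error into a \emph{second-moment} sample-mean error $E_n(U_k\otimes U_k)-E(U_1\otimes U_1)$ and a \emph{mean-product} error $E_n(U_k)\otimes E_n(U_k)-E(U_1)\otimes E(U_1)$; the whole difficulty is that the latter is a nonlinear (quadratic) function of the sample mean rather than a sample mean itself.

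For the almost sure statement \eqref{eq:conv-cov-LeGland-as}, I would first note that since $p\ge 2$ the i.i.d. sequences $\{U_k\}\subset L^1(\Omega,\mathbb{R}^m)$ and $\{U_k\otimes U_k\}\subset L^1(\Omega,[\mathbb{R}^m])$ (the latter because $E|U_k\otimes U_k|=E|U_k|^2=\|U_k\|_2^2<\infty$) both satisfy the hypotheses of the strong law of large numbers in a Banach space \cite[Corollary 7.10]{Ledoux-1991-PBS}. Hence $E_n(U_k)\to E(U_1)$ and $E_n(U_k\otimes U_k)\to E(U_1\otimes U_1)$, each on a set of full measure; on their intersection, continuity of the tensor product \eqref{eq:tensor-continuous} forces $E_n(U_k)\otimes E_n(U_k)\to E(U_1)\otimes E(U_1)$, and subtracting gives $C_n(U_k)\to\operatorname*{Cov}(U_1)$, i.e. $\varepsilon_n\to 0$ a.s.

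For the rate \eqref{eq:conv-cov-LeGland-Lp}, the cleanest route is to invoke Corollary \ref{cor:LpLLN-sample-cov-operator-norm} directly: it yields $\|\varepsilon_n\|_p\le(\tfrac{2C_p}{\sqrt n}+\tfrac{4C_{2p}^2}{n})\|U_1\|_{2p}^2$, so multiplying by $\sqrt n$ and using $1/\sqrt n\le 1$ gives $\sup_{n\ge1}\sqrt n\,\|\varepsilon_n\|_p\le(2C_p+4C_{2p}^2)\|U_1\|_{2p}^2<\infty$. The more faithful LeGland argument, which the appendix is contrasting, instead works entrywise: for each pair $(i,j)$ one applies the scalar $L^p$ law of large numbers (Lemma \ref{lem:Lp-large-numbers} with $H=\mathbb{R}$) to the i.i.d. variables $[U_k]_i[U_k]_j$ and to $[U_k]_i$, combines the two $O(1/\sqrt n)$ bounds through the entrywise version of \eqref{eq:sample-cov-tensor}, and finally passes from entries to the spectral norm by dominating it with the Frobenius norm $\big(\sum_{i,j}[\,\cdot\,]_{ij}^2\big)^{1/2}$ --- which is exactly where a dimension-dependent constant enters.

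The main obstacle is bookkeeping rather than any deep step. Concretely: (i) the quadratic mean-product term must be controlled, which for the a.s. part is handled by continuity but for the $L^p$ part requires the tensor-norm identity \eqref{eq:tensor-norm} together with the triangle inequality exactly as in Lemma \ref{lem:LpLLN-sample-cov}; and (ii) one must keep honest track of moments --- a genuine $O(1/\sqrt n)$ bound on $\|\varepsilon_n\|_p$ uses $U_1\in L^{2p}$ (the products $[U_k]_i[U_k]_j$ lie in $L^p$ only by Cauchy-Schwarz on $2p$-th moments), matching the hypothesis of Corollary \ref{cor:LpLLN-sample-cov-operator-norm}. I would flag (ii) as the step most likely to be mis-stated, and the dimension dependence of the entrywise constant as the feature worth emphasizing, since avoiding it is precisely the advantage of the Hilbert-Schmidt proof in Lemma \ref{lem:LpLLN-sample-cov}.
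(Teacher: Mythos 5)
Your proposal is correct, but your primary routes are genuinely different from the paper's, whose proof is exactly the entrywise argument you describe only as the secondary, ``more faithful'' option. For the almost sure part (\ref{eq:conv-cov-LeGland-as}), the paper assumes without loss of generality $E\left(  U_{1}\right)  =0$, bounds $\varepsilon_{n}$ by $\sum_{i,j}\left\vert \left[  E_{n}\left(  U_{k}\otimes U_{k}\right)  -E\left(  U_{1}\otimes U_{1}\right)  \right]  _{ij}\right\vert +\sum_{i}\left\vert \left[  E_{n}\left(  U_{k}\right)  \right]  _{i}\right\vert ^{2}$, and applies the scalar strong law of large numbers to each entry; you instead apply the Banach-space strong law \cite[Corollary 7.10]{Ledoux-1991-PBS} to $U_{k}$ and to $U_{k}\otimes U_{k}$ and then use continuity of the tensor product (\ref{eq:tensor-continuous}). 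In $\mathbb{R}^{m}$ the two are equivalent, but your version is coordinate-free and would survive in a Hilbert space, which the entrywise one would not. For the rate (\ref{eq:conv-cov-LeGland-Lp}), the paper again works entry by entry, applying the scalar $L^{p}$ law of large numbers (Marcinkiewicz-Zygmund) to $\left[  U_{k}\otimes U_{k}\right]  _{ij}$ and $\left[  U_{k}\right]  _{i}$ and dominating the operator norm by the \emph{sum of absolute values} of the entries rather than the Frobenius norm you name -- an immaterial difference, since both introduce the dimension-dependent constant that is the whole point of the contrast. Your ``cleanest route'' via Corollary \ref{cor:LpLLN-sample-cov-operator-norm} is logically sound (the corollary is proved independently, so there is no circularity), but it defeats the expository purpose of this appendix lemma, which exists precisely to exhibit the dimension-dependent argument against which the Hilbert-Schmidt proof of Lemma \ref{lem:LpLLN-sample-cov} is being compared; you clearly recognize this, so the objection is to emphasis, not correctness.

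Your flag (ii) on moment bookkeeping is well placed, and in fact hits a defect of the paper's own proof: its final display contains the quantities $\left\Vert \left[  U_{1}\otimes U_{1}\right]  _{ij}-E\left(  \left[  U_{1}\otimes U_{1}\right]  _{ij}\right)  \right\Vert _{p}$ and $\left\Vert \left[  U_{1}\right]  _{i}\right\Vert _{2p}^{2}$, both of which are finite only when $U_{1}\in L^{2p}$, while the hypothesis of the lemma asserts only $U_{k}\in L^{p}$. So the $O\left(  1/\sqrt{n}\right)  $ bound, as proved in the paper, tacitly requires the same $2p$-th moments that your route through Corollary \ref{cor:LpLLN-sample-cov-operator-norm} makes explicit.
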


\begin{proof}
Without loss of generality, let $E\left(  U_{1}\right)  =0$. Then%
\begin{align*}
C_{n}(U_{k})  &  =E_{n}\left(  U_{k}\otimes U_{k}\right)  -E_{n}\left(
U_{k}\right)  \otimes E_{n}\left(  U_{k}\right) \\
\operatorname*{Cov}\left(  U_{1}\right)   &  =E\left(  U_{1}\otimes
U_{1}\right)
\end{align*}
yield%
\begin{align}
\varepsilon_{n}  &  =\left\vert C_{n}(U_{k})-\operatorname*{Cov}\left(
U_{1}\right)  \right\vert \label{eq:LeGland-eps-by-coord}\\
&  \leq\left\vert E_{n}\left(  U_{k}\otimes U_{k}\right)  -E\left(
U_{1}\otimes U_{1}\right)  \right\vert +\left\vert E_{n}\left(  U_{k}\right)
\right\vert ^{2}\nonumber\\
&  \leq\sum_{i,j=1}^{n}\left\vert E_{n}\left(  \left[  \left(  U_{k}\otimes
U_{k}\right)  \right]  _{ij}\right)  -E\left(  U_{1}\otimes U_{1}\right)
\right\vert +\sum_{i=1}^{n}\left\vert \left[  E_{n}\left(  U_{k}\right)
\right]  _{i}\right\vert ^{2}.\nonumber
\end{align}
Since $p\geq2$, we have $E\left(  \left\vert \left[  U_{1}\otimes
U_{1}\right]  _{ij}\right\vert \right)  <\infty$ and $E\left(  \left\vert
\left[  U_{1}\right]  _{ij}\right\vert \right)  <\infty$, so it follows from
the strong law of large numbers for each entry separately that%
\[
\left[  E_{n}\left(  U_{k}\otimes U_{k}\right)  -E\left(  U_{1}\otimes
U_{1}\right)  \right]  _{ij}\rightarrow0\text{ a.s.,\quad}\left[  E_{n}\left(
U_{k}\right)  \right]  _{i}\rightarrow0\text{ a.s. , }n\rightarrow\infty,
\]
which, together with (\ref{eq:LeGland-eps-by-coord}), concludes the proof of
(\ref{eq:conv-cov-LeGland-as}).

From (\ref{eq:LeGland-eps-by-coord}), by triangle inequality for the
$L^{p}\left(  \Omega,\mathbb{R}\right)  $ norm,%
\begin{equation}
\left\Vert \varepsilon_{n}\right\Vert _{p}\leq\sum_{i,j=1}^{n}\left\Vert
E_{n}\left(  \left[  \left(  U_{k}\otimes U_{k}\right)  \right]  _{ij}\right)
-E\left(  \left[  U_{1}\otimes U_{1}\right]  _{ij}\right)  \right\Vert
_{p}+\sum_{i=1}^{n}\left\Vert \left[  E_{n}\left(  U_{k}\right)  \right]
_{i}\right\Vert _{2p}^{2}, \label{eq:LeGland-eps-pnorm}%
\end{equation}
using also $\left\Vert \left\vert X\right\vert ^{2}\right\Vert _{p}=\left(
E\left(  \left\vert X\right\vert ^{2p}\right)  \right)  ^{1/p}=\left\Vert
X\right\Vert _{2p}^{2}$ with $X=\left[  E_{n}\left(  U_{k}\right)  \right]
_{i}$. By Marcinkiewicz-Zygmund inequality in the form of the $L^{p}$ law of
large numbers (\ref{eq:Lp-LLN}),%
\[
\left\Vert E_{n}\left(  X_{k}\right)  -E\left(  X_{1}\right)  \right\Vert
_{p}\leq\frac{C_{p}}{\sqrt{n}}\left\Vert X_{1}-E\left(  X_{1}\right)
\right\Vert _{p},
\]
applied to each entry separately, (\ref{eq:LeGland-eps-pnorm}) yields%
\[
\left\Vert \varepsilon_{n}\right\Vert _{p}\leq\frac{C_{p}}{\sqrt{n}}%
\sum_{i,j=1}^{n}\left\Vert \left[  \left(  U_{1}\otimes U_{1}\right)  \right]
_{ij}-E\left(  \left[  U_{1}\otimes U_{1}\right]  _{ij}\right)  \right\Vert
_{p}+\frac{C_{p}^{2}}{n}\sum_{i=1}^{n}\left\Vert \left[  U_{1}\right]
_{i}-0\right\Vert _{2p}^{2},
\]
which proves (\ref{eq:conv-cov-LeGland-Lp}).
\end{proof}

Unlike the $L^{2}$ law of large numbers, the approaches to the convergence of
the sample covariance matrix in Lemma \ref{lem:MCB-cov} and Lemma
\ref{lem:LeGland-cov} fail to provide an explicit bound that would work
independently of dimension or carry over to the Hilbert space case. Both
approaches rely on treating one entry of the covariance matrix at a time. The
proof of Lemma \ref{lem:MCB-cov} is based on Slutsky's theorem (following a
suggestion in \cite{Furrer-2007-EHP}), which does not give a constructive
estimate of the speed of convergence. Lemma \ref{lem:LeGland-cov} gives an
estimate that becomes progressively worse with the dimension, because it
relies on the estimate of the operator norm by the sum of all entries.

In the Hilbert space case, the entry-by-entry approach can be immediately
generalized to estimates of $\left\Vert \left\langle u,\left(  C_{n}\left(
U_{k}\right)  -\operatorname*{Cov}\left(  U_{1}\right)  \right)
v\right\rangle \right\Vert _{p}$ for arbitrary $u,v\in H$ (in the
finite-dimensional case, $u$, $v$ are the canonical basis vectors), but the
utility of such approach is not clear.

A direct infinitely dimensional approach encounters fundamental issues in
probability on Banach spaces and geometry of Banach spaces. The sample
covariance is a random element in $\left[  H\right]  $, which is not a Hilbert
space but only a Banach space and not separable if $H$ is infinitely
dimensional \cite[page 23]{DaPrato-1992-SEI}. $L^{p}$ laws of large numbers do
not hold on a general Banach space; in fact, a Banach space is defined to be
Rademacher type $p$ if the Marcinkiewicz-Zygmund inequality holds
\cite{Ledoux-1991-PBS}, and then an $L^{p}$ law of large numbers follows just
like in (\ref{eq:Lp-LLN}). Conversely, certain $L^{p}$ laws of large numbers
imply that the Banach space is of type $p$ \cite{Shi-xin-2002-CTp}. See also
Proposition \ref{prop:type-p}.

Marcinkiewicz-Zygmund inequality and weak laws of large numbers are also
available in $p$-uniformly smooth Banach spaces, which are characterized by a
generalized form of the parallelogram equality,%
\[
\left\vert x+y\right\vert ^{p}+\left\vert x-y\right\vert ^{p}\leq2\left\vert
x\right\vert ^{p}+K\left\vert y\right\vert ^{p}%
\]
for some $K$ \cite{Quang-2006-WLL,Woyczynski-1978-GMB}.

For weak convergence of sample covariance in finite dimension, see also
\cite{Sepanski-2005-WLL}.

\section{White noise measures}

A theory of white noise somewhat different from Definition
\ref{def:white-noise} is obtained when the test functions are restricted to a
smaller test space $S\subset H$. However, $S$ cannot be arbitrary; it must be
a so-called nuclear space. For example, $H=L^{2}\left(  \mathbb{R}\right)  $
while $S=\mathcal{S}\left(  \mathbb{R}\right)  $, the Schwartz space, gives
white noise as random continous linear functionals in $\mathcal{S}^{\prime
}\left(  \mathbb{R}\right)  $, i.e., distributions. More generally, given a
complete orthonormal set $\left\{  y_{j}\right\}  $ in $H$ and $\lambda
_{1}\geq\lambda_{2}\geq\ldots>0$ such that $\sum_{j=1}^{\infty}\lambda
_{j}^{\theta}<\infty$ for some constant $\theta>0$, one can define%
\[
S_{p}=\left\{  x\in H:\left\vert x\right\vert _{p}^{2}<\infty\right\}
\quad\text{where }\left\vert x\right\vert _{p}^{2}=\sum_{j=1}^{\infty}%
\lambda_{j}^{-2p}\left\langle y_{j},x\right\rangle ,
\]
and define $S=%
%TCIMACRO{\tbigcap \nolimits_{p=1}^{\infty}}%
%BeginExpansion
{\textstyle\bigcap\nolimits_{p=1}^{\infty}}
%EndExpansion
S_{p}$, equipped with the projective limit topology, i.e., the topology
generated by the neighborhoods of zero $\left\{  y\in H:\left\vert
y\right\vert _{p}<\varepsilon\right\}  $, $p\in\mathbb{N},$ $\varepsilon>0$.
Such space $S$ is a \emph{nuclear space}. The Gaussian probability measure
$\mu$ on $\left(  S^{\prime},\mathcal{B}\left(  S^{\prime}\right)  \right)  $
is then given by its Fourier transform%
\[
\int_{S^{\prime}}e^{i\left\langle y,x\right\rangle }d\mu\left(  x\right)
=e^{-\frac{1}{2}\left\vert y\right\vert ^{2}},\quad y\in S,
\]
and its existence follows from the Bochner-Minlos theorem. The measure $\mu$
is called the \emph{white noise measure} and the measure space $\left(
S,\mu\right)  $ is called \emph{white noise space}. See, for example,
\cite{Kuo-1996-WND} or \cite[p. 25]{Hida-2008-LWN}.

For any test function $x\in S$, the mapping $y\mapsto\left\langle
y,x\right\rangle $ is a random variable with the distribution $N\left(
0,\left\vert x\right\vert ^{2}\right)  $, and if $x_{1}$,\ldots, $x_{n}$ are
orthonormal, the random variables $y\mapsto\left\langle y,x_{i}\right\rangle $
are independent.

\cite[p. 48]{DaPrato-1992-SEI} call a linear mapping $X$ from Hilbert space
$H$ to $L^{2}\left(  \Omega,\Sigma,\Pr\right)  $ (for some probability space
$\left(  \Omega,\Sigma,\Pr\right)  $) white noise if the values of $X$ are
Gaussian random variables and $E\left(  X\left(  z_{1}\right)  X\left(
z_{2}\right)  \right)  =\left\langle z_{1},z_{2}\right\rangle $ for all
$z_{1}$, $z_{2}\in H$. Clearly, if $W$ is as above, one has the correspondence
$\Omega=H$, $\Sigma$ are the Borel sets on $H$, $\Pr=\mu$, and, for a fixed
$z\in H$, the random variable $X\left(  z\right)  $ is defined by $X\left(
z\right)  :\omega\in H\mapsto W_{z}\left(  \omega\right)  $.

\section{Linear Algebra}

%\subsection{Singular values decomposition}

For a rectangular matrix $A\in\mathbb{C}^{m,n},$ there exist unitary matrices
$U\in\mathbb{C}^{m,m}$, $U^{\ast}U=I$, $V\in\mathbb{C}^{n,n}$, $V^{\ast}V=I$,
and logically diagonal matrix $S\in\mathbb{C}^{m,n}$ such that%
\[
A=USV^{\ast}%
\]
where $U=\left[  u_{1},\ldots,u_{m}\right]  $, $u_{k}\in\mathbb{C}^{m}$,
$V=\left[  v_{1},\ldots,v_{n}\right]  $, $v_{k}\in\mathbb{C}^{n}$, and
$S\in\mathbb{C}^{m,n}$ has only nonzero entries $s_{kk}>0$, $k=1,\ldots,p,$
$p=\operatorname{rank}\left(  A\right)  ,$ called singular values. In
particular,%
\[
\operatorname*{Range}A=\operatorname*{span}\left\{  u_{1},\ldots
,u_{p}\right\}  .
\]

Using SVD, we can show%

\begin{equation}
\operatorname*{Range}AA^{\mathrm{\ast}}=\operatorname*{Range}A \label{eq:AAT}%
\end{equation}
as follows:%

\begin{align*}
\operatorname*{Range}AA^{\ast}  &  =\operatorname*{Range}USV^{\ast}VSU^{\ast
}\\
&  =\operatorname*{Range}US^{2}U^{\ast}=\operatorname*{span}\left\{
u_{1},\ldots,u_{p}\right\} \\
&  =\operatorname*{Range}A.
\end{align*}

\end{document}